\documentclass[11 pt]{amsart}

\usepackage{amssymb,amsfonts}
\usepackage{ dsfont }

\usepackage{verbatim}
\usepackage[all,arc]{xy}
\usepackage{enumerate}
\usepackage{mathrsfs}
\usepackage[hidelinks]{hyperref}
\usepackage{graphicx}
\usepackage{mathtools}
\usepackage[dvipsnames]{xcolor}
\usepackage{fixme}
\usepackage{physics}
\fxsetup{status=draft}
\usepackage{svg}
\usepackage{tikz}
\usepackage[toc]{appendix}
\usepackage{caption}
\usepackage{subcaption}
\usepackage{multicol}
\usepackage[inline]{enumitem}

\usepackage{pgfplots}
\pgfplotsset{compat=newest}

\usetikzlibrary{3d,calc}

\usepackage[a4paper, margin=1.5in]{geometry}

\def \R {\mathbb R }
\def \N {\mathbb N }

\def \Z {\mathbb Z}

\def \C{\mathbb C }
\def \P{\mathbb P }

\def \cN{\mathcal N }
\def \cF{\mathcal F}

\renewcommand{\angle}[1]{ \langle #1 \rangle}
\newcommand{\set}[1]{\left\lbrace #1 \right\rbrace}
\renewcommand{\phi}{\varphi}
\newcommand\restr[2]{{%
		\left.\kern-\nulldelimiterspace %
		#1 %
		\vphantom{\big|} %
		\right|_{#2} %
}}
\newtheorem{thm}{Theorem}[section]
\newtheorem{cor}[thm]{Corollary}
\newtheorem{prop}[thm]{Proposition}
\newtheorem{lem}[thm]{Lemma}

\newtheorem{claim}[thm]{Claim}

\theoremstyle{definition}
\newtheorem{defn}[thm]{Definition}

\newtheorem{notn}[thm]{Notation}

\newtheorem{ques}[thm]{Question}

\newtheorem{rmk}[thm]{Remark}

\theoremstyle{remark}

\DeclareMathOperator{\Amp}{Amp}
\DeclareMathOperator{\Nef}{Nef}
\DeclareMathOperator{\Psef}{Psef}
\DeclareMathOperator{\Kah}{Kah}
\DeclareMathOperator{\NS}{NS}
\DeclareMathOperator{\Aut}{Aut}
\DeclareMathOperator{\Prob}{Prob}
\DeclareMathOperator{\supp}{supp}
\DeclareMathOperator{\GL}{GL}

\DeclareMathOperator{\Orth}{O}
\DeclareMathOperator{\Vol}{Vol}
\DeclareMathOperator{\Dens}{Dens}

\makeatletter
\let\c@equation\c@thm
\makeatother
\numberwithin{equation}{section}

\usepackage[numbers]{natbib}
\usepackage{graphicx}

\renewcommand{\epsilon}{\varepsilon}

\title[Maximal entropy for random skew products]{The measure of maximal entropy for random skew products on compact complex surfaces}
\author[Ethan Cohen]{Ethan Cohen}
\address{Department of Mathematics, Yale University}
\email{ethan.cohen@yale.edu}

\makeatletter
\def\ps@headings{%
	\def\@evenhead{\normalfont\scriptsize\thepage\hfil ETHAN COHEN\hfil}%
	\def\@oddhead{\normalfont\scriptsize\hfil MAXIMAL ENTROPY FOR RANDOM SKEW PRODUCTS\hfil\thepage}%
	\def\@evenfoot{}%
	\def\@oddfoot{}%
}
\makeatother

\usepackage{microtype}
\begin{document}
	\begin{abstract}
		Let $X$ be a compact complex surface. We prove that the skew product associated to a Borel probability measure $\mu$ on $\Aut(X)$ admits a unique invariant measure of maximal fiber entropy, assuming that $\mu$ satisfies a logarithmic integrability condition and that $\supp(\mu)$ generates a non-elementary subgroup of $\Aut(X)$. We describe this measure canonically in terms of the random limit currents constructed by Cantat and Dujardin, and show that its fiber entropy is equal to the Furstenberg exponent of the associated random action on cohomology. Under an exponential moment assumption, we prove that it is mixing.
	\end{abstract}
	\maketitle
	\pagestyle{headings}
	\tableofcontents
	\section{Introduction}
	
	\subsection{Motivation and main result}

		Let $f$ be a biholomorphism of a compact complex surface. It is well known that if $f$ has positive entropy, then there exist nonzero closed positive $(1,1)$-currents $T^+_f, T^-_f$ satisfying 
	\[f^* T^+_f = \lambda T^+_f, \hspace{1 cm} f^* T^-_f = \lambda^{-1} T^-_f\] 
	where $ \lambda \in \R_{>1}$ is the spectral radius of the action of $f^*$ on cohomology, and such currents are unique up to scaling \cite{MR1864630, MR2137979, MR2629598}.
	
	  The currents $T^\pm_f$ have continuous potentials, so their wedge \[m_f\coloneqq T^+_f \wedge T^-_f\] is well-defined in the sense of Bedford--Taylor \cite{MR445006} (see Section \ref{sec:wedges}). The measure $m_f$ is a nonzero finite Radon measure on $X$ that we assume to be normalized to a probability measure, and it is the unique $f$-invariant probability measure having maximal entropy \cite{MR1864630,MR2219241}. By theorems of Gromov \cite{MR2026895} and Yomdin \cite{MR889979}, $f$ has topological entropy $h_{top}(f) = \ln \lambda$, so the variational principle \cite[Theorem~4.5.3]{MR1326374} implies $h_{m_f}(f) = \ln \lambda$. Similar results hold in other complex dynamical systems, including polynomial diffeomorphisms of $\C^2$ (see \cite{MR1207478}), more general birational maps of surfaces (see \cite{MR2219241}), and some higher-dimensional settings (see \cite{MR2137979}, \cite{MR2629598}).

		The goal of this article is to establish a random analog of the theorem of \cite{MR1864630,MR2219241}, that a positive-entropy automorphism of a compact complex surface admits a unique measure of maximal entropy. Precisely, let $X$ be a compact complex surface. Endow the group of biholomorphisms of $X$, denoted $\Aut(X)$, with the topology of uniform convergence, and consider a Borel probability measure $\mu$ on $\Aut(X)$. Define the skew product associated to $\mu$ by
 \begin{align*}	F\colon \Omega \times X \to \Omega \times X, \quad (\omega, x) \mapsto (\sigma(\omega), \omega_0(x))
 \end{align*}
	 where $\Omega \coloneqq(\supp \mu)^{\mathbb Z}$, $\sigma\colon \Omega \to \Omega$ is the left shift, $\omega = (\dots, \omega_{-1}, \omega_0, \omega_1, \dots) \in \Omega$, and $x\in X$. In Section \ref{sec:FiberEntropy}, we recall the notion of \emph{fiber entropy} $h_{\nu}^\mathcal{F}$ for an $F$-invariant Borel probability measure $\nu$ on $\Omega \times X$.

Assume that $\Gamma_\mu\coloneqq \angle{\supp \mu}$ is a non-elementary subgroup of $\Aut(X)$ (see Section \ref{sec:setup}), meaning that $\Gamma_\mu$ contains a non-abelian free group, and that $\mu$ satisfies the moment condition
	\begin{align} \label{int}
		\int (\ln \norm{f}_{C^1} + \ln \norm{f^{-1}}_{C^1})  \,d\mu(f) < \infty.
		\end{align}
	From the associated random action on cohomology, we obtain a Furstenberg exponent, $\lambda_\mu$, and almost-everywhere defined Furstenberg limit classes in the boundary of the Nef cone (see Section \ref{sec:cohomology}).
  	
	In \cite{MR4635340}, Cantat and Dujardin prove that there exist unique unit-mass closed positive $(1,1)$-currents $T_\omega^+, T_\omega^-$ whose cohomology classes are the forward and backward Furstenberg limit classes determined by $\omega$
	(see Section \ref{sec:cohomology}). Moreover, they demonstrate that the $T_\omega^{\pm}$ have continuous potentials. Therefore, we may consider their wedge $T_{\omega}^+ \wedge T_{\omega}^-$ in the Bedford--Taylor sense. %
	The classes $[T_\omega^+]$ and $[T_\omega^-]$ are distinct almost surely, so $T_{\omega}^+ \wedge T_{\omega}^-$ defines a nonzero Radon measure on $X$. Let $m_\omega$ denote the normalization of this measure, that is,
	\begin{align*}
		m_\omega\coloneqq \frac 1 {\int_X T_{\omega}^+ \wedge T_{\omega}^-} T_{\omega}^+ \wedge T_{\omega}^- .
	\end{align*}
The measurability of the assignment $\omega\mapsto m_\omega$ is demonstrated in Lemma \ref{lem:m_omega_measurability}.
We prove the following:
\begin{thm}\label{thm:main} Let $X$ be a compact complex surface and $\mu$ a Borel probability measure on $\Aut(X)$ satisfying the moment condition \eqref{int} and such that $\Gamma_\mu\coloneqq \angle{\supp \mu}$ is non-elementary.
		Then the measure 
		\begin{align*}
			m\coloneqq \int_{\Omega}( \delta_{\omega}\otimes m_\omega)  \,d\mu^\Z(\omega)
		\end{align*}
	is the unique one in the class 
	\begin{align*}
		S(\mu)  \coloneqq  \set{\nu\in \Prob(\Omega \times X)\colon F_*\nu = \nu \text{ and } p_*\nu = \mu^\Z} 
	\end{align*}
	with maximal fiber entropy. Moreover, $m$ has fiber entropy $h_{m}^\mathcal{F} = \lambda_\mu$. 
\end{thm}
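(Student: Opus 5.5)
Three things need proving: $m\in S(\mu)$, the upper bound $h_\nu^{\mathcal F}\le \lambda_\mu$ for every $\nu\in S(\mu)$, and the identity $h_m^{\mathcal F}=\lambda_\mu$ together with uniqueness among maximizers.

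\textbf{Invariance of $m$.} Since $p_*m=\mu^\Z$ by construction, it remains to check $F$-invariance, which reduces to the equivariance $(\omega_0)_* m_\omega = m_{\sigma\omega}$. By the uniqueness statement of Cantat--Dujardin, the pulled-back current $\omega_0^*T^\pm_{\sigma\omega}$ is, up to positive scalars, equal to $T^\pm_\omega$. Indeed, the Furstenberg limit classes transform equivariantly under $\omega_0^*$, and $\omega_0^*$ preserves closed positive currents with continuous potentials. Bedford--Taylor wedge products commute with pullback by biholomorphisms. After normalizing, this gives $\omega_0^* m_{\sigma\omega}=m_\omega$, which is the required equivariance.

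\textbf{Upper bound.} To show $h_\nu^{\mathcal F}\le\lambda_\mu$ for all $\nu\in S(\mu)$, I would use a random Gromov--Yomdin argument. The fiber entropy is bounded by the fiber topological entropy (relative variational principle of Ledrappier--Walters / Kifer). The latter is bounded by the exponential growth rate of volumes of graphs of $\omega_{n-1}\circ\cdots\circ\omega_0$, as in Gromov's argument. The moment condition \eqref{int} handles the integrability of the $C^1$ norms, which enter through Yomdin-type error terms that are subexponential on average. The volume of the graph in $X^n$ is controlled by mixed intersection numbers $\int \omega_{(j)}^*\kappa\wedge\omega_{(k)}^*\kappa$ for a Kähler form $\kappa$. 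These grow like $\|(\omega_{n-1}\cdots\omega_0)^*\|$ on $H^{1,1}$, whose exponential rate is $\lambda_\mu$ almost surely by Furstenberg--Kesten.

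\textbf{Lower bound and uniqueness.} This is the main obstacle. My plan is to adapt the de Thélin / Dinh--Sibony style argument, or alternatively the Bedford--Lyubich--Smillie approach via Pesin theory.
\begin{enumerate}
\item For $m$, use Cantat--Dujardin's description of the fiberwise stable and unstable laminar structure of $T^\pm_\omega$. By the random Ledrappier--Young formula, this yields that $m$ has fiber Lyapunov exponents $\pm\lambda_\mu$ and that its conditional measures on unstable manifolds are absolutely continuous with respect to the transverse measures of $T^-_\omega$. The transverse measures of $T^-_\omega$ scale by the factor $e^{\lambda_\mu}$ under the dynamics, which gives $h_m^{\mathcal F}\ge\lambda_\mu$.
\item For uniqueness, take $\nu\in S(\mu)$ with $h_\nu^{\mathcal F}=\lambda_\mu$ and disintegrate it as $\nu_\omega$.
\begin{enumerate}
\item Ruelle's inequality together with the upper bound forces the fiber exponents of $\nu$ to be $\pm\lambda_\mu$.
\item Equality in the Ledrappier--Young formula makes the unstable conditionals of $\nu$ equal to the transverse measures of $T^-_\omega$. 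Here I would use a volume-growth comparison to show that unstable conditionals have maximal dimension only when they come from the currents.
\item The same holds symmetrically for the stable conditionals.
\item A product-structure (laminarity) argument then gives $\nu_\omega=T^+_\omega\wedge T^-_\omega$ up to normalization, so $\nu=m$.
\end{enumerate}
\end{enumerate}
Alternatively, one could avoid laminarity: show that $\nu_\omega$ pushed forward and averaged along the backward orbit equidistributes, and use the uniqueness of currents in the Furstenberg classes. I expect this identification step for a general maximizer to be the hardest. It needs fine Pesin theory in the random setting under only the logarithmic moment assumption.
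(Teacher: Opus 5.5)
Your invariance argument and your upper bound (relative variational principle plus a random Gromov volume estimate) agree with the paper. One small correction: Yomdin-type estimates play no role in the upper bound; they give the reverse inequality $h_{top}(\mu)\ge\lambda_\mu$.

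\textbf{False premise on exponents.} The lower-bound/uniqueness half rests on a false premise. The fiber exponents are not $\pm\lambda_\mu$. The unstable direction is a complex line, of real dimension $2$, so Ruelle's inequality reads $\lambda^u\ge\tfrac12 h^{\mathcal F}_\nu$. A maximizer therefore only satisfies $\lambda^u\ge\lambda_\mu/2$. For linear actions on a complex torus, where $m$ is Haar measure, the exponents are exactly $\pm\lambda_\mu/2$. So steps 1 and 2(a), and the ``maximal dimension'' heuristic in 2(b), cannot work as stated.

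\textbf{Computing $h_m^{\mathcal F}$.} Step 1 also presupposes that $m$ is hyperbolic, with a Pesin/laminar description of its conditionals, and this is not available a priori. The paper never computes $h^{\mathcal F}_m$ directly. A maximizer exists by compactness of $S(\mu)$ and upper semicontinuity of $\nu\mapsto h^{\mathcal F}_\nu$, and $\sup_{\nu\in S(\mu)}h^{\mathcal F}_\nu=\lambda_\mu$ needs Yomdin--Kifer as well as Gromov. Once every ergodic maximizer is shown to equal $m$, the identity $h^{\mathcal F}_m=\lambda_\mu$ follows. Your plan mentions neither the existence of a maximizer nor Yomdin--Kifer.

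\textbf{Missing mechanism for the conditionals.} Along unstable manifolds, the conditionals of a maximizer are normalized slices of $T^+_\omega$, not transverse measures of $T^-_\omega$. Indeed $T^+_\omega$ is the current laminated by \emph{stable} manifolds, so its transverse measure lives on unstable ones. The paper forces this by a Ledrappier-type Jensen argument:
\begin{itemize}
\item Take an increasing partition $\eta$ subordinate to $W^u$ with $H(F^{-1}\eta\mid\eta)=h^{\mathcal F}_\nu$, and set $f(\omega,x)=T^+_\omega|_{W^u_\omega(x)}(\eta_\omega(x))$.
\item Equivariance gives $T^+_\omega|_{W^u_\omega(x)}((F^{-1}\eta)_\omega(x))=M((\omega_0)_*T^+_\omega)\,f(F(\omega,x))$.
\item The mass form of Furstenberg's formula gives $\int\ln M(\omega_0^*T^+_{\sigma(\omega)})\,d\mu^\Z=\lambda_\mu$, and one has $\int\ln(f\circ F/f)\,d\nu=0$.
\item Jensen's inequality then yields $h^{\mathcal F}_\nu\le\lambda_\mu$, with equality if and only if $\nu_{\eta_\omega(x)}=\cN(T^+_\omega|_{\eta_\omega(x)})$.
\end{itemize}
This requires $f>0$ almost everywhere. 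The paper gets that from Zariski density of unstable manifolds (a consequence of positive entropy) together with Cantat--Dujardin's Ahlfors--Nevanlinna uniqueness, which rules out $T^+_\omega|_{W^u}=0$.

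\textbf{From conditionals to $\nu=m$.} You propose a product-structure argument, which would require identifying $T^+_\omega\wedge T^-_\omega$ with a geometric intersection of laminar currents in Pesin boxes; that is itself nontrivial. The paper instead pushes the conditionals \emph{forward}. Proposition~\ref{lem:last2} shows that suitably normalized $(f^n_\omega)_*(\psi[D])$, for an unstable disk $D$, approaches $T^-_{\sigma^n(\omega)}$ along times of upper density greater than $1-\rho$. The main difficulty is that these currents are not closed; the paper handles this with closed approximants and Gou\"ezel--Karlsson tight times. Averaging gives $K^{-1}m\le\nu\le Km$, hence $\nu=m$ by ergodicity.

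None of these ingredients appears in your plan, so the uniqueness claim and the identity $h^{\mathcal F}_m=\lambda_\mu$ remain unproved.
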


\begin{rmk}
	$F$-invariance of $m$ follows immediately from the equivariance property of the currents $T^\pm_\omega$ (see Remark \ref{equivariance}).
\end{rmk}
	As we will explain in the next section, the new aspect of Theorem \ref{thm:main} is the uniqueness of the maximizing measure and its identification in terms of the Cantat--Dujardin limit currents.
  	
\subsection{Some remarks}

The following theorem is a combination of several results: a variational principle on $S(\mu)$ due to Bogensch\"utz 
\cite{MR1181382} and Kifer \cite{MR1819189}, an extension of Yomdin's theorem to the random setting due to Yomdin and Kifer \cite{MR970566}, and the observation that Gromov's theorem applies in the random setting (see Appendices \ref{app:entropy} and \ref{app:gromov}).

\begin{thm} \label{thm:var}
	We have
	\begin{align*}
			\sup_{\nu \in S(\mu)}h_\nu^\mathcal{F} = \lambda_\mu.
	\end{align*}
\end{thm}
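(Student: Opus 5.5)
The statement splits into two inequalities. I would prove the upper bound $\sup_{\nu\in S(\mu)} h^{\mathcal F}_\nu \le \lambda_\mu$ through a random Gromov-type estimate, and the lower bound through a random Yomdin-type estimate. The variational principle of Bogensch\"utz and Kifer \cite{MR1181382, MR1819189} then links both to the fiber topological entropy $h^{\mathcal F}_{top}(F)$. Concretely, it gives
\[
\sup_{\nu\in S(\mu)} h^{\mathcal F}_\nu = h^{\mathcal F}_{top}(F),
\]
where the fiber topological entropy is defined through $(n,\epsilon)$-separated sets for the compositions $f^n_\omega = \omega_{n-1}\circ\cdots\circ\omega_0$, averaged over $\mu^\Z$. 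It therefore suffices to show $h^{\mathcal F}_{top}(F)=\lambda_\mu$. I would first check that the hypotheses of these theorems are met. Each fiber is $X$, which is compact. The moment condition \eqref{int} supplies the integrability of $\ln^+$ of Lipschitz constants that Kifer's framework needs.

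For the upper bound, I would adapt Gromov's argument to the random setting. An $(n,\epsilon)$-separated set for $f^n_\omega$ gives a separated set on the graph
\[
\Gamma_n(\omega)=\{(x,f^1_\omega x,\dots,f^{n-1}_\omega x)\}\subset X^n .
\]
By Lelong's inequality, its cardinality is controlled by $\epsilon^{-4}$ times the volume of $\Gamma_n(\omega)$, up to a constant depending on $\epsilon$. This volume is a sum of intersection numbers
\[
\int_X (f^{i_1}_\omega)^*\kappa\wedge (f^{i_2}_\omega)^*\kappa, \qquad 0\le i_1,i_2<n,
\]
for a fixed K\"ahler form $\kappa$. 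Each term is bounded by $C\,\|(f^{i}_\omega)^*\|\,\|(f^{j}_\omega)^*\|$ on $H^{1,1}$, and for automorphisms of surfaces one has $\|(f^{-1})^*\|=\|f^*\|$. Hence $\frac1n\ln\Vol(\Gamma_n(\omega))$ is at most roughly $\frac1n\max_{k<n}\ln\|(f^k_\omega)^*\|+o(1)$. By Furstenberg--Kesten this converges almost surely to $\lambda_\mu$. A subadditivity and Fatou argument transfers this to the averaged fiber entropy.

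For the lower bound, I would follow Yomdin--Kifer \cite{MR970566}. Their result bounds the growth of the volumes of images of smooth discs by the fiber entropy plus a term involving $C^r$ norms. That correction vanishes in the holomorphic setting, because Cauchy estimates bound all $C^r$ norms by $C^1$ norms on a slightly smaller set, and \eqref{int} controls the $C^1$ norms. Taking a smooth curve or test class whose pullback mass grows like $\|(f^n_\omega)^*\|$ then gives $h^{\mathcal F}_{top}\ge\lambda_\mu$. A simpler alternative for this half is to use the measure $m$ itself, once its entropy is computed later in the paper, but that would be circular for the structure of the paper, so I would use Yomdin.

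The main obstacle is the Gromov step in the random setting. The problem is the uniformity of the constant relating separated sets to graph volumes. In the deterministic case the map is fixed, so a single Lipschitz bound covers everything. Here the Lipschitz constants of $\omega_i$ vary from step to step. I would handle this with Bowen balls adapted to $\omega$, using $\epsilon_i$ scaled by $e^{-\sum\ln\|\omega_j\|_{C^1}}$ where needed. Alternatively, I would use the standard fact that the fiber entropy can be computed with $\epsilon$ fixed and then absorb the dependence on $\epsilon$ at the end. The moment condition guarantees that the resulting error terms are $o(n)$ almost surely, by the ergodic theorem.
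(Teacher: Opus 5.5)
Your overall structure matches the paper's. The Bogensch\"utz--Kifer variational principle reduces the statement to $h_{top}(\mu)=\lambda_\mu$. The lower bound comes from Yomdin--Kifer plus the observation that cohomological growth forces volume growth. The upper bound is Gromov's graph argument with a Lelong/monotonicity density bound in $X^n$. The gap is in the cohomological estimate inside the Gromov step.

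\textbf{The Gromov step as written gives $2\lambda_\mu$.} The bound you state, $\int_X (f^i_\omega)^*\kappa\wedge(f^j_\omega)^*\kappa\le C\norm{(f^i_\omega)^*}\,\norm{(f^j_\omega)^*}$, is true. Summed over $0\le i,j<n$, however, it only gives $\frac1n\ln\Vol(\Gamma_n(\omega))\le\frac2n\max_{k<n}\ln\norm{(f^k_\omega)^*}+o(1)\to2\lambda_\mu$. The identity $\norm{(f^{-1})^*}=\norm{f^*}$ does not remove the factor $2$; it is precisely what produces the product bound, via $\norm{((f^i_\omega)^*)^{-1}(f^j_\omega)^*}\le\norm{(f^i_\omega)^*}\,\norm{(f^j_\omega)^*}$. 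The product bound is far from sharp. For large $i,j$, both $(f^i_\omega)^*[\kappa]$ and $(f^j_\omega)^*[\kappa]$ are nearly proportional to the isotropic class $e^+(\omega)$, so their intersection is much smaller than the product of their norms.

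\textbf{What is needed instead.} You should not split the norm. Recognize that $((f^i_\omega)^*)^{-1}(f^j_\omega)^*=(f^{j-i}_{\sigma^i(\omega)})^*$, equivalently change variables by the automorphism $f^i_\omega$. This gives $\int_X\kappa\wedge(f^{j-i}_{\sigma^i(\omega)})^*\kappa\le C\norm{(f^{j-i}_{\sigma^i(\omega)})^*}$. In the deterministic case this is $\norm{(f^{j-i})^*}$, and your ``$\max_{k<n}$'' conclusion is immediate. Here, though, the products start at $\sigma^i(\omega)$ with $i$ ranging up to $n$, so Furstenberg--Kesten along the orbit of $\omega$ does not apply directly. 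You need the uniform estimate
\[
\limsup_{n\to\infty}\frac1n\ln\max_{0\le i\le j<n}\norm{(f^{j-i}_{\sigma^i(\omega)})^*}\le\lambda_\mu \quad\text{almost surely.}
\]
The paper proves this as follows:
\begin{enumerate}
\item Fix $L$ with $\frac1L\int\ln\norm{(f^L_\eta)^*}\,d\mu^\Z(\eta)<\lambda_\mu+\epsilon$, and cut $[i,j)$ into blocks of length $L$.
\item Use subadditivity and $\ln\norm{(f^L)^*}\ge0$ (adapted norm) to dominate the block sum by a Birkhoff sum for $\sigma^L$ started at a residue $s<L$. This bound is uniform in the starting index.
\item Control the leftover piece of length less than $L$ by Borel--Cantelli and \eqref{int:cohomology}.
\end{enumerate}
This is the actual content of the randomized Gromov theorem, and your proposal skips it.

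\textbf{The obstacle you identify is not a real one.} The Lelong/monotonicity constant depends only on the geometry of $(X^n,\widehat\kappa_n)$, whose curvature and injectivity-radius bounds are uniform in $n$. Also, $(\omega,n,\epsilon)$-separation in $X$ gives $\epsilon$-separation of the graph points in $X^n$ without any reference to Lipschitz constants of the $\omega_i$. So one simply works with fixed $\epsilon$, which is your second alternative and what the paper does. It uses Kifer's fact that $h_{top}(\mu)=\lim_{\epsilon\to0}\limsup_n\frac1n\ln r^\omega(n,\epsilon)$ for a.e.\ $\omega$.

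Your first remedy would actually be harmful. Shrinking $\epsilon$ by $e^{-\sum_{j<n}\ln\norm{\omega_j}_{C^1}}$ multiplies the Lelong bound by $e^{4\sum_{j<n}\ln\norm{\omega_j}_{C^1}}$. By the ergodic theorem, its logarithm grows like $4n\int\ln\norm{f}_{C^1}\,d\mu$, which is linear in $n$, not $o(n)$, and this would destroy the sharp constant.
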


	By compactness of $S(\mu)$ (see Remark \ref{compactness}) and upper-semicontinuity of the entropy function $\nu\mapsto h_\nu^\mathcal{F}$ \cite[Lemma~2.1]{MR1819189}, the supremum in Theorem \ref{thm:var} is achieved.  Thus, there exists a measure in $S(\mu)$ with maximal fiber entropy.

\begin{rmk}
	By Cauchy's estimate, the $C^1$-norm in \eqref{int} can be replaced by the $C^k$-norm for any $k\geq 1$. 
\end{rmk}
\begin{rmk}
	Recall that a $\mu$-stationary Borel probability measure $\tilde \nu$ on $X$ induces a unique $F$-invariant Borel probability measure $\nu$ on $\Omega \times X$ that pushes forward to $\tilde \nu \times \mu^\N$ under the projection $\Omega \times X \to \Omega_+ \times X$ \cite[Proposition~I.1.2]{MR1369243}. Here $\Omega_+\coloneqq(\supp \mu)^\N$ denotes the space of futures. The set $S(\mu)$ therefore contains those measures on $\Omega \times X$ obtained from $\mu$-stationary measures on $X$.
\end{rmk}
	\subsection{Proof outline for Theorem \ref{thm:main}}\label{sec:MainOutline}
	The proof follows the same strategy used to show that a single positive-entropy automorphism of a K\"ahler surface admits a unique measure of maximal entropy \cite{MR1207478,MR1864630,MR2219241}. 
	
		We fix a measure $\nu\in S(\mu)$ satisfying $h_{\nu}^\mathcal{F} = \lambda_\mu$; by the ergodic decomposition, we may assume that $\nu$ is ergodic. Then we consider a measurable partition $\eta$ subordinate to the unstable manifolds of $\nu$ (see Definition \ref{def:subordinate}) such that $\eta$ is a refinement of the partition into fibers of the projection $p\colon \Omega\times X \to \Omega$ and 
	\begin{align*}
		h_{\nu}^\mathcal{F} = H( F^{-1} \eta \mid \eta);
	\end{align*} such a partition was developed in the proof of the Ledrappier-Young entropy formula \cite{MR819556} for a single automorphism and later extended to the random setting \cite{MR1369243,MR2032494}. 
	
	Let $\eta_\omega(x)$ be as in Definition \ref{def:subordinate}. The conditional measures of $\nu$ with respect to $\eta$, denoted $\nu_{\eta(\omega, x)}$, satisfy
	\begin{align*}
		\nu_{\eta(\omega, x)} = \delta_\omega \otimes \nu_{\eta_\omega(x)}
	\end{align*} for some Borel probability measure $\nu_{\eta_\omega(x)}$ on $X$. 
		The key step is to prove that conditionals of $\nu$ on $\eta$ are proportional to so-called slices of $T_\omega^+$ (see Section \ref{sec:slices}) on the atoms of $\eta$, meaning
	\begin{align}\label{equivalence}
		\nu_{\eta_\omega(x)}  = \cN\big(\restr{T_\omega^+}{\eta_\omega(x)}\big)
	\end{align} $\nu$-almost everywhere, where $\cN(\cdot)$ denotes the normalization of a finite positive measure. This is encouraging because, if we assume $m$ is hyperbolic (which is not a priori true), then $m$ also has the slice property in \eqref{equivalence} by the product structure of $m$ in Pesin boxes (see Appendix \ref{app:Product}). 
		
		Using the ergodic theorem, we will prove that, for $\nu$-generic $(\omega, x)$, the sequence 
\begin{align*}
			\frac{1}{N} \sum_{n=0}^{N-1} F_*^n\nu_{\eta(\omega, x)}
\end{align*} converges weakly to $\nu$ in the sense of Section \ref{sec:setup}, with $f_\omega^n$ as in Notation \ref{notn:product}. By \eqref{equivalence}, we have 
	\begin{align}\label{subseq}
	\frac{1}{N} \sum_{n=0}^{N-1} F_*^n\nu_{\eta(\omega, x)} &= \frac{1}{N} \sum_{n=0}^{N-1} \delta_{\sigma^n(\omega)}\otimes (f_\omega^n)_*\nu_{\eta_\omega(x)}\notag \\
	&= \frac{1}{N} \sum_{n=0}^{N-1} \delta_{\sigma^n(\omega)}\otimes (f_\omega^n)_*\cN\big(\restr{T_\omega^+}{\eta_\omega(x)}\big).
	\end{align} We will prove that a subsequence of the averages in \eqref{subseq} converges to $m$ as well; hence, $\nu = m$.

		Assume for simplicity that $\eta_\omega(x)$ is a holomorphically embedded closed disk in $X$ and $\psi\colon X\to \R_{\geq 0}$ is smooth and satisfies $(\supp \psi) \cap \partial \eta_\omega(x) = \varnothing$. It will follow from Proposition \ref{lem:last2} that, for all $\rho > 0$, there exists a subsequence $(n_i)_{i\in \N}$ of $\N$ with upper-density (see Definition \ref{defn:density}) greater than $1 - \rho$ along which 
\begin{align}\label{meas1}
	(f_\omega^{n_i})_*m_\omega - (f_\omega^{n_i})_*\cN(T_\omega^+\wedge \psi [\eta_\omega(x)])\to 0
	\end{align} as $i\to \infty$ in the weak sense of measures, where the wedge $(T_\omega^+\wedge \psi [\eta_\omega(x)])$ is defined as in Section \ref{sec:wedges}. The proof of \eqref{meas1} in Proposition \ref{lem:last2} passes through approximation at the level of currents. That is, the currents
	$(f_\omega^{n_i})_*(\psi[\eta_\omega(x)])$ are well-approximated, up to normalization, by $T_{\sigma^{n_i}(\omega)}^-$ when $i$ is large.

By the definition of upper-density, \eqref{meas1} implies that there exists a sequence $N_i\to \infty$ along which, roughly speaking,
\begin{align*}
	\frac{1}{N_i}\sum_{n=0}^{N_i -1} \delta_{\sigma^n(\omega)}\otimes (f_\omega^n)_*m_\omega -  \frac{1}{N_i}\sum_{n=0}^{N_i -1} \delta_{\sigma^n(\omega)}\otimes (f_\omega^n)_*\cN(T_\omega^+\wedge \psi [\eta_\omega(x)]) \approx 0
\end{align*} for $i$ large, where the approximation depends on $\rho$. 

By the ergodic theorem,
\begin{align*}
	\frac{1}{N_i}\sum_{n=0}^{N_i -1} \delta_{\sigma^n(\omega)}\otimes (f_\omega^n)_*m_\omega \to m
\end{align*} as $i\to \infty$; thus,
\begin{align*}
	&   m - \frac{1}{N_i}\sum_{n=0}^{N_i -1} \delta_{\sigma^n(\omega)}\otimes (f_\omega^n)_*\cN(T_\omega^+\wedge \psi [\eta_\omega(x)]) \approx 0
\end{align*} 
	for $i$ large. Here again the approximation depends on $\rho$. Taking $\rho \to 0$ and replacing the wedge $T_\omega^+\wedge \psi[\eta_\omega(x)]$ by the slice $\restr{T_\omega^+}{\eta_\omega(x)}$, as justified in Step 2 of the proof of Theorem \ref{thm:main}, we obtain 
	\begin{align*}
	 \frac{1}{N_i} \sum_{n=0}^{N_i-1} \delta_{\sigma^n(\omega)}\otimes (f_\omega^n)_*\cN\big(\restr{T_\omega^+}{\eta_\omega(x)}\big)\to m,
\end{align*} so $m = \nu$. 

\subsection{Acknowledgements}
The author would like to thank his advisor, Sebastian Hurtado, for guidance throughout this project. The author is also grateful to Romain Dujardin for explaining aspects of his work on laminar currents and geometric intersections.
	
	\section{Preliminaries}
	For a much more comprehensive introduction to random dynamical systems on compact complex surfaces, we refer the reader to \cite{MR4635340}.

	\subsection{Notation and setup}\label{sec:setup}
	So far, we have stated our results for compact complex surfaces. However, any compact complex surface admitting an automorphism of positive entropy is K\"ahler \cite{MR1864630}. In fact, the existence of a non-elementary subgroup of $\Aut(X)$ forces $X$ to be projective \cite[Theorem~E]{MR4635340}. Henceforth, we specialize to the projective setting.

	Fix once and for all a K\"ahler form $\kappa_0$ with $\int_X \kappa_0 \wedge \kappa_0 = 1$; the form $\kappa_0$ induces a Riemannian metric on $X$. Given $a\in H^{1,1}(X, \R)$ define the \emph{mass} of $a$ to be $M(a)\coloneqq \int_X a\wedge \kappa_0$, and take the mass of any positive $(1,1)$-current $C$ to be $(C\mid \kappa_0)$. We also fix a Euclidean norm $\norm{\cdot}$ on $H^{1,1}(X,\R)$ to use for the remainder of the paper, which we will later assume is adapted to the intersection form. %
	
	Let $\Kah(X)$ denote the K\"ahler cone, $\NS(X, \Z)$ the N\'eron-Severi group, 
		$\NS(X) \coloneqq \NS(X, \Z) \otimes_\Z \R$, $\Amp(X)\coloneqq \NS(X)\cap \Kah(X)$ the ample cone, and $\Nef(X) \coloneqq  \NS(X)\cap \overline{\Kah(X)}$ the nef cone. Let $\Psef(X)$ denote the pseudo-effective cone, consisting of cohomology classes of closed positive currents. We note the inclusions
	\begin{align*}
		\Amp(X)\subset \Nef(X) \subset \Psef(X).
	\end{align*}

	\begin{rmk}
		Mass is positive on $\Kah(X)$, and thus positive on $\Amp(X)$ and nonnegative on $\Nef(X)$. Moreover, mass is nonnegative on $\Psef(X)$ since $\kappa_0$ pairs nonnegatively with positive currents. 
	\end{rmk}

	\begin{rmk}\label{cpt}
		The collection of classes with unit mass in $\Psef(X)$ is compact. Combined with the 
		scaling equivariance of mass and $\norm{\cdot}$,  we find that there exists $C> 0$ such that $\frac 1 CM(a)\leq \norm{a} \leq C M(a)$ for all $a\in \Psef(X)$.
	\end{rmk}
	
		Throughout, $\Prob(X)$ and $\Prob(\Omega \times X)$ denote the sets of Borel probability measures on their respective spaces. We equip $\Omega$ and $\Omega\times X$ with the product topologies. Given $\mu_n, \mu \in \Prob(X)$, we say $\mu_n \to \mu$ weakly if
		\[\int f\,d\mu_n \to \int f\,d\mu\] for all $f\in C(X)$. Given $\nu_n, \nu \in \Prob(\Omega \times X)$, we say $\nu_n\to \nu$ weakly if 
		\[\int f \,d \nu_n \to \int f \,d \nu\] for all $f\in C_b(\Omega\times X)$, the set of bounded continuous functions. 
		
		We will say that a function from $\Omega$ into a measure space is $\mu^\Z$-measurable if it is measurable with respect to the $\mu^\Z$-completion of the Borel $\sigma$-algebra on $\Omega$. Moreover, once a measure $\nu\in\Prob(\Omega\times X)$ is fixed, $\nu$-measurable means measurable with respect to the $\nu$-completion of the Borel $\sigma$-algebra on $\Omega\times X$. For maps into a topological space, the target is endowed with its Borel $\sigma$-algebra unless otherwise stated.

	  	\begin{rmk}\label{compactness}
		Let $L^1(\Omega, C(X))$ denote the collection of Borel-measurable functions $f\colon \Omega \times X \to \R $ such that $f_\omega\coloneqq \restr{f}{\set{\omega} \times X}$ is continuous for almost every $\omega$ and $\int_\Omega \norm{f_\omega}_{C^0}\,d\mu^\Z(\omega) < \infty$. Since $p_*\nu=\mu^\Z$, we have $L^1(\Omega, C(X)) \subset L^1(\nu)$ for every $\nu \in S(\mu)$. For $\nu_n, \nu \in S(\mu)$, write $\nu_n \xrightarrow{*} \nu$ if, for each $f\in L^1(\Omega, C(X))$,
		\begin{align*}
			\int f \,d \nu_n \to \int f  \,d\nu.
		\end{align*} Then $S(\mu)$ is compact in this topology \cite[Lemma~2.1]{MR1819189}.
	\end{rmk}

\begin{rmk}\label{disintegration}
	For each $\nu\in S(\mu)$, since $\Omega$ and $X$ are standard Borel spaces, there exists a Borel measurable assignment $\Omega \to \Prob(X)$, where $\Prob(X)$ is endowed with the weak topology, such that
	\begin{align*}
		\nu=\int_\Omega (\delta_\omega\otimes \nu_\omega)\,d\mu^\Z(\omega)
	\end{align*}
	\cite[Theorem~8.5]{MR1876169}. The measures $\nu_\omega$ are the conditional measures of $\nu$ along the fibers of $p:\Omega\times X\to\Omega$, and are uniquely determined for $\mu^\Z$-almost every $\omega$.

\end{rmk}

	\subsection{Action on cohomology} \label{sec:actionCohomology}
	 $\Aut(X)$ acts on the cohomology groups of $X$ via pull-back, preserving the Hodge decomposition. 
	By the Hodge Index Theorem, the intersection form 
	\begin{align*}
		\angle{\alpha, \beta}\coloneqq \int_X \alpha \wedge \beta
	\end{align*}
  has signature $(1, \dim H^{1,1} - 1)$ on $H^{1,1} \coloneqq H^{1,1}(X, \R)$. Let $\Orth(H^{1,1})$ denote the group of linear transformations of $H^{1,1}$ that preserve the intersection form, and $\Orth^+(H^{1,1})$ the subgroup of those preserving the positive cone. Since pullbacks preserve both the intersection form and the positive cone, we get an anti-homomorphism $\pi:\Aut(X) \to \Orth^+(H^{1,1})\cong \Orth^+(1, \dim H^{1,1} - 1)$ defined by $\pi(f) \coloneqq f^*$. 
 
 Let $\Gamma_\mu$ denote the subgroup of $\Aut(X)$ generated by the support of $\mu$, and $\Gamma_\mu^* $ denote the image of $\Gamma_\mu$ under $\pi$. We assume that $\Gamma_\mu$ is \emph{non-elementary}, meaning that the following equivalent conditions hold \cite[Appendix~A]{MR4635340}:
 \begin{enumerate}
 	\item $\Gamma_\mu$ contains a non-abelian free subgroup in which every non-identity element has positive entropy.
 	\item $\Gamma^*_\mu$ contains a non-abelian free subgroup.
 \end{enumerate}
	Unless otherwise stated, we assume throughout that $\mu$ satisfies \eqref{int} and that $\Gamma_\mu$ is non-elementary.

		 By projectivity of $X$, the restriction of the intersection form to $\NS(X)$ has signature $(1, \dim \NS(X) - 1)$.
	By Proposition 2.11 of \cite{MR4635340}, there exists a $\Gamma_\mu$-invariant subspace $\NS(X)_+\subset \NS(X)$ on which $\angle{\cdot,  \cdot}$ has signature $(1, \dim \NS(X)_+ - 1)$ and the action of $\Gamma_\mu^*$ on $\NS(X)_+$ is \emph{strongly irreducible}, meaning that there exists no $\Gamma_\mu^*$-invariant finite union of proper subspaces of $\NS(X)_+$. Furthermore, the action remains non-elementary on $\NS(X)_+$, meaning the image of $\Gamma_\mu$ in $\GL(\NS(X)_+)$ contains a non-abelian free subgroup. Finally, the intersection form is negative definite on $\NS(X)_+^\perp \subset H^{1,1}(X,\R)$. 
		
		Throughout, we let $\mathbb{H}_\mu$ denote the connected component of
		\begin{align*}
			\set{v\in \NS(X)_+ \colon \angle{v,v} = 1}
		\end{align*} intersecting the K\"ahler cone. Then $\mathbb{H}_\mu$ is a hyperbolic space on which $\Gamma_\mu$ acts by isometries. We view its boundary $\partial \mathbb{H}_\mu$ as a subset of $\P(\NS(X)_+)$. 
	\subsection{Adapted inner product}\label{sec:adapted}
	 We assume from now on that the norm $\norm{\cdot}$ on $H^{1,1}$ is adapted to the intersection form. That is, fix $v\in \mathbb H_\mu$ and define a positive-definite inner product by $\angle{w,u}_{\mathrm{ad}}\coloneqq 2\angle{w,v}\angle{u,v} - \angle{w,u}$. 
	 Then we let $\norm{w}^2 =  \angle{w,w}_{\mathrm{ad}}$ be the norm associated to $\angle{\cdot, \cdot}_{\mathrm{ad}}$. 
	 
	 Let $d\coloneqq \dim H^{1,1}$ and choose $e_1, \dots, e_{d-1}\in H^{1,1}$ such that $\angle{e_i, e_i} = -1$, $\angle{e_i ,v } = 0$, and $\angle{e_i, e_j} = 0$ for $i\neq j$. Notice that the basis $(v, e_1, \dots, e_{d-1})$ is orthonormal with respect to $\angle{\cdot, \cdot}_{\mathrm{ad}}$. In that basis, any $B\in \Orth^+(H^{1,1})$ admits a Cartan decomposition as $B= k_1Ak_2$ where $k_i$ are orthogonal and 
	 \begin{align*}
	 	A = \mqty(
	 	\cosh t & \sinh t & 0 \\
	 	\sinh t & \cosh t & 0 \\
	 	0 & 0 & I_{d-2}
	 	)
	 \end{align*} for $t = d_{\mathbb H^{1,1}} (v, Bv)$, where $d_{\mathbb H^{1,1}}$ represents hyperbolic distance in the component of $\set{w\in H^{1,1} \colon \angle{w, w} = 1}$ intersecting the K\"ahler cone. In particular, $\norm{B} = \norm{B^{-1}} = e^{d_{\mathbb H^{1,1}} (v, Bv)}$ where $\norm{B}$ denotes the operator norm induced by $\norm{\cdot}$.

	 Throughout, if $A$ is a linear endomorphism of $H^{1,1}$, we write $A^t$ for the adjoint of $A$ with respect to $\angle{\cdot,\cdot}_{\mathrm{ad}}$. Equivalently, $A^t$
	 is the transpose of the matrix of $A$ in the orthonormal basis
	 $(v,e_1,\dots,e_{d-1})$.
	 
	 The expanding and contracting singular vectors of $B$ have self-intersection zero. Indeed, the eigenspaces of $A$ are spanned by 
	 \begin{align*}
		u_+\coloneqq v + e_1, \hspace{1 cm } u_-\coloneqq v- e_1
	\end{align*} which are orthogonal for the adapted inner product. Then $k_2^{-1}u_+, k_2^{-1}u_-$ span the expanding and contracting singular directions for $B$, respectively. 
So \[\angle{k_2^{-1}u_\pm, k_2^{-1}u_\pm} = \angle{u_\pm, u_\pm} = 0.\] 

Given $B\in \Orth^+(H^{1,1})$ with $\norm{B}>1$, we let $e_+(B), e_-(B)$ denote the unique representatives of the expanding and contracting singular directions, respectively, with unit norm and lying in the closure of the positive cone. 

If $B$ preserves $\NS(X)_+$, then it follows from the Cartan decomposition that the expanding and contracting singular vectors of $B$ lie in $\NS(X)_+$. To see this, notice
\begin{align*}
	H^{1,1} = \NS(X)_+ \oplus \NS(X)_+^\perp
\end{align*} where  $\NS(X)_+^\perp$ denotes the perpendicular space with respect to the intersection form. Firstly, this decomposition is also orthogonal with respect to $\angle{\cdot, \cdot}_{\mathrm{ad}}$. Secondly, $\angle{u, w}_{\mathrm{ad}} = -\angle{u, w}$ whenever $u,w\in \NS(X)_+^\perp$, and hence the restriction of the adapted inner product to $\NS(X)_+^\perp$ is $\Gamma_\mu$-invariant. It follows from those two facts that singular directions corresponding to singular values different from $1$ lie in $\NS(X)_+$.

	\subsection{Potentials}\label{sec:Potentials}
	
	For the rest of the paper, we fix K\"ahler forms $\kappa_1, \dots, \kappa_n$ whose cohomology classes form a basis of $H^{1,1}$ and fix a volume form $V$ on $X$ with $\int_X V = 1$. We choose the basis so that one of the $\kappa_i$ is $\kappa_0$. Any class $\alpha\in H^{1,1}$
	can be written uniquely as a linear combination $\alpha = \sum c_i [\kappa_i]$.
	Let \[\Theta(\alpha)  \coloneqq \sum_{i=1}^n c_i \kappa_i.\]
	For a closed real current or form $T$ on $X$ with bidegree $(1,1)$, we set $\Theta(T) = \Theta([T])$. In particular, $\Theta(\kappa_0)=\kappa_0$.

For a smooth closed real $(1,1)$-form $\beta$, define $\phi(\beta)\in C^\infty(X)$ to be the unique function such that $\int_X \phi(\beta)\,dV = 0$ and
\[dd^c \phi(\beta) = \beta - \Theta(\beta).\] 
If $T$ is a closed positive $(1,1)$-current, there exists a \emph{unique} $\phi(T)\in L^1(V)$ such that $\int_X \phi(T)\,dV = 0$ and \[dd^c \phi(T) = T - \Theta(T)\] \cite[Section~8.1]{GuedjZeriahi2017}. Here we use the convention
$d^c \coloneqq \frac{i}{2}(\overline{\partial}-\partial)$
so that $dd^c = i\partial\overline{\partial}$.

The function $\phi(T)$ is called the \emph{normalized potential} of $T$ with respect to the $\kappa_i$'s and $V$. %
A different choice of $V$ changes $\phi(T)$ by an additive constant. 

The following appears as Corollary 6.2 in \cite{MR4635340}. We let $\mathcal{Z}_1$ denote the set of closed positive $(1,1)$-currents with unit mass.
\begin{lem}\label{lem:compactness}
	The set  
	\begin{align*}
		\phi(\mathcal{Z}_1) = \set{\phi(T) \mid T \in \mathcal{Z}_1}
	\end{align*} is compact in $L^1(V)$. 
\end{lem}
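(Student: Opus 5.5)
We will show that $\phi(\mathcal{Z}_1)$ is sequentially compact in $L^1(V)$. The plan is to reduce to the classical compactness theorem for quasi-plurisubharmonic functions with respect to a fixed Kähler form, and then use that the normalization is a closed condition.

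\textbf{Step 1: boundedness of the cohomology data.} For $T\in\mathcal{Z}_1$, the class $[T]$ lies in $\Psef(X)$ and has unit mass. By Remark \ref{cpt}, the class $[T]$ ranges over a compact set. Hence the coefficients $c_i$ in $[T]=\sum c_i[\kappa_i]$ are uniformly bounded, say $|c_i|\le C$. It follows that $\Theta(T)=\sum c_i\kappa_i \ge -A\kappa_0$ for a uniform constant $A>0$, since each $\kappa_i$ is smooth and comparable to $\kappa_0$.

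\textbf{Step 2: uniform quasi-psh property.} From $dd^c\phi(T)=T-\Theta(T)\ge -\Theta(T)\ge -A\kappa_0$, we get that every $\phi(T)$ is $A\kappa_0$-plurisubharmonic, after modification on a null set to its usc representative. Moreover $\int_X\phi(T)\,dV=0$.

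\textbf{Step 3: apply the classical compactness theorem.} By \cite[Section~8.1]{GuedjZeriahi2017}, the family of $A\kappa_0$-psh functions with $\int\,dV$ normalized is relatively compact in $L^1(V)$. Let $\phi(T_k)$ be a sequence. Pass to a subsequence with $[T_k]\to a$, and with $\phi(T_k)\to u$ in $L^1$, where $u$ is $A\kappa_0$-psh (Hartogs' lemma). Then $\int u\,dV=0$. Also $\Theta(T_k)\to\Theta(a)$ smoothly, since $\Theta$ is linear on a finite-dimensional space.

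\textbf{Step 4: identify the limit.} Set $T:=\Theta(a)+dd^c u$. It is the weak limit of $\Theta(T_k)+dd^c\phi(T_k)=T_k$, because $L^1$ convergence implies convergence of $dd^c$ as currents. Weak limits of closed positive currents are closed and positive. The mass is $(T\mid\kappa_0)=\lim(T_k\mid\kappa_0)=1$. Thus $T\in\mathcal{Z}_1$, and by the uniqueness of normalized potentials, $u=\phi(T)$. This gives sequential compactness, hence compactness in the metric space $L^1(V)$.

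The only substantive input is the compactness theorem in Step 3, which we quote. The main point to check is the uniform lower bound on $\Theta(T)$ in Step 1, which rests on the compactness of unit-mass pseudo-effective classes.
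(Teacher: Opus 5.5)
Your proof is correct and is essentially the paper's argument: the compactness of the unit-mass pseudo-effective classes makes every $\phi(T)$ an $A\kappa_0$-psh function for a uniform $A$, and the Guedj--Zeriahi compactness theorem for normalized quasi-psh functions (the paper cites their Proposition 8.5 and Remark 8.6) does the rest, with your Step 4 making explicit the closedness of $\phi(\mathcal{Z}_1)$ that the paper leaves implicit. One small slip: Step 2 needs the upper bound $\Theta(T)\le A\kappa_0$ (so that $A\kappa_0+dd^c\phi(T)=T+(A\kappa_0-\Theta(T))\ge 0$), not the lower bound you wrote in Step 1, but both bounds follow from $|c_i|\le C$.
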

\noindent Indeed, Cantat and Dujardin note that compactness of $\mathcal{Z}_1$ combined with the continuity of $\Theta$ implies that there exists $A\in \R$ such that each of the potentials in $\phi(\mathcal{Z}_1)$ is $A\kappa_0$-plurisubharmonic. Then Lemma \ref{lem:compactness} follows from Proposition 8.5 and Remark 8.6 of \cite{GuedjZeriahi2017}.

\begin{cor}\label{cor:cts}
	The restriction $\phi: \mathcal{Z}_1 \to L^1(V)$ is continuous.
\end{cor}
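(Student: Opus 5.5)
The plan is to show sequential continuity and then conclude from the compactness of $\mathcal{Z}_1$. I use the weak topology of currents on $\mathcal{Z}_1$; by Banach--Alaoglu together with the unit-mass normalization, this space is compact and metrizable. So fix $T_k \to T$ in $\mathcal{Z}_1$, and the goal is $\phi(T_k)\to\phi(T)$ in $L^1(V)$.

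\textbf{Step 1 (subsequence extraction).} By Lemma \ref{lem:compactness}, every subsequence of $(\phi(T_k))$ has a further subsequence $\phi(T_{k_j})$ converging in $L^1(V)$ to some $u\in\phi(\mathcal{Z}_1)$. It therefore suffices to prove that every such limit satisfies $u=\phi(T)$. Once that is known, the whole sequence converges, by the usual subsequence argument.

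\textbf{Step 2 (identifying the limit).} I check the two properties that characterize $\phi(T)$.

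First, the normalization $\int_X u\,dV=0$ passes to the limit. Indeed, $L^1(V)$ convergence implies convergence of integrals against the bounded density $1$, so $\int_X u\,dV=\lim_j\int_X\phi(T_{k_j})\,dV=0$.

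Second, $L^1$ convergence implies convergence in the sense of distributions, and $dd^c$ is continuous on distributions. Hence $dd^c\phi(T_{k_j})\to dd^c u$ weakly. On the other hand,
\[dd^c\phi(T_{k_j}) = T_{k_j}-\Theta(T_{k_j}).\]
The current $T_{k_j}$ converges weakly to $T$. The cohomology class map $S\mapsto[S]$ is continuous for weak convergence of closed currents, because classes are detected by pairing with finitely many closed test forms. Since $\Theta$ is linear in the class with fixed smooth forms $\kappa_i$, this gives $\Theta(T_{k_j})\to\Theta(T)$ smoothly. Therefore $dd^c u = T-\Theta(T)$.

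\textbf{Step 3 (uniqueness).} Both $u$ and $\phi(T)$ are $L^1$ solutions of the same equation with zero $V$-mean. Their difference $w$ therefore satisfies $dd^c w=0$ as a distribution, so $w$ is pluriharmonic, hence smooth by elliptic regularity. Since $X$ is compact, $w$ is constant by the maximum principle, and the zero mean forces $w=0$. This is exactly the uniqueness already quoted from \cite[Section~8.1]{GuedjZeriahi2017}, so I simply invoke it.

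The only point requiring any care is the continuity of $T\mapsto[T]$, and hence of $\Theta$, under weak convergence in Step 2. This follows from Poincaré duality, by pairing with a basis of closed smooth $(1,1)$-forms. I do not expect a genuine obstacle: the compactness supplied by Lemma \ref{lem:compactness} does all the analytic work.
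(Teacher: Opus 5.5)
Your proposal is correct and follows essentially the same argument as the paper: use the compactness of $\phi(\mathcal{Z}_1)$ from Lemma \ref{lem:compactness} to extract $L^1(V)$-limit points, identify each limit point as a normalized solution of $dd^c u = T-\Theta(T)$ via continuity of $\Theta$, and conclude by uniqueness of the normalized potential. You also spell out details the paper takes for granted or cites: why the normalization survives the limit, why sequential continuity suffices (metrizability of $\mathcal{Z}_1$), and why the normalized potential is unique.
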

\begin{proof}[Proof of Corollary \ref{cor:cts}]
	Let $(T_n)_{n\in \N}$ be a sequence of currents in $\mathcal{Z}_1$, and suppose $T_n \to T$ weakly. Then $\Theta(T_n) \to \Theta(T)$ since $\Theta$ is continuous. Now, if $\psi$ is an accumulation point of $\set{\phi(T_n)}_{n\in \N}$ in $L^1(V)$, then $dd^c \phi(T_n) \to dd^c \psi$ weakly. Thus $T = \Theta(T) + dd^c \psi$. But $\psi$ is normalized and $\phi(T)$ is the unique normalized potential for $T$, so $\psi = \phi(T)$. Since $\phi(\mathcal{Z}_1)\subset L^1(V)$ is compact by Lemma \ref{lem:compactness}, it follows that $\phi(T_n) \to  \phi(T)$ in $L^1(V)$. 
\end{proof}
 
 \subsection{A semi-norm on currents}\label{sec:norms}
 The following construction will be convenient in the proof of Proposition \ref{lem:last2}.
	 Let $B_1(H^{1,1})$ denote the unit ball in $H^{1,1}$ with respect to our fixed Euclidean norm. Define a semi-norm $\norm{\cdot}_\Theta$ on the vector space of $(1,1)$-currents on $X$ by
\begin{align*}
	\norm{C}_\Theta \coloneqq \sup_{\alpha \in B_1(H^{1,1})} |(C \mid \Theta(\alpha))|.
\end{align*}
	Define the distance between two currents $C_1, C_2$ by \[d_\Theta(C_1, C_2)\coloneqq \norm{C_1 - C_2}_{\Theta}.\]

\begin{rmk}\label{rmk:dtheta-mass}
	If $C_1$ and $C_2$ are positive $(1,1)$-currents, then $M(C_i)=(C_i\mid \kappa_0)$ for $i=1,2$. Since $\Theta(\kappa_0)=\kappa_0$, it follows that
	\begin{align*}
		|M(C_1)-M(C_2)|=|(C_1-C_2\mid \Theta(\kappa_0))|\leq \norm{[\kappa_0]}d_\Theta(C_1,C_2).
	\end{align*}
\end{rmk}

\begin{rmk}
	The semi-norm $\norm{\cdot}_\Theta$ satisfies the triangle inequality and is continuous with respect to the weak topology on currents. 
\end{rmk}

 		\subsection{Furstenberg theory on cohomology} \label{sec:cohomology}
 	By Lemma 5.1 in \cite{MR4635340}, there exists $C >0$ such that 
 	\begin{align*}
 		\norm{f^*}_{H^k}\leq C^k \norm{f}_{C^1}^k
 	\end{align*} for all $C^1$ maps $f:X\to X$. Thus the moment condition \eqref{int} implies
 \begin{align}\label{int:cohomology}
 	\int (\ln \norm{f^*}_{H^{1,1}} + \ln \norm{(f^*)^{-1}}_{H^{1,1}})  \,d\mu(f) < \infty
 \end{align} which allows us to apply Furstenberg theory to the action on $H^{1,1}$.  
 	\begin{notn}\label{notn:product}
 		Let $\omega = (\dots, \omega_{-1}, \omega_0, \omega_1, \dots) \in \Omega$. Throughout the paper, we use the notation
 		\begin{align*}
 			f_\omega^n \coloneqq \omega_{n-1} \circ \dots \circ \omega_0 \hspace{.25 cm} \text{ and } \hspace{.25 cm} f_\omega^{-n} \coloneqq \omega_{-n}^{-1} \circ \dots \circ \omega_{-1}^{-1}
 		\end{align*} where $n\geq 1$. For convenience, we let $f_\omega^0$ be the identity map. 
 	
 	\end{notn}
	\begin{thm}[Furstenberg, \cite{MR163345}]\label{FurstenbergLimit} Let $a\in \Psef(X)$ with $\angle{a,a} > 0$. For $\mu^\N$-almost every $\omega\in \Omega_+ = (\supp \mu)^\N$,
		\begin{align*}
			e^+(\omega)\coloneqq \lim_{n\to \infty} \frac{1}{M((f_\omega^n)^*a )}(f_\omega^n)^*a 
		\end{align*} exists and is independent of $a$. By construction, $M(e^+(\omega)) = 1$. Moreover, $e^+(\omega)\in \overline \Kah(X)$ and its projective class is contained in $\partial \mathbb{H}_\mu$. 
	\end{thm}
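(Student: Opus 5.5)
The plan is to translate the statement into Furstenberg's theorem for random products in $\Orth^+(H^{1,1})\cong \Orth^+(1,d-1)$. By the anti-homomorphism property of $\pi$, we have
\begin{align*}
	B_n\coloneqq (f_\omega^n)^* = \omega_0^*\,\omega_1^*\cdots\omega_{n-1}^*,
\end{align*}
which is a \emph{right} random walk with i.i.d.\ increments $A_k\coloneqq\omega_{k-1}^*$ of law $\pi_*\mu$. For right random walks the image singular direction $e_+(B_n)=k_1(B_n)u_+/\norm{k_1(B_n)u_+}$ (notation of Section \ref{sec:adapted}) is the quantity expected to converge almost surely. We will prove this convergence and then show that $B_na/\norm{B_na}$ is asymptotic to $e_+(B_n)$ for every admissible $a$. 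Throughout we write $t_n\coloneqq d_{\mathbb H^{1,1}}(v,B_nv)$, so that $\norm{B_n}=e^{t_n}$.

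\textbf{Step 1: linear growth of $t_n$.} By \eqref{int:cohomology} and Kingman's subadditive ergodic theorem, $\frac1n t_n\to\lambda_\mu$ almost surely. Positivity $\lambda_\mu>0$ is Furstenberg's theorem. The group $\Gamma_\mu^*$ acts strongly irreducibly and non-elementarily (hence non-compactly, with proximal elements) on $\NS(X)_+$. On $\NS(X)_+^\perp$ it acts isometrically for $\angle{\cdot,\cdot}_{\mathrm{ad}}$, since the form is negative definite there. Hence the top exponent comes from $\NS(X)_+$ and is positive. Alternatively, one can argue geometrically: a non-elementary random walk on $\mathbb H_\mu$ with finite first moment has positive drift.

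\textbf{Step 2: convergence of $e_+(B_n)$.} This is the main obstacle, and I would handle it with a direct estimate. Write $B_{n+1}=B_nA_{n+1}$ and use the Cartan decomposition $B_n=k_1Ak_2$, where $A$ has singular values $e^{t_n},1,\dots,1,e^{-t_n}$. Elementary linear algebra for this $2\times 2$ hyperbolic block then gives
\begin{align*}
	\angle\big(e_+(B_{n+1}),e_+(B_n)\big)\lesssim \norm{A_{n+1}}^2\norm{A_{n+1}^{-1}}^2e^{-2t_n}.
\end{align*}
The moment condition and Borel--Cantelli give $\ln\norm{A_{n+1}^{\pm1}}=o(n)$ almost surely. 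Combined with $t_n\geq n\lambda_\mu/2$ for large $n$, the right-hand side is summable, so $e_+(B_n)$ is Cauchy in $\P(H^{1,1})$. Call its limit $e$. Each $e_+(B_n)$ is isotropic, lies in the closed positive cone, and by Section \ref{sec:adapted} lies in $\NS(X)_+$ once $\norm{B_n}>1$. Hence the same holds for $e$, so its projective class lies in $\partial\mathbb H_\mu$.

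\textbf{Step 3: independence of $a$ and the properties of $e^+(\omega)$.} Expanding in the singular basis gives
\begin{align*}
	B_na = e^{t_n}\angle{a,k_2^{-1}u_+}_{\mathrm{ad}}\,k_1u_+ + O(\norm a).
\end{align*}
For $a$ in the positive cone with $\angle{a,a}>0$, a reverse Cauchy--Schwarz inequality together with compactness of the set of unit isotropic vectors in the closed positive cone yields $\angle{a,k_2^{-1}u_+}_{\mathrm{ad}}\geq c(a)>0$. The same holds for a psef $a$ with $\angle{a,a}>0$, since such a class pairs positively with nonzero nef isotropic classes. Therefore $B_na/\norm{B_na}\to e$ for every such $a$, and the limit is independent of $a$.

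Take $a$ ample. Then each $B_na$ is ample, so $e$ lies in $\overline{\Kah}(X)$, and $M(e)>0$ because $e$ is nef and nonzero (Remark \ref{cpt}). Since mass is continuous and homogeneous, $(f_\omega^n)^*a/M((f_\omega^n)^*a)\to e/M(e)\eqqcolon e^+(\omega)$, which has unit mass.
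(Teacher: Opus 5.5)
The paper gives no proof of this statement. It quotes Furstenberg, i.e.\ the classical route: a $\mu$-stationary measure on $\P(\NS(X)_+)$, martingale convergence of its push-forwards under $B_n=\omega_0^*\cdots\omega_{n-1}^*$, and strong irreducibility plus proximality to force a Dirac limit. The paper uses the same Bougerol--Lacroix machinery elsewhere, e.g.\ in the proof of Claim \ref{claim:mass}.

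Your argument is a genuinely different and essentially correct route. Once $\lambda_\mu>0$ is granted, it is hands-on hyperbolic geometry. That positivity is still borrowed from Furstenberg and is the deep input.
\begin{itemize}
\item Linear drift, together with the log-moment \eqref{int:cohomology} and Borel--Cantelli, makes the image singular directions a Cauchy sequence with exponentially small increments.
\item The positive cone replaces the usual genericity hypothesis on $a$. So the exceptional set is independent of $a$ from the start, and you get an exponential rate.
\end{itemize}
In exchange, Furstenberg's route needs no moment condition for convergence of directions. It also identifies the law of $[e^+(\omega)]_\P$ with the stationary measure $\mu_{\mathcal F}$, which the paper uses right away (Remark \ref{wekk}, Claim \ref{claim:mass}). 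Your construction does not give that identification by itself.

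Two small repairs are needed.

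\textbf{The exponent in Step 2.} Non-elementarity forces $\dim\NS(X)_+\geq 3$, so the singular values of $B_n$ are $e^{t_n},1,\dots,1,e^{-t_n}$. Hence $\sigma_2/\sigma_1=e^{-t_n}$, and the correct one-step bound is $\lesssim\norm{A_{n+1}}^2\norm{A_{n+1}^{-1}}^2e^{-t_n}$, not $e^{-2t_n}$. For instance, right-multiplying a boost of length $t$ in the $(v,e_1)$-plane by a fixed boost in the $(v,e_2)$-plane moves the image singular direction by order $e^{-t}$. Your $2\times 2$ block computation ignores the singular values equal to $1$. Summability is unaffected.

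\textbf{The psef case in Step 3.} The vectors $k_2^{-1}u_+$ are isotropic in the closed positive cone but need not be nef. Moreover, what you need is the adapted pairing, so ``pairs positively with nef isotropic classes'' is not the relevant fact. Instead:
\begin{itemize}
\item A psef $a$ pairs nonnegatively with $[\kappa_0]$, so $\langle a,a\rangle>0$ places it in the positive cone.
\item Writing $a=\alpha v+a'$ with $a'\perp v$, one gets $\langle a,w\rangle_{\mathrm{ad}}=\alpha+\langle a',w'\rangle_{\mathrm{ad}}\geq\alpha-\norm{a'}>0$ for every $w=v+w'$ isotropic in the closed positive cone.
\end{itemize}
So your first argument then applies verbatim.

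A note on notation: Section \ref{sec:adapted} uses $e_+(B)$ for the domain vector $k_2^{-1}u_+$. Your $e_+(B_n)$ is the image of that vector under $B_n$. It lies in $\NS(X)_+$ because $B_n$ preserves $\NS(X)_+$.
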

	\begin{rmk}
		The usual Furstenberg limiting direction is only defined up to projective class, but since $\Aut(X)$ preserves the K\"ahler cone%
		, $e^+(\omega)$ is an honest cohomology class.
	\end{rmk}
	\begin{rmk}\label{wekk}
			The projective classes $[e^+(\omega)]_{\P}\in \partial \mathbb{H}_\mu$ are distributed with respect to the Furstenberg measure on $\partial \mathbb{H}_\mu$, denoted $\mu_{\mathcal{F}}$. The measure $\mu_{\mathcal{F}}\in \Prob(\partial \mathbb H_\mu)$ is simply the push-forward of $\mu^\N$ under $\omega \mapsto[ e^+(\omega)]_{\P}$, i.e. $\mu_{\mathcal F} = \int \delta_{[e^+(\omega)]_{\P}}\, d\mu^\Z$. Moreover, $\mu_{\mathcal{F}}$ is the unique $\mu$-stationary measure on $\P H^{1,1}$ giving zero mass to $\P(\NS(X)_+^\perp)$ \cite[Chapter~3]{MR886674}. Strong irreducibility implies that $\mu_{\mathcal{F}}$ gives no mass to any proper projective subspace in $\P(\NS(X)_+)$. 
	\end{rmk}
	\begin{rmk}\label{rmk:FurstBack}
		The backwards limit classes $e^-(\omega)$ are defined analogously by instead taking $n \to -\infty$. Notice that $e^-(\omega)$ is independent from $e^+(\omega)$, so it follows from the non-atomicity of $\mu_{\mathcal{F}}$ in Remark \ref{wekk} that the two are generically distinct. 
	\end{rmk}

\begin{thm}[Furstenberg, \cite{MR163345}] \label{lyap}
	There exists $\lambda_\mu\in \R_{> 0}$ such that for $\mu^\Z$-almost every $\omega\in \Omega$ and every $a\in H^{1,1}(X, \R)$ with $\angle{a,a} > 0$, 
	\begin{align*}
		\lambda_\mu  = \lim_{n\to \infty}\frac 1 n \ln \norm{(f_\omega^n)^*a} = - \lim_{n\to -\infty}\frac 1 n \ln \norm{(f_\omega^n)^*a} . 
	\end{align*}
\end{thm}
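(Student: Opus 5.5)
This is Furstenberg's theorem, specialized to the action of $\Gamma_\mu^*$ on $H^{1,1}$. I plan to derive it from Kingman's subadditive ergodic theorem together with the geometry of the adapted norm. The first step is to define the cocycle $A(n,\omega)\coloneqq (f_\omega^n)^*$. Because pull-back is an anti-homomorphism, we have $(f_\omega^{n+m})^* = (f_\omega^n)^*(f_{\sigma^n\omega}^m)^*$. This gives subadditivity of $a_n(\omega)\coloneqq \ln\norm{(f_\omega^n)^*}$ over the shift $\sigma$, and \eqref{int:cohomology} gives integrability of $a_1$. Kingman's theorem applied to the ergodic system $(\Omega,\sigma,\mu^\Z)$ then produces a constant $\lambda_\mu\geq 0$ with $\frac1n\ln\norm{(f_\omega^n)^*}\to\lambda_\mu$ almost surely. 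Since $\norm{B}=\norm{B^{-1}}=e^{d(v,Bv)}$ for $B\in\Orth^+$, the backward norms $\norm{(f_\omega^{-n})^*}$ are norms of products of the inverse elements. These have the same law as a forward walk driven by the pushforward of $\mu$ under inversion, so they grow at a rate $\lambda'$. The identity $\norm{B}=\norm{B^{-1}}$, combined with the fact that the time-reversed block $\omega_{-n},\dots,\omega_{-1}$ has the same distribution as $\omega_0,\dots,\omega_{n-1}$, gives $\lambda'=\lambda_\mu$ (equality of the a.s. limits follows from equality in distribution).

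Next I would pass from operator norms to vectors $a$ with $\angle{a,a}>0$. Write $B_n=(f_\omega^n)^*=k_1A_nk_2$ in the Cartan decomposition. Then $\norm{B_na}\ge \cosh(t_n)\,|\langle k_2a,v\rangle_{\mathrm{ad}}|$-type lower bounds hold. More cleanly, for $a$ with $\angle{a,a}>0$ we may assume $a$ lies in the positive cone up to sign. For a vector $w$ in the positive cone one has $\norm{w}\asymp\angle{w,v}$, and the quantity $\angle{B_na,v}=\angle{a,B_n^{-1}v}$ is at least $c(a)\,e^{d(v,B_n^{-1}v)}$. This last bound follows from the reverse Cauchy–Schwarz inequality in Minkowski space: for $a$ timelike and $u=B_n^{-1}v\in\mathbb H$, $\angle{a,u}\ge \sqrt{\angle{a,a}}\cosh d(a/\sqrt{\angle a,a},u)$, and the hyperbolic distance from $u$ to a fixed point differs from $d(v,u)$ by a bounded amount. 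Hence $\norm{B_na}$ is comparable to $\norm{B_n}$ up to constants depending only on $a$, and the limits for $\norm{B_na}$ and $\norm{B_n}$ coincide. This is the step that gives "every $a$" rather than a generic $a$, without any exceptional subspace: positivity of $\angle{a,a}$ prevents $a$ from aligning with the contracting direction. The backward statement follows in the same way.

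It remains to prove $\lambda_\mu>0$, which I expect to be the main obstacle. Here I would use non-elementarity and strong irreducibility on $\NS(X)_+$ (Proposition 2.11 of \cite{MR4635340}), together with Furstenberg's positivity theorem. The image of $\Gamma_\mu$ in $\Orth^+(1,\dim\NS(X)_+-1)$ is non-elementary, hence non-compact and strongly irreducible, and Furstenberg's theorem \cite{MR163345} then yields a positive top exponent for the restricted cocycle. Alternatively, one can argue via the random walk on $\mathbb H_\mu$: a non-elementary isometry group with finite first moment has positive drift, $\frac1n d(v,B_n v)\to\ell>0$. Because $\norm{B_n}=e^{d(v,B_nv)}$ and the action on $\NS(X)_+^\perp$ is isometric, this gives $\lambda_\mu=\ell>0$. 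Either route must rest on invoking the classical result, since establishing positivity directly would require the stationary-measure argument (non-atomicity of $\mu_{\mathcal F}$, Remark \ref{wekk}) to rule out a zero exponent.
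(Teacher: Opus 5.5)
Your proposal is correct and follows the paper. The paper treats this theorem as a citation of Furstenberg and records exactly your two supplementary points: positivity of $\lambda_\mu$ comes from Furstenberg's positivity theorem, since $\Gamma_\mu$ is non-elementary and strongly irreducible on $\NS(X)_+$, and the forward and backward rates agree because $\norm{A}=\norm{A^{-1}}$ in the adapted norm. Your Kingman argument, combined with the Minkowski comparison $\norm{B_n a}\asymp_a \norm{B_n}$ for timelike $a$, is a sound and self-contained way to get the limit for every $a$ with $\angle{a,a}>0$ off a single null set; the paper leaves this step implicit in the citation.
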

	\begin{rmk}
		If additionally $a\in \Psef(X)$, then by Remark \ref{cpt} we also have 
		\begin{align*}
			\lambda_\mu  = \pm\lim_{n\to \pm\infty}\frac 1 n \ln M((f_\omega^n)^*a).
		\end{align*}
	\end{rmk}
	\begin{rmk}
		The positivity of the exponent $\lambda_\mu$ is guaranteed by Furstenberg's theorem \cite[Theorem~III.6.3]{MR886674} since $\Gamma_\mu$ is non-elementary.
	\end{rmk}
	\begin{rmk}
	The limits in Theorem \ref{lyap} are equal because if $A$ preserves the intersection form then $\norm{A} = \norm{A^{-1}}$ for our adapted norm. 
\end{rmk}

		The following mass-version of Furstenberg's formula is obtained by Cantat and Dujardin from the usual Furstenberg formula using the Birkhoff ergodic theorem and compactness considerations.
		\begin{thm}[Furstenberg, \cite{MR163345}; Cantat--Dujardin, {\citep[Section~5]{MR4635340}}] \label{thm:furst}
	Recalling the notation $\omega = (\omega_i)_{i\in \Z}$, we have
		\begin{align*}
			\lambda_\mu
			&=\int_{\Omega}   \ln M(\omega_0^* e^+(\sigma(\omega))) \,d\mu^\Z(\omega).
		\end{align*}
\end{thm}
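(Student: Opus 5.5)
We will derive the formula from the classical Furstenberg formula on projective space by a telescoping/cocycle argument. Write $e^+(\omega)$ for the unit-mass forward limit class. By Theorem \ref{FurstenbergLimit}, $e^+$ depends only on the future $(\omega_0,\omega_1,\dots)$. It also satisfies the equivariance
\[
\omega_0^* e^+(\sigma(\omega)) = M\big(\omega_0^* e^+(\sigma(\omega))\big)\, e^+(\omega).
\]
This holds because $(f_\omega^{n})^* a = \omega_0^*(f_{\sigma\omega}^{n-1})^*a$, so after normalizing and passing to the limit the left side is a positive multiple of $e^+(\omega)$, and the multiple is fixed by unit mass. Define $\Phi(\omega) := \ln M(\omega_0^* e^+(\sigma(\omega)))$. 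Iterating the equivariance gives the cocycle identity
\[
(f_\omega^n)^* e^+(\sigma^n\omega) = \exp\Big(\sum_{k=0}^{n-1}\Phi(\sigma^k\omega)\Big) e^+(\omega),
\]
and taking masses yields $\sum_{k=0}^{n-1}\Phi(\sigma^k\omega) = \ln M\big((f_\omega^n)^*e^+(\sigma^n\omega)\big)$.

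Next comes integrability. Since $e^+(\sigma\omega)\in\Psef(X)$ has unit mass, Remark \ref{cpt} gives $|\Phi(\omega)|\le \ln C^2 + \ln\max(\norm{\omega_0^*},\norm{(\omega_0^*)^{-1}})$. For the lower bound we use $\norm{\omega_0^* b}\ge \norm{(\omega_0^*)^{-1}}^{-1}\norm b$. By \eqref{int:cohomology}, $\Phi\in L^1(\mu^\Z)$. Birkhoff's theorem for the ergodic shift then gives
\[
\tfrac1n\ln M\big((f_\omega^n)^*e^+(\sigma^n\omega)\big)\to\int\Phi\,d\mu^\Z
\]
almost surely.

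It remains to identify this limit with $\lambda_\mu$. This is the main obstacle, because $e^+(\sigma^n\omega)$ varies with $n$ and Theorem \ref{lyap} only treats a fixed class $a$. The upper bound is easy: $\frac1n\ln M(\cdots)\le \frac1n\ln\norm{(f^n_\omega)^*}+O(1/n)\to\lambda_\mu$. For the lower bound we use the Cartan decomposition from Section \ref{sec:adapted}. Write $B_n=(f_\omega^n)^*$ with $\norm{B_n}=e^{t_n}$. Then $\norm{B_n b}\ge \norm{B_n}\,|\angle{b, e_+(B_n)}_{\mathrm{ad}}|$. The direction $e_+(B_n)$ is determined by $\omega_0,\dots,\omega_{n-1}$, while $e^+(\sigma^n\omega)$ is independent of it and distributed by $\mu_{\mathcal F}$. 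So it suffices to show $\frac1n\ln|\angle{e^+(\sigma^n\omega),e_+(B_n)}_{\mathrm{ad}}|\to 0$ in probability, or along a subsequence almost surely. Convergence in probability is enough, since we already know the a.s. limit exists.

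To get this, we use that $\mu_{\mathcal F}$ gives no mass to proper projective subspaces (Remark \ref{wekk}). By compactness, for every $\epsilon>0$ there is $\delta>0$ with $\mu_{\mathcal F}\{[b]:|\angle{b,u}_{\mathrm{ad}}|<\delta\}<\epsilon$ uniformly over unit $u$. Indeed, the sets shrink to hyperplane sections, which have measure zero, and weak continuity together with compactness of the unit sphere upgrades this to uniformity. The unit-mass normalization only changes norms by bounded factors, by Remark \ref{cpt}. Hence, with probability at least $1-\epsilon$,
\[
\ln M\big(B_n e^+(\sigma^n\omega)\big)\ge t_n+\ln\delta-O(1).
\]
Dividing by $n$ and using $t_n/n\to\lambda_\mu$ from Theorem \ref{lyap} gives the lower bound $\lambda_\mu$ on an event of probability at least $1-\epsilon$. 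Since $\epsilon$ is arbitrary and the limit is almost surely constant, $\int\Phi\,d\mu^\Z=\lambda_\mu$.
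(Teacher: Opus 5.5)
Your argument is correct, but it takes a different route from the one the paper points to. The paper gives no proof. It attributes the formula to Cantat--Dujardin, who obtain it from the classical Furstenberg formula $\lambda_\mu=\iint \ln\bigl(\|f^*v\|/\|v\|\bigr)\,d\mu(f)\,d\mu_{\mathcal F}([v])$ using Birkhoff and compactness. Because $M\asymp\|\cdot\|$ on $\Psef(X)$ (Remark \ref{cpt}), replacing norms by masses changes the integrand only by a bounded coboundary, which integrates to zero.

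You never invoke the classical formula. The cocycle identity turns the Birkhoff sums of $\Phi$ into $\ln M\bigl((f^n_\omega)^*e^+(\sigma^n\omega)\bigr)$. You then identify their almost-sure limit with $\lambda_\mu$ directly, using three ingredients:
\begin{enumerate}
\item the singular-value lower bound;
\item the independence of $(\omega_0,\dots,\omega_{n-1})$ from $e^+(\sigma^n\omega)$;
\item uniform non-concentration of $\mu_{\mathcal F}$ near hyperplanes.
\end{enumerate}
In effect you reprove the Furstenberg formula in this setting. Your argument is self-contained and makes the role of strong irreducibility explicit. The cited route is shorter because it hands exactly that step to the classical theorem.

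One statement needs tightening. The uniformity in $\delta$ cannot hold over all unit $u\in H^{1,1}$: for $u\in\NS(X)_+^\perp$, the set $\{[b]:|\langle b,u\rangle_{\mathrm{ad}}|<\delta\}$ contains all of $\partial\mathbb H_\mu$. Restrict to unit $u\in\NS(X)_+$. There the hyperplane sections are proper subspaces of $\P(\NS(X)_+)$, since the adapted inner product is positive definite on $\NS(X)_+$, and your compactness argument goes through. This restriction suffices, because $e_+(B_n)\in\NS(X)_+$ whenever $\|B_n\|>1$ (Section \ref{sec:adapted}), and $\|B_n\|\to\infty$ almost surely.

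Also, the good events in your last step depend on $n$. The cleanest finish is the reverse Fatou bound $\mu^\Z(\limsup_n A_n)\geq 1-\epsilon$. On that set, combined with the almost-sure convergence, the constant limit is at least $\lambda_\mu$.
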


		\subsection{Fiber entropy and its basic properties} \label{sec:FiberEntropy}
			In this section, we define fiber entropy and recall some basic facts used in the paper. See, for example, Appendix A of \cite{MR1369243}, or \cite{MR1819189}.
			
			\begin{rmk}\label{rmk:completion}
				When a measure $\nu\in\Prob(\Omega\times X)$ is fixed, notions such as measurable partitions, conditional measures, and entropy are understood with respect to the $\nu$-completion of the Borel $\sigma$-algebra on $\Omega\times X$. With this completed $\sigma$-algebra, $\Omega\times X$ is a Lebesgue probability space.
			\end{rmk}

			\begin{defn}[Fiber entropy] \label{defn:fiber} Let $\eta_\Omega$ denote the $\sigma$-algebra generated by the partition of $\Omega \times X$ into fibers of $p\colon \Omega \times X \to \Omega$.
			Define the \emph{fiber entropy} of $F$-invariant $\nu\in \Prob(\Omega \times X)$ by
			\begin{align*}
					h_\nu^\mathcal{F}\coloneqq h_\nu^{\eta_\Omega}(F)
			\end{align*}
				where $h_\nu^{\eta_\Omega}(F)$ is the $\eta_\Omega$-conditional entropy of $F$ (see \cite{MR1369243}, Definitions 0.4.1 and Appendix A or see \cite{MR1819189}, Section~2). 
		\end{defn} 
		\begin{rmk}
			In the case that $\nu\in S(\mu)$ and  $h_{\mu^\Z}(\sigma) < \infty$ (the second property holds, for example, when $\mu$ is finitely supported) we have  $h_\nu^\mathcal{F} = h_\nu(F) - h_{\mu^\Z}(\sigma)$. %
		\end{rmk}
	
		Given a finite measurable partition $\eta$ of $X$ and $\tau$ a probability measure on $X$, recall  
		\begin{align}
			H_\tau(\eta)\coloneqq - \int_{X}\ln \tau(\eta(x)) d\tau(x).
		\end{align} 
		Let $\nu\in S(\mu)$ be ergodic and recall the disintegration from Remark \ref{disintegration}. For any finite measurable partition $\eta$ of $X$, 
		\begin{align}\label{align:2}
			h_{\nu}^\mathcal{F} (\eta ) \coloneqq \lim_{n\to \infty}\frac 1 n H_{\nu_\omega}\left (\bigvee_{k=0}^{n-1}(f_\omega^k)^{-1}\eta\right )
		\end{align} exists and is constant $\mu^\Z$-almost surely. Moreover, 
		\begin{align}
			h_{\nu}^\mathcal{F} = \sup\set{h_{\nu}^\mathcal{F} (\eta )\mid \eta \text{ is a finite measurable partition of $X$}}
		\end{align} where $h_{\nu}^\mathcal{F}$ is the fiber entropy as defined in Definition \ref{defn:fiber} \cite[Theorem~II.1.4]{MR884892} or \cite{MR1181382}.
		
		Fiber entropy satisfies many properties analogous to entropy for a single map; see, for example, Chapters 0 and 1 of \cite{MR1369243}.

		\subsection{Recollections from Pesin theory}
	The following constructions are standard in Pesin theory, and can be found for example in \cite{MR1369243}. Note that $\nu\in S(\mu)$ might not be induced by a stationary measure on $X$, and therefore is not explicitly considered in Liu-Qian. 
	However, the theorems from \cite{MR1369243} which we cite also apply to ergodic $\nu$ with the same proofs. See also Section~5 of \cite{MR3671937}, where our situation is explicitly considered, or \cite{MR2032494}.

\begin{thm}[Oseledets' multiplicative ergodic theorem]
		Suppose $\nu \in S(\mu)$ is ergodic. There exist $\nu$-measurable assignments
	\begin{align*}
		(\omega,x) \mapsto E^u_\omega(x), \hspace{.5 cm}
		(\omega,x) \mapsto E^s_\omega(x),
	\end{align*}
	where $E^u_\omega(x)$, $E^s_\omega(x)$ are complex subspaces of $T_xX$, and real numbers $\lambda^s \leq \lambda^u$ such that, for $\nu$-almost every $(\omega, x)\in \Omega \times X$, $T_xX = E^u_\omega(x) \oplus E^s_\omega(x)$ and for every $v\in E^{s/u}_\omega(x) \setminus \set{0}$,
	\begin{align*}
		\lim_{n\to \pm \infty} \frac 1 n \ln \norm{Df_\omega^n v} &= \lambda^{s/u}.
	\end{align*}
\end{thm}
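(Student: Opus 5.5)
The plan is to derive this random version of Oseledets' theorem from the standard multiplicative ergodic theorem for cocycles over an invertible ergodic base. Let $(Y,\nu) := (\Omega\times X,\nu)$ with the invertible map $F$. The cocycle is $(\omega,x)\mapsto D_x\omega_0 : T_xX \to T_{\omega_0(x)}X$. Its iterates are $D_x f^n_\omega$ for $n\in\Z$, in the notation of Notation \ref{notn:product}.

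To make this a matrix cocycle, I will fix a measurable trivialization of $TX$ by $\C^2$. Concretely, I cover $X$ by finitely many charts and choose a Borel section $x\mapsto \iota_x$ of unitary frames with respect to the metric induced by $\kappa_0$. This gives a measurable map $A\colon Y\to \GL_2(\C)$ with $\norm{A(\omega,x)}\le \norm{\omega_0}_{C^1}$ and $\norm{A(\omega,x)^{-1}}\le \norm{\omega_0^{-1}}_{C^1}$. The integrability hypothesis of Oseledets' theorem is $\ln^+\norm{A^{\pm1}}\in L^1(\nu)$. This follows from \eqref{int}, because the integrand depends only on $\omega_0$ and $p_*\nu=\mu^\Z$. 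Measurability of $A$ uses that $f\mapsto D_xf$ is continuous for the uniform $C^1$ topology. Here I would note that the $C^1$ topology should be used, or equivalently the uniform one, since by Cauchy estimates these agree on $\Aut(X)$.

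Next I apply the invertible multiplicative ergodic theorem (for example in the form of \cite{MR1369243}, or Ruelle's version) to $A$ over the ergodic system $(Y,F,\nu)$. This yields finitely many exponents $\chi_1<\dots<\chi_k$ with $k\le 2$, which are constant by ergodicity. It also yields a measurable invariant splitting $\C^2=\bigoplus_i E_i(y)$ with $\lim_{n\to\pm\infty}\frac1n\ln\norm{A^{(n)}(y)v}=\chi_i$ for $v\in E_i(y)\setminus\{0\}$. The key structural point is that the cocycle is $\C$-linear, since $D_x\omega_0$ is complex linear. The Oseledets subspaces are intrinsically defined by growth rates, and multiplication by $i$ preserves norms and commutes with $A$. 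Hence each $E_i$ is $i$-invariant and thus a complex subspace.

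There are now two cases. If $k=2$, each $E_i$ has complex dimension one, and I set $E^s=E_1$, $E^u=E_2$, $\lambda^s=\chi_1<\lambda^u=\chi_2$. If $k=1$, I set $\lambda^s=\lambda^u=\chi_1$. For $E^u$ and $E^s$ I then choose any measurable splitting of $T_xX$ into two complex lines, for instance the pullback of a fixed coordinate splitting under the frame $\iota_x$. All vectors have exponent $\chi_1$ in both time directions, so the asserted limits hold for any such choice. Transporting back through $\iota_x$ gives the subspaces of $T_xX$, and measurability is inherited from the theorem and the measurable frame.

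The main obstacle is the bookkeeping. One must make sure the two-sided limits hold, which requires invertibility of $F$ and the integrability of $\ln\norm{(D\omega_0)^{-1}}$; both are supplied by the skew product over the bi-infinite shift and by \eqref{int}. One must also check that the integrability transfers from $\mu$ to an arbitrary $\nu\in S(\mu)$ rather than only to stationary-induced measures. This is the only place where $\nu$ not coming from a stationary measure could matter, and it is resolved by the marginal condition $p_*\nu=\mu^\Z$.
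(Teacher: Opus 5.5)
Your proposal is correct and follows essentially the route the paper relies on. The paper gives no proof here: it cites the standard multiplicative ergodic theorem for the derivative cocycle over the invertible skew product (Liu--Qian) and remarks that it applies to any ergodic $\nu\in S(\mu)$. Your reduction (measurable unitary trivialization, integrability of $\ln^+\norm{A^{\pm 1}}$ from \eqref{int} via $p_*\nu=\mu^\Z$, and $i$-invariance of the growth-rate subspaces) spells out why that citation is valid.

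The only difference is cosmetic. When $\lambda^s=\lambda^u$, the paper lets one of $E^s,E^u$ be trivial, which keeps the splitting equivariant. Your arbitrary choice of two lines satisfies the statement as written but is not equivariant. This is harmless, since the splitting is only used later for hyperbolic $\nu$, where $\lambda^s<\lambda^u$.
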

	\begin{rmk}
		It might be that $\lambda^s = \lambda^u$, in which case we allow one of $E^s, E^u$ to be trivial. In the case that $X$ admits an $\Aut(X)$-invariant volume form, for example when $X$ is a $K3$ surface or a complex torus, we have $\lambda^s + \lambda^u = 0$.
	\end{rmk}
		\begin{defn}[Hyperbolic measure]
		We call an ergodic measure $\nu\in S(\mu)$ \emph{hyperbolic} if $\lambda^s < 0 < \lambda^u$.
	\end{defn}
	\noindent Let
	\begin{align*}
		W_\omega^s(x)& \coloneqq \set{y\in X\colon\lim_{n\to \infty}  \frac 1 n \ln d(f_\omega^n y,f_\omega^n x) < 0},\\
 &\hspace{-1.3 cm}W_\omega^u(x) \coloneqq \set{y\in X\colon \lim_{n\to \infty} \frac 1 n \ln d(f_\omega^{-n} y,f_\omega^{-n} x)< 0}.
	\end{align*}
		In Section 7 of \cite{MR4635340}, the following is stated as a consequence of Pesin theory in the situation of $\nu$ obtained from a stationary measure. Again, it holds for arbitrary hyperbolic $\nu\in S(\mu)$. 
	
		\begin{thm}[\cite{MR4635340}, Proposition 7.11]
		Suppose $\nu \in S(\mu)$ is ergodic and hyperbolic. Then, $\nu$-almost everywhere, $W_\omega^{s/u}(x)$ are injectively immersed entire curves. That is, there exist  holomorphic injections $\phi^{s/u}:\C\to X$ parameterizing $W_\omega^{s/u}(x)$. 
	\end{thm}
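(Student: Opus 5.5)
The plan is to run the standard Pesin-theoretic construction fiberwise, as in \cite{MR1369243}, and then upgrade the resulting local curves to entire ones with an Ahlfors--Liouville type argument on injectivity radii, in the manner of Cantat--Dujardin. For $\nu$-almost every $(\omega,x)$ I would first invoke the random stable/unstable manifold theorem for hyperbolic $\nu$. This gives local unstable manifolds $W^u_{\omega,\mathrm{loc}}(x)$: embedded $C^1$ disks tangent to $E^u_\omega(x)$, of size $r(\omega,x)$ that is tempered along orbits, i.e.\ $\lim_{n\to\pm\infty}\frac1n\ln r(F^n(\omega,x))=0$. The moment condition \eqref{int} is exactly what makes the nonlinear correction terms tempered, via Kingman/Birkhoff applied to $\ln\norm{\omega_0}_{C^2}$ (using Cauchy estimates to pass from $C^1$ to $C^2$).

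These local manifolds are holomorphic. Each is the graph of the fixed point of a graph transform built from holomorphic maps, and uniform limits of holomorphic graphs are holomorphic. One can also see this directly: the local manifold is characterized by exponential backward contraction, and $E^u$ is a complex line, so the disk is a complex curve.

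The global manifold is then the increasing union
\begin{align*}
W^u_\omega(x)=\bigcup_{n\geq 0} f^{n}_{\sigma^{-n}\omega}\big(W^u_{\sigma^{-n}\omega,\mathrm{loc}}(F^{-n}(\omega,x))\big),
\end{align*}
where the equality with the set defined by exponential backward approach is the usual characterization. This union is an injectively immersed Riemann surface, since each piece is an injective holomorphic image of a disk and the pieces are nested. It is simply connected as an increasing union of disks. By uniformization it is therefore biholomorphic to $\mathbb D$ or $\C$.

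The main obstacle is ruling out the disk. Following Cantat--Dujardin, I would fix $\epsilon<\min(\lambda^u,-\lambda^s)$ and exploit the fact that the backward dynamics contracts $W^u$ by roughly $e^{-n(\lambda^u-\epsilon)}$ while the local sizes shrink at most subexponentially. Concretely, I would take the conformal modulus of the annulus
\[W^u_{\omega,\mathrm{loc}}(x)\setminus \overline{f^n_{\sigma^{-n}\omega}(\text{small disk at }F^{-n}(\omega,x))}\]
for a sequence of return times $n_k$ to a Pesin set of positive measure, where $r$ is bounded below. In the pulled-back picture, $f^{-n_k}_\omega$ maps the local disk at $(\omega,x)$ into a disk of radius $\lesssim e^{-n_k(\lambda^u-\epsilon)}$ inside a local disk of uniform size at the return point. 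So the unstable curve through $(\omega,x)$ contains nested annuli around $x$ whose moduli are each bounded below by a uniform constant $c>0$ and which are disjoint after passing to a subsequence. Their moduli then sum to infinity, which forces the Riemann surface to be parabolic, i.e.\ $\C$.

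An alternative I would try if the modulus bookkeeping is messy: suppose $W^u_\omega(x)\cong\mathbb D$. Use the Poincaré metric and compare with $f^{-n}$, which is a biholomorphism between the global unstable curves at $(\omega,x)$ and at $F^{-n}(\omega,x)$, hence a Poincaré isometry. Compute its Euclidean derivative at return times: it is exponentially small, while the Poincaré-to-Euclidean density at $x$ is fixed and at return points is bounded by the uniform local radius. This contradicts the isometry. The stable case follows by applying the same argument to $F^{-1}$, whose associated measure $\check\mu$ still satisfies \eqref{int}, and hyperbolicity is symmetric.
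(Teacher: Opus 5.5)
Your proposal is correct and is essentially the argument the paper relies on. The paper does not reprove the statement; it invokes the fiberwise Pesin theory of Liu--Qian (noting that it applies to any ergodic hyperbolic $\nu\in S(\mu)$, not only those coming from stationary measures), uses $J$-invariance of the stable and unstable directions to get complex curves, and cites Cantat--Dujardin for the fact that the resulting increasing union of disks is parabolic, hence biholomorphic to $\C$, which is the recurrence-to-Pesin-sets modulus argument you describe.

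One slip: your displayed annulus is written inside-out. It should be $f^{n}_{\sigma^{-n}\omega}\bigl(W^u_{\mathrm{loc}}(F^{-n}(\omega,x))\bigr)\setminus\overline{W^u_{\omega,\mathrm{loc}}(x)}$, taken at backward return times $n$. As your pulled-back description shows, its modulus actually tends to infinity, which already rules out the disk without arranging disjoint annuli.
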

	
		Indeed, as noted in \cite{MR2032494} and \cite{MR3671937}, the construction of Lyapunov charts in Chapter~6, Section~3 of \cite{MR1369243} is identical with the same proofs, and therefore we have the local stable manifold theorem (see Chapter~6, Section~4 of \cite{MR1369243}; or Theorem 6.4 of \cite{MR3671937}; or Proposition 7.9 of \cite{MR4635340}). By holomorphicity of the maps $f_\omega^n$, the manifolds $W^{s/u}_\omega(x)$ are $J$-invariant and thus complex submanifolds. In fact, they are holomorphic immersions of $\C$ (see \cite{MR4635340}, Proposition 7.10).
	
	\subsection{A special partition}
	Throughout this section, we assume that $\nu\in S(\mu)$ is ergodic and hyperbolic.
		\begin{defn}[Subordinate partition]\label{def:subordinate}
		A $\nu$-measurable partition $\eta$ of $\Omega \times X$ is called \emph{subordinate} to unstable manifolds of $\nu$ if, for $\nu$-almost every $(\omega, x)$, we have
		\begin{enumerate}
			\item $\eta_\omega(x)\coloneqq \set{y\in X \mid (\omega, y)\in \eta(\omega, x)}\subset W^u_\omega(x)$ and 
			\item there exists some open neighborhood $U(\omega, x)$ of $x$ in $ W^u_\omega(x)$ such that $U(\omega, x)\subset \eta_\omega(x)$.
		\end{enumerate}
		$\nu$-measurable partitions subordinate to stable manifolds are defined analogously. 
	\end{defn}

	Let $ \lambda^u_{(\omega, x)}$ denote the Lebesgue measure on $W^u_\omega(x)$ induced by our fixed Riemannian metric on $X$ associated to $\kappa_0$.
	\begin{prop}[Proposition~5.2 in Chapter~VI of \cite{MR1369243}]\label{prop:partition}
		There exists a partition $\eta$ of $\Omega \times X$ subordinate to the unstable manifolds that further satisfies
		\begin{enumerate}
			\item $F\eta \leq \eta$, 
			\item $\vee_{n\geq 0}F^{-n} \eta$ is the partition into points,
			\item the sigma-algebra generated by $\wedge_{n\geq 0}F^{n} \eta$ agrees with the one generated by unions of unstable manifolds, up to $\nu$-measure $0$, and
			\item for every $\nu$-measurable subset $B\subset \Omega \times X$, the function
			\begin{align*}
			(\omega, x)\mapsto \lambda^u_{(\omega, x)}(\eta(\omega, x)\cap B)
			\end{align*}
			is $\nu$-measurable and $\nu$-almost everywhere finite.
		\end{enumerate}
		
	\end{prop}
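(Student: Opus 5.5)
The statement is Proposition~\ref{prop:partition}: the existence of an increasing partition subordinate to unstable manifolds. I would prove it by the classical Ledrappier--Strelcyn / Ledrappier--Young construction, carried out fiberwise over $\Omega$ as in Liu--Qian.

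\textbf{Step 1: Pesin set and local manifolds.} Fix a Pesin set $\Lambda\subset\Omega\times X$ of positive $\nu$-measure. On $\Lambda$ the local unstable manifolds $W^u_{\omega,\mathrm{loc}}(x)$ have size bounded below by some $r>0$, vary continuously in the $C^1$ topology with $(\omega,x)$, and are uniformly transverse to the stable directions. These facts come from the random local unstable manifold theorem and Lusin's theorem applied to the measurable Lyapunov charts.

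\textbf{Step 2: a rectangle and the partition $\xi$.} Choose a $\nu$-density point $(\omega^*,x^*)$ of $\Lambda$ and a small radius $\rho$. Consider the set $S_\rho$ of $(\omega,x)$ with $\omega$ close to $\omega^*$ in the product topology (a cylinder) and $x\in B(x^*,\rho)$. Define $\xi$ by the following rule.
\begin{itemize}
\item For $(\omega,x)\in S_\rho\cap\Lambda$, the atom is $\{\omega\}\times\big(W^u_{\omega,\mathrm{loc}}(x)\cap B(x^*,\rho)\big)$, taking connected components of plaques.
\item The complement $(\Omega\times X)\setminus\bigcup_{(\omega,x)\in S_\rho\cap\Lambda}\xi(\omega,x)$ forms a single atom.
\end{itemize}
For this to be a partition, plaques through distinct points of $\Lambda$ must be either disjoint or equal inside the ball. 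This follows from uniqueness of unstable manifolds.

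\textbf{Step 3: iterate.} Set $\eta\coloneqq\bigvee_{n\ge0}F^n\xi$. Then $F\eta\le\eta$ holds automatically, which gives property (1). Each atom of $\eta$ is contained in an unstable manifold because every $F^n\xi$ atom is either a pushed-forward local unstable plaque or a big set, and intersecting them keeps us inside $W^u$.

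\textbf{Step 4: the neighborhood condition (main obstacle).} We must show that for a.e.\ $(\omega,x)$ the atom $\eta_\omega(x)$ contains an open neighborhood of $x$ in $W^u_\omega(x)$. I would follow the Ledrappier--Strelcyn trick: average over $\rho\in[\rho_0,2\rho_0]$. Backward iterates $f^{-n}_\omega$ contract unstable plaques exponentially, at rate $e^{-n(\lambda^u-\epsilon)}$ up to a tempered constant. So the atom gets cut near $x$ only if $F^{-n}(\omega,x)$ lands within distance $Ce^{-n\lambda'}$ of the boundary sphere $\partial B(x^*,\rho)$. For a.e.\ $\rho$ we have
\[
\sum_n \nu\big(d(\cdot,\partial B(x^*,\rho))<Ce^{-n\lambda'}\big)<\infty.
\]
This holds because $\int_{\rho_0}^{2\rho_0}\nu(\{|d(x^*,\cdot)-\rho|<\delta\})\,d\rho\le2\delta$, and the sum over $n$ of $e^{-n\lambda'}$ converges. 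Borel--Cantelli then gives only finitely many cuts, each at positive distance from $x$. Tempered constants need care here; in the random setting they are handled with the $\nu$-integrability of $\log$ norms from \eqref{int} and Oseledets/Pesin tempering.

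\textbf{Step 5: generating property and tail.} Ergodicity gives that a.e.\ orbit enters $S_\rho\cap\Lambda$ infinitely often in the past. For property (2), I would additionally refine $\xi$ by a finite partition of small diameter, using the standard modification of Ledrappier--Young: $\bigvee_{n\ge0}F^{-n}\eta$ separates points by forward expansivity along stable directions combined with the $\Omega$-coordinate. For property (3), $\bigwedge_n F^n\eta$ is measurable with respect to the $\sigma$-algebra of unions of global unstable manifolds, since $W^u=\bigcup_n F^n(\text{local plaques})$; the converse inclusion is the standard Ledrappier--Young argument.

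\textbf{Step 6: finiteness of $\lambda^u$-measure.} Property (4) holds because atoms lie in $F^n$ of bounded plaques of finite area. Measurability follows from the measurable dependence of the plaques on $(\omega,x)$.
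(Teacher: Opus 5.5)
Your Steps 1--4 are the Ledrappier--Strelcyn construction carried out fiberwise, which is what the cited Liu--Qian proposition does. The paper gives no proof of its own beyond noting that the argument uses only invariance and ergodicity of $\nu$, not stationarity.

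The step that does not work as written is your justification of (2). The mechanism you give, ``forward expansivity along stable directions'' after refining by a finite partition, is the wrong one. Every atom of $\eta$ already lies in $\{\omega\}\times W^u_\omega(x)$. So two points in the same atom of $\bigvee_{n\ge0}F^{-n}\eta$ are in the same fiber and on the same unstable manifold, and stable directions never enter.

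What separates them is backward contraction along unstable plaques at return times. Let $S\coloneqq\bigcup_{(\omega',x')\in S_\rho\cap\Lambda}\xi(\omega',x')$. By ergodicity, a.e.\ $(\omega,x)$ has infinitely many $n\ge0$ with $F^n(\omega,x)\in S$. For those $n$, the atom $\eta(F^n(\omega,x))\subset\xi(F^n(\omega,x))$ lies in the local unstable plaque of a point of $\Lambda$. On that plaque $(f^n_\omega)^{-1}=f^{-n}_{\sigma^n\omega}$ contracts with constants uniform over $\Lambda$. Hence the atom of $\bigvee_{n\ge0}F^{-n}\eta$ through $(\omega,x)$ has diameter at most $C_\Lambda e^{-n(\lambda^u-\epsilon)}$ along a sequence $n\to\infty$, so it is a point. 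No extra partition is needed.

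Refining $\xi$ by an arbitrary finite partition $\mathcal P$ would in fact break Step 4, since $\partial\mathcal P$ cuts plaques everywhere in $X$. You would have to impose a small-boundary condition such as $\nu(\{d(\cdot,\partial\mathcal P)<\delta\})\le C\delta$ and rerun the Borel--Cantelli argument for $\partial\mathcal P$.

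On (3), you have placed the difficulty on the wrong side. The inclusion of measurable unions of unstable manifolds into $\mathcal B(\bigwedge_n F^n\eta)=\bigcap_n\mathcal B(F^n\eta)$ is immediate, because such a set is saturated by every $F^n\eta$. The substantive inclusion is the other one. It needs the atoms $(F^n\eta)(\omega,x)=F^n(\eta(F^{-n}(\omega,x)))$ to increase to all of $\{\omega\}\times W^u_\omega(x)$ almost everywhere.

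That does not follow from $W^u=\bigcup_nF^n(\text{local plaques})$ alone. It uses three facts together:
\begin{itemize}
\item the positive inner radius of $\eta$-atoms produced in Step 4;
\item recurrence of $F^{-n}(\omega,x)$ to a set where this radius is bounded below;
\item $d^u(f^{-n}_\omega x,f^{-n}_\omega y)\to0$ for $y\in W^u_\omega(x)$.
\end{itemize}
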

	\begin{rmk}
		We may further assume that $\eta$ is a refinement of the partition of $\Omega \times X$ into the fibers of $p$.
	\end{rmk}
	\begin{rmk}\label{bdd}
		It follows from the construction in \cite{MR1369243} that there exists a $\nu$-measurable function $r\colon \Omega\times X\to \R_{>0}$ such that \[W^u_{r(\omega, x)}(\omega, x) \subset \eta_\omega(x)\] almost surely, where $W^u_{r}(\omega, x)$ denotes a ball of radius $r$ about $x$ in $W^u_\omega(x)$ with respect to the induced metric. It further follows from the construction that, $\nu$-almost surely, after choosing a holomorphic parametrization $\psi:\C\to W^u_\omega(x)$ there exists an open disk $\Delta\subset \C$ with 
		\[\eta_\omega(x) \subset \psi(\Delta).\]
	\end{rmk}

		Throughout, we let $\nu_{\eta(\omega, x)}$ denote the conditional measure of $\nu$ on the atom $\eta(\omega, x)$, and recall from the introduction that we write 
	\begin{align*}
		\nu_{\eta(\omega, x)} = \delta_\omega \otimes \nu_{\eta_\omega(x)}
		\end{align*} where $\nu_{\eta_\omega(x)}\in \Prob(X)$. We let $H(F^{-1} \eta \mid \eta)$ denote the conditional entropy of $F^{-1} \eta$ given $\eta$, defined by 
	\begin{align*}
		H(F^{-1} \eta \mid \eta)\coloneqq -\int \ln \nu_{\eta(\omega, x)}(F^{-1}(\eta(F(\omega, x)))) \,d\nu.
	\end{align*}
	\begin{prop}[Corollary~7.1 in Chapter~VI of \cite{MR1369243}] \label{Prop:full}
		Let $\eta$ be a partition satisfying Proposition \ref{prop:partition}. Then $H(F^{-1} \eta \mid \eta) = h_{\nu}^\mathcal{F}$. 
	\end{prop}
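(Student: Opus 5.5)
The statement is the random Ledrappier--Young identity $H(F^{-1}\eta\mid\eta)=h_\nu^{\mathcal F}$. I would follow the proof of the deterministic version in \cite{MR819556}, adapted to conditional entropy relative to $\eta_\Omega$. The plan is to show two inequalities, $H(F^{-1}\eta\mid\eta)\le h_\nu^{\mathcal F}$ and $H(F^{-1}\eta\mid\eta)\ge h_\nu^{\mathcal F}$, using properties (1)--(3) of Proposition \ref{prop:partition} and the fact that $\eta$ refines the fiber partition.

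\textbf{Identifying the quantity.} Because $F\eta\le\eta$, the partition $F^{-1}\eta$ refines $\eta$, so $H(F^{-1}\eta\mid\eta)$ is a genuine conditional entropy of increasing partitions. Because $\eta$ refines the fibers of $p$, every atom of $\eta$ lies inside a single fiber. I would therefore first check that $H(F^{-1}\eta\mid\eta)=h_\nu(F,\eta\mid\eta_\Omega)$, the relative entropy of $F$ with respect to the generating-type partition $\eta$. This uses the standard identity
\begin{align*}
H\Big(\bigvee_{k=0}^{n-1}F^{-k}\eta\;\Big|\;\eta\Big)=nH(F^{-1}\eta\mid\eta),
\end{align*}
which follows from invariance of $\nu$ and the chain rule.

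\textbf{Upper bound.} The inequality $H(F^{-1}\eta\mid\eta)\le h_\nu^{\mathcal F}$ I would get by approximating $\eta$ from below by finite partitions $\xi_N$ of $X$, lifted fiberwise. By property (2) together with the increasing-martingale theorem, the conditional entropies $H(F^{-1}\eta\mid\eta\vee\xi_N^-)$ converge appropriately, where $\xi_N^-$ is the past generated by $\xi_N$. Each term is bounded by $h_\nu^{\mathcal F}(\xi_N)\le h_\nu^{\mathcal F}$, using the variational description of fiber entropy in \eqref{align:2}.

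\textbf{Lower bound.} The inequality $H(F^{-1}\eta\mid\eta)\ge h_\nu^{\mathcal F}$ is the main obstacle. As in \cite{MR819556}, and as in \cite[Ch.~VI]{MR1369243}, I would first use a Ma\~n\'e-type finite entropy partition $\mathcal P$ with $h^{\mathcal F}_\nu(\mathcal P)$ close to $h^{\mathcal F}_\nu$, taken fine along $W^u$ relative to the function $r$ of Remark \ref{bdd}. Property (3) then shows that $\mathcal P^+:=\bigvee_{n\ge0}F^{n}\mathcal P$, taken relative to $\eta_\Omega$, refines $\eta$ up to the Pinsker-type $\sigma$-algebra of unstable sets. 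Two facts then give $h(F,\mathcal P\mid\eta_\Omega)\le H(F^{-1}\eta\mid\eta)$:
\begin{itemize}
\item By the relative Pinsker formula, unstable manifolds carry all fiber entropy.
\item By property (3), $\bigwedge_n F^n\eta$ coincides with the $\sigma$-algebra generated by unions of unstable manifolds.
\end{itemize}
The delicate point is building $\mathcal P$ measurably in $\omega$ with $\int -\ln r\,d\nu<\infty$. This is the random Ma\~n\'e lemma, and it requires the moment condition \eqref{int} to control $\ln\norm{\omega_0}_{C^1}$. This is where I expect most of the work, and I would cite \cite{MR1369243,MR2032494} for it.
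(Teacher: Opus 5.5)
The paper does not prove this statement. It imports Corollary~VI.7.1 of \cite{MR1369243}, the random form of the Ledrappier--Young identity, and remarks that those proofs carry over to ergodic $\nu\in S(\mu)$ not induced by stationary measures. Your outline reconstructs that kind of argument, but as written neither inequality is established.

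A minor slip first: since $F^{-1}\eta\geq\eta$, we have $\bigvee_{k=0}^{n-1}F^{-k}\eta=F^{-(n-1)}\eta$. The chain rule therefore gives $(n-1)H(F^{-1}\eta\mid\eta)$, not $nH(F^{-1}\eta\mid\eta)$.

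\textbf{Upper bound.} The mechanism you describe yields nothing. If $\xi_N\leq\eta$, then $\xi_N^-\leq\eta$ because $F\eta\leq\eta$. So $H(F^{-1}\eta\mid\eta\vee\xi_N^-)=H(F^{-1}\eta\mid\eta)$ for every $N$, and nothing ties it to $h^{\mathcal F}_\nu(\xi_N)$. Lifted finite partitions of $X$ are not coarser than $\eta$ in the first place. The fix is to approximate $F^{-1}\eta$ rather than $\eta$. We have $H(F^{-1}\eta\mid\eta)=\sup H(\mathcal Q\mid\eta)$ over finite partitions $\mathcal Q\leq F^{-1}\eta$ of $\Omega\times X$. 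For such $\mathcal Q$, $F^k\mathcal Q\leq F^{k-1}\eta\leq\eta$ for $k\geq1$, and $\eta_\Omega\leq\eta$, hence
\[
H(\mathcal Q\mid\eta)\leq H\Bigl(\mathcal Q\Bigm|\bigvee_{k\geq1}F^k\mathcal Q\vee\eta_\Omega\Bigr)=h_\nu(F,\mathcal Q\mid\eta_\Omega)\leq h^{\mathcal F}_\nu .
\]
This uses only property (1) and the fact that $\eta$ refines the fibers; neither property (2) nor a martingale argument is needed.

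\textbf{Lower bound.} This is where the real gap is. ``Unstable manifolds carry all fiber entropy'' is the conclusion you are after, so it cannot serve as an input. Property (3) only identifies the tail $\bigwedge_nF^n\eta$ with the $\sigma$-algebra $\mathcal U$ of measurable unions of unstable manifolds. It says nothing about the entropy of $\mathcal U$, and $\bigvee_{n\geq0}F^n\mathcal P$ does not refine $\eta$ either.

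What is missing is the statement that $\mathcal U$ carries \emph{zero} fiber entropy, i.e.\ that it lies in the relative Pinsker $\sigma$-algebra of $(F,\nu)$ over $\eta_\Omega$. Given that, (1), (2) and the relative Rokhlin--Sinai theorem yield $H(F^{-1}\eta\mid\eta)=h^{\mathcal F}_\nu$. That theorem says an exhaustive increasing partition whose tail lies in the Pinsker $\sigma$-algebra computes the entropy.

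The Ma\~n\'e partition is exactly the tool for the zero-entropy step:
\begin{enumerate}
\item Take $\mathcal P$ with $H_\nu(\mathcal P\mid\eta_\Omega)<\infty$ such that the atoms of $\mathcal P^-\coloneqq\bigvee_{n\geq0}F^n\mathcal P$ lie in local unstable manifolds. This requires $\mathcal P$ to be fine relative to the Lyapunov-chart size along backward orbits, not relative to the function $r$ of Remark~\ref{bdd}, which only describes $\eta$.
\item Then every set in $\mathcal U$ is a union of atoms of $\mathcal P^-\vee\eta_\Omega$, hence measurable for it mod~$0$.
\item Since $\mathcal U$ is $F$-invariant, it lies in the remote past $\bigwedge_nF^n(\mathcal P^-\vee\eta_\Omega)$.
\item The relative Pinsker formula shows that the remote past of a partition with finite conditional entropy has zero relative entropy.
\end{enumerate}

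With these two ingredients made explicit, and the random Ma\~n\'e lemma cited as you propose, your outline becomes a legitimate Pinsker-style route to the identity the paper takes from Liu--Qian. Without them, the lower bound is asserted rather than proved.
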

	\subsection{The Ledrappier-Young formula}\label{sec:LY}

	The results cited in this section are used only in Lemma \ref{posent}, where we show that positive fiber entropy rules out a degenerate case where almost surely $\nu_\omega$ is a uniform measure on a finite union of irreducible curves.
	
	Let $B_r^u(\omega, x)$ denote the ball about $x$ in $W_\omega^u(x)$ with radius $r$ in the induced Riemannian metric on $W_\omega^u(x)$, and define $B_r^s(\omega, x)$ analogously. Given measurable partitions $\eta^u$ and $\eta^s$ subordinate to unstable and stable manifolds, respectively, let 
	\begin{align*}
		\dim^u (\nu, (\omega, x)) \coloneqq \lim_{r \to 0}\frac{\ln (\nu_{\eta^u_\omega(x)}(B_r^u(\omega, x)))}{\ln r}
	\end{align*} and 
	\begin{align*}
		\dim^s (\nu, (\omega, x)) \coloneqq \lim_{r \to 0}\frac{\ln (\nu_{\eta^s_\omega(x)}(B_r^s(\omega, x)))}{\ln r}.
	\end{align*} Notice that $\dim^{s/u} (\nu, (\omega, x))$ do not depend on the choices of subordinate partitions, since any two such partitions will have conditional measures that agree on overlaps up to normalization.

	The following theorem is stated in our setting as Proposition 6.10 of \cite{MR3671937}. It is essentially due to Ledrappier and Young \cite{MR819557}.
	
	\begin{thm}\label{thm:magic}
		We have 
		\begin{align*}
			h^\mathcal F _\nu = \lambda^u \dim^u (\nu, (\omega, x))=  - \lambda^s \dim^s (\nu, (\omega, x))
		\end{align*}
	almost surely.
	\end{thm}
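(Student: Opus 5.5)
The plan is to prove the unstable identity $h^\mathcal F_\nu=\lambda^u\dim^u(\nu,(\omega,x))$ directly, following Ledrappier--Young, and then deduce the stable identity by time reversal. Two features of our setting simplify matters considerably. First, $W^u_\omega(x)$ is a complex curve and the maps $f^n_\omega$ are holomorphic, so $Df^n_\omega|_{E^u}$ is conformal. Hence there is a single unstable exponent and no intermediate-dimension filtration is needed. Second, the partition $\eta$ of Proposition \ref{prop:partition} already satisfies $H(F^{-1}\eta\mid\eta)=h^\mathcal F_\nu$ by Proposition \ref{Prop:full}.

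\textbf{Step 1 (entropy as a local decay rate).} Since $F\eta\le\eta$, the partitions $F^{-n}\eta$ increase, and
\[
\nu_{\eta(\omega,x)}\big((F^{-n}\eta)(\omega,x)\big)=\prod_{k=0}^{n-1}\nu_{\eta(F^k(\omega,x))}\big((F^{-1}\eta)(F^k(\omega,x))\big)
\]
by invariance of conditional measures under $F$. Taking logarithms and applying Birkhoff's theorem to the information function $-\ln\nu_{\eta}(F^{-1}\eta)$ gives, for $\nu$-a.e.\ $(\omega,x)$,
\[
-\frac1n\ln\nu_{\eta_\omega(x)}\big((F^{-n}\eta)_\omega(x)\big)\to H(F^{-1}\eta\mid\eta)=h^\mathcal F_\nu .
\]
This requires the information function to be integrable. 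That is standard for this construction, and is where finiteness of the entropy enters.

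\textbf{Step 2 (atoms versus balls).} Observe that $(F^{-n}\eta)_\omega(x)=f^{-n}_{\sigma^n\omega}\big(\eta_{\sigma^n\omega}(f^n_\omega x)\big)$. By Remark \ref{bdd}, the atom $\eta_{\sigma^n\omega}(f^n_\omega x)$ contains the unstable ball of radius $r(F^n(\omega,x))$ and is contained in a disk of controlled size. In Lyapunov charts, with tempered constants $e^{\epsilon n}$, the map $f^{-n}_{\sigma^n\omega}$ restricted to local unstable manifolds contracts distances by a factor of $e^{-n\lambda^u\pm \epsilon n}$. Here conformality is used: the contraction is by a scalar, so images of balls are comparable to balls. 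Therefore
\[
B^u_{e^{-n(\lambda^u+2\epsilon)}r(F^n(\omega,x))}(\omega,x)\subset (F^{-n}\eta)_\omega(x)\subset B^u_{e^{-n(\lambda^u-2\epsilon)}}(\omega,x).
\]
To make the inner radius harmless, I will use that $\frac1n\ln r(F^n(\omega,x))\to0$ along a full-density set of times. One way is to restrict to times when $F^n(\omega,x)$ lies in a set $\{r>r_0\}$ of measure close to $1$, using the ergodic theorem; alternatively, one can use a temperedness argument, since $\ln r\in L^1$ by construction.

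\textbf{Step 3 (conclusion).} Interpolate $r\in[e^{-(n+1)\lambda^u},e^{-n\lambda^u}]$ and combine Step 1 with Step 2, first for the admissible times and then for all small $r$ by monotonicity in $r$. This gives $\lim_{r\to0}\ln\nu_{\eta^u_\omega(x)}(B^u_r)/\ln r=h^\mathcal F_\nu/\lambda^u$ up to $O(\epsilon)$; letting $\epsilon\to0$ gives the unstable identity. As remarked, $\dim^u$ does not depend on the choice of subordinate partition, so we may use $\eta$.

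\textbf{Step 4 (the stable identity).} Apply Steps 1--3 to $F^{-1}$. This is the skew product over the reversed shift with fiber maps $\omega_{-1}^{-1}$, and its unstable manifolds and exponent are $W^s$ and $-\lambda^s$. It remains to check that the fiber entropy of $F^{-1}$ equals that of $F$. This is the relative version of $h(T^{-1})=h(T)$ for $\eta_\Omega$-conditional entropy, which I will cite from \cite{MR1369243} (Appendix A).

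I expect the main obstacle to be Step 2: controlling the distortion and the inner radius $r(F^n(\omega,x))$ with only subexponential losses along $\nu$-typical orbits. Steps 1 and 4 are essentially formal.
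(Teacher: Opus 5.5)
Your proposal is correct in outline, and it takes a genuinely different route from the paper. The paper does not prove Theorem \ref{thm:magic}. It quotes Proposition~6.10 of \cite{MR3671937}, the skew-product version of Ledrappier--Young \cite{MR819557}, which needs no conformality and relies on the full Ledrappier--Young machinery (several exponents, transverse dimensions).

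You instead use the fact that $E^u_\omega(x)$ is a complex line, so $Df^n_\omega|_{E^u}$ is conformal with the single rate $\lambda^u$. This reduces the comparison between atoms of $F^{-n}\eta$ and unstable balls to a two-sided nesting. Combined with the Shannon--McMillan--Breiman-type limit of Step~1, it gives a self-contained proof from Propositions \ref{prop:partition} and \ref{Prop:full}. Your telescoping identity is right, and integrability follows from $\int I\,d\nu = H(F^{-1}\eta\mid\eta)=h^{\mathcal F}_\nu<\infty$. Step~4 matches the paper's own use of Remarks \ref{invertibility} and \ref{entropy_inverse}. The equality $h^{\mathcal F}_\nu(F^{-1})=h^{\mathcal F}_\nu(F)$ is immediate, because the fiber $\sigma$-algebra is invariant under both $F$ and $F^{-1}$.

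Two points in Step~2 need more care than you give them.

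\emph{Outer inclusion.} It does not follow from Remark \ref{bdd}, which only places $\eta_\omega(x)$ in some disk, with no size control. What you need is that $\eta(F^n(\omega,x))$ lies, up to subexponential losses, in the region where backward contraction along $W^u$ holds. This is a feature of the Ledrappier--Strelcyn/Liu--Qian construction: atoms sit in forward images of local unstable manifolds through a Pesin block that backward orbits visit with positive frequency. It is the same estimate that gives property (2) of Proposition \ref{prop:partition}.

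\emph{Inner radius.} Your first option is the robust one. Restrict to times $n$ with $F^n(\omega,x)\in\set{r>r_0}$, a set of density at least $1-\delta$. Then the gap after an admissible $n$ is at most $(O(\delta)+o(1))n$, which costs a factor $1+O(\delta)$ in the exponent, and this disappears as $r_0\to 0$. Integrability of $\ln r$ holds for that construction, but the paper does not record it, so you should not rely on it.

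Your route also gives something extra for this particular paper. Theorem \ref{thm:magic} is used only in Corollary \ref{lem:atomic}, to conclude $h^{\mathcal F}_\nu=0$ when $x$ is an atom of $\nu_{\eta_\omega(x)}$. Your Step~1 alone, applied to the partition of Proposition \ref{prop:partition}, already gives this: in that case $\nu_{\eta_\omega(x)}((F^{-n}\eta)_\omega(x))\geq \nu_{\eta_\omega(x)}(\set{x})>0$ for every $n$, so the decay rate is zero.
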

	
	As a corollary, we obtain the following.
	
	\begin{cor}\label{lem:atomic}
		The following are equivalent:
		\begin{enumerate}
			\item $h_{\nu}^\mathcal{F} > 0$
				\item for every $\nu$-measurable partition $\eta^u$ subordinate to the unstable manifolds, $\nu_{\eta^u_\omega(x)}$ is non-atomic almost everywhere
				\item for every $\nu$-measurable partition $\eta^s$ subordinate to the stable manifolds, $\nu_{\eta^s_\omega(x)}$ is non-atomic almost everywhere
		\end{enumerate}
	\end{cor}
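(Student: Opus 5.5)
We prove the corollary from Theorem \ref{thm:magic}, together with the standard fact that a Lyapunov-type local dimension vanishes exactly when the conditional measures are atomic. Throughout, $\nu$ is ergodic and hyperbolic, so $\lambda^u>0>\lambda^s$. By Theorem \ref{thm:magic}, $h^\mathcal F_\nu>0$ if and only if $\dim^u(\nu,(\omega,x))>0$ almost surely, if and only if $\dim^s(\nu,(\omega,x))>0$ almost surely. Each of these dimensions is almost surely constant, because it equals $h^\mathcal F_\nu/\lambda^u$ (respectively $-h^\mathcal F_\nu/\lambda^s$). It therefore suffices to show, for the unstable side (the stable side being symmetric), that $\dim^u>0$ almost surely exactly when $\nu_{\eta^u_\omega(x)}$ is almost surely non-atomic, for every subordinate partition $\eta^u$.

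For (2) $\Rightarrow$ (1), argue by contraposition. If $h^\mathcal F_\nu=0$, then $\dim^u=0$ almost surely. I would instead use the entropy identity directly. Take $\eta$ from Proposition \ref{prop:partition}; by Proposition \ref{Prop:full}, $H(F^{-1}\eta\mid\eta)=0$. Hence the conditional measure on $\eta(\omega,x)$ is carried, almost surely, by a single atom of $F^{-1}\eta$. Iterating, and using $F\eta\le\eta$, the same holds for every $F^{-n}\eta$. Since $\bigvee_n F^{-n}\eta$ is the partition into points, $\nu_{\eta_\omega(x)}$ is a Dirac mass. For an arbitrary subordinate $\eta^u$, the conditionals agree up to normalization on overlaps with those of $\eta$. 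Any atom of $\eta^u$ of positive conditional measure meets atoms of $\eta$ on which the conditionals are Dirac, so $\nu_{\eta^u_\omega(x)}$ is purely atomic. This contradicts (2).

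For (1) $\Rightarrow$ (2), suppose $\nu_{\eta^u_\omega(x)}$ has atoms on a set of positive $\nu$-measure. Consider the function $g(\omega,x)=\nu_{\eta^u_\omega(x)}(\{x\})$ and the set $A=\{g>0\}$, which has positive measure; one should check that $A$ is essentially independent of the choice of $\eta^u$, by comparing with the refinement $\eta^u\vee\eta$. For $(\omega,x)\in A$ the measure of $B^u_r(\omega,x)$ is bounded below by $g(\omega,x)>0$ for all small $r$. Hence $\ln\nu(B_r^u)/\ln r\to 0$, so $\dim^u=0$ on $A$. Since the dimension is almost surely constant, $\dim^u=0$ almost surely, and $h^\mathcal F_\nu=0$. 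The equivalence (1) $\Leftrightarrow$ (3) follows identically, using the stable formula in Theorem \ref{thm:magic} and stable subordinate partitions; the zero-entropy direction comes from applying Proposition \ref{Prop:full} to $F^{-1}$.

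The main obstacle is the zero-entropy direction, which must pass from $H(F^{-1}\eta\mid\eta)=0$ to Dirac conditionals for an arbitrary subordinate partition. One issue is the invariance of atomicity under change of subordinate partition. The other is that the non-hyperbolic case is not covered by Theorem \ref{thm:magic}; there I would note that $h^\mathcal F_\nu>0$ forces hyperbolicity via the random Ruelle inequality, so the statement is only needed under the standing hyperbolicity assumption.
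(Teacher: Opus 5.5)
Your proposal is correct and follows essentially the same route as the paper: an atom at the basepoint forces $\dim^u=0$, hence $h^{\mathcal F}_\nu=0$ by Theorem \ref{thm:magic}, while $H(F^{-1}\eta\mid\eta)=0$ forces Dirac conditionals on the special partition of Proposition \ref{prop:partition}, and the stable case follows by invertibility. Two minor points: the claim that $A=\{g>0\}$ has positive measure needs the short disintegration argument the paper supplies (an atom $y$ of $\nu_{\eta^u_\omega(x)}$ lies in $\eta^u_\omega(x)$, so $\eta^u_\omega(y)=\eta^u_\omega(x)$ and $(\omega,y)\in A$, whence $\nu(A)=\int\nu_{\eta^u(\omega,x)}(A)\,d\nu>0$); and transferring atomicity to an arbitrary $\eta^u$ in the zero-entropy direction is unnecessary, because the special partition is itself subordinate and already violates (2).
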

	\begin{rmk}\label{invertibility}
	Our setup is invertible, meaning that the map $F^{-1}$ is contained in the class of skew-products we consider. Indeed, consider the inversion map $\iota:\Aut(X)\to \Aut(X)$ defined by $\iota(f) = f^{-1}$ and let $\hat \mu = \iota_* \mu$. Then we get a skew-product
	$\hat F : (\supp \hat \mu)^\Z \times X \to (\supp \hat \mu)^\Z \times X$ defined as before.
	
	Let $\mathrm{Flip}$ be the involution of $\Aut(X)^\Z$ defined by $\mathrm{Flip}(\omega)_i \coloneqq \omega_{-i - 1}^{-1}$. Notice that $T_\omega^{\pm} = T_{\mathrm{Flip}(\omega)}^{\mp}$. Moreover, $$\mathrm{Flip} \times id: \Aut(X)^\Z \times X \to \Aut(X)^\Z \times X$$ conjugates $F^{-1}$ to $\hat F$, so \[\hat \nu \coloneqq (\mathrm{Flip} \times id)_* \nu\] is $\hat F$-invariant. The conjugacy sends the stable directions of $\nu$ to the unstable directions of $\hat \nu$, and the unstable directions of $\nu$ to the stable directions of $\hat \nu$.
\end{rmk}
	\begin{proof}[Proof of Corollary \ref{lem:atomic}] 
			By the invertibility discussed in Remark \ref{invertibility}, it is sufficient to prove that (1) and (2) are equivalent.
		
		We first show that the negation of (2) is equivalent to the a priori stronger assertion that $x$ is an atom of $\nu_{\eta_\omega(x)}$ almost everywhere. Let $\eta^u$ be some $\nu$-measurable partition subordinate to the unstable manifolds of $\nu$, and suppose that the set of $(\omega, x)$ such that $\nu_{\eta_\omega(x)}$ contains an atom has positive measure. For $r > 0$ let $S_r$ denote the set of $(\omega, x)$ such that $\nu_{\eta_\omega(x)}$ contains an atom of mass greater than $r$. Then for some $r$, $S_r$ has positive measure. Let
		\begin{align*}
			P_r = \set{(\omega, x) \colon \nu_{\eta_\omega(x)}(\set{x}) > r}.
			\end{align*} Now $P_r$ is $\nu$-measurable. To see this, we prove $(\omega, x) \mapsto \nu_{\eta_\omega(x)}(\set{x})$ is $\nu$-measurable. Write
	\begin{align*}
	\nu_{\eta_\omega(x)}(\set{x}) =
		\lim_{s\to0} \nu_{\eta_\omega(x)}(B_s(x)).
	\end{align*} 
	Now 
	\begin{align*}
	  (\omega, x)\mapsto \nu_{\eta_\omega(x)}(B_s(x)) = \int_X\mathds{1}_{B_s(x)}(y) \, d\nu_{\eta_\omega(x)}(y)
\end{align*} is $\nu$-measurable by a standard measurable kernel argument. Indeed, note that $(x,y)\mapsto \mathds{1}_{B_s(x)}(y)$ is measurable and $(\omega,x)\mapsto \nu_{\eta_\omega(x)}$ is a $\nu$-measurable kernel $\Omega \times X\to X$, so apply Lemma 3.2 (i) of \cite{MR1876169}. 

	For every atom $\eta_\omega(x)$ where $(\omega, x)\in S_r$, we have $\nu_{\eta_\omega(x)}(P_r) > r$. By the disintegration of $\nu$ with respect to conditional measures, it follows that
	\begin{align*}
		\nu(P_r) > r \nu(S_r)
	\end{align*} which is positive.
	The set
		\[\set{(\omega, x)\colon \text{$x$ is an atom of $\nu_{\eta_\omega(x)}$}}\]
		is $F$-invariant, and since it contains $P_r$ we just showed it has positive measure; hence, it has full measure by ergodicity. 
		
		Now if $x$ is an atom in $\nu_{\eta_\omega(x)}$, then $\dim^u (\nu, (\omega, x)) = 0$ and by Theorem \ref{thm:magic} we get $h_{\nu}^\mathcal{F} = 0$. Thus (1) implies (2). Conversely, suppose $h_{\nu}^\mathcal{F} = 0$. Letting $\eta^u$ be the special partition from Proposition \ref{prop:partition}, 
		\[ 0 = h_{\nu}^\mathcal{F}  =\frac 1 n H(F^{-n}\eta^u \mid \eta^u) \] 
		for all $n\geq 1$ by Proposition \ref{Prop:full} and $F$-invariance. So we get that, for all $n\geq 1$, $\nu_{\eta^u(\omega, x)}$ is supported on $F^{-n}\eta^u(x)$ almost surely. But $\lim_{n\to \infty}F^{-n}\eta^u = \vee_{n=1}^\infty F^{-n}\eta^u$ is the partition into points, so actually $\nu_{\eta^u(\omega, x)} = \delta_x$ almost surely, proving the negation of (2).
	\end{proof}

	\subsection{Limit currents}\label{sec:LimitCurrents}

		\begin{prop}[Cantat--Dujardin, {\citep[Section~6]{MR4635340}}]\label{prop:limit_currents}
		For $\mu^\N$-almost every $\omega\in \Omega_+$, there exists a unique closed positive current $T_\omega^+$ in the class $e^+(\omega)$. Moreover, $\mu^\Z$-almost surely we have
		\begin{enumerate}
			\item for any K\"ahler form $\kappa$,
			\begin{align*}
					\frac{1}{M((f^n_\omega)^*\kappa)}(f_\omega^n)^*\kappa \to T_\omega^+,
			\end{align*} and 
			\item  $\phi(T_\omega^+)$ is continuous.
		\end{enumerate} 
	\end{prop}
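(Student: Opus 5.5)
We will follow the strategy of Cantat--Dujardin, which adapts the single-map construction of Guedj and Dinh--Sibony to random products. The statement has three parts: existence and uniqueness of the positive current in the class $e^+(\omega)$, convergence of normalized pullbacks of Kähler forms, and continuity of the normalized potential. The central tool is a telescoping series of potentials, controlled by the moment condition \eqref{int:cohomology} and the positivity of $\lambda_\mu$ (Theorem \ref{lyap}).

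\textbf{Step 1: convergence and continuous potentials.} Fix a Kähler form $\kappa$ with $M([\kappa])=1$, and set $c_n\coloneqq M((f_\omega^n)^*\kappa)$. Write $f^n_\omega = \omega_{n-1}\circ f^{n-1}_\omega$. Then
\[(f_\omega^{n})^*\kappa = (f_\omega^{n-1})^*(\omega_{n-1}^*\kappa).\]
The form $\omega_{n-1}^*\kappa$ equals $\Theta(\omega_{n-1}^*\kappa) + dd^c u_{n-1}$, where $u_{n-1}=\phi(\omega_{n-1}^*\kappa)$ is smooth, and $\|u_{n-1}\|_{C^0}$ is bounded by $C\|\omega_{n-1}\|_{C^2}^2$ up to constants.

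We then compare $(f_\omega^n)^*\kappa/c_n$ with the analogous expression at the previous step. Expanding inductively gives
\[\frac{(f_\omega^n)^*\kappa}{c_n} = \frac{\Theta\big((f_\omega^n)^*[\kappa]\big)}{c_n} + dd^c\Big(\sum_{k<n} \frac{M\big((f^k_\omega)^*\Theta\text{-part}\big)}{c_n}\, u_k\circ f^k_\omega\Big),\]
which is a sum of uniformly bounded functions weighted by mass ratios. Using the Cartan decomposition from Section \ref{sec:adapted}, each ratio decays like $e^{-(n-k)\lambda_\mu+o(n)}$, for $\mu^\Z$-almost every $\omega$. The moment condition together with Borel--Cantelli gives $\ln\|\omega_k\|_{C^2}=o(k)$, so the sup norms of the terms are summable.

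The potentials therefore converge uniformly, to a continuous function $g_\omega$. Since the classes converge to $e^+(\omega)$ by Theorem \ref{FurstenbergLimit}, the forms converge to $T^+_\omega\coloneqq\Theta(e^+(\omega))+dd^c g_\omega$, which is positive and has continuous potential. Independence from $\kappa$ follows by comparing two Kähler forms through a bounded potential, whose pullback contribution is killed by the division by $c_n\to\infty$.

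\textbf{Step 2: uniqueness.} Let $S$ be any closed positive current in $e^+(\omega)$, so $S-T^+_\omega=dd^c h$ with $h$ quasi-psh. We pull back along the backward direction: $S$ is the normalized pushforward image of the currents $S_n\coloneqq (f^n_\omega)_*S\cdot c_n$, which lie in the classes $(f^n_\omega)_*e^+(\omega)$. Here we use the equivariance $(f_\omega^{n})^*e^+(\sigma^n\omega)\propto e^+(\omega)$. The normalized $S_n/\text{mass}$ have uniformly controlled potentials by Lemma \ref{lem:compactness}, with quasi-psh constant bounded in terms of $\|\omega_k\|$.

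Writing $S=c_n^{-1}(f^n_\omega)^*(\Theta(\cdot)+dd^c\psi_n)$ with $\psi_n$ in a compact family, we must show $c_n^{-1}\psi_n\circ f^n_\omega\to0$ in $L^1$. We plan to do this with the standard Guedj-type estimate: for quasi-psh functions in a compact family, the volume of sublevel sets $\{\psi<-t\}$ decays like $e^{-\alpha t}$ (Skoda), and the Jacobian of $f^n_\omega$ grows at most like $e^{o(n)\cdot n}$ controlled by the moment condition. Combining these two facts gives $L^1$ norm $O(n\, e^{o(n)})/c_n\to0$. Hence $h$ is constant, and $S=T_\omega^+$.

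\textbf{Main obstacle.} The main difficulty is this uniqueness step. The growth of the Jacobian and of the quasi-psh constants of $\psi_n$ must be beaten by the exponential growth $c_n\approx e^{n\lambda_\mu}$. The moment condition only gives subexponential control along $\mu^\Z$-almost every $\omega$, so the estimates must be made pathwise with $o(n)$ errors rather than uniformly. The summability in Step 1 is similar, but easier.
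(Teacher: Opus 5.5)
The gap is in Step 1. If you carry out the inductive expansion of $\phi((f^n_\omega)^*\kappa)$, the correction terms are not $u_k\circ f^k_\omega$ with $u_k=\phi(\omega_k^*\kappa)$. As in the proof of Lemma \ref{lem:C0Bound}, the $i$-th term is $\phi\bigl(\omega_{i-1}^*\Theta((f^{n-i}_{\sigma^i(\omega)})^*\kappa)\bigr)\circ f^{i-1}_\omega$. After dividing by $c_n$, it is bounded by a constant times $\norm{\omega_{i-1}}_{C^1}^2\,\norm{(f^{n-i}_{\sigma^i(\omega)})^*}/\norm{(f^n_\omega)^*}$.

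This weight should decay in the early index $i$ (heuristically like $e^{-i\lambda_\mu}$), not in $n-i$. With your weights, each fixed term would vanish as $n\to\infty$ and the sum would be carried by indices near $n$, so summability would not give convergence anyway.

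More importantly, the weight compares the cocycle at the shifted points $\sigma^i(\omega)$ over windows of length $n-i$, and Theorem \ref{lyap} gives no uniformity over these $n$ points. The $e^{o(n)}$ error you write is exactly what destroys summability, so you get neither a $C^0$ bound uniform in $n$ nor uniform convergence along the full sequence. The estimate you need, $\norm{(f^{n-j}_{\sigma^j(\omega)})^*}\le e^{-j(\lambda_\mu-\delta_j)}\norm{(f^n_\omega)^*}$ for all $j\le n$, is the Gou\"ezel--Karlsson tightness of Theorem \ref{thm:GoodTimes}. Under \eqref{int} it is only available for $n$ in a set of upper density $>1-\rho$. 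This is why Lemma \ref{lem:C0Bound} controls potentials only along such times, and along all of $\N$ only under \eqref{exponential} (Proposition \ref{prop:exponentialC0}).

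Your Step 2 is sound in substance and should carry existence and uniqueness, once the normalization is corrected. The factor is $d_n\coloneqq M((f^n_\omega)^*e^+(\sigma^n(\omega)))$, not $c_n$: $d_n(f^n_\omega)_*S$ is the unit-mass current in $e^+(\sigma^n(\omega))$, and $S=d_n^{-1}(f^n_\omega)^*(d_n(f^n_\omega)_*S)$. Its exponential growth follows from $\ln d_n=\sum_{k<n}\ln M(\omega_k^*e^+(\sigma^{k+1}(\omega)))$, Birkhoff, and Theorem \ref{thm:furst}. The relevant Jacobian is that of $(f^n_\omega)^{-1}$, with $\ln\sup\mathrm{Jac}=O(n)$ by Birkhoff on $\ln\norm{\omega_0^{-1}}_{C^1}$. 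Together with uniform Skoda integrability on $\phi(\mathcal{Z}_1)$, this gives $d_n^{-1}\norm{\psi_n\circ f^n_\omega}_{L^1(V)}=O(n)e^{-n(\lambda_\mu-o(1))}\to 0$.

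Existence then comes from compactness of $\set{(f^n_\omega)^*\kappa/c_n}$, and (1) for every $\kappa$ from compactness plus uniqueness. This matches the paper's account, where (2) is derived from (1). Continuity (2) needs its own argument, and there are two options.

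\begin{enumerate}
\item Follow the paper: run the series along tight times, where termwise convergence plus geometric domination gives uniform convergence along $(n_i)$.
\item Replace $\kappa$ by the representatives $\Theta(e^+(\sigma^k(\omega)))$. Since $\omega_k^*\Theta(e^+(\sigma^{k+1}(\omega)))=M(\omega_k^*e^+(\sigma^{k+1}(\omega)))\,\Theta(e^+(\sigma^k(\omega)))+dd^c u_k$ with $\norm{u_k}_{C^0}\lesssim\norm{\omega_k}_{C^1}^2$ by Lemma \ref{cor:expansionBound}, the weights become $1/d_{k+1}(\omega)$, independent of $n$. The series $\sum_k u_k\circ f^k_\omega/d_{k+1}$ then converges uniformly. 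Because $\Theta(e^+)$ is not a positive form, identifying $\Theta(e^+(\omega))+dd^c$ of this sum with $T^+_\omega$ again requires your Step 2 comparison.
\end{enumerate}
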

	\begin{rmk}
			By symmetry, there exists an analogous current $T_\omega^- \in e^-(\omega)$ obtained by replacing $n\to \infty$ with $n\to -\infty$. The current $T_\omega^-$ of course satisfies the same properties as $T_\omega^+$. 
	\end{rmk}
	\begin{rmk}\label{rmk:msbl}
			The assignment $\omega \mapsto T_\omega^+$ is $\mu^\Z$-measurable where the space of currents is endowed with its Borel $\sigma$-algebra coming from the weak topology. Indeed, this follows from (1) in Proposition \ref{prop:limit_currents}. Alternatively, it can also be deduced from the measurability of $\omega \mapsto e^+(\omega)$ and the uniqueness of $T_\omega^+$. 
	\end{rmk}

Recall that $V$ is a volume form fixed in Section \ref{sec:Potentials}.
\begin{lem}\label{lem:L1}
	The assignment $\omega \mapsto \phi(T_\omega^+)$ is $\mu^\Z$-measurable as a map into $L^1(V)$, endowed with its norm topology.
\end{lem}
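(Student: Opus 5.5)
The plan is to write $\omega\mapsto\phi(T_\omega^+)$ as a composition of a $\mu^\Z$-measurable map with a continuous one, using the results already established.

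By Remark \ref{rmk:msbl}, the map $\omega\mapsto T_\omega^+$ is $\mu^\Z$-measurable. Its target is the space of currents with the Borel $\sigma$-algebra of the weak topology. Almost surely $T_\omega^+$ lies in $\mathcal{Z}_1$, because it is a closed positive current with $M([T_\omega^+])=M(e^+(\omega))=1$ by Theorem \ref{FurstenbergLimit}. By Corollary \ref{cor:cts}, the restriction $\phi\colon\mathcal{Z}_1\to L^1(V)$ is continuous, where $\mathcal{Z}_1$ carries the weak topology and $L^1(V)$ the norm topology. A continuous map is Borel measurable, and composing a measurable map with a Borel map gives a measurable map. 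So $\omega\mapsto\phi(T_\omega^+)$ is measurable off a $\mu^\Z$-null set. On that null set we may redefine the map arbitrarily, for instance as $0$, since we only need measurability with respect to the completion.

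Two technical points need checking.

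\textbf{Borel structure on $\mathcal{Z}_1$.} The Borel $\sigma$-algebra of the subspace $\mathcal{Z}_1$ must agree with the trace of the Borel $\sigma$-algebra of the ambient space of currents. This holds for any subspace of a topological space, since the open sets of the subspace are the traces of open sets. In addition, $\mathcal{Z}_1$ is a closed subset: weak limits of closed positive currents are closed and positive, and mass is weakly continuous. So $\{\omega : T_\omega^+\in\mathcal{Z}_1\}$ is measurable.

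\textbf{Sequential versus topological continuity.} Corollary \ref{cor:cts} was proved for sequences. This suffices because $\mathcal{Z}_1$ with the weak topology is compact and metrizable. One can test against a countable dense family of smooth forms, so sequential continuity is equivalent to continuity.

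The only step needing any care is this topological bookkeeping; everything else is a direct combination of Remark \ref{rmk:msbl} and Corollary \ref{cor:cts}.
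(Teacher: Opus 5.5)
Your proposal is correct and is essentially the paper's proof. The paper also writes $\omega\mapsto\phi(T_\omega^+)$ as the composition of the $\mu^\Z$-measurable map $\omega\mapsto T_\omega^+$ (Remark \ref{rmk:msbl}) with the continuous map $\phi\colon\mathcal{Z}_1\to L^1(V)$ (Corollary \ref{cor:cts}). Your extra bookkeeping is accurate and makes explicit what the paper leaves implicit: the trace Borel structure on $\mathcal{Z}_1$, its closedness, and the metrizability that lets sequential continuity suffice.
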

\begin{proof}[Proof of Lemma \ref{lem:L1}]
	By Remark \ref{rmk:msbl} and Corollary \ref{cor:cts}, $\omega \mapsto \phi(T_\omega^+)$ is a composition of a $\mu^\Z$-measurable map with a continuous map. Hence it is $\mu^\Z$-measurable.
\end{proof}
	\begin{lem}\label{lem:equicontinuous}
		For all $\rho > 0$, there exists a compact subset $K\subset \Omega$ such that $\mu^\Z(K) > 1 - \rho$ and 
		\begin{align*}
			\set{\phi(T_\omega^+) \mid \omega \in K}
		\end{align*} 
	 is an equicontinuous family admitting a uniform $C^0$-bound. 
	\end{lem}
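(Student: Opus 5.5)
The plan is to deduce the statement from Lusin's theorem applied to the map $\omega\mapsto\phi(T_\omega^+)$, regarded as taking values in $C(X)$ with the sup norm, followed by Arzel\`a--Ascoli. The first task is to check that this map is $\mu^\Z$-measurable into $(C(X),\norm{\cdot}_{C^0})$. By Proposition \ref{prop:limit_currents}(2) there is a $\mu^\Z$-full measure set $\Omega_0$ on which $\phi(T_\omega^+)$ is continuous. More precisely, its $L^1(V)$-class has a continuous representative, and this representative is unique since $V$ is a smooth volume form, so continuous functions agreeing $V$-a.e.\ agree everywhere. Consider the inclusion $\iota\colon C(X)\to L^1(V)$. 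It is continuous and injective, and both spaces are Polish, since $X$ is a compact metric space. By the Lusin--Souslin theorem, $\iota$ maps Borel subsets of $C(X)$ to Borel subsets of $L^1(V)$. Hence, for $B\subset C(X)$ Borel,
\[\set{\omega\in\Omega_0 : \phi(T_\omega^+)\in B}=\Omega_0\cap\set{\omega : \phi(T_\omega^+)\in\iota(B)}.\]
The right-hand set is $\mu^\Z$-measurable by Lemma \ref{lem:L1}. So the map $\Phi\colon\Omega_0\to C(X)$, $\omega\mapsto\phi(T_\omega^+)$, is $\mu^\Z$-measurable. Modifying on a null set, we may pick a Borel map $\tilde\Phi\colon\Omega\to C(X)$ agreeing with $\Phi$ off a Borel null set $N$.

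Next I apply Lusin's theorem. The space $\Aut(X)$ is a complex Lie group, hence Polish, so $\supp\mu$ is closed in a Polish space and therefore Polish. Consequently $\Omega=(\supp\mu)^\Z$ is Polish and $\mu^\Z$ is a Radon measure on it. The target $C(X)$ is separable metric. Lusin's theorem for Borel maps from a Radon probability space into a separable metric space then gives a closed set $K_1\subset\Omega\setminus N$ with $\mu^\Z(K_1)>1-\rho/2$ on which $\tilde\Phi=\Phi$ is continuous. Inner regularity (tightness) of $\mu^\Z$ on the Polish space $\Omega$ gives a compact $K_2$ with $\mu^\Z(K_2)>1-\rho/2$. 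I then set $K\coloneqq K_1\cap K_2$, which is compact and satisfies $\mu^\Z(K)>1-\rho$.

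Finally, $\Phi(K)$ is the continuous image of a compact set, so it is compact in $(C(X),\norm{\cdot}_{C^0})$. By the easy direction of the Arzel\`a--Ascoli theorem on the compact metric space $X$, a compact subset of $C(X)$ is uniformly bounded and equicontinuous. This is exactly the claim.

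The main obstacle is the first step: Lemma \ref{lem:L1} only provides measurability for the weaker $L^1$ topology, while Lusin's theorem must be run in the sup-norm topology to yield a compact, hence equicontinuous, family. The Lusin--Souslin argument comparing the two Borel structures on $C(X)$ is what bridges this gap. A secondary point to verify carefully is that $\Omega$ is Polish, so that $\mu^\Z$ is tight and Lusin's theorem applies.
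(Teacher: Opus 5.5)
Your proof is correct, but it takes a different route from the paper's.

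\textbf{The paper's route.} The paper never works with measurability into $C(X)$ directly. It fixes $x$ and shows that $\omega\mapsto\phi(T_\omega^+)(x)$ is $\mu^\Z$-measurable. It does this by testing the $L^1(V)$-valued map of Lemma \ref{lem:L1} against smooth densities $f_nV\to\delta_x$, using continuity of $\phi(T_\omega^+)$ at $x$. It then applies the Scorza--Dragoni theorem to this Carath\'eodory function (measurable in $\omega$, continuous in $x$). This produces a compact $K$ on which $(\omega,x)\mapsto\phi(T_\omega^+)(x)$ is jointly continuous on $K\times X$. Equicontinuity and the uniform bound then follow from uniform continuity on the compact set $K\times X$.

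\textbf{Your route.} You upgrade Lemma \ref{lem:L1} to Borel measurability into $(C(X),\norm{\cdot}_{C^0})$ via Lusin--Souslin. You then apply the ordinary Lusin theorem and the easy half of Arzel\`a--Ascoli.

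\textbf{Comparison.} The two outputs are the same. Since $K$ and $X$ are compact, continuity of $K\to(C(X),\norm{\cdot}_{C^0})$ is equivalent to joint continuity on $K\times X$. This is exactly what the paper reuses later in Lemma \ref{lem:measurable-slices}. Your route replaces the less standard Scorza--Dragoni theorem with a descriptive-set-theoretic fact. The two measurability steps are in fact interchangeable. The Borel $\sigma$-algebra of the separable space $C(X)$ is generated by evaluations at a countable dense set of points. So the paper's pointwise claim already gives measurability into $C(X)$ without Lusin--Souslin. Conversely, your Lusin--Souslin step already contains the paper's pointwise claim.

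\textbf{One small imprecision.} The step ``complex Lie group, hence Polish'' needs second countability, since a Lie group with uncountably many components is not Polish. It does hold here. With the uniform topology, $\Aut(X)$ is a closed subgroup of the Polish group of homeomorphisms of the compact metric space $X$. Alternatively, it is second countable as a subspace of the separable space $C(X,X)$.
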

	\begin{proof}[Proof of Lemma \ref{lem:equicontinuous}]
			Consider $\phi(\omega, x)\coloneqq \phi(T_\omega^+)(x)$ as a function $\Lambda \times X \to \R$ where $\Lambda\subset \Omega$ has full measure. By Proposition \ref{prop:limit_currents}, for each $\omega\in \Lambda$ we have that $x\mapsto \phi(\omega, x)$ is continuous. 
		\begin{claim}
			For each $x\in X$, $\omega \mapsto \phi(\omega, x)$ is $\mu^\Z$-measurable. 
		\end{claim}
			\noindent To prove the claim, let $x\in X$ and $(f_n)_{n\in \N}$ be a sequence of smooth functions $X\to \R_{\geq 0}$ such that $\int f_n V = 1$ for all $n$ and $f_n V \to \delta_x$ weakly as measures. For all $n$, the assignment $\omega \mapsto \int \phi(T_\omega^+) f_n V$ is $\mu^\Z$-measurable. Indeed, this follows from Lemma \ref{lem:L1} and the boundedness of each $f_n$. 

		Therefore 
		\begin{align*}
				\phi(\omega, x) = \lim_{n\to \infty}\int \phi(T_\omega^+)f_n V
		\end{align*} is $\mu^\Z$-measurable for fixed $x$, as desired.
	
			To conclude, we apply a theorem of Scorza and Dragoni to obtain a compact subset $K\subset \Lambda$ with $\mu^{\Z}(K) > 1 - \rho$ such that the restriction $\restr{\phi}{K\times X}$ is continuous. Indeed, one takes $T$ in Theorem 1 of \cite{MR1121606} to be some compact set $K_0\subset \Omega$ with large $\mu^\Z$-measure, and $B = K_0 \times X$, so that $K$ is obtained as a subset of $K_0$. Then the collection 
			\[\set{x\mapsto \restr{\phi}{\set{\omega}\times X}\mid \omega\in K}\] is equicontinuous and admits a uniform $C^0$-bound, which is what we desire. 
	\end{proof}
\begin{rmk} \label{equivariance}
		From the convergence and uniqueness statements in Proposition \ref{prop:limit_currents}, one can deduce that the limit currents satisfy the following equivariance properties:
	\begin{align}\label{equivar}
		\omega_0^*T_{\sigma(\omega)}^+ &= M(\omega_0^*T_{\sigma(\omega)}^+) T_\omega^+ = \frac{1}{M((\omega_0)_*T_{\omega}^+)} T_\omega^+ \\
		\omega_0^*T_{\sigma(\omega)}^- &= M(\omega_0^*T_{\sigma(\omega)}^-) T_\omega^- = \frac{1}{M((\omega_0)_*T_{\omega}^-)} T_\omega^-.
	\end{align}

\end{rmk}

A crucial step in Cantat and Dujardin's proof of Proposition \ref{prop:limit_currents}, specifically in their derivation of (2) from (1), is provided by the following lemmas. We state them here and include portions of the proofs due to Cantat and Dujardin because they will be independently crucial to the rest of this article. 

\begin{lem}[\cite{MR4635340}, Corollary 6.10]\label{cor:expansionBound}
	There exists a constant $C>0$, depending only on a choice of coordinate charts on $X$ and choice of basis from Section \ref{sec:Potentials}, such that, for every $f\in \Aut(X)$ and $a\in \Psef(X)$,
	\begin{align*}
		\norm{\phi(f^*\Theta(a))}_{C^1} \leq C \norm{a} \norm{f}^2_{C^1}.
	\end{align*} 
\end{lem}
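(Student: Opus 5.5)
Write $\Theta(a)=\sum_i c_i\kappa_i$. Then $f^*\Theta(a)$ is a smooth closed $(1,1)$-form in the class $f^*a$. By definition, $\phi(f^*\Theta(a))$ is the normalized solution of
\[
dd^c u = f^*\Theta(a)-\Theta(f^*a).
\]
By linearity in $a$ and the triangle inequality, it suffices to bound each term $c_i\,\phi(f^*\kappa_i)$, since $|c_i|\le C\norm{a}$ by equivalence of norms on the finite-dimensional space $H^{1,1}$. So the claim reduces to showing
\[
\norm{\phi(f^*\kappa_i)}_{C^1}\le C\norm{f}_{C^1}^2
\]
for each fixed Kähler form $\kappa_i$.

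To get this, I use elliptic regularity for $dd^c$ on the compact manifold $X$. Let $\beta=f^*\kappa_i-\Theta(f^*\kappa_i)$; this is a $d$-exact (hence $dd^c$-exact) real $(1,1)$-form. The solution operator $\beta\mapsto u$ with $dd^c u=\beta$ and $\int u\,dV=0$ satisfies an estimate of the form
\[
\norm{u}_{C^{1}}\le \norm{u}_{C^{1,\alpha}}\le C\norm{\beta}_{C^{0,\alpha}}.
\]
However, $C^{0,\alpha}$ of $f^*\kappa_i$ involves second derivatives of $f$, which we want to avoid. Instead I use an $L^p$ estimate. Taking the trace against $\kappa_0$ gives
\[
\Delta u = \mathrm{tr}_{\kappa_0}\beta,
\]
which is the Laplace equation with an $L^\infty$ right-hand side. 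Standard $W^{2,p}$ estimates, with $p>4=\dim_\R X$, together with Sobolev embedding $W^{2,p}\subset C^{1}$, then give
\[
\norm{u}_{C^1}\le C\norm{\beta}_{L^\infty}.
\]
The normalization $\int u\,dV=0$ kills the kernel of $\Delta$; one has to note that a different volume form only shifts the constant. This is the step I expect to need the most care: justifying a constant depending only on the charts and the basis, and working with the trace rather than the full form. On a compact surface, $dd^c u=\beta$ is determined by its trace up to harmonic, hence constant, functions, so the trace suffices.

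Finally, bound $\norm{\beta}_{L^\infty}$. Pointwise in charts,
\[
|f^*\kappa_i|\le C\norm{Df}^2\le C\norm{f}_{C^1}^2,
\]
since $\kappa_i$ is a 2-form pulled back by $Df$. Also $\Theta(f^*\kappa_i)=\Theta(f^*[\kappa_i])$ has $C^0$ norm at most $C\norm{f^*}_{H^{1,1}}$. By Lemma 5.1 of \cite{MR4635340} quoted in Section \ref{sec:cohomology}, this is at most $C\norm{f}_{C^1}^2$; alternatively, mass computed against $\kappa_0$ is bounded by the $L^\infty$ norm of $f^*\kappa_i$. Since $\norm{f}_{C^1}\ge 1$ for an automorphism (its differential cannot be uniformly contracting on a compact manifold), all the terms combine into the bound $C\norm{a}\norm{f}_{C^1}^2$.
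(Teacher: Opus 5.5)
Your argument is correct, and it is essentially the standard mechanism behind the cited Cantat--Dujardin result, which the paper quotes without proof. That mechanism combines a uniform elliptic estimate $\norm{\phi(\beta)}_{C^1}\lesssim \norm{\beta}_{C^0}$ for normalized $dd^c$-potentials (your $W^{2,p}$ bound with $p>4$ applied to the trace equation) with the pointwise bound $\norm{f^*\Theta(a)}_{C^0}\lesssim \norm{f}_{C^1}^2\norm{a}$ and the analogous bound on the correction $\Theta(f^*a)$. Two small remarks: the final appeal to $\norm{f}_{C^1}\geq 1$ is unnecessary, since no additive constants appear, and your linear reduction to the forms $\kappa_i$ shows that the hypothesis $a\in\Psef(X)$ is never actually used.
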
 

	\begin{defn}[Upper-density and lower-density]\label{defn:density}
	Recall that the \emph{upper-density} of a subset $S$ of the naturals is \[\overline{d}(S)\coloneqq \limsup_{k\to \infty }\frac 1 k \#\set{S\cap [1, k-1]}.\] For a subsequence $(n_i)_{i\in\N}$ of $\N$, we define its upper-density to be the upper-density of the set $\set{n_i}_{i\in \N}$. Similarly, the \emph{lower-density} of $S$  is \[\underline{d}(S)\coloneqq \liminf_{k\to \infty }\frac 1 k \#\set{S\cap [1, k-1]}.\] 
\end{defn}

\begin{lem}[\cite{MR4635340}, Lemma 6.15]\label{lem:C0Bound}
	Let $\kappa$ be a K\"ahler form. For all $\rho > 0$, there exists a subset $S_\rho\subset \Omega$ with $\mu^\Z(S_\rho) > 1 - \rho$ such that, for all $\omega\in S_\rho$, one can find $C > 0$ and a subsequence $(n_i)_{i\in \N}$ of $\N$ with upper-density greater than $1 - \rho$ satisfying
	\begin{align*}
		\frac{1}{\norm{(f_\omega^{n_i})^*}}\norm{\phi((f_\omega^{n_i})^*\kappa)}_{C^0} < C
	\end{align*}
	for all $i\in \N$. 
\end{lem}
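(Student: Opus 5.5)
The plan is to write the normalized potential of $(f_\omega^n)^*\kappa$ as a telescoping sum along the orbit, bound each term with Lemma \ref{cor:expansionBound}, and then control the sum relative to $\norm{(f_\omega^n)^*}$ using hyperbolicity of the cohomological cocycle. The subsequence $(n_i)$ will consist of the times $n$ at which a backward-looking "good angle" condition holds at $\sigma^n(\omega)$; the ergodic theorem then gives it upper density $>1-\rho$.

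\emph{Step 1 (cocycle for potentials).} Set $a_n(\omega)\coloneqq [(f_\omega^n)^*\kappa]$ and $\phi_n(\omega)\coloneqq \phi((f_\omega^n)^*\kappa)$. Since $(f_\omega^n)^*=\omega_0^*(f_{\sigma\omega}^{n-1})^*$, we have
\begin{align*}
(f_\omega^n)^*\kappa=\omega_0^*\bigl(\Theta(a_{n-1}(\sigma\omega))+dd^c\phi_{n-1}(\sigma\omega)\bigr).
\end{align*}
Moreover $\omega_0^*\Theta(b)=\Theta(\omega_0^*b)+dd^c\phi(\omega_0^*\Theta(b))$. Hence, up to an additive constant fixed by the normalization $\int\phi\,dV=0$,
\begin{align*}
\phi_n(\omega)=\phi\bigl(\omega_0^*\Theta(a_{n-1}(\sigma\omega))\bigr)+\phi_{n-1}(\sigma\omega)\circ\omega_0 .
\end{align*}
Iterating this identity and absorbing the constants (each is at most the $C^0$ norm of the term it corrects) gives
\begin{align*}
\norm{\phi_n(\omega)}_{C^0}\le 2\sum_{k=0}^{n-1}\norm{\phi\bigl(\omega_k^*\Theta(a_{n-k-1}(\sigma^{k+1}\omega))\bigr)}_{C^0}\le 2C\sum_{k=0}^{n-1}\norm{\omega_k}_{C^1}^2\,\norm{a_{n-k-1}(\sigma^{k+1}\omega)},
\end{align*}
where the last inequality is Lemma \ref{cor:expansionBound}.

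\emph{Step 2 (reduction to a cohomological estimate).} It remains to show that, for $\omega\in S_\rho$ and $n=n_i$, the quantity
\begin{align*}
\sum_{k=0}^{n-1}\norm{\omega_k}_{C^1}^2\,\frac{\norm{(f^{n-k-1}_{\sigma^{k+1}\omega})^*}}{\norm{(f^n_\omega)^*}}
\end{align*}
is bounded by a constant depending only on $\omega$. Write $(f_\omega^n)^*=(f_\omega^{k+1})^*(f^{n-k-1}_{\sigma^{k+1}\omega})^*$. In the hyperbolic space $\mathbb H_\mu$, a product $AB$ of elements of $\Orth^+$ satisfies $\norm{AB}\ge c\,\norm{A}\norm{B}$ as soon as the expanding singular direction of $B$ stays a definite angle away from the contracting singular direction of $A$, with $c$ depending on that angle. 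Under such an angle condition the $k$-th summand is at most
\begin{align*}
c^{-1}\norm{\omega_k}_{C^1}^2\,\norm{(f_\omega^{k+1})^*}^{-1}.
\end{align*}
By Theorem \ref{lyap}, $\norm{(f_\omega^{k+1})^*}^{-1}\le C(\omega)e^{-\lambda_\mu k/2}$. By the moment condition \eqref{int} and Birkhoff (Borel–Cantelli), $\ln\norm{\omega_k}_{C^1}=o(k)$. So the series converges, with a bound depending only on $\omega$.

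\emph{Step 3 (the angle condition and the density set).} This is the main obstacle, because the angle condition must hold \emph{uniformly in $k\le n$} for a fixed $n$. I would handle it with backward-in-time Furstenberg theory. As $n-k\to\infty$, the expanding direction of $(f^{n-k-1}_{\sigma^{k+1}\omega})^*$ is governed by the future of $\sigma^{k+1}\omega$, so it tends to $e^+(\sigma^{k+1}\omega)$. The contracting direction of $(f_\omega^{k+1})^*$, on the other hand, is governed by the past of $\omega$ up to time $k+1$.

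Rather than pair these indices one by one, I would compare everything with a single vector. Express $(f^{n-k-1}_{\sigma^{k+1}\omega})^*[\kappa]$ as a partial product of the cocycle ending at time $n$, and compare it with $e^-$ at the point $\sigma^n\omega$ pulled back along the orbit. Remark \ref{wekk} (non-atomicity, and no mass on proper subspaces) together with the independence in Remark \ref{rmk:FurstBack} give a measurable $\omega$-set $G_\rho$ of measure $>1-\rho^2$ on which the relevant angles, and the constants in the exponential convergence, are uniformly controlled.

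I then take $(n_i)$ to be the set of $n$ with $\sigma^n\omega\in G_\rho$. The Birkhoff ergodic theorem gives this set density $>1-\rho^2$ for a.e.\ $\omega$. An Egorov-type step then produces $S_\rho$ with $\mu^\Z(S_\rho)>1-\rho$ on which the remaining constants $C(\omega)$ from Steps 1–2 are finite, which yields the stated bound.

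If this uniform comparison proves too delicate, the fallback is to split the sum at $k=n/2$. For $k\le n/2$ the exponential decay in $k$ suffices. For $k>n/2$, I would instead use $\norm{(f^{n-k-1}_{\sigma^{k+1}\omega})^*}\le e^{(\lambda_\mu+\epsilon)(n-k)}$ together with $\norm{(f^n_\omega)^*}\ge e^{(\lambda_\mu-\epsilon)n}$ at good times $n$. The $\epsilon$-losses must then be absorbed by choosing the good times via a second application of the ergodic theorem.
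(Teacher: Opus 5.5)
\paragraph{Where the gap is.} Your Steps 1 and 2 are the same reduction the paper makes. The telescoping formula for $\phi((f_\omega^n)^*\kappa)$ and Lemma \ref{cor:expansionBound} reduce the claim to bounding
\[
\frac{1}{\norm{(f_\omega^{n})^*}}\sum_{j=1}^{n}\norm{(f_{\sigma^j(\omega)}^{n-j})^*}\,\norm{\omega_{j-1}}_{C^1}^2
\]
uniformly along the chosen times $n$. The gap is Step 3, and that is where all the content lies.

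\paragraph{Why a uniform angle fails.} The ``definite angle'' you need cannot hold for all $j\le n$.
\begin{itemize}
\item When $j$ and $n-j$ are both large, the image direction of $(f^{n-j}_{\sigma^j(\omega)})^*$ is close to $e^+(\sigma^j(\omega))$.
\item In the same regime, the contracting direction of $(f^j_\omega)^*$ is close to $e^-(\sigma^j(\omega))$. This is the direction into which its inverse $(f^{-j}_{\sigma^j(\omega)})^*$ pushes generic classes.
\item These two classes are independent, and both laws are supported on the limit set of $\Gamma_\mu^*$. So their angle is smaller than any prescribed $t>0$ with positive probability, hence at a positive density of intermediate times $j$ along a typical orbit.
\item These angles sit at the points $\sigma^j(\omega)$, not at $\sigma^n(\omega)$. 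No condition of the form $\sigma^n(\omega)\in G_\rho$, for a fixed set $G_\rho$, controls them.
\item For $j$ near $n$ the product $(f^{n-j}_{\sigma^j(\omega)})^*$ is short, so its image is not near $e^+(\sigma^j(\omega))$ at all.
\end{itemize}

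\paragraph{Why the fallback does not help.} For $k\le n/2$, the ``exponential decay in $k$'' was derived in Step 2 from the very angle condition in question, so it is circular. If you use only $\norm{(f^n_\omega)^*}\geq e^{(\lambda_\mu-\epsilon)n}$ and $\norm{(f^{n-j}_{\sigma^j(\omega)})^*}\lesssim e^{(\lambda_\mu+\epsilon)(n-j)}$, the ratio is bounded only by $e^{-\lambda_\mu j+\epsilon(2n-j)}$. That is of size $e^{2\epsilon n}$ for small $j$. These losses come from the sublinear fluctuations of $\ln\norm{(f^n_\omega)^*}-n\lambda_\mu$. Kingman and Birkhoff only show these are $o(n)$, and a second ergodic average does not remove them.

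\paragraph{The missing ingredient.} You need a tightness property at the selected times: simultaneously for all $0\le\ell\le n$,
\[
\ln\norm{(f^{n}_\omega)^*}-\ln\norm{(f^{n-\ell}_{\sigma^\ell(\omega)})^*}\geq(\lambda_\mu-\delta_\ell)\ell,\qquad \delta_\ell\to0 .
\]
The paper gets exactly this from the Gou\"ezel--Karlsson theorem (Theorem \ref{thm:GoodTimes}(i)) applied to $a(n,\omega)=\ln\norm{(f^n_\omega)^*}$. That theorem also supplies the set $S_\rho$ and the upper density $>1-\rho$ of the good times. Combined with Borel--Cantelli for $\norm{\omega_{j-1}}_{C^1}^2$, it turns the sum into the geometric series $\sum_j e^{j(\epsilon+\delta_j-\lambda_\mu)}$, with no angle bookkeeping.

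An angle-based repair would have to let the angles degenerate at a subexponential rate and control this through tempered (Oseledets--Pesin type) constants along the orbit. A uniform angle enforced at the single point $\sigma^n(\omega)$ cannot work, and your proposal does not supply the tempered version.
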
 
Note that Lemma 6.15 in \cite{MR4635340} is stated without the upper-density assertion, but, as we will see, their proof provides it. An immediate corollary is the following.
\begin{cor}\label{cor:C0Bound}
		Let $\kappa$ be a K\"ahler form. For $\mu^\Z$-almost every $\omega\in \Omega$ and every $\rho > 0$, one can find a constant $C> 0$ and a subsequence $(n_i)_{i\in \N}$ with upper-density greater than $1 - \rho$ satisfying
		\begin{align*}
		\frac{1}{\norm{(f_\omega^{n_i})^*}}\norm{\phi((f_\omega^{n_i})^*\kappa)}_{C^0} < C
	\end{align*}
	for all $i\in \N$. 
\end{cor}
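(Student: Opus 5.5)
The plan is a countable intersection argument built directly on Lemma \ref{lem:C0Bound}. The key observation is monotonicity in $\rho$. Suppose $\omega\in S_{\rho'}$ for some $\rho'\leq\rho$. Then Lemma \ref{lem:C0Bound} provides a constant $C>0$ and a subsequence $(n_i)_{i\in\N}$ of upper-density greater than $1-\rho'\geq 1-\rho$ with
\begin{align*}
	\frac{1}{\norm{(f_\omega^{n_i})^*}}\norm{\phi((f_\omega^{n_i})^*\kappa)}_{C^0} < C
\end{align*}
for all $i$. This is exactly the conclusion of Corollary \ref{cor:C0Bound} at the parameter $\rho$. So it suffices to show that $\mu^\Z$-almost every $\omega$ lies in $S_{1/k}$ for arbitrarily large $k$, i.e.\ that almost every $\omega$ belongs to $\limsup_k S_{1/k}$.

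Concretely, I will fix the K\"ahler form $\kappa$ and, for each $k\in\N$, apply Lemma \ref{lem:C0Bound} with $\rho=1/k$ to get a set $S_{1/k}$ with $\mu^\Z(S_{1/k})>1-1/k$. If measurability of $S_{1/k}$ is in doubt, I will replace it by a $\mu^\Z$-measurable subset of measure still greater than $1-1/k$; this is possible since we work with the completed $\sigma$-algebra, and inner regularity lets us shrink to a Borel (even compact) set. Then I set
\begin{align*}
	A_j\coloneqq \bigcup_{k\geq j} S_{1/k}, \qquad G\coloneqq \bigcap_{j\geq 1} A_j .
\end{align*}

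For each $j$ and each $k\geq j$ we have $\mu^\Z(A_j)\geq \mu^\Z(S_{1/k})>1-1/k$. Letting $k\to\infty$ gives $\mu^\Z(A_j)=1$, and hence $\mu^\Z(G)=1$ as a countable intersection of full-measure sets. Now take $\omega\in G$ and $\rho>0$, and choose $j$ with $1/j\leq\rho$. Since $\omega\in A_j$, there is some $k\geq j$ with $\omega\in S_{1/k}$, and $1/k\leq 1/j\leq\rho$. The monotonicity observation above then yields the required $C$ and subsequence. Note that $C$ and $(n_i)$ are allowed to depend on $\omega$ and $\rho$, which is what the statement asks for.

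There is no real obstacle here. The only delicate point is that the corollary quantifies over every $\rho>0$ for a single full-measure set. This is handled by restricting to the countable family $\rho=1/k$ together with monotonicity in $\rho$, plus the minor measurability remark about the sets $S_{1/k}$.
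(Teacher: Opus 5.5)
Your proposal is correct and is the paper's argument: the paper's proof takes the full-measure set $\bigcap_{N\in\N}\bigcup_{i\geq N} S_{1/i}$, which is exactly your $G$. You have simply spelled out the monotonicity in $\rho$ and the full-measure computation, both of which the paper leaves implicit.
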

\begin{proof}[Proof of Corollary \ref{cor:C0Bound}]
	The set
	\begin{align*}
		\bigcap_{N\in \N}  \left(\bigcup_{i \geq N} S_{\frac 1 i }\right)
	\end{align*}
		has full $\mu^\Z$-measure and the desired properties.
\end{proof}

\begin{proof}[Proof of Lemma \ref{lem:C0Bound}]
	The starting point is the elementary computation
	\begin{align*}
		\phi(f^* \alpha) &= \phi(f^* \Theta(\alpha)) + \phi(\alpha)\circ f - c(f,\alpha)
	\end{align*}
	for every smooth closed real $(1,1)$-form $\alpha$, where $c(f,\alpha)$ is a normalizing constant. Iterating, we find
	\begin{align*}
		\phi((\omega_1\circ \omega_0)^* \kappa) &= \phi(\omega_0^* \omega_1^* \kappa) \\
		&=\phi(\omega_0^* \Theta(\omega_1^*\kappa)) + \phi(\omega_1^*\kappa)\circ \omega_0 - c_1
		 \\
		&=\phi(\omega_0^* \Theta(\omega_1^*\kappa)) + \phi(\omega_1^* \Theta(\kappa))\circ \omega_0 + \phi(\kappa)\circ \omega_1 \circ \omega_0 - c_1 - c_2
\end{align*}
where $c_1$ and $c_2$ are normalizing constants. More generally,
	\begin{align*}
			\phi((f_\omega^n)^* \kappa) + c(\omega, n)=   \phi(\kappa) \circ f_\omega^n + \sum_{i=1}^{n}\phi(\omega_{i-1}^*\Theta((f_{\sigma^i(\omega)}^{n-i})^*\kappa)) \circ f_\omega^{i-1} 
	\end{align*}
	for all $n\geq 1$, where again $c(\omega,n)$ is a normalizing constant.

Since the normalizing constants are controlled by the $C^0$-norm of the right-hand side,
\begin{align}\label{ugh2}
	\norm{\phi((f_\omega^n)^*\kappa)}_{C^0} & \lesssim  
	\norm{\phi(\kappa)}_{C^0} + \sum_{i=1}^{n}\norm{\phi(\omega_{i-1}^*\Theta((f_{\sigma^i(\omega)}^{n-i})^*\kappa))}_{C^0} \notag \\&\lesssim  
	\sum_{i=1}^{n}  \norm{(f_{\sigma^i(\omega)}^{n-i})^* [\kappa]} \norm{\omega_{i-1}}_{C^1}^2 
\end{align} where in the second line we apply Lemma \ref{cor:expansionBound}, and $\lesssim$ denotes $\leq$ up to multiplicative and additive constants independent of $\omega$ and $n$. 

	Therefore, to prove the lemma it is sufficient to find $(n_i)$ with large upper-density such that there exists a finite constant $R(\omega)$ depending only on $\omega$ satisfying
	\begin{align*}
			\frac{1}{\norm{(f_\omega^{n_i})^*}}\sum_{j=1}^{n_i}  \norm{(f_{\sigma^j(\omega)}^{n_i-j})^*}\norm{\omega_{j-1}}^2_{C^1}< R(\omega)
		\end{align*} for all $i$, where we used that $M(\cdot)\asymp \norm{\cdot}$ on $\Psef(X)$.

	By the moment condition \eqref{int} and Borel--Cantelli, for all $\epsilon > 0$, $e^{-\epsilon i }\norm{\omega_i}^2_{C^1}$ is bounded independently of $i\in \N$. Therefore, it is enough to find $(n_i)$ such that $\norm{(f_{\sigma^j(\omega)}^{n_i-j})^*}/\norm{(f_\omega^{n_i})^*}$ decays exponentially in $j$, in a sense made precise below.
	To do so, one applies a theorem of Gou\"ezel and Karlsson \cite[Theorem~1.1]{MR4092901} to the sub-additive cocycle $a(n,\omega)\coloneqq \ln \norm{(f_\omega^n)^*}$.

	\begin{thm}[Gou\"ezel-Karlsson \cite{MR4092901}, Theorem 1.1 and Remark 1.2]\label{thm:GoodTimes}
		Let $a(n,\omega)$ be an ergodic sub-additive cocycle with finite asymptotic average 
		\begin{align*}
				A\coloneqq \lim_{n\to \infty} \frac 1 n \int_\Omega a(n,\omega) \,d\mu^\Z(\omega).
			\end{align*} For all $\rho>0$, there exists a sequence of positive reals $\delta_\ell \to 0$ and a subset $S\subset \Omega$ with $\mu^\Z(S) > 1 - \rho$ such that for all $\omega \in S$ there exists  a sequence of naturals $(n_i)$  satisfying 
		\begin{enumerate}
			\item $(n_i)$ has upper-density greater than $1 - \rho$ and 
			\item  $a(n,\omega)$ is \emph{tight} along $(n_i)$, i.e. 
			\begin{enumerate}
				\item[(i)] for all $i$ and $0\leq \ell \leq n_i$, 
				\begin{align*}
					|- a(n_i, \omega) + a(n_i - \ell, \sigma^\ell(\omega)) + A\ell| \leq \ell \delta_\ell
				\end{align*} and 
				\item[(ii)] for all $i$ and $0\leq \ell \leq n_i$
				\begin{align*}
					a(n_i, \omega) - a(n_i - \ell, \omega) \geq (A - \delta_\ell)\ell.
				\end{align*}
			\end{enumerate}
		\end{enumerate}
	\end{thm}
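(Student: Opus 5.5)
This statement is quoted from \cite{MR4092901}. Here is how I would reprove it in the form needed, including the upper-density clause.

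\emph{Reduction.} I would reduce to a statement about a single threshold $\delta>0$ and then diagonalize. Fix $\delta>0$. I aim to find $L=L(\delta)$ and a set $S_\delta$ with $\mu^\Z(S_\delta)>1-\rho 2^{-k}$ on which the following holds. For $\omega\in S_\delta$, the set $G_\delta(\omega)$ of times $n$ satisfying
\begin{align*}
a(n,\omega)-a(n-\ell,\sigma^\ell\omega)\geq (A-\delta)\ell \quad\text{and}\quad a(n,\omega)-a(n-\ell,\omega)\geq (A-\delta)\ell
\end{align*}
for all $L\le \ell\le n$ has lower density at least $1-\rho2^{-k}$.

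The upper bound in (i) is essentially free. Subadditivity gives $a(n,\omega)-a(n-\ell,\sigma^\ell\omega)\le a(\ell,\omega)$. By Kingman, $a(\ell,\omega)\le (A+\delta)\ell$ for $\ell\ge L$ after shrinking $S_\delta$ by Egorov.

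The finitely many small $\ell<L$ are handled by requiring, via Egorov and Lusin on a set of large measure, a uniform bound on $|a(\ell,\sigma^j\omega)|$. On the relevant times this bound can be absorbed into $\ell\delta_\ell$ by choosing $\delta_\ell$ large for $\ell<L$.

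Taking $\delta=1/k$, scales $L_k\uparrow\infty$, $\delta_\ell=1/k$ for $L_k\le \ell<L_{k+1}$, and $S=\bigcap_k S_{1/k}$, the intersection $\bigcap_k G_{1/k}(\omega)$ still has upper density $>1-\rho$. For this I would arrange the exceptional densities to be summable, using the lower-density version at each stage.

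\emph{Good times for fixed $\delta$.} First I would replace $a$ by an additive cocycle. Fix a large block length $N$ with $\frac1N\int a(N,\cdot)\,d\mu^\Z<A+\delta/10$, and consider Birkhoff sums of $\phi_N=\frac1N a(N,\cdot)$. Subadditivity, averaged over the $N$ phases, bounds $a(n,\omega)$ above by $S_n\phi_N(\omega)$ plus boundary errors. Kingman bounds $a(n,\omega)$ below by $(A-\delta/10)n$ for large $n$.

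The condition at time $n$ then says that the backward increments from time $n$ are at least linear. That is, the "height" $h_m=a(m,\cdot)-(A-\delta)m$, measured along the orbit in reversed time, has a record at $n$: no later-starting piece has already gained more. This is exactly the setting of the Riesz rising-sun / maximal inequality. I would apply it to $\sigma^{-1}$ with the function $-\phi_N+A-\delta/2$, which has integral $\le -\delta/2\cdot$const. The maximal ergodic theorem then bounds the density of bad times, namely those $n$ from which some backward window of length $\ell\ge L$ has average deficit, by the measure of a set whose measure tends to $0$ as $L\to\infty$. The ergodic theorem converts this measure bound into a density bound along $\mu^\Z$-a.e.\ orbit.

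The second condition (ii), with $\omega$ unshifted, is handled symmetrically with the forward orbit, and both have small exceptional density. Removing these two exceptional sets leaves $G_\delta(\omega)$ of lower density $\ge 1-\rho2^{-k}$.

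\emph{Main obstacle.} The hard part is passing from the additive approximant back to the genuinely subadditive $a$ uniformly in $\ell\le n$. Subadditivity only gives inequalities in one direction, and the comparison error between $a(m,\cdot)$ and the block sums is only $o(m)$, not uniform. My first attempt would follow Gouëzel--Karlsson. I would define the bad set via the subadditive quantity directly, $\sup_{\ell\ge L}\frac1\ell\big[a(n-\ell,\sigma^\ell\omega)-a(n,\omega)+(A-\delta)\ell\big]>0$. I would then bound its density through a Vitali-type covering of bad times by disjoint bad intervals. On each such interval $a$ loses at least $\delta\ell$ relative to the linear rate. Summing these losses and comparing with the Kingman asymptotics $a(n,\omega)\approx An$ shows that the total length of bad intervals is at most a small fraction of $n$, which gives the density estimate.
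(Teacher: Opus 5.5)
The paper gives no proof of this statement. It quotes Gou\"ezel--Karlsson (Theorem~1.1 and Remark~1.2) and takes (ii) from Cantat--Dujardin, Theorem~5.11. Your preliminary reductions are fine. The upper bound in (i) does follow from subadditivity plus Kingman and Egorov. Small $\ell$ can be absorbed: the lower bound for $\ell<L$ follows from the one at $\ell=L$ together with $a(n-\ell,\sigma^\ell\omega)\le a(L-\ell,\sigma^\ell\omega)+a(n-L,\sigma^L\omega)$.

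\textbf{Gap 1: the lower bound in (i) is not proved.} Your rising-sun picture describes (ii), not (i). For an additive cocycle $a=S_nf$,
\[
a(n,\omega)-a(n-\ell,\sigma^\ell\omega)=S_\ell f(\omega),\qquad a(n,\omega)-a(n-\ell,\omega)=S_\ell f(\sigma^{n-\ell}\omega),
\]
so (i) concerns the start of the orbit, and the backward increment from time $n$ is the quantity in (ii). Replacing $a$ by $S_n\phi_N$ also fails. You get only an upper bound on $a(n-\ell,\sigma^\ell\omega)$ and a Kingman lower bound on $a(n,\omega)$, so the error is of order $\delta n+o(n)$. That cannot be absorbed into $\delta\ell$ when $\ell\ll n$.

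Your Vitali fallback does not apply to (i) either. Badness at time $n$ compares two cocycle values sharing the endpoint $n$. The witnesses $[0,\ell]$ are all anchored at the base point, so there is no disjoint family whose deficits telescope along one sequence.

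\textbf{The missing idea: the reversed cocycle.} Set $b(m,\eta)\coloneqq a(m,\sigma^{-m}\eta)$. It is subadditive over $\sigma^{-1}$ with the same average $A$, and
\[
a(n,\omega)-a(n-\ell,\sigma^\ell\omega)=b(n,\sigma^n\omega)-b(n-\ell,\sigma^n\omega).
\]
Fix $\eta$ and let $R(\eta)$ be the set of $n$ with $b(n,\eta)-b(n-\ell,\eta)\geq (A-\delta_\ell)\ell$ for all $\ell\le n$. This is a record set of the single sequence $m\mapsto b(m,\eta)$. Here your last paragraph does work: the complementary gaps are bounded above by subadditivity and a Kingman-type covering, and you obtain lower density close to $1$. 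The same argument applied to $m\mapsto a(m,\omega)$ gives (ii) with lower density.

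However, the (i)-good set of $\omega$ is the diagonal $\set{n\colon n\in R(\sigma^n\omega)}$, and its density is controlled only on average. By $\sigma$-invariance,
\[
\int \tfrac1N\#\set{n\le N\colon n\in R(\sigma^n\omega)}\,d\mu^\Z(\omega)=\int \tfrac1N\#(R(\eta)\cap[1,N])\,d\mu^\Z(\eta).
\]
Reverse Fatou and Markov then give upper density $>1-\rho$ on a set of measure $>1-\rho$. This is exactly why the statement is phrased with a set $S$ and with upper density. Lower density is lost at the Fatou step, and nothing in your argument recovers it. Your reduction needs $G_\delta(\omega)$ to have lower density at least $1-\rho2^{-k}$, which you have not justified. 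The paper itself obtains lower density only under \eqref{exponential}, via Cantat--Dujardin's Proposition~5.14, and then without tightness (Remark~\ref{rmk:tightness}).

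\textbf{Gap 2: the diagonalization over $k$.} Summable exceptional densities do not let you intersect countably many sets. Density is not countably subadditive, and countably many density-one sets can have empty intersection. You need the number of stage-$k$ bad times in $[1,N]$ to be at most $\rho2^{-k}N$ for \emph{every} $N$. One way is to enlarge $L_k$ past an Egorov threshold, since stage-$k$ bad times are all $\geq L_k$. This intersection must be carried out for the single sequences $b(\cdot,\eta)$ before the Fatou step, because sets of large upper density cannot be intersected at all.
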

		While $(ii)$ is not explicit in the statement of Gou\"ezel--Karlsson, it is easily deduced by Cantat and Dujardin \cite[Theorem~5.11]{MR4635340}. 
	
	In our case, $A = \lambda_\mu$. Now taking $\omega\in S$ and $(n_i)$ as in the theorem, and any $ 0 < \epsilon < \lambda_\mu$,
	\begin{align*}
			\frac{1}{\norm{(f_\omega^{n_i})^*}}\sum_{j=1}^{n_i}  \norm{(f_{\sigma^j(\omega)}^{n_i-j})^*}\norm{\omega_{j-1}}^2_{C^1} &\lesssim   \sum_{j=1}^{n_i} e^{j \epsilon} \frac{\norm{(f_{\sigma^j(\omega)}^{n_i -  j})^*}}{\norm{(f_\omega^{n_i})^*}}\\ & =  \sum_{j=1}^{n_i} \frac{e^{j \epsilon + a(n_i - j, \sigma^j(\omega))}}{e^{a(n_i,\omega)}}  \\ &\leq \sum_{j=1}^{n_i} \frac{e^{j \epsilon + j \delta_j - j \lambda_\mu + a(n_i,\omega)}}{e^{a(n_i,\omega)}}  = \sum_{j=1}^{n_i} e^{j (\epsilon + \delta_j - \lambda_\mu)}
	\end{align*} which is a convergent geometric series since $\epsilon < \lambda_\mu$ and $\delta_j\to 0$, where we used (i) in the final inequality step.
\end{proof}
\begin{rmk}\label{remark:later}
	Since it will be useful later, we remark that the analog to \eqref{ugh2} in backwards time is given by
	\begin{align*}
		\norm{\phi((f_\omega^{-n})^*\kappa)}_{C^0} & \lesssim  
		\sum_{i=1}^{n}  \norm{(f_{\sigma^{-i}(\omega)}^{-n+i})^* [\kappa]} \norm{\omega_{-i}^{-1}}_{C^1}^2,
	\end{align*} where $n\geq 1$. 
\end{rmk}
Consider the exponential moment condition 
\begin{align}\label{exponential}
		\text{$\exists \ \alpha > 0$ such that }\int ( \norm{f}_{C^1} +  \norm{f^{-1}}_{C^1})^\alpha \,d\mu(f) < \infty.
\end{align}
Then Cantat and Dujardin prove the following.
\begin{prop}[\cite{MR4635340}, Lemma 6.15 and Proposition 5.14]\label{prop:exponentialC0}
	If the exponential moment condition \eqref{exponential} is satisfied, then the sequence $(n_i)$ in the conclusion of Corollary \ref{cor:C0Bound} can be taken to be $\N$. 
\end{prop}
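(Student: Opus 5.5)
The plan is to rerun the proof of Lemma \ref{lem:C0Bound}, replacing the Gouëzel--Karlsson tightness statement, which only holds along a sequence of large upper-density, with a large-deviation estimate valid for every time $n$. As before, the starting point is estimate \eqref{ugh2}. It reduces matters to showing that, for $\mu^\Z$-almost every $\omega$, there is $R(\omega)<\infty$ with
\begin{align*}
\frac{1}{\norm{(f_\omega^{n})^*}}\sum_{j=1}^{n}\norm{(f_{\sigma^j(\omega)}^{n-j})^*}\,\norm{\omega_{j-1}}_{C^1}^2 < R(\omega)\quad\text{for all } n\geq 1.
\end{align*}

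\textbf{Controlling the $C^1$ norms.} Under \eqref{int}, Borel--Cantelli already gives $\norm{\omega_{j}}_{C^1}^2\leq C_\epsilon(\omega)e^{\epsilon j}$ for every $\epsilon>0$. This part of the argument is unchanged, so the whole difficulty lies in the ratio of operator norms.

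\textbf{Controlling the ratio.} Write $g_j=(f_\omega^{j})^*$ and $h_j=(f_{\sigma^j\omega}^{n-j})^*$, so that $(f_\omega^n)^*=g_j h_j$ by the anti-homomorphism property. We need
\begin{align*}
\frac{\norm{h_j}}{\norm{g_jh_j}}\lesssim e^{-(\lambda_\mu-\epsilon)j}
\end{align*}
uniformly in $n\geq j$.

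In the Cartan picture of Section \ref{sec:adapted} we have $\norm{g_jh_j}\geq \norm{h_j}\,\norm{g_j^{t}e_+(h_j^{t})}$ up to constants, using the expanding singular direction. Since $\norm{g_j}\approx e^{\lambda_\mu j}$, it therefore suffices to show that the unit isotropic vector $e_+(h_j^t)\in\NS(X)_+$ is not too close to the contracting direction of $g_j^t$. Concretely, we want the angle between them to be at least $e^{-\epsilon j}$.

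Here the exponential moment \eqref{exponential} enters. The cocycle then satisfies the large-deviation estimates and the Hölder regularity of the Furstenberg measure due to Le Page, Benoist--Quint and Bougerol--Lacroix; this is what Proposition 5.14 of \cite{MR4635340} packages. Concretely we get:
\begin{itemize}
\item[(a)] $\mu^\Z\bigl(|\ln\norm{g_j}-\lambda_\mu j|>\epsilon j\bigr)\leq Ce^{-cj}$;
\item[(b)] uniformly over unit vectors $u$ in the closure of the positive cone, $\mu^\Z\bigl(\angle(u, e_-(g_j^t))<e^{-\epsilon j}\bigr)\leq Ce^{-cj}$, which follows from the Hölder regularity of $\mu_{\mathcal F}$.
\end{itemize}

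Since $h_j$ depends only on $\omega_j,\dots,\omega_{n-1}$, it is independent of $g_j$. Conditioning on $h_j$ and applying (b) with $u=e_+(h_j^t)$ bounds the probability of the bad event $B_{n,j}$ by $Ce^{-cj}$, uniformly in $n$.

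\textbf{The main obstacle.} The bound on $B_{n,j}$ is uniform in $n$, but summing over all pairs $(n,j)$ diverges, so Borel--Cantelli cannot be applied directly. I would try to remove the dependence on $n$ as follows. The direction $e_+(h_j^t)$ converges exponentially fast as $n\to\infty$, at a rate $e^{-c(n-j)}$ off another exceptional set of probability $e^{-c(n-j)}$, to the limit class $e^+(\sigma^j\omega)$. That limit is independent of $\omega_0,\dots,\omega_{j-1}$. One then uses the single family of events $\{\angle(e^+(\sigma^j\omega),e_-(g_j^t))<2e^{-\epsilon j}\}$, which is summable in $j$.

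The remaining range, where $n-j$ is small compared with $j$, is handled crudely. For $n-j\leq \epsilon' j$, moment bounds give $\norm{h_j}\leq e^{\epsilon'' j}$, while $\norm{g_jh_j}\geq e^{(\lambda_\mu-\epsilon)n}$ on the complement of event (a) at time $n$. The ratio is therefore still exponentially small in $j$.

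Combining the two ranges, the series is dominated by $\sum_j e^{-(\lambda_\mu-3\epsilon)j}$ for all $n$, with a constant depending only on $\omega$. This gives the uniform bound for every $n\in\N$, so the sequence $(n_i)$ in Corollary \ref{cor:C0Bound} can be taken to be $\N$.
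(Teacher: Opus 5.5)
Your argument is sound in outline, but it takes a different route from the paper. The paper proves nothing new here; it only cites Cantat--Dujardin. Their Proposition 5.14 is already a weighted bound, uniform in $n$, on sums of ratios of norms of products. The paper invokes it in that form, with $D\equiv 1$, in the lower-density part of the proof of Proposition \ref{lem:last2}. Taking the weight $\norm{f}_{C^1}^2$, it bounds the right-hand side of \eqref{ugh2}, divided by $\norm{(f^n_\omega)^*}$, for every $n$ at once, so the Gou\"ezel--Karlsson step in the proof of Lemma \ref{lem:C0Bound} simply disappears.

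Your description of what Proposition 5.14 ``packages'' is therefore off. The large deviation and regularity facts (a), (b) are ingredients of such a statement, and your proposal is in effect a reconstruction of it. What your version buys is that it isolates where \eqref{exponential} is really needed. Exponential tails make the exceptional events for the convergence $e_+(h_j^t)\to e^+(\sigma^j(\omega))$ summable over all pairs $(j,n)$ with $n-j\geq\epsilon' j$. This is exactly what is unavailable under \eqref{int}, and its absence is what forces the density-$(1-\rho)$ subsequence there. In fact the lower bound along the limit direction needs no regularity of $\mu_{\mathcal F}$ at all. The class $(f^j_\omega)^*e^+(\sigma^j(\omega))$ is a multiple of $e^+(\omega)$, and the logarithm of that multiple is the Birkhoff sum of $\ln M(\omega_0^*e^+(\sigma(\omega)))$, which has mean $\lambda_\mu$ by Theorem \ref{thm:furst}.

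Two points need repair.

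First, the transpose is misplaced. The map $h_j$ sends $e_+(h_j)$ to $\norm{h_j}\,e_+(h_j^t)$, so applying $g_jh_j$ to $e_+(h_j)$ gives $\norm{g_jh_j}\geq\norm{h_j}\,\norm{g_j\,e_+(h_j^t)}$. Your version with $g_j^t$ can fail by a factor of order $\norm{g_j}^2$; take $e_+(h_j^t)=e_-(g_j)=e_+(g_j^t)$, which occurs in $\Orth^+(H^{1,1})$. So the direction to avoid is $e_-(g_j)$, not $e_-(g_j^t)$. Estimate (b) for $e_-(g_j)$ still holds. It follows from the uniform large deviation bound $\mu^\Z(\norm{g_ju}\leq e^{(\lambda_\mu-\epsilon)j})\leq Ce^{-cj}$ over unit $u\in\NS(X)_+$, since $g_ju$ has the same law as $\omega_{j-1}^*\cdots\omega_0^*u$. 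The passage from $e^+(\sigma^j(\omega))$ to $e_+(h_j^t)$ is then just $\norm{g_j(u-u')}\leq\norm{g_j}\,\norm{u-u'}$.

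Second, make the bookkeeping explicit.
\begin{itemize}
\item The exceptional sets for the convergence of $e_+(h_j^t)$ have probability $\lesssim e^{-c(n-j)}$. This is summable only on the range $n-j\geq\epsilon' j$, because $\sum_j\sum_{k\geq\epsilon' j}e^{-ck}<\infty$.
\item On that range you need $e^{-c\epsilon' j}\ll e^{-\epsilon j}$. So fix $\epsilon'$ first, small enough for the other range, and then choose the angle exponent $\epsilon<c\epsilon'$.
\item On the range $n-j<\epsilon' j$, the termwise Borel--Cantelli bound $\ln\norm{\omega_i^*}\leq\delta i$ only gives $\ln\norm{h_j}\lesssim\delta\epsilon' j^2$. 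Use Birkhoff's theorem for $\ln\norm{\omega_i^*}$, or an exponential Chebyshev bound, to get $\ln\norm{h_j}\leq\bigl(\epsilon'\int\ln\norm{f^*}\,d\mu(f)+\delta\bigr)j+C(\omega)$.
\end{itemize}

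With these adjustments, absorb the finitely many small $j$ and the finitely many exceptional pairs into the constant. Your two-range argument then gives $\sup_n\norm{(f^n_\omega)^*}^{-1}\sum_{j=1}^n\norm{(f^{n-j}_{\sigma^j(\omega)})^*}\norm{\omega_{j-1}}_{C^1}^2<\infty$ almost surely, which is what the statement requires.
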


		\subsection{Wedges of currents}  \label{sec:wedges}
		A complete reference for the following can be found in \cite{Demailly2009}. Let $T$ and $S$ be closed positive $(1,1)$-currents on $X$. Assume that $S$ has continuous potentials, meaning that $\phi(S)$ is continuous. Let $T$ be supported on some open set $U$ in $X$. Then $T\wedge S$ is a Radon measure on $U$ defined locally by
	\begin{align}\label{wedge}
			\int_X \phi\,d(S\wedge T)  \coloneqq (T\mid u_S dd^c \phi )
	\end{align} where $\phi$ is a compactly supported test function with support contained in some open set $V\subset U$ and $u_S:V\to \R$ satisfies $dd^c u_S = \restr{S}{V}$ (in the distributional sense). When both $S$ and $T$ have continuous potentials, this definition is symmetric. 

		Now suppose $T$ is not assumed to be closed. Then $T\wedge S$ still defines a Radon measure on $X\setminus \supp \partial T$ as in \eqref{wedge}. The measure $T\wedge S$ extends naturally to a Radon measure on $X$ by taking the product with $\mathds{1}_{X\setminus \supp \partial T}$. If $\rho$ is a nonnegative function on $X$ that is integrable with respect to $T\wedge S$, then the product $ \rho(T\wedge S)$ is a positive measure, and we define 
	\begin{align*}
	(\rho T) \wedge S = S\wedge (\rho T) \coloneqq \rho(T\wedge S).
	\end{align*}
	For $f\in \Aut(X)$, we define  
	\begin{align*}
		 	f_*(\psi T) \wedge S = S\wedge f_*(\psi T)  \coloneqq f_*((f^* S) \wedge (\psi T)).
	\end{align*}

	\begin{lem}\label{lem:currClosed}
	Suppose $(T_n)$ is a sequence of closed positive $(1,1)$-currents converging weakly as $n\to \infty$ to a current $T$ with continuous potentials.  Let $(B_n)$ be a sequence of closed positive $(1,1)$-currents such that $\set{[B_n]}$ is bounded in cohomology and $\set{\phi(B_n)}$ is an equicontinuous family admitting a uniform $C^0$ bound. Then 
	\begin{align*}
		T_n\wedge  B_n - T \wedge B_n \to 0
	\end{align*} in the weak sense of measures. 
\end{lem}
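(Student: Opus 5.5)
To prove Lemma \ref{lem:currClosed}, I pair both measures with a smooth test function, reduce to a statement about potentials using the Bedford--Taylor definition \eqref{wedge}, and exploit the uniform continuity of the potentials of $B_n$. Since $B_n$ has continuous potentials, $T_n\wedge B_n$ is well-defined, and so is $T\wedge B_n$. The masses $\int T_n\wedge B_n = \angle{[T_n],[B_n]}$ are uniformly bounded because $[T_n]\to[T]$ and $\set{[B_n]}$ is bounded. Hence it suffices to test against smooth $\chi$; density of smooth functions in $C(X)$ then handles continuous test functions.

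First I use the global potentials of Section \ref{sec:Potentials}. Write $B_n = \Theta(B_n) + dd^c u_n$ with $u_n \coloneqq \phi(B_n)$. By definition of the wedge,
\begin{align*}
\int \chi \, d(T_n\wedge B_n) = (T_n \mid \chi\,\Theta(B_n)) + (T_n \mid u_n\, dd^c\chi) + \text{(terms with } du_n\wedge d^c\chi\text{)},
\end{align*}
and the same holds with $T$ in place of $T_n$. To avoid the gradient terms, I use the symmetric formulation $\int \chi\, T_n\wedge dd^c u_n = \int u_n\, T_n\wedge dd^c\chi$. This follows by integration by parts, since $T_n$ is closed, and is legitimate for continuous $u_n$ by approximation. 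This leaves two terms. The term $(T_n - T\mid \chi\,\Theta(B_n))$ tends to $0$ because $\Theta(B_n)$ ranges over a finite-dimensional bounded family of smooth forms, on which the weak convergence $T_n\to T$ is uniform. The remaining term is $(T_n - T \mid u_n\, dd^c\chi)$.

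The main step is to show that this last term tends to $0$. By Arzel\`a--Ascoli, $\set{u_n}$ is precompact in $C^0(X)$. I argue by subsequences: suppose the term stays bounded away from $0$ along some subsequence, and pass to a further subsequence with $u_n\to u$ uniformly. Then
\begin{align*}
(T_n - T\mid u_n\, dd^c\chi) = (T_n - T \mid u\, dd^c\chi) + (T_n-T\mid (u_n-u)\,dd^c\chi).
\end{align*}
The second piece is bounded by $\norm{u_n-u}_{C^0}\norm{\chi}_{C^2}$ times the masses of $T_n$ and $T$, and these masses are bounded, so it tends to $0$. The first piece has a merely continuous coefficient $u$, so weak convergence of currents does not apply directly. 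I handle it by approximating $u$ uniformly by smooth functions $u^\epsilon$ with $\norm{u-u^\epsilon}_{C^0}<\epsilon$, which reduces it to $(T_n-T\mid u^\epsilon dd^c\chi) \to 0$ plus an error of $O(\epsilon)$, again using the uniform mass bound. This yields a contradiction.

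I expect the main obstacle to be justifying the integration-by-parts identity, namely that for closed positive $T_n$ and continuous $u_n$ one has $\int\chi\, T_n\wedge dd^c u_n=\int u_n\, T_n\wedge dd^c\chi$. Here $T_n$ itself need not have continuous potentials. I would first prove the identity for smooth $u_n$ and then pass to continuous $u_n$ using Demailly's regularization: choose smooth $u_n^\epsilon \to u_n$ uniformly with $\Theta(B_n) + dd^c u_n^\epsilon \geq -\epsilon\kappa_0$, and use Bedford--Taylor continuity of the wedge under uniform convergence of potentials. It is also worth checking where continuity of $T$'s potentials enters. It guarantees that $T\wedge B_n$ is symmetric and well-defined, so the limit object is the intended one. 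The bounds themselves never use it, which is consistent with the hypotheses.
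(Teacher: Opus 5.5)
Your argument is correct and is essentially the paper's proof. The paper pairs with a smooth $g$ supported in a bidisk and writes the local potential of $B_n$ as $u_{\Theta(B_n)}+\phi(B_n)$. It then concludes from precompactness of $\set{u_{B_n}dd^c g}$ in $C^0$ together with weak convergence of the coefficient measures of $T_n$, which is exactly your Arzel\`a--Ascoli step; you only split off the smooth $\Theta(B_n)$ part globally and spell out the passage from smooth to continuous test functions via the mass bound.

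The step you flag as the main obstacle needs no Demailly regularization. By definition \eqref{wedge}, $\int \chi\,d(T_n\wedge B_n)=(T_n\mid u_{B_n}dd^c\chi)$ for $\chi$ supported in a chart. So the pairing $(T_n\mid \phi(B_n)\,dd^c\chi)$ holds by definition, and only the smooth part $u_{\Theta(B_n)}$ needs the classical Stokes identity for closed $T_n$.
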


\begin{proof}[Proof of Lemma \ref{lem:currClosed}]
	Let $g \in C^\infty(X)$ be supported in an open bidisk $V\subset X$. For each $\kappa_i$ in Section \ref{sec:Potentials}, choose local potentials $\restr{\kappa_i}{V} = dd^c v_i$. Then if $\Theta(B_n) = \sum a_i \kappa_i$, let $u_{\Theta(B_n)} = \sum a_i v_i$ so that $\restr{\Theta(B_n)}{V} = dd^c u_{\Theta(B_n)} $. 
	Then
	$u_{B_n}\coloneqq u_{\Theta(B_n)}+\phi(B_n)$ is a local potential for $B_n$ on $V$.  By assumption, $\set{[B_n]}$ is bounded in cohomology and $\set{\phi(B_n)}$ is a uniformly bounded equicontinuous family. It then follows from our definition of $u_{B_n}$ and $u_{\Theta(B_n)}$ that $\set{u_{B_n}dd^cg}$ is precompact in the $C^0$-topology on coefficient functions. Recalling the definition of the wedge product from Section \ref{sec:wedges}, we have
	\begin{align*}
		(T_n \wedge B_n \mid g)
		&= (  T_n  \mid u_{B_n} dd^cg).
	\end{align*} Since the $T_n$'s are positive currents, the local coefficient measures of $T_n$ in $V$ converge to the corresponding coefficient measures of $T$. By precompactness of the family $\set{u_{B_n}dd^cg}$, it follows that $(T_n \mid u_{B_n}dd^c g) - (T \mid u_{B_n}dd^c g)\to 0$. Since $(T \mid u_{B_n}dd^c g) = (T \wedge B_n \mid g)$, the lemma is proven.
\end{proof}

\begin{lem}\label{lem:m_omega_measurability}
	The assignment $\omega \mapsto m_\omega$ is $\mu^\Z$-measurable with the weak topology on $\Prob(X)$. 
\end{lem}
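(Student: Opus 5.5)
We prove measurability of $\omega\mapsto T_\omega^+\wedge T_\omega^-$ first and then of its normalization. Since the weak topology on $\Prob(X)$ (and on the finite positive measures of bounded mass) is metrizable and its Borel $\sigma$-algebra is generated by the evaluation maps $\tau\mapsto\int g\,d\tau$ for $g$ in a countable dense subset of $C^\infty(X)$, it suffices to show that, for each fixed smooth $g$, the function $\omega\mapsto\int g\, d(T_\omega^+\wedge T_\omega^-)$ is $\mu^\Z$-measurable. By a partition of unity we may further assume $g$ is supported in a small open bidisk $V$, where the wedge is given by \eqref{wedge}.

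The plan is to exhibit this function as a pointwise limit of measurable functions. Fix $\rho>0$ and let $K_\rho\subset\Omega$ be the compact set given by Lemma \ref{lem:equicontinuous}, applied to the currents $T_\omega^-$ (by symmetry, Remark \ref{invertibility}). On $K_\rho$ the potentials $\phi(T_\omega^-)$ form an equicontinuous, uniformly bounded family. Moreover, by the proof via Scorza--Dragoni, the map $(\omega,x)\mapsto\phi(T_\omega^-)(x)$ is continuous on $K_\rho\times X$, so $\omega\mapsto \phi(T_\omega^-)$ is continuous from $K_\rho$ into $C^0(X)$.

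As in the proof of Lemma \ref{lem:currClosed}, write the local potential as $u_\omega = u_{\Theta(T_\omega^-)}+\phi(T_\omega^-)$ on $V$. Here $u_{\Theta(T_\omega^-)}$ depends linearly on the class $e^-(\omega)$, which is measurable in $\omega$ by Remark \ref{rmk:msbl}. Then
\[
\int g\, d(T^+_\omega\wedge T^-_\omega) = (T_\omega^+\mid u_\omega\, dd^c g).
\]
The map $(T,h)\mapsto (T\mid h)$ is jointly continuous on pairs consisting of a positive current of bounded mass (weak topology) and a smooth-coefficient test form with continuous coefficients ($C^0$ topology). Composing this with the measurable map $\omega\mapsto (T_\omega^+, u_\omega dd^c g)$ therefore gives a measurable function on $K_\rho$. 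Here measurability of $\omega\mapsto T_\omega^+$ again comes from Remark \ref{rmk:msbl}, and $T^\pm_\omega$ are defined on a common full-measure set. Taking $\rho=1/k$ and setting $K=\bigcup_k K_{1/k}$, which has full measure, the function is measurable on each $K_{1/k}$ and hence on $K$, so it is $\mu^\Z$-measurable.

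Finally, the total mass $\omega\mapsto\int_X T_\omega^+\wedge T_\omega^-$ is measurable by taking $g\equiv1$. In fact it equals $\langle e^+(\omega),e^-(\omega)\rangle$, which is positive almost surely by Remark \ref{rmk:FurstBack}. Dividing by it preserves measurability, which gives the lemma.

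The main obstacle is the joint continuity step: weak convergence of the $T$'s alone does not control $(T\mid h)$ unless the test forms $h$ converge uniformly. This is exactly why we restrict to the sets $K_\rho$, where the potentials vary continuously in $C^0$, rather than working with the merely $L^1$-measurable map of Lemma \ref{lem:L1}.
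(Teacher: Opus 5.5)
Your proposal is correct and follows essentially the paper's route. Both restrict to the large compact sets of Lemma \ref{lem:equicontinuous}, where the potentials are uniformly controlled. Both compute the wedge through the local-potential pairing that underlies Lemma \ref{lem:currClosed}, and then normalize by the almost surely positive mass $\langle e^+(\omega),e^-(\omega)\rangle$. The only difference is cosmetic: the paper also applies Lusin's theorem so that $\omega\mapsto T^\pm_\omega$ is continuous on $K$, and deduces continuity of $\omega\mapsto m_\omega$ there. You instead compose measurable maps with the jointly continuous pairing and obtain measurability directly.
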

\begin{proof}[Proof of Lemma \ref{lem:m_omega_measurability}]
 	For all $\rho > 0$, we may use Lusin's theorem and Lemma \ref{lem:equicontinuous} to find a compact set $K\subset \Omega$ such that
 	\begin{enumerate}
 		\item $\mu^\Z(K) > 1 - \rho$,
 		\item the assignments $\omega \mapsto T_\omega^\pm$ are continuous on $K$,
 		\item  the family \[\set{\phi(T_\omega^+) \mid \omega \in K}\] is equicontinuous and admits a uniform $C^0$-bound,
 		\item and the function 
 		\begin{align*}
 			\beta(\omega)\coloneqq \int_X T_\omega^+\wedge T_\omega^- = \angle{e^+(\omega), e^-(\omega)}
 		\end{align*} is continuous and positive on $K$.
 	\end{enumerate} 
 By Lemma \ref{lem:currClosed}, the restriction of $\omega \mapsto T_\omega^+\wedge T_\omega^-$ to $K$ is continuous. After normalizing by $\beta$, the assignment $\omega\mapsto m_\omega$ is also continuous on $K$. Since $\rho$ was arbitrary, the result follows.
\end{proof}

		\subsection{Slice measures}\label{sec:slices}
		Let $C$ be a Riemann surface and $S$ a closed positive $(1,1)$-current with continuous potentials. Given an injective holomorphic immersion $\zeta:C \to X$ and a continuous local potential $u_S$ for $S$, the pull-back $\zeta^*S$ is a positive $(1,1)$-current defined locally by the potential $u_S\circ \zeta$, meaning 
	\begin{align*}
		(\zeta^* S \mid f)  = \int_C u_S \circ \zeta \, dd^c f 
	\end{align*} locally for test functions. 
		 Since $\zeta^*S$ is a positive current on $C$ of top degree, it defines a positive Radon measure. Abusing notation, we denote this measure and the pushed-forward measure on $\zeta(C)\cong C$ by $\restr{S}{\zeta(C)}$. This is called the \emph{slice} of $S$ by $\zeta(C)$, and, as a measure on $\zeta(C)$, it does not depend on the specific parametrization $\zeta$.
	
	Given a Borel-measurable set $A\subset C$, define a positive measure $\restr{S}{\zeta(A)}$ on $X$ by
	\begin{align*}
		\restr{S}{\zeta(A)} \coloneqq \mathds{1}_{\zeta(A)} 	\restr{S}{\zeta(C)}
	\end{align*} in the sense that
	\begin{align*}
		\int_{X} f \,d \big(\restr{S}{\zeta(A)}\big) \coloneqq 	\int_{\zeta(C)} f \mathds{1}_{\zeta(A)}\,d \big(\restr{S}{\zeta(C)}\big)
	\end{align*} for measurable $f\colon X\to \R$. If $A$ is contained in a compact subset of $C$, then $\restr{S}{\zeta(A)}$ is finite and Radon.

As suggested by the following lemma, slicing $S$ is essentially the same as wedging $S$ with a current of integration. 
\begin{lem}\label{lem:elementary}
	Let $\zeta:\C \to X$ be an injective holomorphic immersion, $\overline \Delta \subset \C$ a closed disk, $S$ a closed positive $(1,1)$-current with continuous potentials, and $\psi\colon X\to \R$ a smooth nonnegative function satisfying $\supp \psi \cap \zeta(\partial  \overline \Delta) = \varnothing$. Let $[\zeta(\overline \Delta)]$ denote the current of integration of $\zeta(\overline \Delta)$. Then
	\begin{align*}
		S \wedge \psi [\zeta(\overline \Delta)] = \psi \restr{S}{\zeta(\overline \Delta)}.
	\end{align*}
\end{lem}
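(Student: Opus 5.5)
The plan is to verify the identity locally and then patch the pieces together with a partition of unity. Both sides are positive Radon measures on $X$. The support of each is contained in $\supp\psi\cap\zeta(\overline\Delta)$, which is a compact set disjoint from $\zeta(\partial\overline\Delta)$. Indeed, $\supp\partial[\zeta(\overline\Delta)]\subset\zeta(\partial\overline\Delta)$, so the left side is $\psi$ times the measure $S\wedge[\zeta(\overline\Delta)]$ defined on $X\setminus\zeta(\partial\overline\Delta)$. It therefore suffices to show that
\[
  S\wedge[\zeta(\overline\Delta)] \;=\; \restr{S}{\zeta(\overline\Delta)}
\]
as measures on $X\setminus\zeta(\partial\overline\Delta)$, and then multiply both sides by $\psi$.

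Fix a point $p$ in the compact set above; then $p=\zeta(z)$ for some $z$ in the open disk $\Delta$. Injectivity of $\zeta$ is not quite enough to make the image locally a single embedded sheet, since $\zeta(\C)$ might accumulate on itself. However, $\zeta(\overline\Delta)$ is the image of a compact set under an injective immersion, so $\zeta|_{\overline\Delta}$ is a homeomorphism onto its image and, restricted to a neighbourhood of $\overline\Delta$, an embedding. Hence we can choose a small open bidisk $V\ni p$ with the following properties: $V$ misses $\zeta(\partial\overline\Delta)$, $V\cap\zeta(\overline\Delta)=\zeta(D)$ for a small open disk $D\subset\Delta$ around $z$, and $S|_V=dd^c u_S$ with $u_S$ continuous on $V$. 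Only the part $\zeta(\overline\Delta)$ matters, because the current of integration only sees that part.

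Now take a test function $g$ supported in $V$. By the definition \eqref{wedge},
\[
  \bigl(S\wedge[\zeta(\overline\Delta)]\mid g\bigr)
  = \bigl([\zeta(\overline\Delta)]\mid u_S\,dd^c g\bigr)
  = \int_{D}(u_S\circ\zeta)\,\zeta^*(dd^c g)
  = \int_D (u_S\circ\zeta)\,dd^c(g\circ\zeta),
\]
where the last equality holds because pullback by the holomorphic map $\zeta$ commutes with $d$ and $d^c$. The function $g\circ\zeta$ is compactly supported in $D$, so by the definition of the slice in Section~\ref{sec:slices} the right-hand side is $\int g\,d\bigl(\restr{S}{\zeta(\overline\Delta)}\bigr)$. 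One should also check that this does not depend on the choice of local potential. Two potentials differ by a pluriharmonic function $h$, and $\int_D (h\circ\zeta)\,dd^c(g\circ\zeta)=\int_D g\circ\zeta\,dd^c(h\circ\zeta)=0$, since $h\circ\zeta$ is harmonic.

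Finally, cover the compact set by finitely many such bidisks $V_j$, add one more open set $V_0=X\setminus(\supp\psi\cap\zeta(\overline\Delta))$, and take a partition of unity subordinate to this cover. The local identities then give equality of $\psi\,S\wedge[\zeta(\overline\Delta)]$ and $\psi\,\restr{S}{\zeta(\overline\Delta)}$ against every continuous test function; for continuous rather than smooth test functions, approximate uniformly by smooth ones, using that both measures are finite. The only delicate point is the local embeddedness of $\zeta(\overline\Delta)$ near points away from its boundary, which is handled by the compactness argument above; everything else is routine.
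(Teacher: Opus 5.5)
Your proposal is correct and follows essentially the same route as the paper. Both arguments test the two sides against functions supported where $S=dd^c u_S$ and reduce each to the same integral $\int_{\overline\Delta}(u_S\circ\zeta)\,dd^c\bigl((h\psi)\circ\zeta\bigr)$. Your additional care about local embeddedness of $\zeta(\overline\Delta)$ and the partition-of-unity globalization fills in details the paper leaves implicit.
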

\begin{proof}[Proof of Lemma \ref{lem:elementary}]
	Let $h\colon X\to \R$ be a test function on a neighborhood where $S = dd^c u_S$. Integrating $h$ over the left side gives 
	\begin{align*}
	\int_X h \,d  (S \wedge \psi [\zeta(\overline \Delta)] )&= \int_X h \cdot \psi \, d (S \wedge  [\zeta(\overline \Delta)] ) \\
	&= \int_X h \cdot  \psi \cdot \mathds{1}_{X\setminus  \zeta(\partial \overline \Delta)} \,d  (S \wedge  [\zeta(\overline \Delta)] ) \\
	&= \int_{\overline \Delta }(u_S \circ \zeta) dd^c((h\cdot \psi)\circ \zeta)  
\end{align*} where the second line holds by definition of the extension of $S \wedge  [\zeta(\overline \Delta)]$ to $X$, and the third line holds since $h \cdot \psi \cdot \mathds{1}_{X\setminus  \zeta(\partial \overline \Delta)} = h\cdot \psi$ is smooth. Similarly,
	\begin{align*}
	\int_X h\cdot  \psi \,d \big(\restr{S}{\zeta(\overline \Delta)}\big) &= \int_{\zeta(\C)} h \cdot \psi \cdot \mathds{1}_{\zeta(\overline \Delta)}\, d \big(\restr{S}{\zeta(\C)} \big)\\
	&= \int_{\C} (u_S\circ \zeta) \, dd^c((h \cdot \psi\cdot \mathds{1}_{\zeta(\overline \Delta)})\circ \zeta) \\
	&= \int_{\overline \Delta} (u_S\circ \zeta)  \, dd^c((h \cdot \psi)\circ \zeta)
		\end{align*}  where the second line holds since $(h\cdot  \psi \cdot \mathds{1}_{\zeta(\overline \Delta)})\circ \zeta$ is smooth on $\C$.
\end{proof}

We now prove two lemmas which will be useful in the next section.
	\begin{lem}\label{lem:weakconvergence}
		Let $(T_n)$ be a sequence of closed positive $(1,1)$-currents on $X$, and let $T$ be a closed positive $(1,1)$-current. Assume that $T_n$ (resp. $T$) has continuous normalized potential $\phi(T_n)$ (resp. $\phi(T)$), that $T_n\to T$ weakly, and that $\phi(T_n) \to \phi(T)$ uniformly. Let $U$ be an open subset of $\C$, and let $\zeta_n: U \to X$ be a sequence of holomorphic maps converging locally uniformly to a holomorphic map $\zeta:U\to X$. Then the measures $\zeta_n^*T_n$ converge weakly to $\zeta^*T$.
	\end{lem}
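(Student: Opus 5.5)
The statement is local on $U$, so I will fix a test function $g \in C_c^\infty(U)$ and show $(\zeta_n^* T_n \mid g) \to (\zeta^* T \mid g)$.

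\textbf{Step 1: localize.} Using a partition of unity on $\supp g$, I may assume $\supp g$ lies in a small closed disk $D \subset U$. Since $\zeta_n \to \zeta$ uniformly on $D$, there is an open bidisk (coordinate ball) $V \subset X$ with $\zeta(D) \subset V$, and $\zeta_n(D) \subset V$ for all large $n$. On $V$ I build local potentials as in the proof of Lemma \ref{lem:currClosed}. I write $\restr{\kappa_i}{V} = dd^c v_i$ with $v_i$ smooth, and $\Theta(T_n) = \sum a_i^{(n)}\kappa_i$, $\Theta(T) = \sum a_i \kappa_i$. I then set
\[
u_n \coloneqq \sum_i a_i^{(n)} v_i + \phi(T_n), \qquad u \coloneqq \sum_i a_i v_i + \phi(T).
\]
These satisfy $dd^c u_n = \restr{T_n}{V}$ and $dd^c u = \restr{T}{V}$, and they are continuous. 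Since $T_n \to T$ weakly, $[T_n] \to [T]$, so $a_i^{(n)} \to a_i$ by linearity of $\Theta$. Together with $\phi(T_n) \to \phi(T)$ uniformly, this gives $u_n \to u$ uniformly on $V$.

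\textbf{Step 2: uniform convergence of the pulled-back potentials.} By the definition in Section \ref{sec:slices},
\[
(\zeta_n^*T_n \mid g) = \int_D (u_n\circ\zeta_n)\, dd^c g, \qquad (\zeta^* T \mid g) = \int_D (u \circ \zeta)\, dd^c g.
\]
These are well defined because the slice does not depend on the choice of local potential: two choices differ by a pluriharmonic function, whose pullback is harmonic and integrates to zero against $dd^c g$. I then estimate
\[
\|u_n\circ \zeta_n - u\circ\zeta\|_{C^0(D)} \le \|u_n - u\|_{C^0(V)} + \|u\circ\zeta_n - u\circ\zeta\|_{C^0(D)}.
\]
The first term tends to $0$ by Step 1. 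The second tends to $0$ because $u$ is uniformly continuous on a compact neighborhood of $\zeta(D)$ and $\zeta_n \to \zeta$ uniformly on $D$. Since $dd^c g$ is a smooth compactly supported form, the integrals converge, which gives $(\zeta_n^*T_n \mid g) \to (\zeta^*T \mid g)$.

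\textbf{Step 3: from test functions to weak convergence of measures.} The measures $\zeta_n^*T_n$ are positive. Step 2 therefore gives convergence against all smooth compactly supported functions. Positivity gives local mass bounds: the mass on a compact set $K$ is bounded by testing against a fixed nonnegative bump equal to $1$ on $K$. By density of smooth functions in $C_c(U)$, this upgrades to convergence against all of $C_c(U)$, i.e.\ vague/weak convergence on $U$.

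The main point needing care is Step 1. The normalized potentials are global, but the local potentials must be uniformly controlled on a single chart containing all $\zeta_n(D)$. This is handled by the smallness of $D$ and the explicit construction via $\Theta$.
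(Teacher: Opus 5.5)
Your proof is correct and follows essentially the same route as the paper: localize to a bidisk $V$ containing $\zeta(\supp g)$ and, for large $n$, $\zeta_n(\supp g)$; build local potentials $u_n$ by adding $\phi(T_n)$ to smooth local potentials of $\Theta(T_n)$; deduce that $u_n\circ\zeta_n\to u\circ\zeta$ uniformly; and pass to the limit in $\int (u_n\circ\zeta_n)\,dd^c g$. Your partition-of-unity reduction and your upgrade from smooth to continuous test functions via positivity are details the paper leaves implicit, and the one small slip is harmless: $u_n\to u$ is only guaranteed uniformly on compact subsets of $V$, since the $v_i$ need not be bounded on $V$, but that is all your Step 2 actually uses.
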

	\begin{proof}[Proof of Lemma \ref{lem:weakconvergence}]
		Let $V\subset X$ be biholomorphic to $\mathbb D\times \mathbb D$ and $h:\C\to \R$ a smooth test function whose support $K$ satisfies $\zeta(K)\subset V$. For all sufficiently large $n$, local uniform convergence gives $\zeta_n(K)\subset V$. On $V$, write
		\begin{align*}
			\restr{T_n}{V}=dd^c u_n,\hspace{1 cm} \restr{T}{V}=dd^c u,
		\end{align*}
		where $u_n$ (resp. $u$) is a continuous plurisubharmonic potential for $\restr{T_n}{V}$ (resp. $\restr{T}{V}$). Since $T_n\to T$ weakly, we have $\Theta(T_n)\to \Theta(T)$. The potentials $u_n$ and $u$ are obtained by adding $\phi(T_n)$ and $\phi(T)$ to local smooth potentials for $\Theta(T_n)$ and $\Theta(T)$, respectively. Hence $\phi(T_n)\to\phi(T)$ uniformly implies that $u_n\to u$ locally uniformly on $V$. Thus $u_n\circ \zeta_n\to u\circ \zeta$ uniformly on $K$, and by dominated convergence
		\begin{align*}
			\int h \, d \zeta_n^*T_n
			= \int u_n\circ \zeta_n\, dd^c h \to 	\int u\circ \zeta\, dd^c h =\int h\, d \zeta^*T.
		\end{align*}
	\end{proof}

	\begin{lem}\label{lem:weakconvergence-slices}
		In the setting of Lemma \ref{lem:weakconvergence}, let $\psi\colon U\to \R_{\geq 0}$ be smooth and compactly supported in $U$, so $\supp \psi\cap \partial U=\varnothing$. Then the measures
		\begin{align*}
			(\zeta_n)_*\bigl(\psi\,\zeta_n^*T_n\bigr)
		\end{align*}
		converge weakly to $\zeta_*(\psi\,\zeta^*T)$. Note here that these are measures on $X$. 
		
	\end{lem}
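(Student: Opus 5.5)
We test the measures against a continuous function $h\colon X\to\R$ and reduce to Lemma \ref{lem:weakconvergence}. By definition of push-forward,
\begin{align*}
	\int_X h\, d(\zeta_n)_*\bigl(\psi\,\zeta_n^*T_n\bigr) = \int_U (h\circ\zeta_n)\,\psi \, d\zeta_n^*T_n .
\end{align*}
Let $K\coloneqq\supp\psi$, which is compact in $U$. The function $(h\circ \zeta)\psi$ is continuous and compactly supported in $U$. The goal is therefore to show
\begin{align*}
	\int_U (h\circ\zeta_n)\psi\, d\zeta_n^*T_n \to \int_U (h\circ\zeta)\psi\, d\zeta^*T .
\end{align*}

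We split the difference into two terms. The first is
\begin{align*}
	\int_U \bigl((h\circ\zeta_n)-(h\circ\zeta)\bigr)\psi\, d\zeta_n^*T_n .
\end{align*}
The second is
\begin{align*}
	\int_U (h\circ\zeta)\psi\, d\zeta_n^*T_n - \int_U (h\circ\zeta)\psi\, d\zeta^*T .
\end{align*}

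\emph{Second term.} Lemma \ref{lem:weakconvergence} gives weak convergence $\zeta_n^*T_n\to\zeta^*T$ of positive Radon measures on $U$. This is convergence against smooth compactly supported test functions. Since the measures are positive, it extends to continuous compactly supported test functions by a standard density argument: approximate uniformly on a fixed compact neighborhood $K'$ of $K$, and use a uniform bound on the masses $\zeta_n^*T_n(K')$. That mass bound follows from testing against a smooth cutoff $\chi\geq \mathds 1_{K'}$ and using convergence of $\int\chi\, d\zeta_n^*T_n$. Applying this with the test function $(h\circ\zeta)\psi$ shows that the second term tends to $0$.

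\emph{First term.} This term is bounded by
\begin{align*}
	\sup_K|h\circ\zeta_n-h\circ\zeta|\cdot \sup\psi\cdot \zeta_n^*T_n(K).
\end{align*}
The middle factor is a constant. The last factor is bounded uniformly in $n$ by the mass bound just described. The first factor tends to $0$ because $h$ is uniformly continuous on the compact space $X$ and $\zeta_n\to\zeta$ uniformly on $K$. Hence the first term also tends to $0$.

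The only mildly delicate point is the uniform local mass bound for $\zeta_n^*T_n$ on $K'$. We obtain it by choosing a smooth $\chi\geq 0$ with $\chi\equiv1$ on $K'$ and compact support in $U$. Lemma \ref{lem:weakconvergence} gives
\begin{align*}
	\int\chi\,d\zeta_n^*T_n\to\int\chi\,d\zeta^*T<\infty,
\end{align*}
so these integrals are bounded in $n$. Positivity then yields $\zeta_n^*T_n(K')\le\int\chi\,d\zeta_n^*T_n$, which gives the bound.
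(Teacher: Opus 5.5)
Your proof is correct, but it follows a different route from the paper. The paper does not use the conclusion of Lemma~\ref{lem:weakconvergence} as a black box. Instead it returns to local potentials and writes, for a smooth test function $g$ on $X$,
\[
\int_X g\,d(\zeta_n)_*\bigl(\psi\,\zeta_n^*T_n\bigr)=\int_U (u_n\circ\zeta_n)\,dd^c\bigl(\psi\cdot(g\circ\zeta_n)\bigr).
\]
It then passes to the limit using two facts: uniform convergence $u_n\circ\zeta_n\to u\circ\zeta$ on $\supp\psi$, and the fact that locally uniform convergence of holomorphic maps is $C^\infty$ convergence on compacts, so $dd^c(\psi\cdot(g\circ\zeta_n))\to dd^c(\psi\cdot(g\circ\zeta))$ uniformly.

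You instead quote the measure-level convergence $\zeta_n^*T_n\to\zeta^*T$ on $U$ and finish with a soft argument. You split off the error from replacing $h\circ\zeta_n$ by $h\circ\zeta$, and control it by uniform continuity of $h$ together with a uniform local mass bound coming from positivity. You handle the remaining term by extending vague convergence from smooth to continuous compactly supported test functions.

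What your approach buys: it needs only $C^0$ convergence of $\zeta_n$ on $\supp\psi$ and tests directly against all of $C(X)$. In fact it proves a general statement with no pluripotential content: vaguely convergent positive measures, cut off by a fixed compactly supported $\psi$ and pushed forward by uniformly convergent maps, converge weakly.

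What the paper's approach buys: it is a short computation at the potential level that mirrors the proof of Lemma~\ref{lem:weakconvergence}. The cost is that it requires smooth test functions and derivative control of $g\circ\zeta_n$. Smooth test functions suffice there because the total masses converge, as one sees by taking $g\equiv 1$.
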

	\begin{proof}[Proof of Lemma \ref{lem:weakconvergence-slices}]
		Let $g\colon X\to\R$ be a smooth test function. As in Lemma \ref{lem:weakconvergence}, we work in a neighborhood where $T_n=dd^c u_n$ and $T=dd^c u$, with $u_n\circ \zeta_n\to u\circ \zeta$ uniformly on $\supp\psi$. Since $\zeta_n\to \zeta$ locally uniformly and the maps are holomorphic, the convergence is $C^\infty$ on $\supp\psi$. Hence $dd^c(\psi\cdot (g\circ \zeta_n))\to dd^c(\psi\cdot (g\circ \zeta))$ uniformly on $\supp\psi$. Therefore
		\begin{align*}
			\int_X g\, d(\zeta_n)_*\bigl(\psi\,\zeta_n^*T_n\bigr)
			=
			\int_U u_n\circ \zeta_n\, dd^c(\psi\cdot (g\circ \zeta_n)) 
			&\to
			\int_U u\circ \zeta\, dd^c(\psi\cdot (g\circ \zeta)) \\
			& =
			\int_X g\,d\zeta_*(\psi\,\zeta^*T).
		\end{align*}
	\end{proof}

	\subsection{Measurability of slices of limit currents along stable/unstable manifolds}
	Given an ergodic and hyperbolic measure $\nu \in S(\mu)$, Pesin theory (see Proposition 7.9 of \cite{MR4635340}, or Theorem 6.4 of \cite{MR3671937} for our skew-product setting) gives local stable and unstable manifolds
	\begin{align*}
		W^{s/u,\mathrm{loc}}_\omega(x)\subset W^{s/u}_\omega(x)
	\end{align*}
	for $\nu$-almost every $(\omega,x)$. In our setting, these	local manifolds are holomorphic disks, which may be controlled on Pesin sets.
		More precisely, for every $\epsilon>0$ there is a compact set $P\subset \Omega\times X$ with $\nu(P)>1-\epsilon$ such that the local stable and unstable disks have uniform size on $P$ and vary continuously there in the $C^1$ topology on disks, in the sense of Sections~7.4.1 and 7.4.3 of \cite{MR4635340}. 

	\begin{lem}\label{lem:measurable-slices}
		Let $\nu\in S(\mu)$ be ergodic and hyperbolic. Fix $\star\in\set{+,-}$ and $\bullet\in\set{s,u}$. Then the assignment to slices of $T_\omega^\star$ along the local $\bullet$-manifolds,
		\begin{align*}
			(\omega, x)\mapsto \restr{T_\omega^\star}{W^{\bullet,\mathrm{loc}}_\omega(x)},
		\end{align*}
		is $\nu$-measurable as a map into finite Borel measures on $X$ endowed with the weak topology. That is, the assignment above is a finite $\nu$-measurable kernel from $\Omega\times X$ to $X$. 
	\end{lem}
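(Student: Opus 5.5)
By Remark \ref{invertibility} (the Flip conjugacy exchanges $T^+$ and $T^-$ and exchanges stable and unstable manifolds), it suffices to treat one choice of $(\star,\bullet)$, say $\star=+$, $\bullet=u$. The other three cases then follow by the same argument, or by applying it to $\hat\nu$ and $\hat F$. The strategy mirrors Lemma \ref{lem:m_omega_measurability}: produce, for every $\epsilon>0$, a compact set of measure $>1-\epsilon$ on which the assignment is \emph{continuous}. A map into a Polish space of finite measures that is continuous on compact sets of measure arbitrarily close to $1$ is $\nu$-measurable for the completed $\sigma$-algebra. Indeed, the union of countably many such compacts has full measure, and on each compact the map is Borel.

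\textbf{Step 1: a good compact set.} Fix $\epsilon>0$ and intersect three sets.

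\begin{enumerate}
\item A Pesin set $P$ with $\nu(P)>1-\epsilon/3$. On $P$ the local unstable disks have uniform size and vary continuously in $C^1$. Concretely, there are holomorphic parametrizations $\zeta_{(\omega,x)}\colon \mathbb D\to X$ of $W^{u,\mathrm{loc}}_\omega(x)$, defined on a slightly larger disk $\mathbb D_{1+\delta}$, depending continuously on $(\omega,x)\in P$ in the locally uniform topology. I expect this to be extractable from Sections~7.4.1 and 7.4.3 of \cite{MR4635340} via graphs over the unstable direction in charts, with $E^u$ continuous on $P$.
\item The preimage $p^{-1}(K)$, where $K$ is the compact set from Lemma \ref{lem:equicontinuous} with $\mu^\Z(K)>1-\epsilon/3$. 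We shrink $K$ further by Lusin's theorem (using Remark \ref{rmk:msbl}) so that $\omega\mapsto T_\omega^+$ is continuous on $K$. Since $p_*\nu=\mu^\Z$, this set has $\nu$-measure $>1-\epsilon/3$.
\item A further Lusin set on which the relevant measurable data are continuous.
\end{enumerate}

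\textbf{Step 2: uniform convergence of potentials on $K$.} On $K$ the family $\{\phi(T_\omega^+)\}$ is equicontinuous and uniformly bounded. Suppose $\omega_n\to\omega$ in $K$. Then $T^+_{\omega_n}\to T^+_\omega$ weakly, so $\phi(T^+_{\omega_n})\to\phi(T^+_\omega)$ in $L^1(V)$ by Corollary \ref{cor:cts}. By Arzel\`a--Ascoli every subsequence has a uniformly convergent sub-subsequence, whose limit must equal $\phi(T^+_\omega)$ by the $L^1$ convergence. Hence the convergence is uniform.

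\textbf{Step 3: continuity of the slices.} We now apply Lemma \ref{lem:weakconvergence-slices} with $U=\mathbb D_{1+\delta}$, $T_n=T^+_{\omega_n}$ and $\zeta_n=\zeta_{(\omega_n,x_n)}$. This gives weak convergence of $(\zeta_n)_*(\psi\,\zeta_n^*T_n)$ for every compactly supported cutoff $\psi$.

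The hardest step is passing from smooth cutoffs to the sharp indicator of the closed disk $\overline{\mathbb D}$, since the slice is $\mathds 1_{\zeta(\overline{\mathbb D})}$ times the measure. I would handle this in one of two ways.

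\begin{enumerate}
\item Choose $\psi_k\uparrow\mathds 1_{\mathbb D}$ and bound the boundary mass uniformly. On a thin annulus $A$ we have $\zeta_n^*T_n(A)\le\int_A\chi\, dd^c(u_n\circ\zeta_n)$, which is controlled by the uniform modulus of continuity of $u_n\circ\zeta_n$ via a standard Chern--Levine--Nirenberg type estimate. The catch is that subharmonic functions with a uniform modulus of continuity need not give small annulus mass. In that case I would fall back on the second option.
\item Observe that weak measurability only requires each map $(\omega,x)\mapsto\int g\,d\big(\restr{T^+_\omega}{W^{u,\mathrm{loc}}_\omega(x)}\big)$ to be measurable. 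This is a monotone limit of the continuous-on-compacts maps $(\omega,x)\mapsto\int g\psi_k\,d(\ldots)$ for $g\ge0$, hence measurable. Finiteness follows from the uniform bound on the total mass over $\mathbb D_{1+\delta}$ on the good compact set.
\end{enumerate}

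This monotone-limit route avoids the boundary issue entirely, and I would present it as the main argument.
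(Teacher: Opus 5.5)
Your monotone-limit argument is correct and is essentially the paper's proof: Pesin-set continuity of the local parametrizations, Lusin continuity of $\omega\mapsto T^\star_\omega$ with uniform convergence of $\phi(T^\star_\omega)$, Lemma \ref{lem:weakconvergence-slices} for each smooth cutoff $\psi_k\nearrow\mathds{1}_{\mathbb D}$, then monotone convergence for $g\geq 0$. The differences are cosmetic: you get uniform convergence of the potentials from Arzel\`a--Ascoli and Corollary \ref{cor:cts} rather than from the Scorza--Dragoni joint continuity, and your Flip reduction only pairs $(+,u)$ with $(-,s)$, which is harmless since the argument is symmetric in $(\star,\bullet)$.
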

	\begin{proof}[Proof of Lemma \ref{lem:measurable-slices}]
		It is enough to prove the claim on compact sets of arbitrarily large $\nu$-measure. Fix $\epsilon>0$. By the Pesin-set statement above, Lusin's theorem applied to $\omega\mapsto T_\omega^\star$, and the joint continuity obtained in the proof of Lemma \ref{lem:equicontinuous} (with the analogous statement for $T_\omega^-$), choose a compact set $P\subset \Omega\times X$ with $\nu(P)>1-\epsilon$ on which the local $\bullet$-manifolds have uniform size and vary continuously, and such that on the projection of $P$ to $\Omega$ the maps
		\begin{align*}
			\omega\mapsto T_\omega^\star, \hspace{1 cm} \omega\mapsto \phi(T_\omega^\star)
		\end{align*}
		are continuous, where the first map is taken with respect to the weak topology on currents and the second with respect to the uniform topology on $C^0(X)$.

			Fix a finite set of charts as in Section~7.4.3 of \cite{MR4635340} covering $P$. In one such chart $V$, choose holomorphic parametrizations $\zeta_{(\omega,x)}:U\to X$ and a disk $\Delta\subset U$ whose closure is contained in $U$, such that $\zeta_{(\omega,x)}(\Delta)=W^{\bullet,\mathrm{loc}}_\omega(x)$ and $(\omega,x)\mapsto \zeta_{(\omega,x)}$ is continuous in the topology of local uniform convergence. Let $(\psi_m)$ be smooth nonnegative functions compactly supported in $\Delta$, viewed as functions on $U$, with $\psi_m\nearrow \mathds{1}_{\Delta}$. For fixed $m$ and $g\in C(X)$, Lemma \ref{lem:weakconvergence-slices} implies that
		\begin{align*}
			(\omega,x)\mapsto
			\int_X g\,d(\zeta_{(\omega,x)})_*
			\bigl(\psi_m\,\zeta_{(\omega,x)}^*T_\omega^\star\bigr)
		\end{align*}
			is continuous on $V\cap P$. By the definition of slices over Borel subsets in Section \ref{sec:slices} and monotone convergence, for $g\geq0$,
		\begin{align*}
			\int_X g\,d\restr{T_\omega^\star}{W^{\bullet,\mathrm{loc}}_\omega(x)}
			=
			\lim_{m\to\infty}
			\int_X g\,d(\zeta_{(\omega,x)})_*
			\bigl(\psi_m\,\zeta_{(\omega,x)}^*T_\omega^\star\bigr).
			\end{align*}
			The left-hand side is therefore measurable as a function on $P\cap V$. Since we cover $P$ with finitely many such charts, the left-hand side is measurable on all of $P$. Hence 
				\begin{align*}
				(\omega, x)\mapsto \restr{T_\omega^\star}{W^{\bullet,\mathrm{loc}}_\omega(x)},
			\end{align*} is measurable on $P$. Since $P$ may be chosen with arbitrarily large $\nu$-measure, the result follows.
	\end{proof}
	\subsection{Limit currents and stable/unstable manifolds}
		Throughout this section, we suppose $\nu\in S(\mu)$ is ergodic.
		\begin{lem}
			If $\nu$ gives positive measure to some $(\omega, x)$, then $(\omega, x)$ is periodic and $\nu$ is the uniform probability measure on the $F$-orbit of $(\omega, x)$.
		\end{lem}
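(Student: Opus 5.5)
Let $a\coloneqq\nu(\set{(\omega,x)})>0$. The plan is to combine $F$-invariance with the fact that $p_*\nu=\mu^\Z$ has no atoms in the nondegenerate case, and then use ergodicity.

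First I will show that the forward orbit of $(\omega,x)$ is finite. Since $F$ is a bijection of $\Omega\times X$ and $F_*\nu=\nu$, each point $F^n(\omega,x)$, $n\in\Z$, has mass $\nu(F^{-n}\set{F^n(\omega,x)})=\nu(\set{(\omega,x)})=a$. If these points were pairwise distinct, infinitely many disjoint singletons of mass $a$ would give total mass exceeding $1$. Hence $F^n(\omega,x)=F^m(\omega,x)$ for some $n\neq m$, and injectivity of $F$ shows that $(\omega,x)$ is periodic, with some minimal period $k$.

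Next, the orbit $\mathcal O=\set{(\omega,x),F(\omega,x),\dots,F^{k-1}(\omega,x)}$ is an $F$-invariant Borel set (a finite set) of measure $ka>0$. Ergodicity then gives $\nu(\mathcal O)=1$. Each of the $k$ points carries mass $a$, so $a=1/k$, and $\nu$ is the uniform probability measure on $\mathcal O$.

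This argument uses only invertibility of $F$, invariance, and ergodicity, so I do not expect a real obstacle. The only points needing care are that singletons are Borel in the product topology, since $\Omega\times X$ is metrizable, and that $F$ is a bijection, with inverse $(\omega,x)\mapsto(\sigma^{-1}\omega,\omega_{-1}^{-1}(x))$. It is worth recording as a remark that $p_*\nu=\mu^\Z$ then forces $\mu^\Z$ to have an atom, so $\mu$ must be a Dirac mass. This case is incompatible with non-elementarity, and it is presumably how the lemma will be used later.
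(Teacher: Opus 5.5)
Your argument is correct and is essentially the paper's proof. The paper first picks an atom of maximal $\nu$-mass, runs the same invariance--finiteness--ergodicity argument on it, and then notes that $(\omega,x)$ must lie on its orbit; you apply that argument directly to $(\omega,x)$, which shows the maximality step is unnecessary. Your closing remark is also right: $p_*\nu=\mu^\Z$ forces $\mu^\Z$ to have an atom, so $\mu$ is a Dirac mass, and the lemma is therefore vacuous under the standing non-elementary assumption.
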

		\begin{proof}
			Let $(\omega', x')$ be an atom of $\nu$ with maximal measure. Then each $F$-translate of $(\omega', x')$ has the same measure; thus, there are finitely many of them. By ergodicity, $\nu$ must be supported on the orbit of $(\omega', x')$. The point $(\omega, x)$ is contained in this orbit because it has positive measure, so the result follows.
		\end{proof}
	Recall the measures $\nu_\omega$ from Remark \ref{disintegration}, which by $F$-invariance satisfy 
	\begin{align}\label{one}
		F_*(\delta_{\omega}\otimes \nu_\omega ) = \delta_{\sigma(\omega)}\otimes \nu_{\sigma(\omega)}.
	\end{align}
	In the following lemmas, assume $\lambda^s(\nu) < 0 \leq \lambda^u(\nu)$.
	\begin{lem}\label{atoms}
			If $\nu_\omega$ has an atom on a positive $\mu^\Z$-measure set, then $\mu^\Z$-almost every $\nu_\omega$ is a uniform measure on a finite set.
	\end{lem}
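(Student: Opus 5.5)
We would prove Lemma \ref{atoms} by the standard ``maximal atom'' argument, carried out fiberwise and using the equivariance \eqref{one}. For each $\omega$, define $a(\omega)\coloneqq \max_{y\in X}\nu_\omega(\set{y})$, the mass of the largest atom of $\nu_\omega$, and $k(\omega)\coloneqq \#\set{y\colon \nu_\omega(\set{y}) = a(\omega)}$. These are finite since $\nu_\omega$ is a probability measure. Measurability of $a$ follows as in the proof of Corollary \ref{lem:atomic}. We write $a(\omega) = \lim_{s\to 0}\sup_{y} \nu_\omega(B_s(y))$ and use the measurable kernel $\omega\mapsto\nu_\omega$ of Remark \ref{disintegration}; the supremum over $y$ may be taken over a countable dense set after enlarging balls slightly.

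\textbf{Step 1: invariance.} By \eqref{one}, $(\omega_0)_*\nu_\omega = \nu_{\sigma(\omega)}$ for $\mu^\Z$-almost every $\omega$. Since $\omega_0$ is a bijection of $X$, it carries atoms to atoms of the same mass. Hence $a\circ\sigma = a$ and $k\circ\sigma = k$ almost surely. Ergodicity of $\sigma$ on $(\Omega,\mu^\Z)$ (a Bernoulli shift) then makes $a$ and $k$ almost surely equal to constants $a_0$ and $k_0$. The hypothesis gives $a_0>0$.

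\textbf{Step 2: the maximal atoms carry a normalized invariant measure.} Let $A_\omega$ be the set of $k_0$ points of mass $a_0$ for $\nu_\omega$, and set $A\coloneqq\set{(\omega,y)\colon y\in A_\omega}$. This set is $\nu$-measurable, since $A = \set{(\omega,y)\colon \nu_\omega(\set{y}) = a_0}$ and $(\omega,y)\mapsto \nu_\omega(\set{y})$ is measurable by the kernel argument cited in Corollary \ref{lem:atomic}. Step 1 shows that $F(A) = A$ modulo null sets. Moreover, $\nu(A) = a_0 k_0 > 0$. By ergodicity of $\nu$, we conclude $\nu(A) = 1$. Thus $\nu_\omega$ is supported on $A_\omega$, each point of which has mass $a_0$. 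So $\nu_\omega$ is the uniform measure on the finite set $A_\omega$ of cardinality $k_0 = 1/a_0$, for $\mu^\Z$-almost every $\omega$.

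\textbf{Main obstacle.} The only delicate points are measurability: of $a$, and of the set $A$ as a $\nu$-measurable subset of $\Omega\times X$. We handle both with the same measurable kernel lemma \cite[Lemma~3.2]{MR1876169} used in Corollary \ref{lem:atomic}, together with the fact that the hypothesis is only needed on a positive-measure set before invoking ergodicity. The Lyapunov exponent assumption $\lambda^s<0\le\lambda^u$ is not needed for this lemma and is presumably used only in subsequent ones.
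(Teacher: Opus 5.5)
Your proposal is correct and follows the paper's argument: the paper also takes the set of maximal-mass atoms $\set{(\omega,x)\colon \nu_\omega(\set{x})=a(\omega)}$, proves its $\nu$-measurability by the same kernel and countable-dense-set argument, derives its $F$-invariance from \eqref{one}, and concludes by ergodicity of $\nu$. The only difference is your preliminary use of ergodicity of the shift to make $a$ and $k$ constant, which is harmless but unnecessary; note also that $k$ only makes sense once you know $a_0>0$, since for a non-atomic $\nu_\omega$ it would be infinite.
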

	\begin{proof}
			Whenever $\nu_\omega$ is defined, let $M_\omega$ denote the set of atoms of $\nu_\omega$ with maximal $\nu_\omega$-measure. Since $\nu_\omega$ is a probability measure, each $M_\omega$ is finite. The same measurable-kernel argument used in the proof of Corollary \ref{lem:atomic}, applied to the measurable assignment $\omega\mapsto\nu_\omega$, shows that
			\begin{align*}
				(\omega,x)\mapsto \nu_\omega(\set{x})
			\end{align*}
			is $\nu$-measurable. Hence, the function
			\begin{align*}
				a(\omega)\coloneqq \sup_{x\in X}\nu_\omega(\set{x})
			\end{align*}
			is $\mu^\Z$-measurable, since it can be realized as the limit, as $r\to0$, of the supremum of $\nu_\omega(B_r(x))$ over a fixed countable dense subset of $X$. Hence
			\begin{align*}
				\bigcup_{\omega \in \Omega}(\set{\omega}\times M_\omega)
				=
				\set{(\omega,x)\colon \nu_\omega(\set{x})=a(\omega)}
			\end{align*}
			is $\nu$-measurable. Now \eqref{one} implies $F(\set{\omega} \times M_\omega) = \set{\sigma(\omega)} \times M_{\sigma(\omega)}$. So the set $\cup_{\omega \in \Omega} (\set{\omega} \times M_\omega)$ is $F$-invariant, and has positive measure by the assumption of the lemma. Thus, by ergodicity the lemma follows.
	\end{proof}

	\begin{lem}[Lemma 8.6 of \cite{MR4635340}]\label{key}
	For $\nu$-almost every $(\omega, x)$ and any embedded open disk $\Delta \subset  W_\omega^s(x)$, we have $\restr{T^+_{\omega}}{\Delta} = 0$. 
\end{lem}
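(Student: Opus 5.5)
The idea is to exploit two competing rates along the stable disk. Forward iterates contract the stable disk $\Delta$ exponentially. The slice mass of the limit current, however, transforms under the equivariance of Remark \ref{equivariance} by a factor that grows only like $e^{n\lambda_\mu}$. We will show that the rescaled slice masses $\restr{T^+_{\sigma^n\omega}}{f^n_\omega(\Delta)}$ tend to zero. Invariance then forces $\restr{T^+_\omega}{\Delta}=0$.

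\textbf{Equivariance.} Iterating \eqref{equivar} gives
\begin{align*}
(f_\omega^n)^*T^+_{\sigma^n(\omega)} = M\big((f_\omega^n)^*T^+_{\sigma^n(\omega)}\big)\, T^+_\omega .
\end{align*}
Slices are natural under biholomorphisms, so
\begin{align*}
\restr{T^+_\omega}{\Delta}(X) = M\big((f_\omega^n)^*T^+_{\sigma^n\omega}\big)^{-1}\,\restr{T^+_{\sigma^n\omega}}{f^n_\omega(\Delta)}(X).
\end{align*}
The prefactor is $\exp(-n\lambda_\mu+o(n))$ by Theorem \ref{lyap} and the mass remark following it, combined with Theorem \ref{thm:furst} and Birkhoff. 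It thus suffices to show that the slice mass of $T^+_{\sigma^n\omega}$ on the small disk $f^n_\omega(\Delta)$ is $O(e^{n(\lambda_\mu-\epsilon)})$ along a positive-density set of times. In fact we will show it tends to $0$.

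\textbf{Shrinking disks.} We may shrink $\Delta$ to a relatively compact subdisk of the local stable manifold, since $W^s_\omega(x)$ is exhausted by preimages of local stable disks. On a Pesin set $P$ of large measure, Lemma \ref{lem:equicontinuous} and Lusin's theorem provide equicontinuous, uniformly bounded potentials $\phi(T^+_{\omega'})$ for the projections $\omega'$. By Birkhoff, $F^n(\omega,x)\in P$ for a positive-density set of $n$. For such $n$, the disk $D_n=f^n_\omega(\Delta)$ has diameter at most $Ce^{n(\lambda^s+\epsilon)}$ and sits inside a uniformly sized local stable disk, parametrized by $\zeta_n$ with uniform $C^1$ bounds.

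\textbf{Controlling the slice mass.} Choose a cutoff $\chi$ on the uniform local disk, equal to $1$ on a disk of radius $2r_n$ containing $D_n$ and with $\|dd^c\chi\|\lesssim r_n^{-2}$. Integrating by parts,
\begin{align*}
\int \chi\, d(\zeta_n^*T) = \int (u\circ\zeta_n - u(\zeta_n(0)))\,dd^c\chi .
\end{align*}
This is bounded by the oscillation of the local potential $u$ on a ball of radius about $r_n$, and that oscillation tends to $0$ by equicontinuity. Hence the slice masses on $D_n$ go to zero along good times. Combined with the prefactor, this makes $\restr{T^+_\omega}{\Delta}(X)$ smaller than any positive number, hence zero. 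Note that the estimate is even better than needed: no competition with $\lambda_\mu$ is required, only equicontinuity.

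\textbf{Main obstacle.} The hardest part is measurability and uniformity. We need the Pesin-set potentials for $T^+_{\sigma^n\omega}$ to be equicontinuous simultaneously with the disk geometry along returns of $F^n(\omega,x)$. This requires the joint Lusin set from the proof of Lemma \ref{lem:measurable-slices}. We also need an exhaustion argument to pass from local stable disks to an arbitrary embedded disk in the global $W^s_\omega(x)$. That exhaustion is handled by pulling back by $f^{-n}_{\sigma^n\omega}$ and using the same equivariance.
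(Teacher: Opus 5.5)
Your argument is correct and is essentially the standard one behind Cantat--Dujardin's Lemma 8.6, which the paper cites without reproducing. You transport the slice by $f^n_\omega$ using the equivariance of $T^+$, and you control the slice mass of $T^+_{\sigma^n(\omega)}$ on $f^n_\omega(\Delta)$ at times when $F^n(\omega,x)$ returns to a set where the local stable disks have uniform geometry and the potentials $\phi(T^+_{\sigma^n(\omega)})$ are uniformly controlled.

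As you observe, either ingredient alone suffices. Equicontinuity makes these slice masses tend to $0$ even with only a bounded prefactor. Alternatively, a uniform $C^0$ bound on the potentials already bounds the slice mass on the uniform local disk, and the decay $M((f^n_\omega)^*T^+_{\sigma^n(\omega)})^{-1}\to 0$ then finishes the proof.
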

Cantat and Dujardin's proof of Lemma \ref{key} translates to our setting without modification, so we omit it.
		\begin{lem}\label{well}
			Suppose $W^s_\omega(x)$ is not Zariski dense in $X$ on a positive $\nu$-measure set. Suppose further that for $\mu^\Z$-almost every $\omega$, $\nu_\omega$ is non-atomic. Then there exists an irreducible algebraic curve $D$ such that $ \nu(\Omega \times (\Gamma_\mu\cdot D)) = 1$. Moreover, for $\mu^\Z$-almost every $\omega$, $\nu_\omega$ is supported on a finite union of $\Gamma_\mu$-translates of $D$, and $\nu$-almost every $W^s_\omega(x)$ is contained in one of these translates.
	\end{lem}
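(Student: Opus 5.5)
The proof runs as in Cantat--Dujardin's analogous argument for stationary measures (their Section 8): first a single invariant curve, then the full support statement via ergodicity. Let $P\subset\Omega\times X$ be the positive-measure set where $W^s_\omega(x)$ is not Zariski dense. Since $W^s_\omega(x)$ is an injectively immersed entire curve, its Zariski closure is an irreducible algebraic curve $D_{(\omega,x)}$ containing it. It is irreducible because the image of $\C$ is connected and irreducible in the analytic sense. Equivariance of stable manifolds, $f_\omega^1(W^s_\omega(x))=W^s_{\sigma\omega}(\omega_0 x)$, gives $\omega_0(D_{(\omega,x)})=D_{F(\omega,x)}$. Hence $P$ is $F$-invariant up to null sets, so $\nu(P)=1$ by ergodicity. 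Measurability of $(\omega,x)\mapsto D_{(\omega,x)}$ is handled by stratifying curves by degree with respect to an ample class, and using that the Chow/Hilbert scheme of curves of bounded degree is a countable union of quasi-projective varieties.

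Next, the curves $D_{(\omega,x)}$ must form a single $\Gamma_\mu$-orbit and be countably many. Since $X$ has countably many irreducible curves of negative self-intersection, I would aim to show $D_{(\omega,x)}^2<0$, or at least that the classes $[D_{(\omega,x)}]$ are bounded. Cohomologically, $(f^n_\omega)^*$ expands all classes outside $\NS(X)_+^\perp$ at rate $e^{\lambda_\mu n}$. On the other hand, $\omega_0(D_{(\omega,x)})=D_{F(\omega,x)}$ and Poincaré recurrence of $\nu$ force the class $[D_{F^n(\omega,x)}]$ to return infinitely often to a bounded set of positive measure. Thus $[D]$ must lie in the negative definite part $\NS(X)_+^\perp$, where the adapted norm is $\Gamma_\mu$-invariant. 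Since the classes are integral and bounded, there are finitely many such classes, and each class of negative square contains at most one irreducible curve. So only countably many curves occur. By countable additivity some single curve $D$ has $\nu(\{(\omega,x):D_{(\omega,x)}=D\})>0$, and then ergodicity together with equivariance give $\nu(\Omega\times\Gamma_\mu\cdot D)=1$.

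Finally, for a.e.\ $\omega$ the measure $\nu_\omega$ is supported on the curves $D_{(\omega,x)}$, $x\in\supp\nu_\omega$, which are $\Gamma_\mu$-translates of $D$ with uniformly bounded class. Bounded integral classes of negative square give finitely many curves, so $\nu_\omega$ lives on a finite union of translates, and each $W^s_\omega(x)$ lies in its translate by construction. Non-atomicity of $\nu_\omega$ is used here to rule out degenerate supports: it excludes the alternative of $\nu_\omega$ charging isolated points where curves intersect.

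The main obstacle is step two: showing the classes $[D_{(\omega,x)}]$ have negative square and bounded norm. My first attempt would combine Lemma \ref{key} with this. Since $T^+_\omega$ has continuous potentials and vanishing slices on $W^s_\omega(x)$, whose closure is $D$, we get $\langle e^+(\omega),[D]\rangle=\int_D T^+_\omega=0$. Because $e^+(\omega)$ is Furstenberg-generic in $\partial\mathbb H_\mu$ and $\mu_{\mathcal F}$ charges no proper subspace, only $[D]\in\NS(X)_+^\perp$ is compatible with this for a positive-measure set of $\omega$. Then $[D]^2<0$, and invariance of the adapted norm on $\NS(X)_+^\perp$ yields boundedness.
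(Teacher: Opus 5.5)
Your proof is correct, but it handles the two substantive steps differently from the paper. The paper obtains negativity by quoting the argument of Lemma~7.1 of \cite{MR4635340}: the Zariski closures are rational curves with $D^2<0$. It then uses countability of $\NS(X,\Z)$ and uniqueness of irreducible curves in negative classes to find $D$, and spreads it by ergodicity. Finiteness of the support of $\nu_\omega$ comes from a soft ergodic device. The union $R_\omega$ of translates of maximal $\nu_\omega$-mass is finite, because distinct translates meet in finitely many points and $\nu_\omega$ is non-atomic. It is $F$-equivariant, hence of full measure.

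You instead prove directly that $[D_{(\omega,x)}]\in\NS(X)_+^\perp$, in one of two ways. The first is Lemma~\ref{key} together with the fact that $\mu_{\mathcal F}$ charges no hyperplane. This is legitimate only after partitioning by the countably many integral classes, since $[D_{(\omega,x)}]$ and $e^+(\omega)$ both depend on the future. The second is recurrence played against Furstenberg growth. Your route gives more than the paper's. $\Gamma_\mu$ acts isometrically on the negative definite space $\NS(X)_+^\perp$ and preserves the lattice, so $\Gamma_\mu^*[D]$ is finite. Hence $\Gamma_\mu\cdot D$ is a finite set of curves and $\nu$ is carried by a $\Gamma_\mu$-invariant curve. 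The support statement is then immediate and non-atomicity is never needed, so your closing remark about it can be dropped. The paper's route, by contrast, only gives a finite union of translates that may depend on $\omega$.

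Two steps need a standard repair. In the $T^+$ route, knowing that $W^s_\omega(x)$ has Zariski closure $D$ does not let you pass from vanishing slices on disks of $W^s_\omega(x)$ to $\int_D T^+_\omega=0$. You need that $W^s_\omega(x)$ exhausts $D$ up to finitely many points. This holds because an injective holomorphic map from $\C$ into the normalization of $D$ forces the normalization to be $\P^1$ and the image to be $\P^1$ minus a point. Slices of currents with continuous potentials have no atoms, so the remaining points carry no mass. This is essentially the rationality part of the Cantat--Dujardin argument.

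In the recurrence route, the claim that $(f^n_\omega)^*$ expands all classes outside $\NS(X)_+^\perp$ is false pathwise, since there is a random slow hyperplane. Also, the curves transform by $(f^n_\omega)_*$, not $(f^n_\omega)^*$. The claim becomes true for integral classes: apply Furstenberg's theorem to each fixed class and use countability of $\NS(X,\Z)$.
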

	\begin{proof}
		Let $A\subset \Omega \times X$ denote a positive-measure set on which $W^s_\omega(x)$ is not Zariski dense. Then for each $(\omega, x)\in A$, the Zariski closure $D(\omega, x)$ of $W^s_\omega(x)$ is a (possibly singular) algebraic curve. Since $\NS(X,\Z)$ is countable, there exists some irreducible curve $D$ with $S = \set{(\omega, x)\in A\colon [D(\omega, x)] = [D]}$ having positive $\nu$-measure. The argument in \cite{MR4635340}, Lemma 7.1, shows that $D(\omega, x)$ is almost surely rational with negative self-intersection. Classes in $\NS(X,\Z)$ of such curves have unique representatives, so $[D(\omega, x)] = [D]$ implies $D(\omega, x) =  D$. 
		
		 Then the $F$-orbit of $S$ is $F$-invariant, so by ergodicity there is a full-measure subset of $\Omega \times X$ where $W^s_\omega(x)$ is contained in a $\Gamma_\mu$-translate of $D$. 
			For $\omega$ on which $\nu_\omega$ exists, let $R_\omega$ denote the union of $\Gamma_\mu$-translates of $D$ having maximal $\nu_\omega$-measure. Since $\nu_\omega$ is a probability measure, $R_\omega$ is nonempty generically. Moreover, since $\nu_\omega$ is finite, $R_\omega$ is a finite union; indeed, distinct translates overlap in $\nu_\omega$-measure zero, because algebraic curves have only finitely many intersection points and $\nu_\omega$ is non-atomic generically.
			
		By \eqref{one}, 
		\begin{align*}
			F(\set{\omega} \times R_\omega) = \set{\sigma(\omega)}\times R_{\sigma(\omega)}
		\end{align*}
		so $\cup_{\omega} \set{\omega} \times R_\omega$ is $F$-invariant. The family of distinct $\Gamma_\mu$-translates of $D$ is countable, and since $\omega\mapsto \nu_\omega$ is measurable, the set of translates having maximal $\nu_\omega$-measure varies measurably in $\omega$. By ergodicity, $\nu$ gives full measure to $\cup_{\omega} \set{\omega} \times R_\omega$, concluding the lemma.
	\end{proof}

	Using Lemma \ref{key} and the theory of laminar currents (specifically Lemma 8.8 of \cite{MR4635340}), Cantat and Dujardin prove the following theorem. We refer to Section~8 of \cite{MR4635340} for the definition of a normalized Ahlfors-Nevanlinna current. Again, their proof applies to our setting without modification.
		\begin{thm}[Theorem 8.2 of \cite{MR4635340}] \label{big}
		Let $\mu$ satisfy \eqref{int} and suppose $\nu\in S(\mu)$ is ergodic with $\lambda^s(\nu) < 0 \leq \lambda^u(\nu)$. Suppose that, $\nu$-almost surely, $W_\omega^s(x)$ is Zariski dense in $X$. Then $T_\omega^{+}$ is the unique Ahlfors-Nevanlinna current with unit mass associated to generic $W_\omega^s(x)$.
	\end{thm}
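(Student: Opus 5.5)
The plan is to show that every normalized Ahlfors--Nevanlinna current $S$ produced by a generic stable manifold $W^s_\omega(x)$ lies in the cohomology class $e^+(\omega)$. Once that is known, the uniqueness part of Proposition \ref{prop:limit_currents} forces $S=T^+_\omega$. This gives existence and uniqueness simultaneously, because Ahlfors--Nevanlinna currents always exist as limits.

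\emph{Setup.} Fix a $\nu$-generic $(\omega,x)$ for which the following hold:
\begin{itemize}
\item Lemma \ref{key} applies;
\item $T^+_\omega$ exists, lies in $e^+(\omega)$ and has a continuous potential $u$;
\item $W^s_\omega(x)=\xi(\C)$ for an injective holomorphic immersion $\xi$, and $\xi(\C)$ is Zariski dense.
\end{itemize}
Let $A(r)$ denote the area of $\xi(D_r)$ and $L(r)$ the length of $\xi(\partial D_r)$. By Ahlfors' lemma there are radii $r_k\to\infty$ with $L(r_k)=o(A(r_k))$. Let $S$ be any weak limit of the normalized Nevanlinna averages
\begin{align*}
S_k=\frac{1}{T(r_k)}\int_0^{r_k}[\xi(D_t)]\,\frac{dt}{t},
\end{align*}
renormalized to unit mass. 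Such an $S$ is a closed positive $(1,1)$-current.

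\emph{Step 1: $\angle{[S],e^+(\omega)}=0$.} Write $\angle{[S],e^+(\omega)}=\lim_k (S_k\mid T^+_\omega)$, where the pairing is computed through the potential $u$. On the smooth part of $\Theta(T^+_\omega)$ this is just weak convergence. For the term $dd^c\phi(T^+_\omega)$ with $\phi(T^+_\omega)$ continuous, I would first approximate $\phi(T^+_\omega)$ uniformly by smooth functions, which is allowed by continuity. Stokes' theorem on $D_t$ then reduces the pairing to two pieces:
\begin{itemize}
\item the mass of the slice $\restr{T^+_\omega}{\xi(D_t)}$, which vanishes by Lemma \ref{key} since $\xi(D_t)$ is an embedded disk in $W^s_\omega(x)$;
\item boundary integrals $\int_{\xi(\partial D_t)} u\, d^c(\cdot)$ and their smoothed analogues.
\end{itemize}
The boundary integrals are bounded by $\norm{u}_{C^0}$ times length terms. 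After Nevanlinna averaging these are $o(T(r_k))$, which is exactly the Ahlfors condition.

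\emph{Step 2: Hodge index.} Because $\xi(\C)$ is Zariski dense, $S$ has no mass on any algebraic curve $C$, and the standard argument gives $\angle{[S],[C]}\ge 0$. So $[S]$ is nef in the sense needed; more precisely, the Ahlfors current of a Zariski-dense entire curve has nef class, and in particular $\angle{[S],[S]}\ge0$. Next:
\begin{itemize}
\item $e^+(\omega)$ is nef and isotropic, and lies in $\NS(X)_+$ by Theorem \ref{FurstenbergLimit};
\item $[S]$ lies in the closed positive cone, with $\angle{[S],e^+(\omega)}=0$ by Step 1.
\end{itemize}
By the Hodge index theorem (signature $(1,h^{1,1}-1)$), two such classes with zero intersection must be proportional. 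Since both $[S]$ and $e^+(\omega)$ have unit mass, $[S]=e^+(\omega)$.

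\emph{Step 3: conclusion.} The uniqueness of the closed positive current in the class $e^+(\omega)$ (Proposition \ref{prop:limit_currents}) gives $S=T^+_\omega$. Since $S$ was an arbitrary limit along an arbitrary Ahlfors sequence, this also gives independence from all choices.

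I expect Step 1 to be the main obstacle. The difficulty is justifying the integration by parts for a current $T^+_\omega$ with only continuous, not smooth, potentials, so that the slice term and the boundary term separate cleanly. I would handle this by regularizing $u$ by smooth $u_\epsilon\to u$ uniformly, applying Stokes on each $D_t$ to $u_\epsilon$, and using the uniform bound $\norm{u-u_\epsilon}_{C^0}\to0$ to control the error. The length terms are then controlled by $L(r_k)=o(A(r_k))$.
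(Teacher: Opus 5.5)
Your architecture is sound: $\angle{[S],e^+(\omega)}=0$ from Lemma \ref{key}, then nefness of $[S]$, then Hodge index, then uniqueness in $e^+(\omega)$. Steps 2 and 3 are fine; nefness of $[S]$ is the standard first-main-theorem consequence of $\xi(\C)\not\subset C$ for every curve $C$. The paper gives no proof of its own here. It defers to Cantat--Dujardin, who obtain the orthogonality from Lemma \ref{key} through laminar-current theory (their Lemma 8.8).

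Your Step 1 replaces this with a direct Stokes computation, and that is where the gap is. The term you wrote, $\int_{\xi(\partial D_t)}u\,d^c(\cdot)$, is harmless. The problem is its companion in Green's formula, where $d^c$ falls on the potential. Globally, with $\phi=\phi(T^+_\omega)$, that term is $\int_{\partial D_t}d^c(\phi\circ\xi)$. It is not bounded by $\norm{u}_{C^0}$ times a length, and nothing you assume controls the gradient of $\phi\circ\xi$ along $\partial D_t$.

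Regularizing does not help. For smooth $u_\epsilon$ the boundary terms are $O(\norm{du_\epsilon}_{C^0}L(t))$, which is fine for fixed $\epsilon$. But the error you must then absorb is
\[
\int_{D_t}dd^c\bigl((u-u_\epsilon)\circ\xi\bigr)=\int_{\partial D_t}d^c\bigl((u-u_\epsilon)\circ\xi\bigr).
\]
At a fixed radius this is not controlled by $\norm{u-u_\epsilon}_{C^0}$, since a function of tiny sup norm can have arbitrarily large flux through a circle.

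The missing ingredient is Jensen's formula, i.e. Nevanlinna's first main theorem; this is exactly what the $dt/t$ averaging in $S_k$ is for. Set $\Theta=\Theta(T^+_\omega)$. Lemma \ref{key} gives $\xi^*T^+_\omega=\xi^*\Theta+dd^c(\phi\circ\xi)=0$ on all of $\C$, so $\phi\circ\xi$ is in fact smooth. With the paper's convention $dd^c=i\partial\overline\partial$,
\begin{align*}
(S_k\mid\Theta)&=-\frac{1}{T(r_k)}\int_0^{r_k}\frac{dt}{t}\int_{D_t}dd^c(\phi\circ\xi)\\
&=-\frac{\pi}{T(r_k)}\left(\frac{1}{2\pi}\int_0^{2\pi}\phi\bigl(\xi(r_ke^{i\theta})\bigr)\,d\theta-\phi(\xi(0))\right).
\end{align*}
Its absolute value is at most $2\pi\norm{\phi}_{C^0}/T(r_k)$, which tends to $0$ because $T(r)\geq A(1)\ln r$. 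Since $S$ is closed, $(S_k\mid\Theta)\to(S\mid\Theta)=\angle{[S],e^+(\omega)}$, so Step 1 holds with no smoothing and no length estimate.

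The length--area condition is needed only to make $S$ closed. For your log-averaged $S_k$ it must be the averaged form $\int_0^{r_k}L(t)\,dt/t=o(T(r_k))$, not $L(r_k)=o(A(r_k))$.

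With this repair your proof is a correct and more elementary alternative to the laminar-current route the paper cites. It only sees the total intersection number $\angle{[S],e^+(\omega)}$, but that is all the Hodge index step uses.
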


	\begin{rmk}
				 By Remark \ref{invertibility}, we obtain the analogous statement to Lemma \ref{well} in the unstable direction by applying the results to $\hat F$. Similarly, we obtain the analogous statement to Theorem \ref{big} for unstables and $T_\omega^-$.
	\end{rmk}
	\begin{rmk}\label{entropy_inverse} 
		From the formula \eqref{align:2}, one can show that $h_{\nu}^\mathcal{F} = 
			h_{\hat \nu}^\mathcal{F}$.
	\end{rmk}
	
	\begin{lem}	\label{posent}
		Suppose $h_{\nu}^\mathcal{F} > 0$. Then $\nu$ is hyperbolic, and $\nu$-almost surely the manifolds $W^{s/u}_\omega(x)$ are Zariski dense in $X$.
	\end{lem}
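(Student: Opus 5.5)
First I will establish hyperbolicity. By Theorem \ref{thm:magic}, I would use a Ruelle-type inequality, namely $h_\nu^\mathcal{F}\le \max(\lambda^u,0)$ and, for the inverse system $\hat F$, $h_{\hat\nu}^\mathcal{F}\le \max(-\lambda^s,0)$. The fiberwise Ruelle inequality for random $C^1$ maps satisfying \eqref{int} is standard (see \cite{MR1369243}). Together with Remark \ref{entropy_inverse}, positive fiber entropy then forces $\lambda^u>0$ and $\lambda^s<0$, so $\nu$ is hyperbolic. In particular $\lambda^s<0\le\lambda^u$, so Lemmas \ref{atoms}, \ref{key}, \ref{well} and Theorem \ref{big} apply. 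By Remark \ref{invertibility} they apply also to $\hat\nu$, which exchanges the roles of stable and unstable.

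Next I will show that the fiber measures $\nu_\omega$ are non-atomic almost surely. Suppose instead that $\nu_\omega$ has atoms on a positive-measure set. By Lemma \ref{atoms}, almost every $\nu_\omega$ is then uniform on a finite set. For a partition $\eta^u$ subordinate to unstable manifolds, the conditional $\nu_{\eta^u_\omega(x)}$ is obtained by disintegrating $\nu_\omega$ along $\eta^u$. A finitely supported $\nu_\omega$ would therefore give atomic conditionals, which contradicts Corollary \ref{lem:atomic}(2) since $h_\nu^\mathcal F>0$. Hence $\nu_\omega$ is non-atomic for $\mu^\Z$-almost every $\omega$.

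Now I will prove Zariski density of the stable manifolds by contradiction. Suppose $W^s_\omega(x)$ fails to be Zariski dense on a positive-measure set. Lemma \ref{well} then gives an irreducible curve $D$ such that, almost surely, $\nu_\omega$ is carried by a finite union $R_\omega$ of $\Gamma_\mu$-translates of $D$, with generic stable manifolds contained in these translates. I must now rule out this configuration using positive entropy; this is the step I expect to be the main obstacle.

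My first attempt is a dimension count. Each component of $R_\omega$ is a compact curve containing the entire curve $W^s_\omega(x)$, so locally $W^s_\omega(x)$ coincides with an open piece of that component. The unstable manifold $W^u_\omega(x)$ is transverse to $W^s_\omega(x)$ at $x$, so near $x$ it meets the finite union $R_\omega$ only in a discrete set. This would make $\nu_{\eta^u_\omega(x)}$ atomic, contradicting Corollary \ref{lem:atomic}. To make this rigorous, I will compare the conditional measures of $\nu_\omega$ on the unstable atom $\eta^u_\omega(x)$ with $\nu_\omega$ itself. The key fact is that $\nu_{\eta^u_\omega(x)}$ is concentrated on $\eta^u_\omega(x)\cap R_\omega$, which is countable, since $W^u$ is not contained in the curve (it is transverse to $W^s\subset D$ at $x$). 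A countable support implies atoms.

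The symmetric argument, applied to $\hat\nu$ via Remark \ref{invertibility} and Remark \ref{entropy_inverse}, gives Zariski density of the unstable manifolds. The main care needed is in justifying the claim that the conditional measures along unstable atoms live on $R_\omega\cap\eta^u_\omega(x)$. I will deduce this from $\nu(\bigcup_\omega\{\omega\}\times R_\omega)=1$ and the defining property of disintegration.
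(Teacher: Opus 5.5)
Your overall route is the paper's: a Margulis--Ruelle inequality for $F$ and $\hat F$ (with Remark \ref{entropy_inverse}) gives hyperbolicity, Lemma \ref{atoms} with Corollary \ref{lem:atomic} gives non-atomicity of $\nu_\omega$, and Lemma \ref{well} followed by a countability argument along unstable atoms gives Zariski density. However, the last step has a gap as written.

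The inference ``$W^u_\omega(x)$ is transverse to $W^s_\omega(x)\subset D$ at $x$, hence $W^u_\omega(x)$ is not contained in $R_\omega$'' fails at some points. The curves given by Lemma \ref{well} are rational but may be singular, and distinct translates in $R_\omega$ may cross. Suppose $x$ is a singular point, say a node, of the component containing $W^s_\omega(x)$. Then $W^u_\omega(x)$ can be the other local branch through $x$, which is transverse to $W^s_\omega(x)$. By the identity principle for a parametrization $\C\to W^u_\omega(x)$, the whole unstable manifold then lies in that curve. Likewise, if $x$ lies on two translates, $W^u_\omega(x)$ can lie in the other one. In both cases $\eta^u_\omega(x)\cap R_\omega$ is uncountable, and your argument gives nothing.

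The missing step uses the non-atomicity of $\nu_\omega$ from your second step a second time, beyond supplying the hypothesis of Lemma \ref{well}. For each $\omega$, the singular points of the components of $R_\omega$ and their pairwise intersections form a finite set, hence a $\nu_\omega$-null set. So for $\nu$-a.e.\ $(\omega,x)$, the point $x$ is a smooth point of a unique component $D_{\omega,x}$. Then $W^u_\omega(x)\not\subset D_{\omega,x}$, since otherwise $E^u_\omega(x)=E^s_\omega(x)=T_xD_{\omega,x}$, contradicting hyperbolicity. Also $W^u_\omega(x)$ lies in no other component, because that component would contain $x$. The identity principle then makes $W^u_\omega(x)\cap R_\omega$ countable. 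The disintegration step you flagged as the delicate point is routine by comparison.

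Two smaller issues in the first step. $E^u_\omega(x)$ is a complex line, of real dimension two, so the Ruelle inequality reads $h_\nu^{\mathcal F}\le 2\max(\lambda^u,0)$, i.e.\ $\lambda^u\geq\frac12 h_\nu^{\mathcal F}$ as in the paper. Your bound $h_\nu^{\mathcal F}\le\max(\lambda^u,0)$ is false in general; for Lebesgue measure in a Kummer example one has $h=2\lambda^u$. The qualitative conclusion $\lambda^s<0<\lambda^u$ is unaffected. Also, Theorem \ref{thm:magic} should not be cited to get hyperbolicity, since it presupposes partitions subordinate to stable and unstable manifolds, that is, a hyperbolic $\nu$.
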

	\begin{proof}
		By the Margulis-Ruelle inequality in our setting, %
			$\lambda^u \geq \frac 1 2 h_{\nu}^\mathcal{F}  > 0$. By invertibility (Remark \ref{invertibility}) and recalling $h_{\nu}^\mathcal{F} = h_{\hat \nu}^\mathcal{F}$ (Remark \ref{entropy_inverse}), we obtain $\lambda^s  \leq -\frac 1 2 h_{\nu}^\mathcal{F} <  0$. So $\nu$ is hyperbolic. Now, by Corollary \ref{lem:atomic}, $h_{\nu}^\mathcal{F} > 0$ if and only if the conditional measures $\nu_{\eta_\omega(x)}$ are non-atomic almost everywhere. Therefore, by Lemma \ref{atoms}, $\nu_\omega$ is non-atomic $\mu^\Z$-almost everywhere. Indeed, if $\nu_\omega$ had atoms on a positive base set, Lemma \ref{atoms} would make the fiber measures uniform on finite sets, forcing subordinate conditionals to be atomic, contradicting Corollary  \ref{lem:atomic}. 
		
		Suppose that there exists a positive $\nu$-measure set on which the manifolds $W^{s}_\omega(x)$ are not Zariski dense in $X$. By Lemma \ref{well}, 
		 for $\mu^\Z$-almost every $\omega$, we have that $\nu_\omega$ is supported on an algebraic curve $C_{\omega}$ and, for $\nu_\omega$-almost every $x$, $W_\omega^s(x)$ is contained in an irreducible component of $C_\omega$. For such $\omega$, the singular points of the irreducible components of $C_\omega$, together with the intersections of distinct components, form a finite set. Since $\nu_\omega$ is non-atomic, $\nu_\omega$-almost every $x$ is therefore a smooth point of a unique irreducible component of $C_\omega$, which we denote by $D_{\omega,x}$.

	 	By holomorphicity, $W_\omega^{u}(x)$ is either also contained in $D_{\omega,x}$ or $W_\omega^{u}(x)\cap D_{\omega,x}$ is countable. But $W_\omega^s(x)$ and $W_\omega^u(x)$ cannot both be contained in $D_{\omega,x}$, since their tangent directions at the smooth point $x$ would then both equal $T_xD_{\omega,x}$, contradicting hyperbolicity. Therefore $W_\omega^{u}(x)\cap D_{\omega,x}$ is countable. 
	 	Moreover, $W_\omega^u(x)$ cannot be contained in any other irreducible component of $C_\omega$, since any component containing $W_\omega^u(x)$ would also contain $x$, contradicting the uniqueness of $D_{\omega,x}$. By holomorphicity, the intersection of $W_\omega^{u}(x)$ with each such component is therefore countable. So $W_\omega^{u}(x)\cap C_\omega$ is countable. Since $\nu_\omega(C_\omega) = 1$, disintegration implies that the unstable conditional at $x$ gives full measure to $W_\omega^u(x)\cap C_\omega$ and hence is atomic. But this would contradict Corollary \ref{lem:atomic}.
		
			 By invertibility (Remarks \ref{invertibility} and \ref{entropy_inverse}), the conclusion of the lemma also holds for unstable manifolds.
	\end{proof}

	\section{Convergence to limit currents}
	Let $C$ be a positive $(1,1)$-current and suppose $\psi\in C^\infty(X)$ is nonnegative and satisfies
	\begin{align}\label{nonint}
		\supp(\psi) \cap \supp(\partial C) = \varnothing.
	\end{align} For $f\in \Aut(X)$ with positive topological entropy, it is known that
	\begin{align*}
		\frac{1}{\lambda^n} f_*^{ \pm n}(\psi C) \to \left(\int_X(T_f^\pm \wedge  \psi C)\right)T_f^{\mp}
	\end{align*} in the weak topology as $n\to \infty$ \cite{MR1188127,fornaess_sibony_1994_complex_dynamics}.
	For each natural $n\geq 0$, let
	\begin{align*}
		C^n_\psi(\omega)\coloneqq (f_\omega^n)_* (\psi C). 
	\end{align*} 
	Recall from Section \ref{sec:wedges} that we may wedge $C_\psi^n(\omega)$ with closed positive currents having continuous potentials. The main result of this section is the following. 
	\begin{prop}\label{lem:last2}  Take $C$, $\psi$, and $C_\psi^n(\omega)$ as above. For $\mu^\Z$-almost every $\omega\in \Omega$ and every $\rho > 0$, one can find a subsequence $(\tilde n_k)_{k\in \N}$ of upper-density greater than $1 - \rho$ for which
			\begin{equation}\label{important:seq}
			\begin{aligned}
		&\left( \int_X T_{\omega}^+ \wedge \psi C \right)T_{\sigma^{\tilde n_k}(\omega)}^- \\
		 &\hspace{3 cm} - \left(\int_X T_{\sigma^{\tilde n_k}(\omega)}^+ \wedge T_{\sigma^{\tilde n_k}(\omega)}^-\right) M((f_\omega^{\tilde n_k})_* T_\omega^+) C_\psi^{\tilde n_k}(\omega) \to 0
			\end{aligned}
		\end{equation} in the weak sense of currents as $k\to \infty$. Furthermore, 
	\begin{equation}\label{important:currs}
		\begin{aligned}
				&\left( \int_X T_{\omega}^+ \wedge \psi C \right)T_{\sigma^{\tilde n_k}(\omega)}^+ \wedge T_{\sigma^{\tilde n_k}(\omega)}^-  \\ & \hspace{3 cm} - \left(\int_X T_{\sigma^{\tilde n_k}(\omega)}^+ \wedge T_{\sigma^{\tilde n_k}(\omega)}^-\right) (f_\omega^{\tilde n_k})_*(T_{\omega}^+ \wedge \psi C) \to 0
		\end{aligned}
	\end{equation}
	in the weak sense of measures as $k\to \infty$.	If the exponential moment condition \eqref{exponential} is satisfied, one can additionally require the lower-density of $(\tilde n_k)_{k\in \N}$ to be greater than $1 - \rho$.
	\end{prop}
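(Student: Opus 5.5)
The plan is to follow the classical single-map argument: test $C^n_\psi(\omega)$ against smooth closed forms through their cohomology classes and normalized potentials, and reduce everything to the backward potential estimates. Write $n$ for $\tilde n_k$ and $\omega' = \sigma^{n}(\omega)$. For a smooth test form (or test function, after writing $g\,\Theta(\alpha)$-type forms), I pair with $(f_\omega^n)_*(\psi C)$ and pull back. This gives $(\psi C \mid (f_\omega^n)^*\beta)$. Now decompose $(f_\omega^n)^*\beta = \Theta((f_\omega^n)^*[\beta]) + dd^c\phi((f_\omega^n)^*\beta)$.

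For the first term, the key input is a cohomological statement. Writing $[\beta] = a\,e^-(\omega') + (\text{rest})$ along the backward/forward Furstenberg splitting, we expect
\[
\frac{(f_\omega^n)^*[\beta]}{M((f_\omega^n)^* T_\omega^+)^{-1}\cdots}
\]
to be dominated by $\langle [\beta], e^-(\omega')\rangle e^+(\omega)$ divided by $\langle e^+(\omega'), e^-(\omega')\rangle$. Concretely, I would use the Cartan decomposition of Section~\ref{sec:adapted} together with the tightness from Theorem~\ref{thm:GoodTimes}. These show that along good times, $\|(f^n_\omega)^*\|^{-1}(f_\omega^n)^*$ converges to a rank-one map. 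Its image is spanned by $e^+(\omega)$, and by equivariance (Remark~\ref{equivariance}) its kernel is $e^-(\omega')^{\perp}$, which moves with $n$. One uses that $\|(f_\omega^n)^*\|\asymp M((f_\omega^n)_*T^+_\omega)^{-1}\langle e^+(\omega'),e^-(\omega')\rangle$, since $(f_\omega^n)^*T^-_{\omega'}$ computes the contraction.

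The issue of a moving kernel is handled by restricting to times where $\omega'$ lies in a Lusin compact set $K$ of measure $>1-\rho/2$, on which $e^\pm$, $T^\pm$ and $\phi(T^\pm)$ vary continuously with uniform bounds (Lemma~\ref{lem:equicontinuous}) and $\langle e^+,e^-\rangle$ is bounded below. By Birkhoff, these times have density $>1-\rho/2$, and we intersect them with the Gouëzel–Karlsson good times. Then $\Theta$-part paired with $\psi C$ yields $\langle[\beta],e^-(\omega')\rangle (\psi C\mid \Theta(e^+(\omega)))$ up to $o(1)$.

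For the potential part, write $(\psi C\mid dd^c\phi) = (C\mid \phi\, dd^c\psi + \ldots)$ using \eqref{nonint}; the terms involving $d\psi$ are controlled through Cauchy–Schwarz for positive currents. So one needs $\|(f^n_\omega)^*\|^{-1}\phi((f_\omega^n)^*\beta)\to \phi(T_\omega^+)$ times $\langle[\beta],e^-(\omega')\rangle/\langle e^+,e^-\rangle$ uniformly, which reassembles $\int T^+_\omega\wedge\psi C$ via \eqref{wedge}.

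This uniform convergence is the main obstacle. I would mimic Lemma~\ref{lem:C0Bound}, using the telescoping formula for $\phi((f_\omega^n)^*\beta)$. The sum is split into $i\le L$ and $i> L$. The tail $i>L$ is small uniformly by tightness (i) and the moment estimate, since it is dominated by $\sum_{j>L}e^{j(\epsilon+\delta_j-\lambda_\mu)}$. For fixed small $i$, the term $\phi(\omega_{i-1}^*\Theta((f^{n-i}_{\sigma^i\omega})^*\beta))\circ f^{i-1}_\omega$ normalized by $\|(f_\omega^n)^*\|$ converges by the cohomological rank-one convergence applied to $\sigma^i\omega$. The identical telescoping for $T^+_\omega$ (via Proposition~\ref{prop:limit_currents}(1)) shows the limit is exactly the truncated expansion of $\phi(T^+_\omega)$.

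This yields \eqref{important:seq} after multiplying through by $\langle e^+(\omega'),e^-(\omega')\rangle M((f_\omega^n)_*T^+_\omega)$, using that the boundedness on $K$ makes the normalization harmless. For \eqref{important:currs}, wedge both sides with $T^+_{\omega'}$. By the definition of $f_*(\psi C)\wedge S$ and equivariance, $T^+_{\omega'}\wedge C^n_\psi(\omega)$ equals $M((f_\omega^n)_*T^+_\omega)^{-1}(f_\omega^n)_*(T_\omega^+\wedge\psi C)$. Passing the difference in \eqref{important:seq} through the wedge requires Lemma~\ref{lem:currClosed}-type continuity with $B_n=T^+_{\omega'}$. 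This is available because $\omega'\in K$ gives equicontinuous, uniformly bounded potentials. Since the currents in \eqref{important:seq} are not closed, I would instead redo the pairing with test functions $g\,u_{T^+_{\omega'}}$, which lie in a precompact family. Under \eqref{exponential}, Proposition~\ref{prop:exponentialC0} replaces good times by all $n$, so only the Birkhoff restriction remains, giving lower density $>1-\rho$.
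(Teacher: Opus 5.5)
\textbf{There is a genuine gap: your argument only determines the cohomology class of the limit, not the limit current.}

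Your computation only ever tests against \emph{closed} forms. The splitting $(f_\omega^n)^*\beta=\Theta((f_\omega^n)^*[\beta])+dd^c\phi((f_\omega^n)^*\beta)$ does not exist for non-closed $\beta$. Reducing to forms $g\,\Theta(\alpha)$ does not help, since $(f^n_\omega)^*(g\,\Theta(\alpha))=(g\circ f^n_\omega)\,(f^n_\omega)^*\Theta(\alpha)$, and nothing in your argument controls the oscillating factor $g\circ f^n_\omega$ on $\supp(\psi C)$.

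So even if your potential estimates go through, they only show that the difference in \eqref{important:seq} tends to zero when paired with closed smooth $(1,1)$-forms. Write $c_n=\langle[\beta],e^-(\omega')\rangle/\langle e^+(\omega'),e^-(\omega')\rangle$. What you obtain is the identity $M((f^n_\omega)_*T^+_\omega)(C^n_\psi\mid\beta)=c_n\int_X T^+_\omega\wedge\psi C+o(1)$. That identifies at most the cohomology class of cluster values, not the cluster values themselves.

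Two ingredients are missing.
\begin{enumerate}
\item \emph{Mass bounds and closedness of cluster values.} The cluster values of $M((f^n_\omega)_*T^+_\omega)C^n_\psi$ along your times must have bounded mass and be \emph{closed}. This is Propositions~\ref{lem:MassBound2} and~\ref{lem:closed}. There, a cutoff $\psi_0\equiv1$ on $\supp\psi$ and Cauchy--Schwarz give $M((f^n_\omega)_*T^+_\omega)\,|(C^n_\psi\mid d\gamma)|\lesssim M((f^n_\omega)_*T^+_\omega)^{1/2}\to0$ for smooth $1$-forms $\gamma$.
\item \emph{Uniqueness in the class.} Along a subsequence with $T^-_{\sigma^n(\omega)}\to T_1$, a closed positive cluster value $T_2$ with $[T_2]=a[T_1]$ must equal $aT_1$. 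This requires restricting to times with $\sigma^n(\omega)$ in a compact set $K'$ on which $e^-$ is continuous and $T^-_\eta$ is the \emph{unique} closed positive current of unit mass in $e^-(\eta)$. Then $[T_1]=e^-(\eta)$ for some $\eta\in K'$. Your Lusin set provides the continuity, but you never invoke uniqueness in the class, and that is the step that turns cohomological information into weak convergence.
\end{enumerate}

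The same gap affects \eqref{important:currs}. The forms $u_{T^+_{\omega'}}\,dd^c g$ are not closed. So you need \eqref{important:seq} as genuine weak convergence, applied uniformly on a precompact family, with the boundary error terms handled as in Lemma~\ref{lem:curr}.

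\textbf{How the approach compares once the gap is filled.} With these two ingredients added, your forward mechanism would be a legitimate alternative to the paper's argument. That mechanism is uniform control of $M((f^n_\omega)_*T^+_\omega)\,\phi((f^n_\omega)^*\beta)-c_n\phi(T^+_\omega)$ by telescoping. The tail is controlled by tightness, and the finitely many head terms by the closeness of the contracting directions of $(f^{n-i}_{\sigma^i(\omega)})^*$ to $e^-(\omega')$. The paper instead pushes closed approximants $\tilde C^{\tilde n_k}_\psi$ back by $((f^{-\tilde m_k}_{\sigma^{\tilde n_k}(\omega)})^*)^{-1}$. That device needs only $C^0$ \emph{bounds} on normalized potentials (Lemma~\ref{lem:C0Bound}), not their uniform convergence.

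\textbf{The lower-density claim is not justified as stated.} Your tail estimate uses Gou\"ezel--Karlsson tightness (i) from Theorem~\ref{thm:GoodTimes}. That is exactly what is lost when one takes all $n$ under \eqref{exponential} (Remark~\ref{rmk:tightness}). Proposition~\ref{prop:exponentialC0} gives only uniform $C^0$ bounds for all $n$, not uniform smallness of the tails $\sum_{i>L}$. So ``only the Birkhoff restriction remains'' needs a further argument; the paper supplies one via the Cantat--Dujardin summability bound and a per-$n$ choice of admissible $m$.
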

	\begin{rmk}\label{rmk:prop31-independent}
	The subset of $\Omega$ on which Proposition \ref{lem:last2} holds may be chosen independently of $C$ and $\psi$.
\end{rmk}

	\begin{rmk}\label{rmk:identity}
		Notice 
		\begin{align*}
			T_{\sigma^{n}(\omega)}^+ \wedge M(( f_\omega^n)_*T_{\omega}^+) C_\psi^{n} &= T_{\sigma^{n}(\omega)}^+ \wedge M(( f_\omega^n)_*T_{\omega}^+) ( f_\omega^n)_*(\psi C) \\
			&= (f_\omega^n)_*(T_{\omega}^+ \wedge \psi C).
		\end{align*} Integrating, we find 
		\begin{align*}
			\int_X T_{\sigma^{n}(\omega)}^+ \wedge M(( f_\omega^n)_*T_{\omega}^+) C_\psi^{n} = \int_X (f_\omega^n)_*(T_{\omega}^+ \wedge \psi C) = \int_X T_{\omega}^+ \wedge \psi C
		\end{align*} so \eqref{important:seq} may be rewritten as 
		\begin{align*}
			&	\left(\int_X T_{\sigma^{\tilde n_k}(\omega)}^+ \wedge M(( f_\omega^{\tilde n_k})_*T_{\omega}^+) C_\psi^{\tilde n_k}\right)T_{\sigma^{\tilde n_k}(\omega)}^- \\
			&\hspace{3.5 cm} - \left(\int_X T_{\sigma^{\tilde n_k}(\omega)}^+ \wedge T_{\sigma^{\tilde n_k}(\omega)}^-\right) M((f_\omega^{\tilde n_k})_* T_\omega^+) C_\psi^{\tilde n_k}(\omega) \to 0.
		\end{align*}
	\end{rmk}
	\begin{rmk}\label{tbd}
		Since $\mu_{\mathcal{F}}$ assigns zero measure to proper subspaces of $\P(\NS(X)_+)$ (by strong irreducibility) and $e^+(\eta)$ is independent of $e^-(\eta)$, we have 
		\begin{align*}
			D_\epsilon \coloneqq \set{\eta \colon e^+(\eta)\wedge e^-(\eta) > \epsilon}
		\end{align*} satisfies 
		\begin{align*}
			\mu^\Z(D_\epsilon) \to 1 
		\end{align*} as $\epsilon \to 0$. Therefore by the ergodic theorem we may assume that 
		\begin{align*}
			 \left(\int_X T_{\sigma^{\tilde n_k}(\omega)}^+ \wedge T_{\sigma^{\tilde n_k}(\omega)}^-\right)  =  e^+(\sigma^{\tilde n_k}(\omega))\wedge e^-(\sigma^{\tilde n_k}(\omega)) 
		\end{align*} is bounded away from $0$. 
		\end{rmk}
\begin{rmk} \label{tbd2} By Remark \ref{tbd}, it follows from \eqref{important:currs} that 
	\begin{align*}
		\frac{\left( \int_X T_{\omega}^+ \wedge \psi C \right)}{\left(\int_X T_{\sigma^{\tilde n_k}(\omega)}^+ \wedge T_{\sigma^{\tilde n_k}(\omega)}^-\right)}T_{\sigma^{\tilde n_k}(\omega)}^+ \wedge T_{\sigma^{\tilde n_k}(\omega)}^-  -  (f_\omega^{\tilde n_k})_*(T_{\omega}^+ \wedge \psi C) 
	\end{align*} converges weakly to the zero measure as $k\to \infty$. 
	Assuming 
	$\int_X T_\omega^+ \wedge \psi C \neq 0$, we obtain 
			\begin{align*}
			m_{\sigma^{\tilde n_k}(\omega)}  -  (f_\omega^{\tilde n_k})_* \cN(T_{\omega}^+ \wedge \psi C) \to 0.
		\end{align*} 
\end{rmk}

	Before proving Proposition \ref{lem:last2}, we deduce a few supporting results. 
	
	\subsection{Supporting results}
	Here we prove a few results that will be useful for the remainder of the section. We assume $C$ is a positive $(1,1)$-current and $\psi\in C^\infty(X)$ is nonnegative and satisfies \eqref{nonint}. The following lemma echoes Lemma \ref{lem:currClosed}.
	\begin{lem}\label{lem:curr}
		 Suppose $\omega\in \Omega$, $\set{a_k}_{k\in \N}\subset \R$, and $(n_k)$ are such that $a_kC_\psi^{n_k}(\omega)$ converges weakly as $k\to \infty$ to a closed positive current $T$ with continuous potentials. Suppose further that $a_k\to 0$ and there exists nonnegative $\psi_0\in C^\infty(X)$ with $\psi_0 \equiv 1$ on $\supp \psi$, $\supp \psi_0 \cap \supp \partial C = \varnothing$, and $a_k(C_{\psi_0}^{n_k}(\omega)\mid \kappa_0)^{\frac 1 2}\to 0$.
	 Let $(B_k)$ be a sequence of closed positive $(1,1)$-currents such that $\set{[B_k]}$ is bounded in cohomology and $\set{\phi(B_k)}$ is an equicontinuous family admitting a uniform $C^0$ bound. Then 
	 \begin{align*}
	 	(a_k C_\psi^{n_k}(\omega) \wedge B_k)  - (T \wedge B_k ) \to 0
	 \end{align*} in the weak sense of measures. 
	\end{lem}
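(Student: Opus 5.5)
We follow the proof of Lemma \ref{lem:currClosed}, but now the currents $a_kC_\psi^{n_k}(\omega)$ are not closed, so the boundary terms have to be controlled. Fix a smooth test function $g$ supported in a small bidisk $V\subset X$. As in the proof of Lemma \ref{lem:currClosed}, write $B_k = dd^c u_{B_k}$ on $V$ with $u_{B_k} = u_{\Theta(B_k)} + \phi(B_k)$. The family $\set{u_{B_k}}$ is uniformly bounded and equicontinuous, so after passing to subsequences we may assume $u_{B_k}\to u$ uniformly on $\overline V$. It suffices to prove the claim along such subsequences, since every subsequence has a further subsequence of this kind. The wedge is defined away from $\supp\partial C$, and $\psi$ vanishes near $\supp \partial C$. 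Hence
\begin{align*}
	\bigl(a_k C_\psi^{n_k}(\omega)\wedge B_k \mid g\bigr) = \bigl( a_k C_\psi^{n_k}(\omega) \mid u_{B_k}\, dd^c g\bigr) + \text{(boundary term)},
\end{align*}
where the boundary term comes from integrating by parts against the non-closed current $a_k(f_\omega^{n_k})_*(\psi C)$. Explicitly, $dd^c$ of $\psi C$ produces terms involving $d\psi\wedge d^c C$-type expressions. Because $C$ is closed on $\supp\psi_0$, these reduce to $d\psi\wedge d^c(\cdot)\wedge C$ and $dd^c\psi\wedge C$.

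\textbf{Main term.} Uniform convergence of $u_{B_k}$, together with weak convergence of the positive currents $a_kC^{n_k}_\psi(\omega)$ (their masses are bounded, so the convergence holds against continuous forms), gives
\[(a_k C_\psi^{n_k}\mid u_{B_k}dd^cg) - (T\mid u_{B_k} dd^c g)\to 0.\]
Since $T$ is closed with continuous potentials, $(T\mid u_{B_k}dd^cg) = (T\wedge B_k\mid g)$.

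\textbf{Error terms.} This is the main obstacle. Pulling back by $f_\omega^{n_k}$, the errors take the form
\[a_k\int (u_{B_k}\circ f^{n_k}_\omega)\, \bigl(d\psi\wedge d^c(g\circ f_\omega^{n_k}) + \dots\bigr)\wedge C.\]
The first step is to approximate $u_{B_k}$ by smooth functions, uniformly, using equicontinuity. This replaces $B_k$ by smooth currents with a uniformly small error, and the error is controlled because the total masses $a_k(C^{n_k}_{\psi_0}\mid\kappa_0)$ stay bounded. The remaining cross terms of the form $a_k\int d\psi\wedge d^c h\wedge (f^{n_k}_\omega)^*(\cdot)\wedge C$ are then estimated by Cauchy--Schwarz for the positive current $\psi_0 C$:
\[\Bigl|\int \chi\, d\psi\wedge d^c h \wedge C\Bigr| \le \Bigl(\int \psi_0\, d\psi\wedge d^c\psi\wedge C\Bigr)^{1/2}\Bigl(\int \psi_0\, dh\wedge d^c h\wedge C\Bigr)^{1/2}.\]
The first factor is a fixed constant, because $\psi$ is fixed in the pulled-back picture. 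The second factor, after pushing forward, is bounded by $\norm{h}_{C^2}\,(C^{n_k}_{\psi_0}(\omega)\mid\kappa_0)$, up to the constant $\norm{h}_{C^2}$ of the smooth approximant.

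Multiplying by $a_k$ gives a bound of order $a_k\,(C_{\psi_0}^{n_k}\mid\kappa_0)^{1/2}$, which tends to $0$ by hypothesis. The terms involving $dd^c\psi$ are bounded directly by $a_k\,\norm{\cdot}_{C^0}$ times a fixed mass, and tend to $0$ because $a_k\to 0$. The delicate point is to keep the smoothing parameter uniform in $k$ when interchanging it with the limit in $k$. We handle this by first fixing the smoothing error $\epsilon$, then letting $k\to\infty$, and finally letting $\epsilon\to0$.
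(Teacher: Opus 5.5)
Your argument is essentially the paper's: the same split into a main term $(a_kC^{n_k}_\psi(\omega)\mid u_{B_k}dd^cg)$, handled by precompactness of $\set{u_{B_k}dd^cg}$ together with weak convergence, and an error coming from expanding $dd^c(\psi\cdot(g\circ f^{n_k}_\omega))$. In both, the cross terms are controlled by Cauchy--Schwarz for $\psi_0C$, and the $dd^c\psi$ term vanishes because $a_k\to0$.

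The one thing to fix is the smoothing detour. It is unnecessary: Cauchy--Schwarz for the positive current $\psi_0C$ only needs a bounded weight, so the paper pulls $\norm{u_{B_k}}_{C^0}$ out directly. Your own displayed inequality already carries the continuous weight $\chi$, and no interchange of limits is needed.

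More seriously, your justification for why the smoothing error is harmless is not available. You say the masses $a_k(C^{n_k}_{\psi_0}(\omega)\mid\kappa_0)$ stay bounded, but that is not a hypothesis of the lemma. You are only given $a_k(C^{n_k}_{\psi_0}(\omega)\mid\kappa_0)^{1/2}\to0$; boundedness does hold where the paper applies the lemma, but the statement does not assume it. Moreover, the parts of that error containing $d(g\circ f^{n_k}_\omega)$ need the Cauchy--Schwarz step in any case, since these forms grow with $k$ and cannot be bounded by masses alone.

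If you keep the smoothing, bound its error, up to constants, exactly as you bound the other terms:
\begin{itemize}
\item by $\epsilon$ times the bounded mass of $a_kC^{n_k}_\psi(\omega)$ for the $\psi\,(f^{n_k}_\omega)^*dd^cg$ part;
\item by $\epsilon\,a_k(C^{n_k}_{\psi_0}(\omega)\mid\kappa_0)^{1/2}$, via Cauchy--Schwarz, for the cross terms;
\item by $\epsilon\,a_k(C\mid\kappa_0)$ for the $dd^c\psi$ part.
\end{itemize}
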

\begin{proof}[Proof of Lemma \ref{lem:curr}]
	Let $g \in C^\infty(X)$ be supported in an open bidisk $V\subset X$. Let $u_{B_k}, u_{\Theta(B_k)}$ denote the local potentials of $B_k, \Theta(B_k)$ on $V$ specified in the proof of Lemma \ref{lem:currClosed}, so that $u_{B_k} = u_{\Theta(B_k)} + \phi(B_k)$. As in the proof of Lemma \ref{lem:currClosed}, the family $\set{u_{B_k}dd^cg}$ is precompact in the $C^0$-topology on coefficient functions.
	
	Then, recalling Section \ref{sec:wedges}, we have
	 \begin{align*}
	 	(a_k C_\psi^{n_k}(\omega) \wedge B_k \mid g)& = a_k( (f_\omega^{n_k})_*(\psi C) \wedge B_k \mid g) \\
	 	& = a_k( (\psi C) \wedge (f_\omega^{n_k})^* B_k \mid g \circ f_\omega^{n_k} )\\
	 	& = a_k( C \wedge (f_\omega^{n_k})^* B_k \mid \psi \cdot (g \circ f_\omega^{n_k}) )\\  
	 	 &= a_k(  C  \mid (u_{B_k} \circ f_\omega^{n_k} )dd^c(\psi \cdot (g\circ f_\omega^{n_k})) )\\  
	 	 &= (a_k C_\psi^{n_k}(\omega)  \mid u_{B_k}dd^c g) + a_k e_{k}(g)
	 \end{align*}  where
 	\begin{align*}
 	e_{k}(g) &=  \big(  C  \mid (u_{B_k} \circ f_\omega^{n_k} )dd^c(\psi \cdot (g\circ f_\omega^{n_k})) \big) -  \big(C_\psi^{n_k}(\omega)  \mid u_{B_k}dd^c g\big).
 \end{align*} Since the $C_\psi^{n_k}(\omega)$'s are positive currents, the local coefficient measures of $a_k C_\psi^{n_k}(\omega)$ in $V$ converge to the corresponding coefficient measures of $T$. By precompactness of the family $\set{u_{B_k}dd^cg}$, it follows that $(a_k C_\psi^{n_k}(\omega) \mid u_{B_k}dd^c g) - (T \mid u_{B_k}dd^c g)\to 0$. Since $(T \mid u_{B_k}dd^c g) = (T \wedge B_k \mid g)$, it only remains to prove that $a_ke_{k}(g)\to 0$.

 Indeed,
 	\begin{align*}
 		e_{k}(g) &=  \big(  C  \mid (u_{B_k} \circ f_\omega^{n_k} )dd^c(\psi \cdot (g\circ f_\omega^{n_k})) \big) -  \big(C_\psi^{n_k}(\omega)  \mid u_{B_k}dd^c g\big) \\
 		&=  \big(  C  \mid (u_{B_k} \circ f_\omega^{n_k} )dd^c(\psi \cdot (g\circ f_\omega^{n_k})) \big) -  \big(C\mid   (u_{B_k} \circ f_\omega^{n_k}) \psi dd^c (g\circ f_\omega^{n_k})\big)\\
 		&=  \big(C  \mid (u_{B_k} \circ f_\omega^{n_k} ) d\psi \wedge d^c (g\circ f_\omega^{n_k})\big) + \big(C  \mid (u_{B_k} \circ f_\omega^{n_k} )  d(g\circ f_\omega^{n_k})\wedge d^c \psi \big) \\ &\hspace{7.1 cm }+ \big(C  \mid (u_{B_k} \circ f_\omega^{n_k} ) (g\circ f_\omega^{n_k}) dd^c \psi\big).
	\end{align*} Now, since $\set{\norm{u_{B_k}}_{C^0}}$ is uniformly bounded, there exists $A\in \R$ such that $-  A\kappa_0 \leq (u_{B_k} \circ f_\omega^{n_k} ) (g\circ f_\omega^{n_k}) dd^c \psi \leq A\kappa_0$ for all $k$; hence, by the positivity of $C$, we have \[|\big(C \mid  (u_{B_k} \circ f_\omega^{n_k} ) (g\circ f_\omega^{n_k}) dd^c \psi\big)| \leq A (C\mid \kappa_0)\] for all $k$. So $a_k\to 0$ implies \[a_k \big(C  \mid (u_{B_k} \circ f_\omega^{n_k} ) (g\circ f_\omega^{n_k}) dd^c \psi\big) \to 0.\] Now, 
 	\begin{align*}
 		&|\big(C  \mid (u_{B_k} \circ f_\omega^{n_k} ) d\psi \wedge d^c (g\circ f_\omega^{n_k})\big)|  \\ & \hspace{3 cm }=|\big(\psi_0 C  \mid (u_{B_k} \circ f_\omega^{n_k} ) d\psi \wedge d^c (g\circ f_\omega^{n_k})\big)| 
 		\\ & \hspace{3 cm }\leq
		 \norm{u_{B_k}}_{C^0} (\psi_0 C\mid i \partial \psi \wedge \overline \partial \psi)^{\frac 1 2}(\psi_0 C\mid i \partial (g\circ f_\omega^{n_k}) \wedge \overline \partial (g\circ f_\omega^{n_k}))^{\frac 1 2}\\ & \hspace{3 cm }\leq \norm{u_{B_k}}_{C^0} (\psi_0 C\mid i \partial \psi \wedge \overline \partial \psi)^{\frac 1 2}(\psi_0 C\mid (f_\omega^{n_k})^*(i \partial g \wedge \overline \partial g) )^{\frac 1 2}
 	\end{align*} where in the first equality we used $\psi_0 \equiv 1$ on $\supp \psi$ and afterwards we use positivity of $\psi_0 C$ and Cauchy--Schwarz. Choosing $B\in \R$ so that $-B\kappa_0 \leq i \partial g \wedge \overline \partial g\leq B\kappa_0$, we have 
 	\begin{align*}
	&|\big(C  \mid (u_{B_k} \circ f_\omega^{n_k} ) d\psi \wedge d^c (g\circ f_\omega^{n_k})\big)| \\& \hspace{3 cm}\leq B^{\frac 1 2} \norm{u_{B_k}}_{C^0} (\psi_0 C\mid i \partial \psi \wedge \overline \partial \psi)^{\frac 1 2} (\psi_0 C\mid (f_\omega^{n_k})^*(\kappa_0) )^{\frac 1 2}.
 \end{align*}  Since $(\psi_0 C\mid (f_\omega^{n_k})^*(\kappa_0) ) = (C_{\psi_0}^{n_k}(\omega) \mid\kappa_0)$ and $a_k (C_{\psi_0}^{n_k}(\omega) \mid\kappa_0)^{\frac 1 2} \to 0$ by assumption, and $\set{\norm{u_{B_k}}_{C^0}}$ is uniformly bounded, we conclude \[a_k\big(C  \mid (u_{B_k} \circ f_\omega^{n_k} ) d\psi \wedge d^c (g\circ f_\omega^{n_k})\big) \to 0.\] The proof that 
	 $a_k\big(C  \mid (u_{B_k} \circ f_\omega^{n_k} )  d(g\circ f_\omega^{n_k})\wedge d^c \psi \big) \to 0$ is similar. Hence we have shown $a_ke_{k}(g)\to 0$, concluding the proof.
\end{proof} 
		\begin{prop}\label{lem:MassBound2}
		For $\mu^\Z$-almost every $\omega\in \Omega$ and every $\rho > 0$, one can find a subsequence $(n_i)_{i\in \N}$ of upper-density greater than $1 - \rho$  for which
		\begin{align*}
			M((f_\omega^{n_i})_* T_\omega^+)M(C^{n_i}_\psi)
		\end{align*}
		 is bounded uniformly in $i$. If the exponential moment condition \eqref{exponential} is satisfied, then the subsequence $(n_i)_{i\in \N}$ has lower-density greater than $1 - \rho$.
		\end{prop}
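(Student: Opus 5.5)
The quantity splits into a cohomological factor, $M((f_\omega^{n})_*T_\omega^+)$, and a non-cohomological factor, $M(C^{n}_\psi)$; the plan is to compare each with $\norm{(f_\omega^n)^*}$, in opposite directions. Put $B_n\coloneqq (f_\omega^n)^*$.

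\emph{Cohomological factor.} Remark \ref{equivariance}, iterated, gives $(f_\omega^n)^*T^+_{\sigma^n(\omega)}=\lambda_n(\omega)T^+_\omega$ with $\lambda_n(\omega)\coloneqq M(B_n e^+(\sigma^n(\omega)))$. Pushing forward and taking masses yields
\begin{align*}
M((f_\omega^n)_*T_\omega^+)=\frac{1}{\lambda_n(\omega)} .
\end{align*}
Hence it suffices to show that $\lambda_n(\omega)\gtrsim \norm{B_n}$ along a dense set of $n$.

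Since $M\asymp\norm{\cdot}$ on $\Psef(X)$ (Remark \ref{cpt}) and $e^+(\sigma^n\omega)$ is nef, the Cartan decomposition of Section \ref{sec:adapted} gives
\begin{align*}
\lambda_n(\omega)\geq c\,\norm{B_n}\,\bigl|\angle{e^+(\sigma^n(\omega)),e_+(B_n)}_{\mathrm{ad}}\bigr| .
\end{align*}
Define, for $\omega'\in\Omega$,
\begin{align*}
h_n(\omega')\coloneqq \ln\frac{M\bigl((f^n_{\sigma^{-n}\omega'})^*e^+(\omega')\bigr)}{\norm{(f^n_{\sigma^{-n}\omega'})^*}},
\end{align*}
so that the quantity we need is $h_n(\sigma^n\omega)$.

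For fixed $\omega'$, the products $(f^n_{\sigma^{-n}\omega'})^*=\omega'^*_{-n}\cdots\omega'^*_{-1}$ grow by adding factors on the past side. By Furstenberg theory for the reversed (transposed) walk, their expanding singular directions converge a.s. to a limit direction $v(\omega')$, which depends only on the past coordinates $(\omega'_{-1},\omega'_{-2},\dots)$. On the other hand $e^+(\omega')$ depends only on the future coordinates, and its law $\mu_{\mathcal F}$ charges no hyperplane (Remark \ref{wekk}). By independence, $\angle{e^+(\omega'),v(\omega')}_{\mathrm{ad}}\neq 0$ almost surely. Since $\norm{(f^n_{\sigma^{-n}\omega'})^*}\to\infty$, this gives $\liminf_n h_n(\omega')>-\infty$ a.s.

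By Egorov's theorem there are $R,N$ and a set $E$ with $\mu^\Z(E)>1-\rho/2$ such that $h_n\geq -R$ on $E$ for all $n\geq N$. By the Birkhoff theorem, for a.e.\ $\omega$ the set $G_1\coloneqq\set{n\colon\sigma^n\omega\in E}$ has (lower) density $>1-\rho/2$. For $n\in G_1$ with $n\geq N$ we get $M((f_\omega^n)_*T^+_\omega)\leq e^R/\norm{B_n}$.

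\emph{Non-cohomological factor.} We have $M(C^n_\psi)=(\psi C\mid B_n\kappa_0)$, and $C$ is not closed, so we write
\begin{align*}
(f_\omega^n)^*\kappa_0=\Theta(B_n[\kappa_0])+dd^c\phi((f_\omega^n)^*\kappa_0) .
\end{align*}
The smooth term contributes at most $\norm{\psi}_{C^0}(C\mid\kappa_0)\cdot O(\norm{B_n})$. For the $dd^c$ term, $dC=0$ on a neighborhood of $\supp\psi$ by \eqref{nonint}, so $dd^c(\psi C)=dd^c\psi\wedge C$. This is an order-zero current with mass $\lesssim (C\mid\kappa_0)$, hence
\begin{align*}
|(\psi C\mid dd^c\phi)|\lesssim\norm{\phi((f_\omega^n)^*\kappa_0)}_{C^0} .
\end{align*}
By Corollary \ref{cor:C0Bound}, this is $\leq C\norm{B_n}$ along a set $G_2$ of upper density $>1-\rho/2$. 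So $M(C^n_\psi)\lesssim\norm{B_n}$ on $G_2$.

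\emph{Conclusion.} On $G_1\cap G_2\cap[N,\infty)$ the product is bounded by a constant depending only on $\omega$. Since $G_1$ has lower density $>1-\rho/2$ and $G_2$ has upper density $>1-\rho/2$, the intersection has upper density $>1-\rho$. Under \eqref{exponential}, Proposition \ref{prop:exponentialC0} lets us take $G_2=\N$, so the lower density of the intersection exceeds $1-\rho/2$.

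\emph{Expected obstacle.} The main obstacle is the a.s.\ non-degeneracy $\angle{e^+(\omega'),v(\omega')}\neq0$. It requires the convergence of the expanding singular directions of backward-growing products, which is standard Furstenberg theory for the transpose walk in $\Orth^+$, together with the independence of past and future. I would verify this via the Cartan decomposition and the non-atomicity in Remark \ref{wekk}.
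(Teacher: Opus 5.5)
Your proposal is correct. Its overall architecture matches the paper's: split the product into $M((f_\omega^n)_*T_\omega^+)$ and $M(C_\psi^n)$, compare each with $\norm{(f_\omega^n)^*}$, and intersect a set of lower density with a set of upper density. Your treatment of $M(C_\psi^n)$ is exactly the paper's Lemma \ref{lem:MassBound}, via Corollary \ref{cor:C0Bound}.

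You genuinely differ on the cohomological factor, which is the paper's Claim \ref{claim:mass}.

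\emph{The paper's route.} It studies the angle $h_n(\omega)$ between $e_+((f_\omega^n)^*)$ and $e^+(\sigma^n(\omega))$ as a function of the future of $\omega$. For fixed $\omega$, $e_+((f_\omega^n)^*)$ does not converge, so the paper has to:
\begin{itemize}
\item compare $h_{n+k}(\omega)$ with $h_n(\sigma^k(\omega))$, using convergence of backward expanding directions (the sets $S_\epsilon^n$);
\item apply Birkhoff for each fixed $n$;
\item bound the density of bad times by $\iint \mathds{1}_{\set{\tau\leq 2\epsilon}}\,d\tilde\mu_{\mathcal F}\,d\mu_{\mathcal F}$ via Fubini and reverse Fatou, and then let $\epsilon\to0$.
\end{itemize}

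\emph{Your route.} By re-basing at $\omega'=\sigma^n(\omega)$, the relevant matrix becomes the backward product $\omega'^*_{-n}\cdots\omega'^*_{-1}$, whose expanding direction actually converges for fixed $\omega'$. Past/future independence then gives $\inf_n h_n>-\infty$ almost surely, and a single Birkhoff application to one fixed set $E$ finishes the argument.

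\emph{Comparison.} Your version is shorter and gives a good set with an honest density $\mu^\Z(E)$. It also makes $\set{\omega\colon n\in G_1(\omega)}=\sigma^{-n}E$ have measure exactly $\mu^\Z(E)$ for $n\geq N$, which would simplify the uniformity later needed in Lemma \ref{lem:uniformity}. The paper's version instead gives an explicit bound on the bad density in terms of the two Furstenberg measures, and that explicit bound is what it reuses there. Both arguments rest on the same three inputs: convergence of backward expanding directions, independence of past and future, and the fact that $\mu_{\mathcal F}$ charges no proper subspace.

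One detail to record when you verify your ``expected obstacle'': you need $v(\omega')\in\NS(X)_+$. This holds because expanding singular directions lie in $\NS(X)_+$ (Section \ref{sec:adapted}) and $\NS(X)_+$ is closed. It guarantees that $v(\omega')^{\perp}\cap\NS(X)_+$ is a proper subspace of $\NS(X)_+$, so Remark \ref{wekk} applies. The phrase ``$\mu_{\mathcal F}$ charges no hyperplane'' is only true for hyperplanes not containing $\NS(X)_+$.
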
 
An immediate corollary is the following.
	\begin{cor}
			For any $\rho>0$, if $\omega$ and $(n_i)$ are as in Proposition \ref{lem:MassBound2}, the collection $\set{M((f_\omega^{n_i})_* T_\omega^+)C_\psi^{n_i}}$ is precompact in the weak topology on currents.
	\end{cor}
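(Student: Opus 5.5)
The plan is to reduce the corollary to the standard compactness theorem for positive currents of bounded mass, with the mass bound supplied by Proposition \ref{lem:MassBound2}.

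\emph{Step 1: the currents are positive.} Fix $\rho>0$, and take $\omega$ and $(n_i)$ as in Proposition \ref{lem:MassBound2}. Since $\psi\geq 0$ and $C$ is a positive $(1,1)$-current, $\psi C$ is a positive $(1,1)$-current. Push-forward by the biholomorphism $f_\omega^{n_i}$ preserves positivity and bidegree. The scalar $M((f_\omega^{n_i})_*T_\omega^+)$ is a positive real number. Hence each
\[
S_i\coloneqq M((f_\omega^{n_i})_*T_\omega^+)\,C_\psi^{n_i}(\omega)
\]
is a positive $(1,1)$-current on $X$. These currents need not be closed, but closedness plays no role in the compactness argument.

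\emph{Step 2: the masses are uniformly bounded.} By linearity of the pairing with $\kappa_0$,
\[
(S_i\mid\kappa_0)=M((f_\omega^{n_i})_*T_\omega^+)\,M(C_\psi^{n_i}),
\]
and Proposition \ref{lem:MassBound2} bounds this uniformly in $i$. Since $\kappa_0$ is a K\"ahler form, positivity lets the single quantity $(S_i\mid\kappa_0)$ control the total variation of every local coefficient measure of $S_i$. Concretely, for each smooth $(1,1)$-test form $\beta$ there is a constant $A_\beta$ with $-A_\beta\norm{\beta}_{C^0}\kappa_0\leq \beta\leq A_\beta\norm{\beta}_{C^0}\kappa_0$ in the sense of positivity, after splitting $\beta$ into real and imaginary parts. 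This gives
\[
|(S_i\mid\beta)|\leq A\,\norm{\beta}_{C^0}\,(S_i\mid\kappa_0),
\]
with $A$ depending only on $X$ and $\kappa_0$.

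\emph{Step 3: compactness.} Step 2 shows that $\{S_i\}$ is a bounded subset of the dual of the Banach space of continuous $(1,1)$-forms on the compact manifold $X$. By Banach--Alaoglu, this set is weak-$*$ precompact. That space of forms is separable, so the weak-$*$ topology is metrizable on bounded sets, and precompactness can be taken sequentially. Moreover, positivity is a closed condition under weak convergence, so every limit point is again a positive $(1,1)$-current. Since convergence against continuous forms implies convergence against smooth test forms, this gives precompactness in the weak topology on currents.

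There is no real obstacle: all of the substance lies in the mass bound of Proposition \ref{lem:MassBound2}. The only point needing care is Step 2, namely that control of the trace mass $(S_i\mid\kappa_0)$ by itself controls all components of a positive $(1,1)$-current. This is standard: positivity makes the trace measure dominate the coefficient measures.
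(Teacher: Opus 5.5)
Your proposal is correct and is essentially the argument the paper leaves implicit when it calls this an ``immediate corollary'': the currents are positive, Proposition \ref{lem:MassBound2} bounds their masses $(\,\cdot\mid\kappa_0)$ uniformly, and positive currents of bounded mass are weakly precompact by Banach--Alaoglu. The only slip is notational: in Step 2 you write the constant as $A_\beta$ and then as $A$; it can be taken independent of $\beta$, depending only on $\kappa_0$ and the chosen $C^0$ norm on forms.
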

	\begin{rmk}\label{rmk:tightness}
		Without the exponential moment assumption, the sequence $(n_i)_{i\in\N}$ in Proposition \ref{lem:MassBound2} is chosen so that it satisfies the tightness conditions of Gou\"ezel--Karlsson, namely condition (2) of Theorem \ref{thm:GoodTimes}. Under the exponential moment assumption, we produce our sequence using Proposition \ref{prop:exponentialC0}, and tightness is not guaranteed. 
	\end{rmk}
		\begin{prop}\label{lem:closed}
			Let $\omega\in\Omega$ be generic in the sense of Proposition \ref{lem:MassBound2}. Given $\rho>0$, let $(n_i)_{i\in\N}$ be a sequence obtained from the construction in its proof. Then every limit point of 
		\begin{align*}
			\set{M((f_\omega^{n_i})_* T_\omega^+)C_\psi^{n_i}(\omega)}
		\end{align*}
		is closed.
		\end{prop}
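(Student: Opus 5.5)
The plan is to show directly that $d$ of any limit point vanishes, using the Cauchy--Schwarz trick already used for the error terms in Lemma \ref{lem:curr}. Write $a_i\coloneqq M((f_\omega^{n_i})_*T_\omega^+)$. Suppose $a_{i_j}C_\psi^{n_{i_j}}(\omega)\to S$ weakly. Since $d$ is continuous for the weak topology, $dS=\lim_j a_{i_j}\, d\big((f_\omega^{n_{i_j}})_*(\psi C)\big)$. Because $\supp\psi\cap\supp\partial C=\varnothing$, we have $d(\psi C)=d\psi\wedge C$. Hence, for a smooth $1$-form $\beta$ (or a $2$-form, depending on the degree convention used for the pairing),
\begin{align*}
\big(d C_\psi^{n}(\omega)\mid \beta\big)=\pm\big(C\mid d\psi\wedge (f_\omega^{n})^*\beta\big).
\end{align*}
It therefore suffices to show that $a_i\,\big|\big(C\mid d\psi\wedge (f_\omega^{n_i})^*\beta\big)\big|\to0$ for every fixed $\beta$.

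\textbf{Step 1 (Cauchy--Schwarz bound).} Choose a smooth $\psi_0\geq0$ with $\psi_0\equiv1$ on $\supp\psi$ and $\supp\psi_0\cap\supp\partial C=\varnothing$. This is possible since the two supports are disjoint compact sets. Then $C$ may be replaced by the positive current $\psi_0C$ in the pairing above. Decompose into $(1,0)$ and $(0,1)$ parts and apply Cauchy--Schwarz for the positive current $\psi_0 C$, exactly as in the proof of Lemma \ref{lem:curr}. This gives
\begin{align*}
\big|\big(C\mid d\psi\wedge (f_\omega^{n})^*\beta\big)\big|\lesssim (\psi_0C\mid i\partial\psi\wedge\overline\partial\psi)^{\frac12}\,\big(\psi_0C\mid (f_\omega^n)^*\kappa_0\big)^{\frac12}.
\end{align*}
Here I bound the positive form built from $\beta$ by $B\kappa_0$ and pull back. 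The last factor is $M(C^n_{\psi_0}(\omega))^{1/2}$.

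\textbf{Step 2 (mass control).} Since $\psi\le\norm{\psi}_{C^0}\psi_0$, I apply Proposition \ref{lem:MassBound2} with $\psi_0$ in place of $\psi$ to get $a_iM(C^{n_i}_{\psi_0})\le K$. I must check that the sequence built in its proof does not depend on the cutoff. I expect it to be produced from $\omega$ alone, via Theorem \ref{thm:GoodTimes} or Proposition \ref{prop:exponentialC0}, with the cutoff entering only through constants, as Remark \ref{rmk:prop31-independent} suggests. This is the point I expect needs the most care: the statement refers to ``a sequence obtained from the construction in its proof'', so I would re-read that construction and confirm it serves $\psi$ and $\psi_0$ simultaneously. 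Granting this,
\begin{align*}
a_i\,M(C^{n_i}_{\psi_0})^{\frac12}=(a_iM(C^{n_i}_{\psi_0}))^{\frac12}\,a_i^{\frac12}\le K^{\frac12}a_i^{\frac12}.
\end{align*}

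\textbf{Step 3 ($a_i\to0$).} Iterating the equivariance in Remark \ref{equivariance} gives $(f^n_\omega)_*T^+_\omega=a_nT^+_{\sigma^n\omega}$ with
\begin{align*}
\ln a_n=-\sum_{k=0}^{n-1}\ln M\big(\omega_k^*e^+(\sigma^{k+1}\omega)\big).
\end{align*}
By Birkhoff's theorem and Theorem \ref{thm:furst}, $\frac1n\ln a_n\to-\lambda_\mu<0$ for $\mu^\Z$-a.e.\ $\omega$. So $a_n\to0$ exponentially, and in particular $a_i\to0$. Combining the three steps, $a_{i}\big(dC^{n_{i}}_\psi\mid\beta\big)\to0$ along the full sequence, hence along the subsequence $(i_j)$. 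Thus $dS=0$, and every limit point is closed.
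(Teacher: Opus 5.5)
Your proposal is correct and follows the paper's proof essentially step for step: the same cutoff $\psi_0$, the same Cauchy--Schwarz estimate, and Proposition \ref{lem:MassBound2} applied to $\psi_0$ along the same sequence, whose independence of the cutoff is exactly Remark \ref{rmk:sequence-independent} (the two sets intersected there, from Lemma \ref{lem:MassBound} and Claim \ref{claim:mass}, depend only on $\omega$). The only variation is Step 3: the paper gets $a_{n_i}\asymp\norm{(f_\omega^{n_i})^*}^{-1}\to0$ from Remark \ref{rmk:equiv} and Claim \ref{claim:mass} using only the genericity already assumed, whereas your Birkhoff argument with Theorem \ref{thm:furst} gives the stronger $\frac1n\ln a_n\to-\lambda_\mu$ along all $n$, at the cost of one extra, harmless, full-measure genericity condition on $\omega$; the cocycle is integrable by \eqref{int:cohomology}.
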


	  	Before proving Propositions \ref{lem:MassBound2} and \ref{lem:closed}, we prove Lemma \ref{lem:MassBound} and Claim  \ref{claim:mass}. 
		
	\begin{lem}\label{lem:MassBound}
	For $\mu^\Z$-almost every $\omega\in \Omega$ and every $\rho > 0$, one can find a subsequence $(n_i)_{i\in \N}$ of upper-density greater than $1 - \rho$  for which
		\[\frac{1}{\norm{(f_\omega^{n_i})^*}}M(C^{n_i}_\psi)\]
		 is bounded uniformly in $i$. Furthermore, if the exponential moment condition \eqref{exponential} is satisfied, one can take $\set{n_i}_{i\in \N} = \N$. 
		\end{lem}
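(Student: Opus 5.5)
The plan is to write the mass of $C^{n}_\psi(\omega)$ as a pairing of $\psi C$ against a pulled-back Kähler form. Then one integrates by parts to move $dd^c$ of a potential onto $\psi$, and uses the $C^0$ bounds on potentials from Corollary \ref{cor:C0Bound}. By definition,
\begin{align*}
M(C^n_\psi(\omega)) = \big((f_\omega^n)_*(\psi C)\mid \kappa_0\big) = \big(\psi C \mid (f_\omega^n)^*\kappa_0\big).
\end{align*}
Decompose $(f_\omega^n)^*\kappa_0 = \Theta((f_\omega^n)^*\kappa_0) + dd^c \phi((f_\omega^n)^*\kappa_0)$. The smooth part is a combination $\sum c_i\kappa_i$ with $|c_i|\lesssim \norm{(f_\omega^n)^*[\kappa_0]}\le \norm{(f_\omega^n)^*}\,\norm{[\kappa_0]}$. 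Since $\psi C$ is a positive current, this part contributes at most a constant times $\norm{(f_\omega^n)^*}\,(\psi C\mid\kappa_0)$. To see this, bound each $\kappa_i\le A\kappa_0$, and handle negative coefficients by absolute values.

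For the exact part, write $u_n \coloneqq \phi((f_\omega^n)^*\kappa_0)$. We would like to write
\begin{align*}
(\psi C\mid dd^c u_n) = (C\mid \psi\, dd^c u_n),
\end{align*}
but we cannot directly integrate by parts against a non-closed $C$. Instead we use $\psi_0$ as in Lemma \ref{lem:curr}: a smooth cutoff equal to $1$ on $\supp\psi$ with support disjoint from $\supp\partial C$. On $\supp\psi_0$ the current $C$ is closed, so $\psi_0 C$ behaves as a closed current near $\supp \psi$, and
\begin{align*}
(C\mid \psi\, dd^c u_n) = (C\mid u_n\, dd^c\psi) + (\text{terms involving } d\psi\wedge d^c u_n).
\end{align*}
Here the cross terms $d\psi\wedge d^c u_n + du_n\wedge d^c\psi$ would not vanish in general, since $C$ is not $dd^c$-closed where $\psi$ varies. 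This is the main obstacle.

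The cleanest fix is to avoid the cross terms by using positivity of $(f_\omega^n)^*\kappa_0$ on a smaller scale. Write
\begin{align*}
(f_\omega^n)^*\kappa_0 + A'\norm{(f_\omega^n)^*}\kappa_0 = dd^c u_n + \big(\Theta + A'\norm{(f_\omega^n)^*}\kappa_0\big).
\end{align*}
This is a positive form $dd^c$ of a quasi-psh function plus a positive form. Then apply the standard Chern–Levine–Nirenberg type estimate for $dd^c C=0$ on $\supp\psi_0$:
\begin{align*}
\big|(\psi C\mid dd^c u)\big| = \big|(C\mid u\, dd^c\psi) \big| \lesssim \norm{u}_{C^0}\,(\psi_0 C\mid\kappa_0).
\end{align*}
This is valid because $d C=0$ on $\supp\psi_0$ gives $(C\mid \psi\,dd^c u) = (C\mid u\,dd^c\psi)$ after two integrations by parts, with the cross terms cancelling since $dd^c(\psi u) = \psi\,dd^cu + u\,dd^c\psi + d\psi\wedge d^cu + du\wedge d^c\psi$ and $(C\mid dd^c(\psi u))=0$. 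We first regularize $u_n$, which is smooth here anyway since $\kappa_0$ is smooth. We must check that the pairing $(C\mid dd^c(\psi u))$ vanishes when $C$ is only closed, not $dd^c$-closed. For $C$ positive and closed near $\supp\psi$, we have $(C\mid dd^c h) = (dd^c C\mid h) = 0$ for $h$ supported where $dC=0$. The cross terms are real and symmetric, so they combine correctly. If this fails for some reason, we would fall back on the Cauchy–Schwarz estimate of Lemma \ref{lem:curr}, which gives a bound of size $\norm{u_n}_{C^0}^{1/2}M(C^n_{\psi_0})^{1/2}$ and closes via a bootstrap.

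Altogether,
\begin{align*}
M(C^n_\psi)\lesssim \norm{(f_\omega^n)^*} + \norm{\phi((f_\omega^n)^*\kappa_0)}_{C^0}.
\end{align*}
Corollary \ref{cor:C0Bound} with $\kappa=\kappa_0$ gives, for $\mu^\Z$-a.e.\ $\omega$ and each $\rho$, a subsequence of upper-density $>1-\rho$ along which $\norm{\phi((f_\omega^{n_i})^*\kappa_0)}_{C^0}\le C\norm{(f_\omega^{n_i})^*}$. Dividing by $\norm{(f_\omega^{n_i})^*}$ gives the claim. Under \eqref{exponential}, Proposition \ref{prop:exponentialC0} lets us take $(n_i)=\N$.
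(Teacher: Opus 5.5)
Your proposal is correct and follows the paper's proof. You decompose $(f_\omega^n)^*\kappa_0=\Theta((f_\omega^n)^*\kappa_0)+dd^c\phi((f_\omega^n)^*\kappa_0)$, bound the $\Theta$-part using positivity of $\psi C$ and $\norm{(f_\omega^n)^*[\kappa_0]}\leq\norm{(f_\omega^n)^*}\norm{[\kappa_0]}$ (slightly more direct than the paper's appeal to the existence of $e^+(\omega)$), move $dd^c$ onto $\psi$ to get the bound $A_\psi(C\mid\kappa_0)\norm{\phi((f_\omega^n)^*\kappa_0)}_{C^0}$, and finish with Corollary \ref{cor:C0Bound} and Proposition \ref{prop:exponentialC0}.

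The cross-term worry, the Chern--Levine--Nirenberg detour and the fallback are unnecessary, and $C$ is in fact closed where $\psi$ varies, by the hypothesis $\supp\psi\cap\supp\partial C=\varnothing$. The cleanest justification is one Stokes step for each term. This gives $(C\mid\psi\,dd^c u_n)=-(C\mid d\psi\wedge d^c u_n)$ and $(C\mid u_n\,dd^c\psi)=-(C\mid du_n\wedge d^c\psi)$. The two right-hand sides coincide because $C$ has bidegree $(1,1)$ and the $(1,1)$-parts of $d\psi\wedge d^c u_n$ and $du_n\wedge d^c\psi$ agree.
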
 
		\begin{rmk}
			Proposition \ref{lem:MassBound2} is identical to Lemma \ref{lem:MassBound}, except we replace $\norm{(f_\omega^{n_i})^*}^{-1}$ with $M((f_\omega^{n_i})_* T_\omega^+)$.
		\end{rmk}
		
		Now we explain how to obtain Lemma \ref{lem:MassBound} from Lemma \ref{lem:C0Bound} (i.e. \cite{MR4635340}, Lemma 6.15) and elementary estimates. 
		\begin{proof}[Proof of Lemma \ref{lem:MassBound}]
		Let $\omega$ be generic in the sense of Corollary \ref{cor:C0Bound}, and further require that $e^+(\omega)$ exists. Let $\rho>0$, and let $(n_i)$ be the subsequence obtained in Corollary \ref{cor:C0Bound}. Then
		\begin{align*}
			\frac{1}{\norm{(f_\omega^{n_i})^*}}M(C^{n_i}_\psi(\omega)) &= \frac{1}{\norm{(f_\omega^{n_i})^*}}( C^{n_i}_\psi(\omega) \mid \kappa_0) \\
			&=\frac{1}{\norm{(f_\omega^{n_i})^*}} (\psi C \mid  (f_\omega^{n_i})^*\kappa_0) \\
			&=\frac{1}{\norm{(f_\omega^{n_i})^*}} (\psi C \mid  \Theta((f_\omega^{n_i})^*\kappa_0)) \\
			&\hspace{2 cm} +  \frac{1}{\norm{(f_\omega^{n_i})^*}}(\psi C \mid dd^c \phi((f_\omega^{n_i})^*\kappa_0)).
		\end{align*}  Since $e^+(\omega)$ exists, there exists a bounded subset of $H^{1,1}$ that contains
	\begin{align*}
		 \frac{1}{\norm{(f_\omega^n)^*}}(f_\omega^n)^* [\kappa_0]
	\end{align*} for all $n$. 
 Hence $\frac{1}{\norm{(f_\omega^{n_i})^*}} (\psi C \mid  \Theta((f_\omega^{n_i})^*\kappa_0)) $ is bounded by a finite constant independent of $i$. Now
		\begin{align*}
			\frac{1}{\norm{(f_\omega^{n_i})^*}}(\psi C \mid dd^c \phi((f_\omega^{n_i})^*\kappa_0)) &= \frac{1}{\norm{(f_\omega^{n_i})^*}}(C \mid (dd^c \psi) \cdot  \phi((f_\omega^{n_i})^*\kappa_0)) \\
			&\leq  \frac{1}{\norm{(f_\omega^{n_i})^*}}A_\psi (C\mid \kappa_0) \norm{\phi((f_\omega^{n_i})^*\kappa_0)}_{C^0}
		\end{align*} where in the first line we used Stokes' theorem and $\supp \psi \cap \supp(\partial C) = \varnothing$ and in the last line we chose $A_\psi > 0$ such that $-A_\psi \kappa_0 \leq dd^c \psi \leq A_\psi \kappa_0$. But the right side is uniformly bounded by Corollary \ref{cor:C0Bound}, concluding the proof.
	
		Since $(n_i)$ is the subsequence obtained in Corollary \ref{cor:C0Bound}, if $\mu$ satisfies the exponential moment condition \eqref{exponential} then $\set{n_i}_{i\in \N} = \N$ by Proposition \ref{prop:exponentialC0}.
	\end{proof}

\begin{rmk}\label{rmk:equiv}
		Using equivariance of the limit currents (see Remark \ref{equivariance}), we have
	\begin{align*}
		M((f_\omega^n)_* T_\omega^+) = \frac{1}{M((f_\omega^n)^* T_{\sigma^n(\omega)}^+)} \asymp \frac{1}{\norm{(f_\omega^n)^* e^+(\sigma^n(\omega))}} 
	\end{align*} where we are using that $\norm{\cdot} \asymp M(\cdot)$ on $\Psef(X)$. 
\end{rmk}

	\begin{claim}\label{claim:mass}
		There exists a set $\Lambda \subset \Omega$ with full $\mu^\Z$-measure such that the following holds: for every
		$\omega\in \Lambda$ and $\delta>0$ there exists a constant $\epsilon> 0$ and a subsequence $(n_k)_{k\in \N}$ of $\N$ such that
		\begin{enumerate}
			\item for all $k$, 
			\begin{align*}
				\norm{(f_\omega^{n_k})^* e^+(\sigma^{n_k}(\omega))} \geq \epsilon \norm{(f_\omega^{n_k})^*},
			\end{align*} and
			\item $\set{n_k}_{k\in \N}$ has lower density greater than $1-\delta$, i.e.
			\begin{align*}
				\liminf_{N\to \infty} \frac 1 N \#\bigl(\set{n_k}_{k\in \N} \cap [1, N - 1]\bigr) > 1-\delta.
			\end{align*}  
		\end{enumerate}
	\end{claim}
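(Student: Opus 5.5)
We will read condition (1) through the Cartan decomposition of Section \ref{sec:adapted}. Write $B_n\coloneqq (f_\omega^n)^*=\omega_0^*\cdots\omega_{n-1}^*$ and $B_n=k_1A_tk_2$. For a unit vector $w$ in the closure of the positive cone, decompose $w$ in the singular basis of $B_n$. This gives
\begin{align*}
\norm{B_nw}\geq \norm{B_n}\,|\angle{w,e_+(B_n)}_{\mathrm{ad}}|.
\end{align*}
Since $e^+(\sigma^n\omega)$ has unit mass and $\norm{\cdot}\asymp M(\cdot)$ on $\Psef(X)$, condition (1) therefore reduces to a uniform lower bound
\begin{align*}
\angle{e^+(\sigma^n\omega),e_+(B_n)}_{\mathrm{ad}}\geq \epsilon'
\end{align*}
for $n$ in a set of lower density $>1-\delta$.

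\textbf{Step 1: write both vectors as functions of $\eta=\sigma^n\omega$.} The direction $e_+(B_n)$ is the top singular direction of $B_n^tB_n$, i.e.\ the image direction of
\begin{align*}
B_n^t=(\omega_{n-1}^*)^t\cdots(\omega_0^*)^t.
\end{align*}
It therefore depends only on $\eta_{-1},\dots,\eta_{-n}$. Set $h_n(\eta)\coloneqq e_+\bigl((f^n_{\sigma^{-n}\eta})^*\bigr)$, so that $e_+(B_n)=h_n(\sigma^n\omega)$. For $h_n(\eta)$, the newest factor sits on the left of a left-multiplied product $(\eta_{-1}^*)^t(\eta_{-2}^*)^t\cdots$. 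Furstenberg theory applied to the transposed random walk, which is still non-elementary and strongly irreducible on $\NS(X)_+$, then shows that $h_n(\eta)\to g(\eta)$ for $\mu^\Z$-a.e.\ $\eta$. The limit $g(\eta)$ depends only on the past $(\eta_{-1},\eta_{-2},\dots)$ and lies in $\NS(X)_+$, in the closure of the positive cone, by the discussion in Section \ref{sec:adapted}. Its law is the (non-atomic, hyperplane-avoiding) stationary measure of the transposed walk.

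\textbf{Step 2: genericity of the pairing.} The class $e^+(\eta)$ depends only on $(\eta_0,\eta_1,\dots)$, so it is independent of $g(\eta)$. Fix $g(\eta)$. The set of $u$ with $\angle{u,g(\eta)}_{\mathrm{ad}}=0$ is a hyperplane, and by Remark \ref{wekk} $\mu_{\mathcal F}$ gives it zero mass. Fubini then gives $\angle{e^+(\eta),g(\eta)}_{\mathrm{ad}}\neq0$ almost surely. Both vectors lie in the closed positive cone, and the adapted form pairs such vectors nonnegatively, since $\angle{u,w}_{\mathrm{ad}}=2\angle{u,v}\angle{w,v}-\angle{u,w}$ and $\angle{u,w}\leq 2\angle{u,v}\angle{w,v}$ there. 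Hence the pairing is strictly positive almost surely. Choose $c>0$ so that
\begin{align*}
D\coloneqq\set{\eta\colon \angle{e^+(\eta),g(\eta)}_{\mathrm{ad}}>2c}
\end{align*}
has measure $>1-\delta/2$.

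\textbf{Step 3: uniformize the convergence in Step 1 (main obstacle).} For $m\in\N$ let
\begin{align*}
E_m\coloneqq\set{\eta\colon \norm{h_k(\eta)-g(\eta)}<c \text{ for all } k\geq m}.
\end{align*}
Step 1 gives $\mu^\Z(E_m)\to1$, so fix $m$ with $\mu^\Z(E_m)>1-\delta/2$. The issue is that $h_n$ varies with $n$, so Birkhoff cannot be applied to it directly. The set $E_m$ removes this dependence: for $n\geq m$ and $\sigma^n\omega\in E_m$,
\begin{align*}
\norm{e_+(B_n)-g(\sigma^n\omega)}<c.
\end{align*}
Define $\Lambda$ as the full-measure set of $\omega$ that are Birkhoff generic for $\mathds 1_{D\cap E_m}$, over all rational $\delta$ and the corresponding countably many choices of $c$ and $m$.

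\textbf{Step 4: conclude.} By the ergodic theorem, $\set{n\geq m\colon \sigma^n\omega\in D\cap E_m}$ has density $\mu^\Z(D\cap E_m)>1-\delta$. For such $n$, Steps 2 and 3 give
\begin{align*}
\angle{e^+(\sigma^n\omega),e_+(B_n)}_{\mathrm{ad}}>2c-c=c,
\end{align*}
using $\norm{e^+}\leq C$. The reduction in the first paragraph then gives condition (1) with $\epsilon\asymp c$. Finally, the choice of representative $e_+$ in the closed positive cone and the sign conventions need a short check.
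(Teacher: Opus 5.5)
Your proof is correct and is essentially the paper's argument. It uses the same reduction to the angle between $e_+\bigl((f_\omega^n)^*\bigr)$ and $e^+(\sigma^n\omega)$, the same almost-sure convergence of the backward singular directions $e_+(\eta_{-n}^*\cdots\eta_{-1}^*)$, past/future independence together with the fact that the Furstenberg measures charge no proper subspace of $\P(\NS(X)_+)$, and Birkhoff. You package it more directly: you name the past-measurable limit $g$ and use the Egorov-type set $E_m$. The paper instead works with the Cauchy sets $S^n_\epsilon$ and the fixed-window angle functions $h_n$, and bounds $\limsup_n\mu^\N(h_n<2\epsilon)$ via reverse Fatou against $\tilde\mu_{\mathcal F}\otimes\mu_{\mathcal F}$.

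There is one small slip in Step 4: you need $\norm{e^+(\sigma^n\omega)}\,\norm{e_+(B_n)-g(\sigma^n\omega)}<c$, so $E_m$ should be defined with threshold $c/C$ rather than $c$. Separately, your SVD bound $\norm{B_nw}\geq\norm{B_n}\,|\angle{w,e_+(B_n)}_{\mathrm{ad}}|$ is sharper than the paper's, and it removes the need for $\norm{B_n}\to\infty$.
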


	\begin{proof}[Proof of Claim \ref{claim:mass}]
		Given $A\in \Orth^+(H^{1,1})$ with $\norm{A}>1$, as in Section \ref{sec:adapted} we let $e_+(A)$ and $e_-(A)$ denote unit-norm singular vectors in the closure of the positive cone corresponding respectively to the largest and smallest singular values of $A$. If $\norm{A}=1$, set $e_+(A)=e_-(A)=\frac{[\kappa_0]}{\norm{[\kappa_0]}}$, where $[\kappa_0]$ is the cohomology class of our fixed K\"ahler form with unit mass.
		
		Let $\Lambda\subset \Omega$ be a full-measure, $\sigma$-invariant set such that if $\omega\in \Lambda$ then $e^+(\omega)$ exists (see Theorem \ref{FurstenbergLimit}). We will show that $\Lambda$ satisfies the conclusion of the claim. Fix $\omega \in \Lambda$.  By definition,
		\begin{align*}
			(f_\omega^{n})^* e^+(\sigma^{n}(\omega)) =  \omega_{0}^*\omega_1^* \dots \omega_{n - 1}^* e^+(\sigma^{n}(\omega)).
		\end{align*}
	Recall that in Section \ref{sec:actionCohomology} we let $\angle{,}_{\mathrm{ad}}$ denote a positive-definite inner product adapted to the intersection form. Let 
	\begin{align*}
		\tau([w]_{\P},[v]_{\P}) \coloneqq \frac{|\angle{w,v}_{\mathrm{ad}}|}{\norm{w}\norm{v}}
	\end{align*} for $[w]_{\P},[v]_{\P}\in \P H^{1,1}$.	For each $n\in \N$, define $h_n:\Lambda \to \R$ by
		\begin{align*}
			h_n(\omega) \coloneqq \tau([e_+((f_\omega^n)^*)]_{\P}, [e^+(\sigma^n(\omega))]_{\P}).
		\end{align*} 
	For each $n$, there exists a unique $w \in H^{1,1}$ with $\angle{w, e_+((f_\omega^n)^*)}_{\mathrm{ad}} = 0$ and \[e^+(\sigma^{n}(\omega)) = \angle{e_+((f_\omega^n)^*), e^+(\sigma^n(\omega))}_{\mathrm{ad}} e_+((f_\omega^n)^*) + w.\] By singular value decomposition, since $w$ is orthogonal to $e_+((f_\omega^n)^*)$ it satisfies $\norm{(f_\omega^n)^* w} \leq \norm{w} \leq \norm{e^+(\sigma^n(\omega))}$.
	Hence
	\begin{align*}
		\norm{(f_\omega^{n})^* e^+(\sigma^{n}(\omega))} \geq \norm{e^+(\sigma^{n}(\omega))}\left(h_n(\omega)	\norm{(f_\omega^{n})^*} - 1\right).
	\end{align*} 
	Since every limit class has unit mass, there exist $c,C >0$ so that $c \leq \norm{e^+(\sigma^{n}(\omega))} \leq C$ for all $\omega$ and $n$. Thus 
	\begin{align*}
		\norm{(f_\omega^{n})^* e^+(\sigma^{n}(\omega))} \geq c h_n(\omega)	\norm{(f_\omega^{n})^*} - C.
	\end{align*}
	We prove that
		\begin{align}\label{delta_eps}
			\delta_\epsilon\coloneqq \limsup_{N\to \infty} \frac 1 N \# \set{n\leq N \colon h_n(\omega) < \epsilon}
		\end{align} satisfies $\lim_{\epsilon \to 0} \delta_\epsilon = 0$, from which the claim follows by taking 
		\[\set{n_k}_{k\in \N} = \set{n\in \N \colon h_n(\omega) \geq \epsilon},\] noting $\norm{(f_\omega^{n})^*}  \to \infty$ and shrinking $\epsilon$. 
	
		Notice that
		\begin{equation} \label{ineq}
	\begin{aligned}
		&|h_n(\sigma^k(\omega)) - h_{n+k}(\omega)| \\ & \hspace{1.3 cm}\leq \left|\bigg \langle e_+(\omega_k^* \dots \omega_{n+k-1}^*) -e_+(\omega_0^*\dots \omega_{{n+k-1}}^*), \frac{e^+(\sigma^{n+k}(\omega))}{ \norm{ e^+(\sigma^{n+k}(\omega))} } \bigg \rangle_{\mathrm{ad}}\right | \\
		&\hspace{1.3 cm} \leq \norm{e_+(\omega_k^* \dots \omega_{n+k-1}^*) -e_+(\omega_0^* \dots \omega_{n+k-1}^*)} 
	\end{aligned}
\end{equation}
		Furthermore, the sequence $(e_+(\eta_{-n}^*\dots \eta_{-1}^*))_{n\in \N}$ converges for $\mu^\Z$-almost every $\eta \in \Omega$. Indeed, recall from Section \ref{sec:adapted} that each expanding singular direction lives in $\NS(X)_+$, so we may consider the product $\eta_{-n}^*\dots \eta_{-1}^*$ as a product of matrices in $\Orth^+(\NS(X)_+)$. Now the random action induced by $\mu$ on $\NS(X)_+$ is strongly irreducible and contracting in the sense of \cite{MR886674}, so by \cite{MR886674}, Proposition III.3.2, the sequence $(e_+(\eta_{-n}^*\dots \eta_{-1}^*))_{n\in \N}$ converges in direction. But $\Aut(X)$ preserves the K\"ahler cone and each vector has unit norm, so the sequence converges to a vector. Therefore, for fixed $\epsilon > 0$, the sets
		\begin{align*}
			S_\epsilon ^n\coloneqq \set{\eta \in \Omega \colon \norm{e_+(\eta_{-\ell}^* \dots \eta_{-1}^*) -e_+(\eta_{-m}^* \dots \eta_{-1}^*)} <\frac \epsilon  C \text{ for all } \ell,m \geq n}
		\end{align*} satisfy $S_\epsilon^n\subset S_\epsilon^{n+1}$ and $ \mu^\Z(S_\epsilon^n)\to 1$. 
			After shrinking $\Lambda$, we may assume that $\omega$ is Birkhoff-generic for the countable collection of indicator functions used below. For all $n\in \N$, from the Birkhoff ergodic theorem we have
		\begin{align}\label{sum}
			\frac 1 M \sum_{k= 1}^M \mathds{1}_{S_\epsilon^n}(\sigma^{n+k}(\omega)) \to \mu^\Z(S_\epsilon^n).
		\end{align}
		Now 
		\begin{equation}\label{ineqq2}
			\begin{aligned}
				 & \liminf_{M \to \infty} \frac 1 M \# \set{k \leq M \colon |h_n(\sigma^k(\omega)) - h_{n+k}(\omega)|  < \epsilon} \\
				& \hspace{.2 cm}\geq  \liminf_{M \to \infty} \frac 1 M \# \set{k \leq M \colon \norm{e_+(\omega_k^* \dots \omega_{n+k-1}^*) -e_+(\omega_0^* \dots \omega_{n+k-1}^*)} <  \epsilon}\\ 
					& \hspace{.2 cm} \geq  \liminf_{M \to \infty} \frac 1 M \sum_{k=1}^M \mathds{1}_{S_\epsilon^n}(\sigma^{n+k}(\omega))  = \mu^\Z(S_\epsilon^n)
			\end{aligned}
		\end{equation} where the first inequality follows from \eqref{ineq}, the second from the definition of $S_\epsilon^n$, and the equality in the last line from \eqref{sum}.
			Moreover, 
		\begin{equation}\label{ineqq3}
			\begin{aligned}
				&  \set{k \leq M \colon |h_n(\sigma^k(\omega))| < 2\epsilon} \\
				& \hspace{1.5 cm}\supset \set{k \leq M \colon |h_{n+k}(\omega)| < \epsilon \text{ and } |h_n(\sigma^k(\omega)) - h_{n+k}(\omega)| < \epsilon }.
			\end{aligned}
		\end{equation}
			Since $h_n$ depends only on the future, we may integrate it against $\mu^\N$. Putting these together, 
 		\begin{align*}
 			 & \int \mathds{1}_{\set{h_n<  2\epsilon}}(\eta)   \,d\mu^\N(\eta) = \limsup_{M \to \infty}  \frac 1 M \sum_{k = 1}^M \mathds{1}_{\set{h_n < 2\epsilon}}(\sigma^k(\omega)) \\
 			 	 &  \hspace{1 cm} \geq \limsup_{M \to \infty} \frac 1 M \# \set{k \leq M \colon |h_{n+k}(\omega)| < \epsilon \text{ and } |h_n(\sigma^k(\omega)) - h_{n+k}(\omega)| < \epsilon }   \\
 			 &  \hspace{1 cm} \geq \limsup_{M \to \infty} \frac 1 M \bigg(\# \set{k \leq M \colon |h_{n+k}(\omega)| < \epsilon} \\
 			 & \hspace{4 cm} - \# \set{k \leq M  \colon |h_n(\sigma^k(\omega)) - h_{n+k}(\omega)| \geq \epsilon }\bigg)   \\
 			  &  \hspace{1 cm} \geq \limsup_{M \to \infty} \frac 1 M \# \set{k \leq M \colon |h_{n+k}(\omega)| < \epsilon}  \\
 			  &\hspace{3 cm}- \limsup_{M \to \infty} \frac 1 M \# \set{k \leq M  \colon |h_n(\sigma^k(\omega)) - h_{n+k}(\omega)| \geq \epsilon }   \\
 			 	&  \hspace{1 cm} =  \left(\limsup_{M \to \infty} \frac 1 M \# \set{k \leq M \colon |h_{n+k}(\omega)| < \epsilon}  \right)  \\ 
 			 	& \hspace{3 cm} - \left(1 -  \liminf_{M \to \infty} \frac 1 M \# \set{k \leq M \colon |h_n(\sigma^k(\omega)) - h_{n+k}(\omega)| < \epsilon} \right)\\
 			 	& \hspace{1 cm} \geq 	\left(\limsup_{M \to \infty} \frac 1 M \# \set{k \leq M \colon |h_{n+k}(\omega)| < \epsilon}  \right) - 1 + \mu^\Z(S_\epsilon^n)  
		\end{align*}  where in the first line we use the Birkhoff ergodic theorem, the second line we used \eqref{ineqq3}, and the last line we used \eqref{ineqq2}.
	Now, recalling \eqref{delta_eps} and allowing a harmless shift by $n$,
		\begin{align*}
			\delta_\epsilon  = \limsup_{M \to \infty} \frac 1 M \# \set{k \leq M \colon |h_{n+k}(\omega)| < \epsilon},
			\end{align*} for all $n\in \N$, so we have shown
			\begin{align}\label{close}
			&\delta_\epsilon - 1 + \mu^\Z(S_\epsilon^n) \leq  \int \mathds{1}_{\set{h_n<  2\epsilon}}(\eta) \,d\mu^\N(\eta).
		\end{align} 
		Finally, taking $ \mathds{1}_{\set{\tau<  2\epsilon}}$ to be the indicator function for
			\[\set{(v,w)\in \P H^{1,1}\times \P H^{1,1}\mid \tau(v,w)<  2\epsilon},\]
		we have
		\begin{align*}
			\int\mathds{1}_{\set{h_n<  2\epsilon}}(\eta)  \,d\mu^\N(\eta)
			&= \int \mathds{1}_{\set{\tau<  2\epsilon}}([e_+((f_\eta^n)^*)]_{\P}, [e^+(\sigma^n(\eta))]_{\P}) \,d\mu^\N(\eta) \\
			&= \int \int \mathds{1}_{\set{\tau<  2\epsilon}}([e_+((f_\eta^n)^*)]_{\P}, [e^+(\xi)]_{\P}) \,d\mu^\N(\xi) \,d\mu^\N(\eta) \\
			&= \int \int \mathds{1}_{\set{\tau<  2\epsilon}}([e_+((f_\eta^n)^*)]_{\P}, [v]_\P) \,d\mu_{\mathcal{F}}([v]_\P) \,d\mu^\N(\eta) \\
			&= \int \int \mathds{1}_{\set{\tau<  2\epsilon}}([e_+(\eta_0^* \dots \eta_{n-1}^*)]_{\P},[v]_\P) \,d\mu^\N(\eta) \,d\mu_{\mathcal{F}}([v]_\P) \\
			&= \int \int \mathds{1}_{\set{\tau<  2\epsilon}}([e_+(\eta_{n-1}^* \dots \eta_0^*)]_{\P},[v]_\P) \,d\mu^\N(\eta) \,d\mu_{\mathcal{F}}([v]_\P).
		\end{align*}
		Here $\mu_{\mathcal{F}} = \int \delta_{[e^+(\eta)]_{\P} } \,d\mu^\N\in \Prob(\partial \mathbb H_\mu)$ is the Furstenberg measure for $\mu$ (see Remark \ref{wekk}). For each $\eta = (\eta_0, \eta_1, \dots) \in \Omega_+$, let $\tilde\eta$ be an associated sequence of linear transformations defined by
	\[\tilde  \eta\coloneqq ((\eta_0^*)^t, (\eta_1^*)^t, \dots)\]
	 where we recall from Section \ref{sec:adapted} that $(\eta_i^*)^t$ denotes the adjoint with respect to the adapted inner product $\angle{\cdot, \cdot}_{\mathrm{ad}}$.
	 
	  For $\mu^\N$ almost every $\eta$, Furstenberg convergence for the adjoint cocycle says that 
	 \begin{align*}
	 	e_+(\eta_{n-1}^* \dots \eta_0^*) \to \frac{e^+(\tilde \eta)}{\norm{e^+(\tilde \eta)}}
	 \end{align*}
	 as $n\to \infty$. Hence, for any vector $v$, 
		\[ \limsup_{n\to \infty} \mathds{1}_{\set{\tau<  2\epsilon}}([e_+(\eta_{n-1}^* \dots \eta_0^*)]_{\P}, [v]_\P) \leq \mathds{1}_{\set{\tau\leq  2\epsilon}} ([e^+(\tilde \eta)]_{\P}, [v]_\P)\] almost everywhere. By reverse Fatou, 
		\begin{align*}
			\limsup_{n\to \infty}\int  \mathds{1}_{\set{\tau<  2\epsilon}}([e_+(\eta_{n-1}^* \dots \eta_0^*)]_{\P}, [v]_\P)  \,d\mu^\N(\eta)& \leq  \int  \mathds{1}_{\set{\tau\leq 2\epsilon}}([e^+(\tilde \eta)]_{\P}, [v]_\P) \,d\mu^\N(\eta) \\
			&= \int  \mathds{1}_{\set{\tau\leq  2\epsilon}}([w]_\P, [v]_\P)\,d\tilde \mu_{\mathcal{F}}([w]_\P)
			\end{align*} where $\tilde \mu_{\mathcal{F}}\coloneqq \int \delta_{[e^+(\tilde \eta)]_{\P}}  \,d\mu^\N(\eta)$. Thus 
		\begin{align}\label{positive}
			\limsup_{n\to \infty} \int\mathds{1}_{\set{h_n<  2\epsilon}}(\eta)\,d\mu^\N(\eta)  \leq \int \int \mathds{1}_{\set{\tau\leq 2\epsilon}}([w]_\P,[v]_\P)\, d \tilde \mu_{\mathcal{F}}([w]_\P)\,d \mu_{\mathcal{F}}([v]_\P),
		\end{align} and 
		taking $n\to \infty$ in \eqref{close} and recalling $\lim_{n\to \infty} \mu^\Z(S_\epsilon^n) = 1$ yields  
		\begin{align}\label{almost}
			\delta_{\epsilon} \leq    \int \int \mathds{1}_{\set{\tau\leq 2\epsilon}}([w]_\P,[v]_\P)\, d \tilde \mu_{\mathcal{F}}([w]_\P)\,d \mu_{\mathcal{F}}([v]_\P). 
		\end{align} Since the adjoint action remains strongly irreducible, $\tilde \mu_{\mathcal{F}}$ gives no mass to any proper projective subspace of $\P(\NS(X)_+)$. Hence
			\[\int \mathds{1}_{\set{\tau\leq 2\epsilon}}([w]_\P,[v]_\P)\, d \tilde \mu_{\mathcal{F}}([w]_\P) \to 0\] 
	as $\epsilon \to 0$ for every $v$. Then dominated convergence and \eqref{almost} imply $\lim_{\epsilon \to 0} \delta_\epsilon = 0$. 
	\end{proof}
	It remains to prove Propositions \ref{lem:MassBound2} and \ref{lem:closed}.
\begin{proof}[Proof of Proposition \ref{lem:MassBound2}]
	Fix $\rho>0$. Choose $\omega$ generic in the sense of both Lemma \ref{lem:MassBound} and Claim \ref{claim:mass}. Applying Lemma \ref{lem:MassBound} with $\rho/3$ in place of $\rho$, we obtain a subset $S_1\subset \N$ with upper-density greater than $1-\rho/3$ such that
	\begin{align*}
		\frac{1}{\norm{(f_\omega^n)^*}} M(C_\psi^n)
	\end{align*}
	is uniformly bounded for $n\in S_1$. Applying Claim \ref{claim:mass} with $\rho/3$ in place of $\rho$, we obtain a subset $S_2\subset \N$ with lower-density greater than $1-\rho/3$ such that
	\begin{align*}
		M((f_\omega^n)_*T_\omega^+)\norm{(f_\omega^n)^*}
	\end{align*}
	is uniformly bounded for $n\in S_2$.
	
		Let $S=S_1\cap S_2$. Since $S_1$ has upper-density greater than $1-\rho/3$ and $S_2$ has lower-density greater than $1-\rho/3$, the set $S$ has upper-density greater than $1-\rho$. For $n\in S$, we have
	\begin{align*}
		M((f_\omega^n)_*T_\omega^+)M(C_\psi^n)
		&=
		\left(M((f_\omega^n)_*T_\omega^+)\norm{(f_\omega^n)^*}\right)
		\left(\frac{1}{\norm{(f_\omega^n)^*}}M(C_\psi^n)\right),
	\end{align*}
	which is uniformly bounded. Enumerating $S$ gives the desired subsequence.
	
	If the exponential moment condition \eqref{exponential} is satisfied, then Lemma \ref{lem:MassBound} gives $S_1=\N$. Hence the same argument gives a subsequence of lower-density greater than $1-\rho$.
\end{proof}
	\begin{rmk}\label{rmk:sequence-independent}
	Notice that the sequences constructed in Claim \ref{claim:mass} and Lemma \ref{lem:MassBound} have no dependence on $C$ or $\psi$. This is clearly the case in Claim \ref{claim:mass}, which involved neither $C$ nor $\psi$ in the statement. In Lemma \ref{lem:MassBound}, we have two cases: with the exponential moment assumption and without it. Under the exponential moment assumption that sequence may be taken to be all of $\N$; otherwise, it comes from Corollary \ref{cor:C0Bound} which does not involve $C$ or $\psi$.
	
	Viewing the sequences constructed in Claim \ref{claim:mass} and Lemma \ref{lem:MassBound} as subsets of $\N$, the sequence $(n_i)_{i\in\N}$ in Proposition \ref{lem:MassBound2} is obtained by intersecting the two. Hence $(n_i)_{i\in\N}$ in Proposition \ref{lem:MassBound2} may be chosen independently of $C$ and $\psi$.
\end{rmk}
		\begin{proof}[Proof of Proposition \ref{lem:closed}]
		Let $\psi$ be as before, and let $\psi_0\in C^\infty(X)$ be nonnegative with $\supp \psi_0 \cap \supp(\partial C) = \varnothing$ and $\restr{\psi_0}{\supp \psi}\equiv 1$. Let $\beta$ be a smooth $(0,1)$-form on $X$; the proof is analogous for $(1,0)$-forms. Then 
		\begin{align*}
			&M((f_\omega^{n_i})_* T_\omega^+)|(C_\psi^{n_i} \mid d\beta)| \\ &\hspace{.7 cm}= M((f_\omega^{n_i})_* T_\omega^+)|(C \mid  \psi  d  (f_\omega^{n_i})^* \beta)| \\ 
			&\hspace{.7 cm}= M((f_\omega^{n_i})_* T_\omega^+)|(C \mid  (d \psi) \wedge (f_\omega^{n_i})^* \beta)|  \\
			&\hspace{.7 cm}= M((f_\omega^{n_i})_* T_\omega^+)|(\psi_0 C \mid  (d \psi) \wedge (f_\omega^{n_i})^* \beta)|  \\
			&\hspace{.7 cm}= M((f_\omega^{n_i})_* T_\omega^+)|(\psi_0 C \mid  (\partial \psi) \wedge (f_\omega^{n_i})^* \beta)|  \\
			&\hspace{.7 cm}\leq M((f_\omega^{n_i})_* T_\omega^+)(\psi_0 C \mid  i \partial \psi \wedge \overline \partial \psi)^{\frac 1 2} (\psi_0 C \mid  (f_\omega^{n_i})^* (-i \beta \wedge \overline \beta))^{\frac 1 2}  \\
			&\hspace{.7 cm} = M((f_\omega^{n_i})_* T_\omega^+)^{\frac 1 2}(\psi_0 C \mid  i \partial \psi \wedge \overline \partial \psi)^{\frac 1 2}  M((f_\omega^{n_i})_* T_\omega^+)^{\frac 1 2} (\psi_0 C \mid  (f_\omega^{n_i})^* (-i \beta \wedge \overline \beta))^{\frac 1 2}  
			\end{align*} where in the third line we used $\supp \psi \cap \supp(\partial C) = \varnothing$, in the fourth line we used that $\restr{\psi_0}{\supp \psi}\equiv 1$, and in the fifth line we used positivity of $\psi_0 C$ and Cauchy--Schwarz. Now 
		Choose $B_\beta>0$ such that
		\begin{align*}
			-i\beta\wedge\overline\beta\leq B_\beta\kappa_0
		\end{align*}
		as smooth $(1,1)$-forms. Then
		\begin{align*}
			M((f_\omega^{n_i})_* T_\omega^+)^{\frac 1 2} (\psi_0 C \mid  (f_\omega^{n_i})^* (-i \beta \wedge \overline \beta))^{\frac 1 2} 
			&\leq B_\beta^{\frac12}M((f_\omega^{n_i})_* T_\omega^+)^{\frac 1 2} (\psi_0 C \mid  (f_\omega^{n_i})^*\kappa_0)^{\frac 1 2} \\
			&=B_\beta^{\frac12}M((f_\omega^{n_i})_* T_\omega^+)^{\frac 1 2}(C_{\psi_0}^{n_i}(\omega)\mid\kappa_0)^{\frac 1 2},
		\end{align*}
		which is uniformly bounded by Proposition \ref{lem:MassBound2} and Remark \ref{rmk:sequence-independent}, applied with $\psi_0$ in place of $\psi$. On the other hand, $ M((f_\omega^{n_i})_* T_\omega^+)^{\frac 1 2} \to 0$. Indeed, by Remark \ref{rmk:equiv} and Claim \ref{claim:mass},
	 	$M((f_\omega^{n_i})_* T_\omega^+) \asymp \frac{1}{\norm{(f_\omega^{n_i})^*}} \to 0$. Hence $M((f_\omega^{n_i})_* T_\omega^+)(C_\psi^{n_i} \mid d\beta) \to 0$.
	\end{proof}
\subsection{Proof outline for Proposition \ref{lem:last2}}
Here we give an outline of the proof of Proposition \ref{lem:last2}. Let $\omega\in \Omega$. For all $n\in \Z$, we have the identity
\begin{align}\label{inverse}
	f_\omega^{-n} = (f_{\sigma^{-n}(\omega)}^n )^{-1}
\end{align}
so 
\begin{align*}
	((f_{\sigma^\ell(\omega)}^{-n})^{-1})^* (C_\psi^\ell) &= (f_{\sigma^\ell(\omega)}^{-n})_*(C_\psi^\ell) \\
	&=( f_{\sigma^\ell(\omega)}^{-n})_*(f_\omega^\ell)_*(\psi C) \\
	&= ( f_{\sigma^\ell(\omega)}^{-n})_*(f_{\sigma^{\ell - n}(\omega)}^{n})_* (f_{\omega}^{\ell - n})_*(\psi C) \\
	&= (f_{\omega}^{\ell - n})_*(\psi C)\\
	&= C_\psi^{\ell - n}
\end{align*}
where in the second to last line we used \eqref{inverse} applied to $\sigma^\ell(\omega)$. Hence 
\begin{align}\label{inverse2}
	(f_{\sigma^\ell(\omega)}^{-n})_*(M((f_\omega^\ell)_*T_\omega^+)C_\psi^\ell) 
	 = M((f_\omega^\ell)_*T_\omega^+) C_\psi^{\ell - n}.
\end{align} 

Assuming $\omega$ to be Furstenberg-generic, by Theorem \ref{FurstenbergLimit} and Remark \ref{rmk:FurstBack} combined with standard results in random products of matrices, the contracting direction of $((f_{\sigma^\ell(\omega)}^{-n})^{-1})^*$ in cohomology is well-approximated by $e^-(\sigma^\ell(\omega))$ for most $n$ when $\ell$ is large. Setting $\ell = n$ in \eqref{inverse2} yields
\begin{align*}
	((f_{\sigma^n(\omega)}^{-n})^{-1})^*(M((f_\omega^n)_*T_\omega^+)C_\psi^n) = M((f_\omega^n)_*T_\omega^+) \psi C
\end{align*} which has mass tending to zero as $n\to \infty$, if one excludes a subset of $n$'s with small upper-density (see Remark \ref{rmk:equiv} and Claim \ref{claim:mass}). If each $C_\psi^n$ were closed, then we could pass to cohomology and obtain
\begin{align*}
d_\P\left(M((f_\omega^n)_*T_\omega^+)[C_\psi^n], [T_{\sigma^n(\omega)}^-]\right) \to 0,
\end{align*}
where $d_\P$ denotes projective distance, since $[T_{\sigma^n(\omega)}^-]$ approaches the unique contracting direction of $((f_{\sigma^n(\omega)}^{-n})^{-1})^*=(f_\omega^n)^*$. After passing to a high-density subsequence on which the classes $[T_{\sigma^n(\omega)}^-]$ have limit points only in a compact subset of
cohomology where closed positive currents are unique in their cohomology classes, this would imply
\begin{align*}
	\frac{C_\psi^n}{M(C_\psi^n)}
	-
	T_{\sigma^n(\omega)}^-
	\to 0
\end{align*}
weakly as currents along this subsequence.
Taking $\omega$, $\rho$, and $(n_i)_{i\in\N}$ to be as in Proposition \ref{lem:MassBound2}, uniformly bounded mass of the sequence
\begin{align*}
		(M((f_\omega^{n_i})_*T_\omega^+)C_\psi^{n_i})_{i\in \N}
	\end{align*} implies that $(\tilde n_i)_{i\in \N} \coloneqq (n_i)_{i\in \N}$ satisfies \eqref{important:seq}. Then \eqref{important:currs} will follow from \eqref{important:seq} after restricting to a subsequence on which the collection of normalized potentials $\set{\phi(T^+_{\sigma^{ n_i}(\omega)}) \colon i\in \N}$ is precompact in the $C^0$-topology (see Lemma \ref{lem:curr}).

The challenge then becomes overcoming the non-closedness of the $C_\psi^{n_i}$'s. To do so, we extract a subsequence $(\tilde n_k)_{k\in \N}$ of $(n_i)_{i\in \N}$ with upper-density greater than $1 - 2\rho$, a sequence $(\tilde C_\psi^{\tilde n_k})_{k\in \N}$ of closed positive currents with bounded mass, and an associated sequence of naturals $\tilde m_k \to \infty$ such that 
\begin{align}\label{need1}
	d_\Theta(\tilde C_\psi^{\tilde n_k}, M((f_\omega^{\tilde n_k})_*T_\omega^+)C_\psi^{\tilde n_k}) \to 0
\end{align} and
\begin{align}\label{need2}
	((f_{\sigma^{\tilde n_k}(\omega)}^{-\tilde m_k})^{-1})^*(\tilde C_\psi^{\tilde n_k}) \to 0
		\end{align} weakly (or equivalently in cohomology) as $k\to \infty$. Since $\tilde m_k \to \infty$ and the $\tilde C_\psi^{\tilde n_k}$'s are closed, \eqref{need2} will be enough to show that the classes of $\tilde C_\psi^{\tilde n_k}$ and $T^-_{\sigma^{\tilde n_k}(\omega)}$ become close projectively. After restricting to times for which uniqueness holds in the class of $T^-_{\sigma^{\tilde n_k}(\omega)}$, this gives projective closeness of the currents themselves. Combined with \eqref{need1} this will prove \eqref{important:seq}. 
		Proposition \ref{lem:closed} is what allows us to find the closed approximants in \eqref{need1} after passing to a subsequence.
			
On the other hand,
\begin{equation}\label{eq:1}
	\begin{aligned}
		M(((f_{\sigma^{\tilde n_k}(\omega)}^{-\tilde m_k})^{-1})^*(\tilde C_\psi^{\tilde n_k}))
	&= 	(((f_{\sigma^{\tilde n_k}(\omega)}^{-\tilde m_k})^{-1})^*(\tilde C_\psi^{\tilde n_k})\mid \kappa_0) \\
	&= 	(\tilde C_\psi^{\tilde n_k}\mid \Theta((f_{\sigma^{\tilde n_k}(\omega)}^{-\tilde m_k})^* \kappa_0)) \\
		&= M((f_\omega^{\tilde n_k})_*T_\omega^+) (C_\psi^{\tilde n_k} \mid \Theta((f_{\sigma^{\tilde n_k}(\omega)}^{-\tilde m_k})^* \kappa_0))  \\
	&\hspace{.3 cm}+  (\tilde C_\psi^{\tilde n_k} - 
	M((f_\omega^{\tilde n_k})_*T_\omega^+) C_\psi^{\tilde n_k} \mid  \Theta((f_{\sigma^{\tilde n_k}(\omega)}^{-\tilde m_k})^* \kappa_0))
	\end{aligned}
\end{equation}
where in the second line we used closedness of $\tilde C_\psi^{\tilde n_k}$. So $(\tilde n_k)$ satisfies \eqref{need2} if the terms in the last two lines of \eqref{eq:1} tend to $0$.

Addressing the first term,
\begin{align*}
	(f_{\sigma^{\tilde n_k}(\omega)}^{-\tilde m_k})^* \kappa_0 = \Theta((f_{\sigma^{\tilde n_k}(\omega)}^{-\tilde m_k})^* \kappa_0) + dd^c \phi((f_{\sigma^{\tilde n_k}(\omega)}^{-\tilde m_k})^* \kappa_0)
\end{align*} implies
\begin{equation}\label{eq:2}
	\begin{aligned}
		(C_\psi^{\tilde n_k} \mid \Theta((f_{\sigma^{\tilde n_k}(\omega)}^{-\tilde m_k})^* \kappa_0))  &\leq (C_\psi^{\tilde n_k} \mid (f_{\sigma^{\tilde n_k}(\omega)}^{-\tilde m_k})^* \kappa_0) \\
		& \hspace{1.5 cm }+ \big |(C_\psi^{\tilde n_k} \mid dd^c \phi((f_{\sigma^{\tilde n_k}(\omega)}^{-\tilde m_k})^* \kappa_0))\big |  \\
		&= M(((f_{\sigma^{\tilde n_k}(\omega)}^{-\tilde m_k})^{-1})^*( C_\psi^{\tilde n_k})) \\
		& \hspace{1.5 cm } + \big |(C_\psi^{\tilde n_k} \mid dd^c \phi((f_{\sigma^{\tilde n_k}(\omega)}^{-\tilde m_k})^* \kappa_0))\big |  \\
		&= M(C_\psi^{\tilde n_k - \tilde m_k})  + \big |(C_\psi^{\tilde n_k} \mid dd^c \phi((f_{\sigma^{\tilde n_k}(\omega)}^{-\tilde m_k})^* \kappa_0))\big | 
	\end{aligned}
\end{equation} where the second equality holds by \eqref{inverse2}. Hence we will want to choose sequences $(\tilde n_k), (\tilde m_k)$ such that 
\begin{align}\label{ab}
	M((f_\omega^{\tilde n_k})_*T_\omega^+) M(C_\psi^{\tilde n_k - \tilde m_k}) \to 0
\end{align} and 
\begin{align}\label{abc}
	M((f_\omega^{\tilde n_k})_*T_\omega^+)  \big |(C_\psi^{\tilde n_k} \mid dd^c \phi((f_{\sigma^{\tilde n_k}(\omega)}^{-\tilde m_k})^* \kappa_0))\big | \to 0
\end{align} as $k\to \infty$.
We will construct them such that $\tilde n_k - \tilde m_k \in \set{n_i}_{i\in \N}$ for all $k$; thus, 
\begin{align*}
		M((f_\omega^{\tilde n_k - \tilde m_k})_*T_\omega^+) C_\psi^{\tilde n_k - \tilde m_k}
		\end{align*} has uniformly bounded mass. Choosing the $(\tilde n_k)$ and $(\tilde m_k)$ carefully, we will have 
			\begin{align*}
					\frac{M((f_\omega^{\tilde n_k})_*T_\omega^+)}{M((f_\omega^{\tilde n_k - \tilde m_k})_*T_\omega^+)} \approx \frac{\norm{(f_\omega^{\tilde n_k-\tilde m_k})^*}}{\norm{(f_\omega^{\tilde n_k})^*}} \to 0
				\end{align*} so \eqref{ab} holds. In the upper-density case, this ratio will be controlled using the Gou\"ezel--Karlsson tightness condition for the sequence $(n_i)$. Then we show that
\begin{align*}
			\big |(C_\psi^{\tilde n_k} \mid dd^c \phi((f_{\sigma^{\tilde n_k}(\omega)}^{-\tilde m_k})^* \kappa_0))\big | \lesssim \norm{\phi (( f_{\sigma^{\tilde n_k}(\omega)}^{-\tilde m_k})^*    \kappa_0) }_{C^0}
		\end{align*} which by choice of subsequences will grow roughly like $e^{(\lambda_\mu+\epsilon)\tilde m_k}$. Hence to satisfy \eqref{abc} we want
\begin{align*}
	M((f_\omega^{\tilde n_k})_*T_\omega^+)e^{(\lambda_\mu+\epsilon)\tilde m_k} \to 0
\end{align*} as $k\to \infty$; this is accomplished by picking $\tilde n_k$ much larger than $\tilde m_k$.

Addressing the term in the last line of \eqref{eq:1}, we have
\begin{equation}\label{later}
	\begin{aligned}
				&\big |(\tilde C_\psi^{\tilde n_k} - 
			M((f_\omega^{\tilde n_k})_*T_\omega^+) C_\psi^{\tilde n_k} \mid \Theta((f_{\sigma^{\tilde n_k}(\omega)}^{-\tilde m_k})^* \kappa_0))\big | \\
		&\hspace{4 cm} \leq  
		\norm{(f_{\sigma^{\tilde n_k}(\omega)}^{-\tilde m_k})^* \kappa_0}
		d_\Theta(\tilde C_\psi^{\tilde n_k} ,
		M((f_\omega^{\tilde n_k})_*T_\omega^+) C_\psi^{\tilde n_k}), 
	\end{aligned}
\end{equation}
and $\norm{(f_{\sigma^{\tilde n_k}(\omega)}^{-\tilde m_k})^* \kappa_0}$ grows roughly like $e^{(\lambda_\mu+\epsilon)\tilde m_k}$; hence, we will choose subsequences such that 
		\[d_\Theta(\tilde C_\psi^{\tilde n_k} , M((f_\omega^{\tilde n_k})_*T_\omega^+) C_\psi^{\tilde n_k}) e^{(\lambda_\mu+\epsilon)\tilde m_k}\to 0\] as $k\to \infty$. Again, this is accomplished by picking $\tilde n_k$ much larger than $\tilde m_k$.
		
The preceding discussion describes the upper-density argument. For the lower-density statement under the exponential moment condition \eqref{exponential}, the same convergence mechanism is used, but the choice of the auxiliary integer $m$ must be made more flexibly. The reason is that the lower-density construction no longer uses the Gou\"ezel--Karlsson tightness condition for the sequences from Proposition \ref{lem:MassBound2}; see Remark \ref{rmk:tightness}. Instead, Proposition 5.14 of Cantat--Dujardin \cite{MR4635340} controls the ratios
	\begin{align*}
		\frac{\norm{(f_\omega^{n-m})^*}}{\norm{(f_\omega^n)^*}}
	\end{align*}
	well enough to choose, for most large $n$, an admissible $m$ for which this ratio is small.

\subsection{Proof of Proposition \ref{lem:last2}}
\begin{proof}[Proof of Proposition \ref{lem:last2}]
Let $\omega$ be generic in the sense of Proposition \ref{lem:MassBound2}, and apply it with $\rho'$, to be chosen later, in place of $\rho$ to obtain $(n_i)_{i\in \N}$. The proof follows the strategy from the preceding outline. First, we choose auxiliary sequences $(m_i)$ and $(c_i)$ so that density, growth, approximation, and Birkhoff estimates needed later hold simultaneously. Second, we use these sequences to select a high-density subsequence $(\tilde n_k)$ of $(n_i)$, together with associated integers $(\tilde m_k)$, and closed currents $\tilde C_\psi^{\tilde n_k}$ close to $M((f_\omega^{\tilde n_k})_*T_\omega^+)C_\psi^{\tilde n_k}$. By construction, the classes $[\tilde C_\psi^{\tilde n_k}]$ become small after applying $((f_{\sigma^{\tilde n_k}(\omega)}^{-\tilde m_k})^*)^{-1}$. This forces either $[\tilde C_\psi^{\tilde n_k}]\to 0$, or their projective directions to approach the Furstenberg limit classes $e^-(\sigma^{\tilde n_k}(\omega))$. Finally, we use the closeness of $\tilde C_\psi^{\tilde n_k}$ to $M((f_\omega^{\tilde n_k})_*T_\omega^+)C_\psi^{\tilde n_k}$, together with the various estimates from the construction, to identify the possible limits obtained along common convergent subsequences of $M((f_\omega^{\tilde n_k})_*T_\omega^+)C_\psi^{\tilde n_k}$ and $T^-_{\sigma^{\tilde n_k}(\omega)}$. From there, we prove \eqref{important:seq} and \eqref{important:currs}. At the end we explain how the same construction gives lower density under the exponential moment assumption.
	
	By construction, there exists $R > 0$ such that 
\begin{align}\label{boundd}
	M((f_\omega^{n_i})_*T_\omega^+) M(C_\psi^{n_i}) &\leq R
\end{align} for all $i \in \N$. Let
\begin{align*}
	S \coloneqq \set{n_i}_{i\in \N}.
\end{align*}
By the construction of the sequence in Proposition \ref{lem:MassBound2}, the set $S$ is contained in the good set supplied by Claim \ref{claim:mass}. Hence, by Remark \ref{rmk:equiv},
\begin{align*}
	M((f_\omega^n)_*T_\omega^+)\asymp \frac{1}{\norm{(f_\omega^n)^*}}
\end{align*}
uniformly for $n\in S$. Together with \eqref{boundd}, this implies
\begin{align}\label{bounddd}
	\frac{1}{\norm{(f_\omega^{n_i})^*}} M(C_\psi^{n_i})
	\end{align} is uniformly bounded as well.

		We begin by introducing auxiliary sequences of integers $(m_i)_{i\in \N}$ and $(c_i)_{i\in \N_0}$ which will be used in the construction of $(\tilde m_k)$ and $(\tilde n_k)$. We choose $m_i,c_i$ in pairs $(m_i,c_i)$ with $m_i\to \infty$ and $c_i\to\infty$. %
	
			The integers $(m_i)$ will be chosen as a subsequence of the sequence $(\hat m_j)$ produced by the following claim.
	\begin{claim}\label{claim:1}
	There exists $\hat m_j\to \infty$ such that 
\begin{align*}
\overline d(S\cap (S +\hat m_j)) \geq  \overline{d}(S)^2-\frac1j.
\end{align*} 
\end{claim}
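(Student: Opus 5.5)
This is a purely combinatorial statement about the set $S\subset\N$ with $\overline d(S)=:s>0$. I would prove it by a correlation (van der Corput / Cauchy--Schwarz) averaging argument. Fix $j$ and a large integer $H$. For $N$ large, double count pairs $(n,h)$ with $n\in[1,N]$, $0\le h< H$, $n-h\in S$:
\begin{align*}
\sum_{n=1}^{N}\Bigl(\sum_{h=0}^{H-1}\mathds{1}_S(n-h)\Bigr)=H\,\#(S\cap[1,N])+O(H^2).
\end{align*}
By Cauchy--Schwarz,
\begin{align*}
\sum_{n=1}^{N}\Bigl(\sum_{h=0}^{H-1}\mathds{1}_S(n-h)\Bigr)^2\geq \frac1N\bigl(H\,\#(S\cap[1,N])-O(H^2)\bigr)^2 .
\end{align*}
Expanding the square, the left side equals
\begin{align*}
\sum_{h,h'}\#\bigl(S\cap(S+(h'-h))\cap[1,N]\bigr)+O(H^2),
\end{align*}
using $\mathds{1}_S(n-h)\mathds{1}_S(n-h')$ and shifting $n$. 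The diagonal terms $h=h'$ contribute at most $H\,\#(S\cap[1,N])\le HN$. Hence
\begin{align*}
\frac{1}{H^2}\sum_{0<|d|<H}(H-|d|)\,\frac{\#(S\cap(S+d)\cap[1,N])}{N}\ \geq\ \Bigl(\frac{\#(S\cap[1,N])}{N}\Bigr)^2-\frac1H-o(1).
\end{align*}

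Next I pass to a sequence $N_\ell\to\infty$ realizing $\#(S\cap[1,N_\ell])/N_\ell\to s$. Since the weights $(H-|d|)/H^2$ sum to at most $1$, some $d$ with $1\le |d|<H$ must satisfy $\#(S\cap(S+d)\cap[1,N_\ell])/N_\ell\ge s^2-2/H$ for infinitely many $\ell$. Here I use that there are only finitely many $d$, together with a pigeonhole over $\ell$ applied to the weighted average. This gives $\overline d(S\cap(S+d))\ge s^2-2/H$. Because $S\cap(S-d)=(S\cap(S+d))-d$ and upper density is shift invariant, I may take $d>0$.

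The main obstacle is forcing $\hat m_j\to\infty$, since this argument may keep producing small $d$. To handle it, I will remove small shifts before pigeonholing. Fix $D_j$ and choose $H\gg jD_j$. The terms with $|d|\le D_j$ carry total weight at most $2D_j/H\le 1/(2j)$ in the weighted average, so they can be discarded. This yields some $d$ with $D_j<d<H$ and $\overline d(S\cap(S+d))\ge s^2-1/j$. Choosing $D_j=j$, for instance, then gives $\hat m_j\ge j\to\infty$, which proves Claim~\ref{claim:1}, with the required inequality holding since $s=\overline d(S)$.
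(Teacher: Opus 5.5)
Your argument is correct, but it proves the claim by a different route than the paper.

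\textbf{The paper's route.} The paper uses a Furstenberg correspondence. It takes a weak limit $P$ of the empirical measures $\frac{1}{N_i}\sum_{n<N_i}\delta_{\sigma^n(\xi)}$ of the indicator sequence $\xi$ of $S$, along times $N_i$ realizing $\overline d(S)$. For the cylinder $C=\{\eta_1=1\}$ it notes $P(C)=\overline d(S)$ and $P(C\cap\sigma^k C)\leq \overline d(S\cap(S+k))$. It then applies the von Neumann mean ergodic theorem and Cauchy--Schwarz to the projection $f^*$ of $\mathds{1}_C$ onto invariant functions, obtaining $\liminf_N \frac1N\sum_{k=1}^N \overline d(S\cap(S+k))\geq \overline d(S)^2$. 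Because this is a Ces\`aro average, good shifts automatically occur arbitrarily far out, so no extra step is needed to force $\hat m_j\to\infty$.

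\textbf{Your route.} You apply Cauchy--Schwarz directly to the window sums $\sum_{h<H}\mathds{1}_S(n-h)$. This produces a Fej\'er-weighted average of finite-$N$ correlations. You replace the weak limit by a pigeonhole over the finitely many shifts along a subsequence $N_\ell$. The cost is the separate step of discarding $|d|\leq D_j$. You handle it correctly: those terms have total weight at most $2D_j/H$, and each correlation is at most $1$.

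\textbf{Comparison.} Your approach is entirely elementary, with no compactness in $\{0,1\}^{\Z}$ and no ergodic theorem. It is also quantitative: it finds a good shift in any window $(D,H)$ with $H\gtrsim jD$. The paper's approach gets unboundedness of the good shifts for free from the Ces\`aro structure.

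\textbf{Minor slip.} When you expand the square, each of the $H^2$ pairs $(h,h')$ has a boundary error of up to $H$. The error is therefore $O(H^3)$, not $O(H^2)$. Since $H$ is fixed while $N\to\infty$, this is still absorbed into the $o(1)$ term and does not affect the conclusion.
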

\begin{proof}[Proof of Claim \ref{claim:1}]
	Set $\alpha \coloneqq \overline d(S)$, and let $\xi \in \set{0,1}^\Z$ be the indicator sequence of $S$, so that $\xi_n = 1$ exactly when $n \in S$. Notice that $\xi_k = 0$ whenever $k\leq 0$. Let
\begin{align*}
	C \coloneqq \set{\eta \in \set{0,1}^\Z : \eta_1 = 1}.
\end{align*}

Since $S$ has upper-density $\alpha$, we may choose $N_i \to \infty$ such that
\begin{align*}
	\frac{1}{N_i} \#(S \cap \set{1,\dots,N_i}) \to \alpha.
\end{align*}
For all $i\in\N$, define
\begin{align*}
			P_i \coloneqq \frac{1}{N_i}\sum_{n=0}^{N_i-1}\delta_{\sigma^{n}(\xi)}.
\end{align*}
Each $P_i$ is a probability measure, so passing to a subsequence we may assume $P_i \to P$ weakly for some $\sigma$-invariant probability measure $P$. By construction,
\begin{align*}
	P(C)=\lim_{i\to\infty} P_i(C)=\alpha.
\end{align*}
Moreover, for each $k \geq 1$,
\begin{align*}
	P(C\cap \sigma^{k}C) =
	\lim_{i\to\infty}\frac{1}{N_i} \# (\set{1,\dots,N_i}\cap S\cap (S+k)) \leq \overline d\bigl(S\cap (S+k)).
\end{align*}

\noindent Now let $f=\mathds{1}_C$. By the mean ergodic theorem,
\begin{align*}
	\frac{1}{N}\sum_{k=1}^N f\circ \sigma^k \to f^*
\end{align*}
in $L^2(\set{0,1}^\Z,P)$, where $f^*$ is the orthogonal projection in $L^2(\set{0,1}^\Z,P)$ of $f$ onto the subspace of $\sigma$-invariant functions. Hence
\begin{align*}
	\frac{1}{N}\sum_{k=1}^N P(C\cap \sigma^{k}C)
	= \int f \left( \frac{1}{N}\sum_{k=1}^N f\circ \sigma^{-k} \right) \,d P \to
	\int f\cdot f^*\,d P =
	\int (f^*)^2\,d P.
\end{align*}
Now
\begin{align*}
	\int f^*\,dP=\int f\,dP=P(C)=\alpha
\end{align*}
and by Cauchy--Schwarz,
\begin{align*}
	\int (f^*)^2\,dP \geq \left(\int f^*\,dP\right)^2 = \alpha^2.
\end{align*}
It follows that
\begin{align*}
	\liminf_{N\to\infty}\frac{1}{N}\sum_{k=1}^N \overline d\bigl(S\cap (S+k)\bigr)\geq \alpha^2.
\end{align*}
In particular, for each $j$ there are arbitrarily large $m$ such that
\begin{align*}
	\overline d\bigl(S\cap (S+m)\bigr)\geq \alpha^2-\frac1j.
\end{align*}
Choosing such $m$ recursively gives the desired sequence $(\hat m_j)$.
	\end{proof}

				 Fix $0<\epsilon<\lambda_\mu$. Next we define the pairs $(m_i,c_i)$ recursively, depending on this choice of $\epsilon$, so that several estimates hold simultaneously.
			
		\begin{claim}\label{claim:2}
			There exists a subsequence of $(\hat m_i)$ from Claim \ref{claim:1}, denoted by $(m_i)$, and an increasing sequence $(c_i)_{i\in\N_0}$ of natural numbers such that the pairs $(m_i,c_i)$ have the following properties:
		\begin{enumerate}
			\item Density satisfies
				\begin{align*} 
					\liminf_{i\to\infty}
					\frac{1}{c_{i+1}}
					\bigl|
					S \cap (S+m_i) \cap [1,c_{i+1}]
				\bigr|
				\geq \overline d(S)^2.
			\end{align*}  \label{claim2:density}
		\item We have $c_i \to \infty$ fast enough that $c_i > m_i$ for all $i$,
		\begin{align*}
		 c_i - m_i\to \infty\hspace{.75 cm }\text{and}  \hspace{.75 cm }\frac{c_i}{c_{i+1}} \to 0.
		\end{align*} \label{claim2:separation}
		\item For every $i\in \N$ and $n\geq  c_i$,
		\begin{align*}
			\norm{(f_\omega^n)^*} > i e^{(\lambda_\mu + \epsilon)m_i}.
		\end{align*} \label{claim2:growth}
	\item For every $i\in \N$ and $n\in S$ with $n\geq c_i$, there exists a closed positive current $T$ such that
	\begin{align*}
		d_\Theta(M((f_\omega^{n})_*T_\omega^+) C_\psi^{n}, T) < \frac 1 i e^{-(\lambda_\mu+\epsilon)m_i},
	\end{align*} recalling the definition of $d_\Theta$ from Section \ref{sec:norms}. \label{claim2:closed-approx}
	\item For every $i\in \N$ and $N\geq c_i$,
\begin{align*}
		\frac 1 {N} \sum_{k = 1}^{N} \mathds{1}_{\set{\eta \colon \left| \frac 1 {m_i} \ln \norm{(f_{\eta}^{-m_i})^*}  - \lambda_\mu \right| < \epsilon}}(\sigma^k(\omega)) > 1 - \frac 1 i 
\end{align*}  
 \label{claim2:lyap-average}
	\item Let 
	\begin{align*}
		G_N(\omega)\coloneqq \sum_{j=1}^{N}  \norm{(f_{\sigma^{-j}(\omega)}^{-N + j})^*}\norm{\omega _{-j}^{-1}}_{C^1}^2.
	\end{align*} For every $i\in \N$ and $N\geq c_i$,
	\begin{align*}
		\frac{1} N	\sum_{k=1}^{N} \mathds{1}_{\set{\eta \colon G_{m_i}(\eta) <  e^{(\lambda_\mu + \epsilon)m_i}}}(\sigma^{k}(\omega)) > 1 - \frac 1 i
	\end{align*}
	\label{claim2:G-average}
	\item
		Let 
		\begin{align*}
			H_N(\eta)\coloneqq d_\P\left(e_+(((f^{-N}_{\eta})^*)^t), e^-(\eta)\right).
		\end{align*} For every $i\in \N$ and $N\geq c_i$,
	\begin{align*}
		\frac 1 N \sum_{k = 1}^N\mathds{1}_{\set{\eta \colon H_{m_i}(\eta )< \frac 1 i }} (\sigma^k(\omega)) > 1 - \frac 1 i
	\end{align*} \label{claim2:H-average}
	\end{enumerate}
\end{claim}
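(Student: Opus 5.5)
The plan is a recursive construction. At stage $i$ we are given $c_{i-1}$. We choose $m_i$ from the tail of $(\hat m_j)$ so that $m_i$ is large and the density estimate of Claim \ref{claim:1} is sharp to within $1/i$. Then we choose $c_i$ so large that properties (2)--(7) hold for index $i$. Property (1) concerns the pair $(m_i,c_{i+1})$, so it is secured when $c_{i+1}$ is chosen at the next stage. Since every requirement is a condition of the form ``for all $n$ (or $N$) $\geq c_i$'', each is eventually satisfied. So we only need that every condition holds for all sufficiently large $n$ or $N$, and then take $c_i$ larger than the maximum of these thresholds, larger than $m_i+i$, and larger than $i\,c_{i-1}$. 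That last choice gives $c_{i-1}/c_i\to 0$, and $c_i>m_i+i$ gives $c_i-m_i\to\infty$, which is (2).

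The individual conditions are handled as follows. For (3), with $m_i$ fixed the right-hand side is a constant, and $\norm{(f_\omega^n)^*}\to\infty$ by Theorem \ref{lyap}. For (5), (6) and (7), each indicator is a fixed measurable set $E_i\subset\Omega$, so the Birkhoff averages converge to $\mu^\Z(E_i)$; this uses genericity of $\omega$ for this countable family, which we impose at the outset. It then suffices to pick $m_i$ large so that $\mu^\Z(E_i)>1-\frac1{2i}$, and after that $c_i$ large. For (5) this holds by Theorem \ref{lyap} in backward time, since convergence in probability follows from a.s. convergence. For (7) it holds because the top singular direction of the adjoint of $(f^{-N}_\eta)^*$ converges a.s. to $e^-(\eta)$, which is the backward analogue of the Furstenberg convergence used in Claim \ref{claim:mass}. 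For (6) we rerun the Gou\"ezel--Karlsson/Borel--Cantelli argument of Lemma \ref{lem:C0Bound} in backward time, as in Remark \ref{remark:later}. That argument gives $G_m(\eta)\le e^{(\lambda_\mu+\epsilon/2)m}$ for a set of $\eta$ of measure close to $1$ along a set of $m$ of upper density close to $1$. We therefore have to select $m_i$ simultaneously from $(\hat m_j)$ and from these good $m$'s. This is the delicate point: Claim \ref{claim:1} gives arbitrarily large admissible $m$ in a set of positive density in averaged form. If intersecting with good times is problematic, I would instead use the cruder bound $G_m(\eta)\le m\,\norm{(f^{-m}_\eta)}_{\text{coh-max}}\max_j\norm{\eta_{-j}^{-1}}^2_{C^1}$ with subexponential control from \eqref{int}. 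By the ergodic theorem this is $\le e^{(\lambda_\mu+\epsilon)m}$ with probability tending to $1$ as $m\to\infty$, which removes any constraint on $m_i$. I would try this first.

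Property (1) is arranged at the next stage. Since $m_i=\hat m_j$ satisfies $\overline d(S\cap(S+m_i))\ge\overline d(S)^2-1/j$, there are arbitrarily large $N$ at which the counting ratio exceeds $\overline d(S)^2-2/j$. We choose $c_{i+1}$ among such $N$, also satisfying the thresholds of stage $i+1$. The $\liminf$ then tends to at least $\overline d(S)^2$, because $j=j(i)\to\infty$.

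The step I expect to be the main obstacle is (4). By Proposition \ref{lem:closed} and the mass bound \eqref{boundd}, every limit point of $M((f_\omega^{n})_*T_\omega^+)C_\psi^n$, for $n\in S$, is a closed positive current. I would argue by contradiction. Suppose (4) fails for infinitely many $n\in S$. By precompactness, a subsequence converges weakly to some $T_\infty$, which is closed and positive. Continuity of $d_\Theta$ in the weak topology then gives $d_\Theta(M((f_\omega^{n})_*T_\omega^+)C_\psi^n,T_\infty)\to0$, a contradiction. Hence for $n\in S$ large enough a closed positive approximant exists within the required tolerance $\frac1ie^{-(\lambda_\mu+\epsilon)m_i}$, and this tolerance is fixed once $m_i$ is fixed. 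The only subtlety is that $S$ and the sequence of Proposition \ref{lem:closed} coincide, which holds by construction since both come from Proposition \ref{lem:MassBound2} via Remark \ref{rmk:sequence-independent}.
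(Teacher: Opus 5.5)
Your proposal is correct and follows essentially the paper's recursive construction: choose $m_i$ deep in $(\hat m_j)$ so that the good sets behind (5)--(7) have $\mu^\Z$-measure exceeding $1-\frac{1}{2i}$, then take $c_i$ beyond every eventual threshold, using Birkhoff genericity for (5)--(7), $\norm{(f_\omega^n)^*}\to\infty$ for (3), and precompactness plus Proposition \ref{lem:closed} for (4); property (1) is secured at the next stage by choosing $c_{i+1}$ among the arbitrarily large $N$ where the counting ratio for $S\cap(S+m_i)$ is nearly $\overline d(S)^2$. For (6), your fallback crude bound is the right route and matches the paper, which shows $\mu^\Z(\mathcal R_m)\to 1$ with no constraint on $m$; the Gou\"ezel--Karlsson detour is unnecessary, but note that the real input is uniform control of the shifted sub-products $\max_{1\le j\le m}\norm{(f^{-(m-j)}_{\sigma^{-j}(\eta)})^*}$, which comes from the block ergodic argument of Appendix \ref{app:gromov} rather than from the pointwise limit of $\frac1m\ln\norm{(f_\eta^{-m})^*}$ alone.
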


		\begin{proof}[Proof of Claim \ref{claim:2}]
			Let $(\hat m_j)_{j\in\N}$ be the sequence obtained in Claim \ref{claim:1}. We will choose a subsequence of $(\hat m_j)$, denoted $(m_i)$, and its companion sequence $(c_i)$ recursively. For $m,i\in\N$, set
			\begin{align*}
				&\mathcal L_m
				\coloneqq
				\set{\eta \colon
				\left|\frac 1 m \ln \norm{(f_\eta^{-m})^*}-\lambda_\mu\right|<\epsilon}, \hspace{.25 cm }
				\mathcal R_m \coloneqq
				\set{\eta \colon G_m(\eta)<e^{(\lambda_\mu+\epsilon)m}},\\
				 & \hspace{4 cm } \mathcal H_{m,i}
				\coloneqq
				\set{\eta \colon H_m(\eta)<\frac 1 i}.
			\end{align*}
			We first show that
			\begin{align}\label{claim2:good-measures}
				\mu^\Z(\mathcal L_m)\to 1,\qquad
				\mu^\Z(\mathcal R_m)\to 1,\qquad
				\mu^\Z(\mathcal H_{m,i})\to 1
			\end{align}
			as $m\to\infty$, where the last convergence is for each fixed $i$.
			The first convergence follows from
			\begin{align*}
				\lim_{m\to\infty}
				\frac 1 m \ln \norm{(f_\eta^{-m})^*}
				= \lambda_\mu
			\end{align*}
			for $\mu^\Z$-almost every $\eta$.
			
			For the second convergence, fix $\delta>0$ and $r>1$. Since
			\begin{align*}
				\lim_{n\to\infty}\frac 1 n \ln\norm{(f_\eta^n)^*}
				= \lambda_\mu
			\end{align*}
			for $\mu^\Z$-almost every $\eta$, we have
			\[ \norm{(f_\eta^n)^*}\leq e^{(\lambda_\mu+\delta)n}	\]
			for all sufficiently large $n$. Moreover, by \eqref{int} and Borel--Cantelli, for $\mu^\Z$-almost every $\eta$ we have $\norm{\eta_{-j}^{-1}}_{C^1}<r^j$
			for all sufficiently large $j$. Hence, for $\mu^\Z$-almost every $\eta$,
			\begin{align*}
				\limsup_{N\to\infty}\frac 1 N\ln G_N(\eta)
				&\leq
				\lim_{N\to\infty}
				\frac 1 N
				\ln\sum_{j=0}^{N-1}e^{(\lambda_\mu+\delta)j}r^{2j}\\
				&\leq \lambda_\mu+\delta+2\ln r.
			\end{align*}
			Letting $\delta\to0$ and $r\to1$ gives
			\begin{align*}
				\limsup_{N\to\infty}\frac 1 N\ln G_N(\eta)\leq \lambda_\mu
			\end{align*}
			almost surely, and therefore $\mu^\Z(\mathcal R_m)\to1$.
			Finally, $H_m(\eta)\to0$ for $\mu^\Z$-almost every $\eta$ by Proposition III.3.2 in \cite{MR886674}, applied to the measure $\tilde \mu\in\Prob(\Orth^+(1,n))$ defined by
			\[
				\tilde \mu(f^*)\coloneqq \mu(((f^*)^{-1})^t).
			\]
			This proves the convergence of $\mu^\Z(\mathcal H_{m,i})$ for each fixed $i$.
			
			Since the sets $\mathcal L_m$, $\mathcal R_m$, and $\mathcal H_{m,i}$ form a countable family, we may also assume that $\omega$ is Birkhoff-generic for each of them. Thus, for each such set $\mathcal E\in \set{\mathcal L_m, \mathcal R_m, \mathcal H_{m,i}}$,
			\begin{align}\label{claim2:birkhoff-generic}
				\frac 1 N\sum_{\ell=1}^{N}
				\mathds 1_{\mathcal E}(\sigma^\ell(\omega))
				\to \mu^\Z(\mathcal E)
			\end{align}
			as $N\to\infty$.
			
			We also record the two facts that will be useful for choosing $c_i$. First,
			\begin{align}\label{limitt}
				\lim_{N\to\infty}\norm{(f_\omega^N)^*}=\infty;
			\end{align} this follows from choosing $\omega$ to be Furstenberg-generic.
			Second, let $\mathcal Z$ denote the set of closed positive currents. We claim that
			\begin{align}\label{claim2:closed-distance}
				d_\Theta\bigl(M((f_\omega^{n_i})_*T_\omega^+)C_\psi^{n_i},\mathcal Z\bigr)
				\to 0
			\end{align} as $i\to \infty$.
			Indeed, the sequence
			\[
				\set{M((f_\omega^{n_i})_*T_\omega^+)C_\psi^{n_i}\colon i\in\N}
			\]
			is precompact, and each of its limit points is closed by Proposition \ref{lem:closed}. %
			Therefore every convergent subsequence has $d_\Theta$-distance to $\mathcal Z$ tending to zero, which proves \eqref{claim2:closed-distance}.
			
			We now construct the pairs $(m_i, c_i)$. Set $c_0=0$. Suppose $m_1,\dots,m_{i-1}$ and $c_0,\dots,c_{i-1}$ have already been chosen. By \eqref{claim2:good-measures}, we may choose $m_i=\hat m_{j_i}$ sufficiently far into the sequence $(\hat m_j)$, with $j_i\geq 2i$ and $m_i>m_{i-1}$ if $i\geq2$, so that
			\begin{align*}
				\mu^\Z(\mathcal L_{m_i})>1-\frac{1}{2i},\qquad
				\mu^\Z(\mathcal R_{m_i})>1-\frac{1}{2i},\qquad
				\mu^\Z(\mathcal H_{m_i,i})>1-\frac{1}{2i}.
			\end{align*}
			Then choose $c_i$ so large that the following all hold:
			\begin{enumerate}[label=(\alph*)]
				\item if $i\geq2$, then
				\begin{align*}
					\frac{1}{c_i}
					\bigl|S\cap(S+m_{i-1})\cap[1,c_i]\bigr|
					> \overline d(S)^2-\frac{1}{i-1};
				\end{align*}
				\item $\frac{c_{i-1}}{c_i}<\frac 1 i$ and $c_i - m_i > i$;
				\item (\ref{claim2:growth}) -- (\ref{claim2:H-average}) hold for the index $i$.
			\end{enumerate}
			This is possible as follows. When $i\geq2$, item (a) can be met for arbitrarily large values of $c_i$ by Claim \ref{claim:1} and the choice $j_{i-1}\geq2(i-1)$. Item (b) only requires taking $c_i$ large. Then (\ref{claim2:growth}) and (\ref{claim2:closed-approx}) in item (c) follow, after increasing $c_i$ if necessary, from \eqref{limitt} and from \eqref{claim2:closed-distance}. Finally  (\ref{claim2:lyap-average}), (\ref{claim2:G-average}), and (\ref{claim2:H-average}) in item (c) follow,  after increasing $c_i$ if necessary, from \eqref{claim2:birkhoff-generic} and the choice of $m_i$.
			
			The recursive construction gives (\ref{claim2:growth})--(\ref{claim2:H-average}) directly. The inequalities $\frac{c_{i-1}}{c_i}<\frac 1 i$ and $c_i - m_i > i$ imply (\ref{claim2:separation}). Finally, applying (a) at stage $i+1$ gives
			\begin{align*}
				\frac{1}{c_{i+1}}
				\bigl|S\cap(S+m_i)\cap[1,c_{i+1}]\bigr|
				> \overline d(S)^2-\frac 1 i,
			\end{align*}
			and taking the $\liminf$ proves (\ref{claim2:density}).
		\end{proof}
			Next, we use the pairs $(m_i,c_i)$ corresponding to the fixed $0<\epsilon<\lambda_\mu$ to choose the final subsequence. Let $E_i$ denote the set of $k\in S\cap (S + m_i)$ where the three conditions whose Birkhoff averages appear in \eqref{claim2:lyap-average}--\eqref{claim2:H-average} hold:
\begin{center}
	\begin{enumerate*}[label=(\roman*), itemjoin=\ \ \ \ \ \  ]
	\item $\sigma^k(\omega)\in \mathcal L_{m_i}$, \label{good:lyap} 
	\item $\sigma^k(\omega)\in \mathcal R_{m_i}$,  \label{good:G} 
	\vspace{.2 cm}
	\item  $\sigma^k(\omega)\in \mathcal H_{m_i, i}$ \label{good:H}
\end{enumerate*}
\end{center}
	Define 
	\begin{align*}
		A\coloneqq \bigcup_{i\geq 1} (E_i\cap (c_i, c_{i+1}]).
	\end{align*}
	Now
	\begin{align*}
		\overline d(A) &\geq \limsup_{i\to \infty }\frac{1}{c_{i+1}} \# (A \cap [1, c_{i+1}])\\
		&\geq \limsup_{i\to \infty }\frac{1}{c_{i+1}} \#(E_i \cap (c_i, c_{i+1}])\\
		&\geq  \limsup_{i\to \infty } \left(\frac{1}{c_{i+1}}\# \left(S \cap (S + m_i) \cap [1, c_{i+1}]\right)- \frac{c_i}{c_{i+1}}- \frac 3 i \right)  \\
	& \geq \overline{d}(S)^2
\end{align*} where the second to last line uses  (\ref{claim2:lyap-average}), (\ref{claim2:G-average}), and (\ref{claim2:H-average}), and the last line uses (\ref{claim2:density}) and (\ref{claim2:separation}). Hence
	 \begin{align*}
		\overline d (A) \geq \overline{d}(S)^2 > \left (1 - \rho' \right)^2 > 1 - \rho.
	\end{align*} once we choose $\rho'$ sufficiently small.

	We now impose additional equicontinuity, uniqueness-in-class, and transversality requirements. Each corresponds to a large set, defined in the following paragraphs, for which we assume $\omega$ is Birkhoff-generic. We then keep only the times at which the orbit of $\omega$ belongs to these sets. These restrictions lose only a small amount of density. 
	
	Fix $\rho_0 > 0$, to be chosen sufficiently small. By Lemma \ref{lem:equicontinuous}, we may pick a compact subset $K\subset \Omega$ with measure $\mu^\Z(K)>1 - \rho_0$ such that
\begin{align*}
	\set{\phi(T_\eta^+) \colon \eta \in K}
\end{align*} 
is an equicontinuous family admitting a uniform $C^0$-bound. %

Let \(\Lambda^- \subset \Omega\) denote the full-measure set on which the current
\(T^-_\eta\) is defined and is the unique closed positive current of unit mass in the class \(e^-(\eta)\). Letting $\rho_0>0$ be as above, by regularity of $\mu^\Z$ and Lusin's theorem we may find a compact subset $K'\subset \Lambda^-$ on which $\eta \mapsto e^-(\eta)$ is continuous and $\mu^\Z(K')>1-\rho_0$.

Echoing Remark \ref{tbd}, since $\mu_{\mathcal{F}}$ is non-atomic and $e^+(\eta)$ is independent of $e^-(\eta)$ we have 
\begin{align*}
	D_\tau \coloneqq \set{\eta \colon e^+(\eta)\wedge e^-(\eta) > \tau}
\end{align*} satisfies $\mu^\Z(D_\tau) \to 1$ as $\tau \to 0$.

Choose $\tau>0$ sufficiently small. We now further require $\omega$ to be Birkhoff-generic for $K$, $K'$, and $D_\tau$. %
 With $\rho_0$ and $\tau$ chosen sufficiently small, 
\begin{align*}
	\set{\tilde n_k}_{k\in \N} \coloneqq A\cap \set{k\in\N\colon \sigma^k(\omega)\in K \cap K' \cap D_\tau} 
	\end{align*} satisfies $\overline d(\set{\tilde n_k}_{k\in \N}) > 1- \rho$. We will prove this subsequence satisfies \eqref{important:seq} and \eqref{important:currs}.

			For each $\tilde n_k$, we now choose the associated integer $\tilde m_k$ and closed approximant described in the outline. Let $i_k$ be the unique index such that $c_{i_k} < \tilde n_k \leq c_{i_k+1}$, and set $\tilde m_k\coloneqq m_{i_k}$. Since $\tilde n_k\in A$, we have $\tilde n_k\in E_{i_k}\cap (S+\tilde m_k)$; in particular, $\tilde n_k-\tilde m_k\in S$.

 Since $\set{\tilde n_k}_{k\in \N}\subset S$, by (\ref{claim2:closed-approx}) there exists $\tilde C^{\tilde n_k}_\psi\in \mathcal{Z}$ such that
\begin{align}\label{ugh}
	d_\Theta(M((f_\omega^{\tilde n_k})_*T_\omega^+) C_\psi^{\tilde n_k} , \tilde C_\psi^{\tilde n_k}) < \frac{1}{i_k} e^{-\tilde m_k(\lambda_\mu + \epsilon )}.
\end{align} Note that the currents $\set{\tilde C_\psi^{\tilde n_k}}_{k\in\N}$ have uniformly bounded mass since the currents $\tilde C_\psi^{\tilde n_k}$ are $d_\Theta$-close to currents with uniformly bounded mass; see Remark \ref{rmk:dtheta-mass}.

	The key estimate is that these closed approximants become negligible after applying $((f_{\sigma^{\tilde n_k}(\omega)}^{-\tilde m_k})^*)^{-1}$.
	
	\begin{claim}\label{claim:shrink} 
	We have
\begin{align*}
	\norm{ ((f_{\sigma^{\tilde n_k}(\omega)}^{-\tilde m_k})^*)^{-1} [\tilde C_\psi^{\tilde n_k}]} \to 0
\end{align*} as $k\to \infty$. 
\end{claim}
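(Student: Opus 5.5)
The plan is to reduce the claim to a mass estimate and then bound the three terms of the decomposition \eqref{eq:1} separately, using the properties built into Claim \ref{claim:2} and the set $A$.

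\textbf{Reduction to mass.} Pulling back the closed positive current $\tilde C_\psi^{\tilde n_k}$ by the automorphism $(f_{\sigma^{\tilde n_k}(\omega)}^{-\tilde m_k})^{-1}$ gives a closed positive current. Its class therefore lies in $\Psef(X)$, and by Remark \ref{cpt} its norm is comparable to its mass. So it suffices to show that the mass $M\big(((f_{\sigma^{\tilde n_k}(\omega)}^{-\tilde m_k})^{-1})^*\tilde C_\psi^{\tilde n_k}\big)\to 0$. I write this mass as in \eqref{eq:1} and then bound the first term using \eqref{eq:2}. This leaves three quantities to control. Throughout, I use that $\tilde n_k\in S$ and $\tilde n_k-\tilde m_k\in S$, and that $\tilde n_k>c_{i_k}$ with $i_k\to\infty$. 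I also use that, on $S$, Claim \ref{claim:mass} and Remark \ref{rmk:equiv} give $M((f_\omega^n)_*T_\omega^+)\asymp \norm{(f_\omega^n)^*}^{-1}$.

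\textbf{Term \eqref{ab}.} Since $\tilde n_k-\tilde m_k\in S$, \eqref{boundd} gives
\[
M\big(C_\psi^{\tilde n_k-\tilde m_k}\big)\leq R\,M\big((f_\omega^{\tilde n_k-\tilde m_k})_*T_\omega^+\big)^{-1}\lesssim \norm{(f_\omega^{\tilde n_k-\tilde m_k})^*}.
\]
Combining this with $M((f_\omega^{\tilde n_k})_*T_\omega^+)\lesssim \norm{(f_\omega^{\tilde n_k})^*}^{-1}$, the product in \eqref{ab} is bounded by $\norm{(f_\omega^{\tilde n_k-\tilde m_k})^*}/\norm{(f_\omega^{\tilde n_k})^*}$. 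By Remark \ref{rmk:tightness}, $S$ is a tight sequence in the sense of Theorem \ref{thm:GoodTimes}. Condition (ii) there, applied with $\ell=\tilde m_k$, bounds this ratio by $e^{-(\lambda_\mu-\delta_{\tilde m_k})\tilde m_k}$. This tends to $0$ because $\tilde m_k=m_{i_k}\to\infty$ and $\delta_\ell\to 0$.

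\textbf{Term \eqref{abc}.} By Stokes' theorem, using $\supp\psi\cap\supp\partial C=\varnothing$ exactly as in the proof of Lemma \ref{lem:MassBound}, we have
\[
\big|(C_\psi^{\tilde n_k}\mid dd^c\phi)\big| = \big|(C\mid (\phi\circ f_\omega^{\tilde n_k})\, dd^c\psi)\big|\leq A_\psi (C\mid\kappa_0)\norm{\phi}_{C^0},
\]
where $\phi=\phi((f_{\sigma^{\tilde n_k}(\omega)}^{-\tilde m_k})^*\kappa_0)$. Remark \ref{remark:later} gives $\norm{\phi}_{C^0}\lesssim G_{\tilde m_k}(\sigma^{\tilde n_k}(\omega))$. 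Condition \ref{good:G} in the definition of $E_{i_k}$ bounds this by $e^{(\lambda_\mu+\epsilon)\tilde m_k}$. Since $\tilde n_k\geq c_{i_k}$, item (\ref{claim2:growth}) gives $M((f_\omega^{\tilde n_k})_*T_\omega^+)\lesssim \big(i_k e^{(\lambda_\mu+\epsilon)\tilde m_k}\big)^{-1}$. Hence \eqref{abc} is $\lesssim 1/i_k\to 0$.

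\textbf{Last term of \eqref{eq:1}.} Because $\Theta$ is linear and $\norm{\cdot}_\Theta$ is a supremum over the unit ball, we have \eqref{later}. The prefactor there satisfies
\[
\norm{(f^{-\tilde m_k}_{\sigma^{\tilde n_k}(\omega)})^*\kappa_0}\leq \norm{[\kappa_0]}\,\norm{(f^{-\tilde m_k}_{\sigma^{\tilde n_k}(\omega)})^*}< \norm{[\kappa_0]}\,e^{(\lambda_\mu+\epsilon)\tilde m_k},
\]
by condition \ref{good:lyap}. Multiplying by \eqref{ugh} gives a bound $\lesssim 1/i_k\to0$.

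Summing the three estimates proves the claim. I expect the main obstacle to be the term \eqref{ab}. It is the only step that needs a ratio of norms at two different times to be small, and it relies on tightness (ii) holding along $S$ at the lag $\ell=\tilde m_k$. I would check that Proposition \ref{lem:MassBound2} indeed produces $S$ inside the Gou\"ezel--Karlsson good set, as Remark \ref{rmk:tightness} asserts. I would also check the uniform comparabilities from Claim \ref{claim:mass} at both times $\tilde n_k$ and $\tilde n_k-\tilde m_k$.
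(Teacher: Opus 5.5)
Your proposal is correct and matches the paper's proof essentially step for step. You reduce to mass via Remark \ref{cpt} and split using \eqref{eq:1} and \eqref{eq:2}. You then bound the $M(C_\psi^{\tilde n_k-\tilde m_k})$ term by \eqref{boundd} plus Gou\"ezel--Karlsson tightness (ii), the $dd^c\phi$ term by Stokes, Remark \ref{remark:later}, \ref{good:G} and (\ref{claim2:growth}), and the approximation error by \eqref{later}, \eqref{ugh} and \ref{good:lyap}.

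The two checks you flag both go through. Tightness is a condition on each Gou\"ezel--Karlsson time individually, so it passes to any subset, in particular to $S$ and to $(\tilde n_k)$. At the time $\tilde n_k-\tilde m_k$ you only need $M((f_\omega^{n})_*T_\omega^+)\gtrsim \norm{(f_\omega^{n})^*}^{-1}$, and this holds for every $n$ by Remark \ref{rmk:equiv}.
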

\begin{proof}[Proof of Claim \ref{claim:shrink}]
	
Recalling \eqref{eq:1} and \eqref{later},
\begin{align*}
	 \norm{ ((f_{\sigma^{\tilde n_k}(
			\omega)}^{-\tilde m_k})^*)^{-1} [\tilde C_\psi^{\tilde n_k}]} 
	&\asymp M(((f_{\sigma^{\tilde n_k}(
		\omega)}^{-\tilde m_k})^*)^{-1} \tilde C_\psi^{\tilde n_k})  \\
	& \hspace{0 cm} = M((f_\omega^{\tilde n_k})_*T_\omega^+) (C_\psi^{\tilde n_k} \mid \Theta((f_{\sigma^{\tilde n_k}(\omega)}^{-\tilde m_k})^* \kappa_0))  \\
	&\hspace{1  cm}+  (\tilde C_\psi^{\tilde n_k} - 
	M((f_\omega^{\tilde n_k})_*T_\omega^+) C_\psi^{\tilde n_k} \mid  \Theta((f_{\sigma^{\tilde n_k}(\omega)}^{-\tilde m_k})^* \kappa_0))\\
	& \hspace{0 cm} \leq M((f_\omega^{\tilde n_k})_*T_\omega^+) \big |(C_\psi^{\tilde n_k} \mid \Theta((f_{\sigma^{\tilde n_k}(\omega)}^{-\tilde m_k})^* \kappa_0))\big |  \\
	&  \hspace{1  cm} + \norm{(f_{\sigma^{\tilde n_k}(\omega)}^{-\tilde m_k})^* [\kappa_0]}
	d_\Theta(\tilde C_\psi^{\tilde n_k} ,
	M((f_\omega^{\tilde n_k})_*T_\omega^+) C_\psi^{\tilde n_k})\\
	& \hspace{0 cm}\leq  \big |(M((f_\omega^{\tilde n_k})_*T_\omega^+) C_\psi^{\tilde n_k} \mid  \Theta( (f_{\sigma^{\tilde n_k}(
		\omega)}^{-\tilde m_k})^*  \kappa_0))\big |  \\
	&\hspace{1 cm }+ \frac{1}{i_k }e^{-(\lambda_\mu+\epsilon)\tilde m_k}\norm{(f_{\sigma^{\tilde n_k}(
			\omega)}^{-\tilde m_k})^*  [\kappa_0]} \\
	& \hspace{0 cm}\leq M((f_\omega^{\tilde n_k})_*T_\omega^+) \big |(C_\psi^{\tilde n_k} \mid\Theta( (f_{\sigma^{\tilde n_k}(
	\omega)}^{-\tilde m_k})^*   \kappa_0))\big |  +  \frac{\norm{[\kappa_0]}}{i_k}
\end{align*}
	where in the second inequality we used \eqref{ugh}, and in the last line we used \ref{good:lyap}. It remains to show that 
\begin{align*}
	 M((f_\omega^{\tilde n_k})_*T_\omega^+) \big |(C_\psi^{\tilde n_k}\mid 	\Theta((f_{\sigma^{\tilde n_k}(
		\omega)}^{-\tilde m_k})^*   \kappa_0))\big | \to 0
\end{align*} as $k\to \infty$. 
Now 
\begin{align*}
& M((f_\omega^{\tilde n_k})_*T_\omega^+) \big |(C_\psi^{\tilde n_k}\mid 	\Theta((f_{\sigma^{\tilde n_k}(
\omega)}^{-\tilde m_k})^*   \kappa_0))\big | \\ &\hspace{.2 cm} \leq  (M((f_\omega^{\tilde n_k})_*T_\omega^+) C_\psi^{\tilde n_k}\mid  (f_{\sigma^{\tilde n_k}(
\omega)}^{-\tilde m_k})^*  \kappa_0) + |(M((f_\omega^{\tilde n_k})_*T_\omega^+) C_\psi^{\tilde n_k}\mid dd^c \phi ((f_{\sigma^{\tilde n_k}(
\omega)}^{-\tilde m_k})^* \kappa_0))|  \\
& \hspace{.2 cm} \lesssim  \frac{1}{\norm{(f_\omega^{\tilde n_k})^*}} (C^{\tilde n_k}_\psi\mid  ( f_{\sigma^{\tilde n_k}(\omega)}^{-\tilde m_k})^* \kappa_0) +  \frac{1}{\norm{(f_\omega^{\tilde n_k})^*}}|  (C_\psi^{\tilde n_k}\mid dd^c \phi ((f_{\sigma^{\tilde n_k}(
		\omega)}^{-\tilde m_k})^*   \kappa_0))|
\end{align*}
where in the last line we used $M((f_\omega^{\tilde n_k})_*T_\omega^+)\asymp \frac{1}{\norm{(f_\omega^{\tilde n_k})^*}}$. 

Echoing \eqref{eq:2}, we have 
\begin{equation}\label{two}
	\begin{aligned}
		& |(M((f_\omega^{\tilde n_k})_*T_\omega^+) C_\psi^{\tilde n_k}\mid  \Theta( ( f_{\sigma^{\tilde n_k}(\omega)}^{-\tilde m_k})^{*}   \kappa_0))|  \\ & \hspace{1 cm} \lesssim  \frac{1}{\norm{(f_\omega^{\tilde n_k})^*}} M(C_\psi^{\tilde n_k - \tilde m_k}) + \frac{1}{\norm{(f_\omega^{\tilde n_k})^*}}|(C_\psi^{\tilde n_k}\mid dd^c \phi (( f_{\sigma^{\tilde n_k}(\omega)}^{-\tilde m_k})^* \kappa_0))|. \\
	\end{aligned}
\end{equation}
We handle the two terms in \eqref{two} separately.

Firstly,
\begin{align*}
	\frac{1}{\norm{(f_\omega^{\tilde n_k})^*}}M(C_\psi^{\tilde n_k - \tilde m_k}) & = 	\frac{\norm{(f_\omega^{\tilde n_k - \tilde m_k})^*}}{\norm{(f_\omega^{\tilde n_k})^*}}  \frac{1}{\norm{(f_\omega^{\tilde n_k - \tilde m_k})^*}}M(C^{\tilde n_k - \tilde m_k}_\psi)\lesssim   \frac{\norm{(f_\omega^{\tilde n_k - \tilde m_k})^*}}{\norm{(f_\omega^{\tilde n_k})^*}} 
\end{align*} using $\tilde n_k - \tilde m_k\in S$ and \eqref{bounddd}. Recalling Remark \ref{rmk:tightness}, in the upper-density case the sequence $(n_i)_{i\in\N}$ was chosen to satisfy the Gou\"ezel--Karlsson tightness conditions. Since $(\tilde n_k)_{k\in\N}$ is a subsequence of $(n_i)_{i\in\N}$,
it satisfies the same tightness conditions. Hence 
\begin{align*}
\frac{\norm{(f_\omega^{\tilde n_k - \tilde m_k})^*}}{\norm{(f_\omega^{\tilde n_k})^*}}   
	&\leq   e^{-\tilde m_k(\lambda_\mu - \delta_{\tilde m_k})} \to 0
\end{align*} as $k\to \infty$, where we used that \[\norm{(f_\omega^{\tilde n_k})^*} \geq e^{\tilde m_k(\lambda_\mu - \delta_{\tilde m_k})}\norm{(f_{\omega}^{\tilde n_k - \tilde m_k})^*}\] from Theorem \ref{thm:GoodTimes} part (ii).

Secondly, 
\begin{align*}
\frac{1}{\norm{(f_\omega^{\tilde n_k})^*}}|(C^{\tilde n_k}_\psi\mid dd^c \phi (( f_{\sigma^{\tilde n_k}(\omega)}^{-\tilde m_k})^*   \kappa_0))| & = \frac{1}{\norm{(f_\omega^{\tilde n_k})^*}}|(\psi C\mid (f_\omega^{\tilde n_k})^*dd^c \phi (( f_{\sigma^{\tilde n_k}(\omega)}^{-\tilde m_k})^*   \kappa_0))|  \\
& = \frac{1}{\norm{(f_\omega^{\tilde n_k})^*}}|(\psi C\mid dd^c (\phi (( f_{\sigma^{\tilde n_k}(\omega)}^{-\tilde m_k})^*   \kappa_0) \circ f_\omega^{\tilde n_k}))|  \\
& = \frac{1}{\norm{(f_\omega^{\tilde n_k})^*}}|(C\mid (\phi (( f_{\sigma^{\tilde n_k}(\omega)}^{-\tilde m_k})^*   \kappa_0) \circ f_\omega^{\tilde n_k}) dd^c \psi)| \\
& \leq \frac{1}{\norm{(f_\omega^{\tilde n_k})^*}}A_\psi  (C\mid \kappa_0) \norm{\phi (( f_{\sigma^{\tilde n_k}(\omega)}^{-\tilde m_k})^*    \kappa_0) }_{C^0}
\end{align*} where in the third line we use that $\supp (\psi ) \cap \supp( \partial C ) = \varnothing$ and in the last line we chose $A_\psi > 0$ such that $-A_\psi \kappa_0 \leq dd^c \psi \leq A_\psi \kappa_0$.

We have seen in Remark \ref{remark:later} that 
\begin{align*}
\norm{\phi((f_{\sigma^{\tilde n_k}(\omega)}^{-\tilde m_k})^*\kappa_0)}_{C^0}
&= \norm{\phi((\omega_{\tilde n_k - \tilde m_k}^{-1}\circ \dots \circ \omega_{\tilde n_k-1}^{-1})^*\kappa_0)}_{C^0}  \\
&\lesssim \sum_{i=1}^{\tilde m_k}  \norm{(f_{\sigma^{\tilde n_k - i}(\omega)}^{-\tilde m_k + i})^* [\kappa_0] } \norm{\sigma^{\tilde n_k}(\omega)_{-i}^{-1}}_{C^1}^2 \\
&\leq  \sum_{i=1}^{\tilde m_k}  \norm{(f_{\sigma^{\tilde n_k - i}(\omega)}^{-\tilde m_k + i})^*} \norm{\sigma^{\tilde n_k}(\omega)_{-i}^{-1}}_{C^1}^2 \\
&\leq e^{(\lambda_\mu + \epsilon)\tilde m_k}
	\end{align*} where the last line follows from \ref{good:G}. Moreover, by (\ref{claim2:growth}),
	\begin{align*}
		\norm{(f_\omega^{\tilde n_k})^*} \geq i_k e^{(\lambda_\mu + \epsilon)\tilde m_k}.
	\end{align*} Hence
\begin{align*}
\frac{1}{\norm{(f_\omega^{\tilde n_k})^*}}|(C_\psi^{\tilde n_k}\mid dd^c \phi ((f_{\sigma^{\tilde n_k}(\omega)}^{-\tilde m_k})^*\kappa_0))| & \lesssim A_\psi(C\mid\kappa_0)\frac 1 {i_k}.
	\end{align*} We have shown that the two terms in \eqref{two} tend to $0$ as $k\to \infty$, concluding the claim.
\end{proof} 
	In the next claim we prove that the cohomological shrinking from Claim \ref{claim:shrink} allows us to relate the sequence $[\tilde C_\psi^{\tilde n_k}]$ with backward Furstenberg directions.
	\begin{claim}\label{claim:addedlater}
		Let $(k_j)$ be a strictly increasing sequence of naturals such that $([\tilde C^{\tilde n_{k_j}}_\psi])_{j\in \N}$ converges to some class $\alpha\in H^{1,1}$. Then either $\alpha = 0$ or  \[d_\P([\alpha]_\P, [e^-(\sigma^{\tilde n_{k_j}}(\omega))]_\P) \to 0\] as $j\to \infty$, recalling that $d_\P$ denotes distance in $\P H^{1,1}$.
	\end{claim}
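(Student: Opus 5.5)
The plan is to read Claim \ref{claim:shrink} as a statement about the singular value decomposition of a hyperbolic matrix, and then identify the relevant singular direction using property \ref{good:H} of the times in $A$. Throughout, write
\[
B_k \coloneqq (f_{\sigma^{\tilde n_k}(\omega)}^{-\tilde m_k})^*, \qquad A_k \coloneqq B_k^{-1}, \qquad c_k \coloneqq [\tilde C_\psi^{\tilde n_k}].
\]
Claim \ref{claim:shrink} says precisely that $\norm{A_k c_k}\to 0$. Two further facts are already available:
\begin{itemize}
\item[(a)] The classes $c_k$ are bounded, since the currents $\tilde C_\psi^{\tilde n_k}$ have uniformly bounded mass (Remark \ref{rmk:dtheta-mass} together with Remark \ref{cpt}).
\item[(b)] By \ref{good:lyap}, $\norm{A_k}=\norm{B_k}\geq e^{(\lambda_\mu-\epsilon)\tilde m_k}\to\infty$. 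This uses $\tilde m_k = m_{i_k}$, and $i_k\to\infty$ because $\tilde n_k\to\infty$ and $c_i$ is increasing.
\end{itemize}

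\textbf{Step 1: $c_k$ approaches the contracting direction of $A_k$.} I would use the Cartan decomposition from Section \ref{sec:adapted}. Any element of $\Orth^+(H^{1,1})$ has singular values $\norm{A_k}$, then $1$ with multiplicity $d-2$, then $\norm{A_k}^{-1}$. Decompose
\[
c_k = a_k\, e_-(A_k) + w_k, \qquad w_k \perp_{\mathrm{ad}} e_-(A_k).
\]
The vectors $A_k e_-(A_k)$ and $A_k w_k$ are $\angle{\cdot,\cdot}_{\mathrm{ad}}$-orthogonal, and every singular value of $A_k$ on $e_-(A_k)^{\perp}$ is at least $1$. Hence
\[
\norm{w_k}\leq\norm{A_k w_k}\leq\norm{A_k c_k}\to 0.
\]
So $c_k - a_k e_-(A_k)\to 0$, with $|a_k|$ bounded by (a).

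\textbf{Step 2: identify $e_-(A_k)$ with $e^-(\sigma^{\tilde n_k}(\omega))$.} The right singular vector of $A_k=B_k^{-1}$ for its smallest singular value equals the left singular vector of $B_k$ for its largest singular value. Concretely, it is the expanding singular direction $e_+(B_k^t)$ of the adjoint, up to sign and hence projectively. Since $\tilde n_k\in A$ and $\tilde n_k\in (c_{i_k},c_{i_k+1}]$, condition \ref{good:H} gives
\[
H_{\tilde m_k}(\sigma^{\tilde n_k}(\omega)) < \tfrac{1}{i_k}, \qquad\text{that is,}\qquad d_\P\big([e_-(A_k)]_\P,\,[e^-(\sigma^{\tilde n_k}(\omega))]_\P\big)<\tfrac{1}{i_k}\to 0.
\]
I expect this bookkeeping to be the main thing to check carefully. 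The points to verify are that the transpose and inverse conventions in the definition of $H_N$ match the contracting direction of $((f^{-\tilde m_k})^*)^{-1}$, and that the sign ambiguity is harmless because we only work in $\P H^{1,1}$.

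\textbf{Step 3: conclude along $(k_j)$.} Suppose $c_{k_j}\to\alpha\neq 0$. Since $w_{k_j}\to 0$, we get $|a_{k_j}|\to\norm{\alpha}>0$. Therefore $c_{k_j}$ and $e_-(A_{k_j})$ become projectively close. Projective distance is continuous away from $0$, so $[e_-(A_{k_j})]_\P\to[\alpha]_\P$. The triangle inequality in $\P H^{1,1}$, combined with Step 2, then gives
\[
d_\P\big([\alpha]_\P,[e^-(\sigma^{\tilde n_{k_j}}(\omega))]_\P\big)\to 0.
\]
If $\alpha=0$, there is nothing to prove.
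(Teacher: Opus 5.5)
Your proposal is correct and follows essentially the same route as the paper. Both arguments decompose $[\tilde C_\psi^{\tilde n_k}]$ along the contracting singular direction of $((f_{\sigma^{\tilde n_k}(\omega)}^{-\tilde m_k})^*)^{-1}$ and its $\angle{\cdot,\cdot}_{\mathrm{ad}}$-orthogonal complement, use Claim \ref{claim:shrink} to kill the orthogonal part, and identify that direction with $e^-(\sigma^{\tilde n_k}(\omega))$ via $e_+$ of the adjoint and condition \ref{good:H}. The only difference is cosmetic: you bound the orthogonal component by orthogonality of the images, so $\norm{w_k}\leq\norm{A_kc_k}$ directly, whereas the paper uses the reverse triangle inequality together with $\norm{(f_{\sigma^{\tilde n_k}(\omega)}^{-\tilde m_k})^*}\to\infty$ from \ref{good:lyap}; you still need that growth to make $e_-(A_k)$ uniquely defined.
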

\begin{proof}[Proof of Claim \ref{claim:addedlater}]
For each $k\in \N$, let
\[
e_-(k)\coloneqq e_-\bigl(((f_{\sigma^{\tilde n_k}(\omega)}^{-\tilde m_k})^*)^{-1}\bigr)
	\] which we recall denotes the contracting singular unit vector in the boundary of the positive cone of $((f_{\sigma^{\tilde n_k}(\omega)}^{-\tilde m_k})^*)^{-1}$ when it exists. Here
	\begin{align*}
		(f_{\sigma^{\tilde n_k-\tilde m_k}(\omega)}^{\tilde m_k})^*
		=
		\left((f_{\sigma^{\tilde n_k}(\omega)}^{-\tilde m_k})^*\right)^{-1},
	\end{align*}
	and, for our adapted Euclidean norm, $\norm{A^{-1}}=\norm{A}$ for $A$ preserving the intersection form. Then for each $k\in \N$ we may find $a_k, b_k\in \R$, and unit vectors  $w_k\in e_-(k)^\perp$ such that
 \[[\tilde C^{\tilde n_k}_\psi]= a_k e_-(k)+  b_{k} w_k.\] By adaptedness of the Euclidean norm and Euclidean singular value decomposition for $((f_{\sigma^{\tilde n_k}(\omega)}^{-\tilde m_k})^*)^{-1}$, 
 \[\norm{((f_{\sigma^{\tilde n_k}(\omega)}^{-\tilde m_k})^*)^{-1} w_k} \geq \norm{w_k} = 1  \hspace{.5 cm}\text{and}\hspace{.5 cm} \norm{((f_{\sigma^{\tilde n_k}(\omega)}^{-\tilde m_k})^*)^{-1} e_-(k)} = \frac{1}{\norm{(f_{\sigma^{\tilde n_k}(\omega)}^{-\tilde m_k})^*}}  \] 
 so 
 \begin{align*}
 	\norm{((f_{\sigma^{\tilde n_k}(\omega)}^{-\tilde m_k})^*)^{-1} [\tilde C^{\tilde n_k}_\psi]}
 	&\geq - |a_k| \frac{1}{\norm{(f_{\sigma^{\tilde n_k}(\omega)}^{-\tilde m_k})^*}} + |b_k|.
 \end{align*}
	 As $\norm{[\tilde C^{\tilde n_k}_\psi]}^2 = a_k^2 + b_k^2$ is uniformly bounded in $k$, both $|a_k|$ and $|b_k|$ are uniformly bounded. Since $\tilde m_k\to\infty$, condition \ref{good:lyap} gives
	 \begin{align*}
	 	\norm{(f_{\sigma^{\tilde n_k}(\omega)}^{-\tilde m_k})^*}
	 	\geq e^{(\lambda_\mu-\epsilon)\tilde m_k}\to\infty,
	 \end{align*}
 and hence Claim \ref{claim:shrink} yields $b_k \to 0$. 
 
 Now let $(k_j)$ and $\alpha$ be as in the claim. By the above computation, if $\alpha\neq 0$, then $a_{k_j}$ is eventually bounded away from $0$ while $b_{k_j} \to 0$. Hence
 $d_\P([\tilde C^{\tilde n_{k_j}}_\psi], e_-(k_j)) \to 0$ as $j\to \infty$. 
 
 Moreover,
 \begin{align*}
 	e_-(k) = e_+(((f_{\sigma^{\tilde n_k}(\omega)}^{-\tilde m_k})^*)^t),
 \end{align*}
	 so by \ref{good:H} we have
	 \begin{align*}
	 	d_\P(e_-(k), e^-(\sigma^{\tilde n_k}(\omega))) < \frac{1}{i_k} \to 0.
	 \end{align*}
  Therefore
 $d_\P([\tilde C^{\tilde n_{k_j}}_\psi], e^-(\sigma^{\tilde n_{k_j}}(\omega))) \to 0$, proving the claim since $[\tilde C^{\tilde n_{k_j}}_\psi] \to \alpha$. 
\end{proof}

We are now ready to prove Proposition \ref{lem:last2}.
Suppose that the sequence
\begin{align}\label{contr}
	\left( \int_X T_{\omega}^+ \wedge \psi C \right)T_{\sigma^{\tilde n_k}(\omega)}^-
	- \left( \int_X T_{\sigma^{\tilde n_k}(\omega)}^+ \wedge T_{\sigma^{\tilde n_k}(\omega)}^-\right)
	M((f_\omega^{\tilde n_k})_* T_\omega^+)\, C_\psi^{\tilde n_k}(\omega)
\end{align}
does not converge to the zero current. By compactness, Claim \ref{claim:addedlater}, and Proposition \ref{lem:closed}, after passing to a subsequence $(\ell_k)$, we may assume that
\begin{align}\label{cool}
	T_{\sigma^{\ell_k}(\omega)}^- \to T_1
	\quad \text{and} \quad
	M((f_\omega^{\ell_k})_* T_\omega^+)\, C_\psi^{\ell_k}(\omega) \to T_2,
\end{align}
where $T_1$ and $T_2$ are closed positive currents satisfying $M(T_1) = 1$ and $M(T_2) \leq R$.%

By the triangle inequality, 
\begin{align*}
	d_\Theta(\tilde C_\psi^{\ell_k}, T_2) &\leq d_\Theta\big(\tilde C_\psi^{\ell_k}, M((f_\omega^{\ell_k})_* T_\omega^+)\, C_\psi^{\ell_k}(\omega)\big)  \\& \hspace{2 cm} + d_\Theta\big(M((f_\omega^{\ell_k})_* T_\omega^+)\, C_\psi^{\ell_k}(\omega), T_2 \big)  \to 0
\end{align*} where $d_\Theta\big(M((f_\omega^{\ell_k})_* T_\omega^+)\, C_\psi^{\ell_k}(\omega), T_2 \big)\to 0$ since $d_\Theta$ is continuous with the weak topology on currents and $d_\Theta\big(\tilde C_\psi^{\ell_k}, M((f_\omega^{\ell_k})_* T_\omega^+)\, C_\psi^{\ell_k}(\omega)\big) \to 0$ by construction. Hence 
\begin{align*}
	[\tilde C_\psi^{\ell_k}]\to [T_2],
\end{align*} and applying Claim \ref{claim:addedlater} yields that either $[T_2] = 0$ or
\begin{align}\label{scratch}
	d_\P([T_2], e^-(\sigma^{\ell_k}(\omega))) \to 0.
\end{align}

Assume first that $[T_2]=0$. Since $T_2$ is a closed positive current, $[T_2] = 0$ implies $M(T_2) = 0$, and hence $T_2=0$. Therefore
\begin{align*}
	M((f_\omega^{\ell_k})_* T_\omega^+)\, C_\psi^{\ell_k}(\omega) \to 0
\end{align*}
as currents. To apply Lemma \ref{lem:curr}, take $a_k =  M((f_\omega^{\ell_k})_* T_\omega^+)$ and choose any $\psi_0 \in C^\infty(X)$ with $\restr{\psi_0}{\supp \psi} \equiv 1$ and $\supp \psi_0 \cap \supp \partial C = \varnothing$. Notice $a_k \to 0$ by our choice of $\ell_k$. Now by Remark \ref{rmk:sequence-independent}, the sequence $(n_k)$ from Proposition \ref{lem:MassBound2} may be chosen independently of $C$ and $\psi$. Hence $a_k(C_{\psi_0}^{\ell_k}(\omega)\mid \kappa_0)$ remains uniformly bounded, and we have $a_k(C_{\psi_0}^{\ell_k}(\omega)\mid \kappa_0)^{\frac 1 2} \lesssim a_k^{\frac 1 2}\to 0$. We then apply Lemma \ref{lem:curr}, which yields
\begin{align*}
	T_{\sigma^{\ell_k}(\omega)}^+ \wedge M((f_\omega^{\ell_k})_* T_\omega^+) C_\psi^{\ell_k}(\omega) \to 0
\end{align*}
weakly as measures. On the other hand, Remark \ref{rmk:identity} gives
\begin{align}\label{constant}
		\int_X T_{\sigma^{\ell_k}(\omega)}^+ \wedge M((f_\omega^{\ell_k})_* T_\omega^+) C_\psi^{\ell_k}(\omega)
		&= 	\int_X T_\omega^+ \wedge   \psi C  
\end{align}
	which is independent of $k$. Hence this constant must be zero. Since $\int_X T_{\sigma^{\ell_k}(\omega)}^+\wedge T_{\sigma^{\ell_k}(\omega)}^-$ is uniformly bounded above in $k$, \eqref{contr} converges to the zero current, a contradiction.

	Assume $[T_2 ]\neq 0$.
Now $T^-_{\sigma^{\ell_k}(\omega)} \to T_1$ implies 
\begin{align*}
	e^-(\sigma^{\ell_k}(\omega))\to [T_1].
\end{align*} Combined with \eqref{scratch}, we obtain $d_\P([T_2], [T_1]) = 0$. Hence there exists $a \neq 0$ such that $[T_2] = a[T_1]$; namely, $a = M(T_2)$. Recall $K'\subset \Omega$ was a compact subset for which $\eta \mapsto e^-(\eta)$ is continuous and $T^-_\eta$ is the unique closed positive current in the class $e^-(\eta)$. Moreover, by construction, $\sigma^{\tilde n_k}(\omega)\in K'$ for all $k$. By compactness of $K'$ and continuity of $\eta \mapsto e^-(\eta)$ on $K'$, the set $e^-(K')$ is compact; hence, $[T_1 ] = e^-(\eta)$ for some $\eta\in K'$. By uniqueness, $[T_2] = M(T_2) [T_1]$ implies $T_2 = M(T_2) T_1$. So 
\begin{align}\label{cool2}
	T_{\sigma^{\ell_k}(\omega)}^- \to T_1
	\quad \text{and} \quad
	M((f_\omega^{\ell_k})_* T_\omega^+)\, C_\psi^{\ell_k}(\omega) \to T_2 = M(T_2) T_1.
\end{align}

By construction, the family
\begin{align*}
		\set{\phi(T_{\sigma^{\tilde n_k}(\omega)}^+) \colon k\in \N}
\end{align*}
is equicontinuous and uniformly bounded in $C^0$. Applying Lemma \ref{lem:curr} to \eqref{cool2}, we obtain
\begin{align}\label{hm1}
	T_{\sigma^{\ell_k}(\omega)}^+ \wedge M((f_\omega^{\ell_k})_* T_\omega^+) C_\psi^{\ell_k}  
	- T_{\sigma^{\ell_k}(\omega)}^+ \wedge T_2
	\to 0.
\end{align}
On the other hand, Lemma \ref{lem:currClosed} gives
\begin{align}\label{hm}
	T_{\sigma^{\ell_k}(\omega)}^+\wedge T_{\sigma^{\ell_k}(\omega)}^-  
	-
	T_{\sigma^{\ell_k}(\omega)}^+ \wedge T_1
	\to 0.
\end{align}
Combining \eqref{hm1} and \eqref{hm}, and recalling that $T_2 = M(T_2)T_1$, we obtain
\begin{align}\label{integrate}
	M(T_2) T_{\sigma^{\ell_k}(\omega)}^+ \wedge  T_{\sigma^{\ell_k}(\omega)}^- 
	- M((f_\omega^{\ell_k})_* T_\omega^+) T_{\sigma^{\ell_k}(\omega)}^+ \wedge C_\psi^{\ell_k}
	\to 0.
\end{align}
Integrating \eqref{integrate} over $X$ and recalling \eqref{constant}, we find
\begin{align}\label{grad}
	\frac{\int_X T_{\omega}^+ \wedge \psi C}
	{\int_X T_{\sigma^{\ell_k}(\omega)}^+ \wedge T_{\sigma^{\ell_k}(\omega)}^-}
	\to M(T_2),
\end{align}
	where Remark \ref{tbd} is used to justify division by $\int_X T_{\sigma^{\ell_k}(\omega)}^+ \wedge T_{\sigma^{\ell_k}(\omega)}^-$.

Since the currents $T_{\sigma^{\ell_k}(\omega)}^-$ have bounded mass (namely unit mass), it follows that
\begin{align}\label{up}
	\frac{\int_X T_{\omega}^+ \wedge \psi C}
	{\int_X T_{\sigma^{\ell_k}(\omega)}^+ \wedge T_{\sigma^{\ell_k}(\omega)}^-} T_{\sigma^{\ell_k}(\omega)}^-
	- M(T_2)T_{\sigma^{\ell_k}(\omega)}^- \to 0.
\end{align}
Moreover,
\begin{align}\label{down}
	M(T_2) T_{\sigma^{\ell_k}(\omega)}^-
	- M((f_\omega^{\ell_k})_* T_\omega^+) C_\psi^{\ell_k}(\omega)
	\to 0.
	\end{align} This follows from \eqref{cool} and the equality $T_2 = M(T_2) T_1$. Substituting \eqref{down} into \eqref{up}, and using the uniform upper bound on $\int_X T_{\sigma^{\ell_k}(\omega)}^+ \wedge T_{\sigma^{\ell_k}(\omega)}^-$ 
which follows from compactness of the unit ball in cohomology, we conclude that \eqref{contr} converges to the zero current, a contradiction. Thus \eqref{important:seq} holds.

To prove \eqref{important:currs}, we apply the same argument. Indeed, if \eqref{important:currs} does not hold then there exists a subsequence $(\ell_k)$ and closed positive currents $T_1, T_2$ such that 
\begin{align}
	T_{\sigma^{\ell_k}(\omega)}^- \to T_1
	\quad \text{and} \quad
	M((f_\omega^{\ell_k})_* T_\omega^+)\, C_\psi^{\ell_k}(\omega) \to T_2 
\end{align} but 
\begin{align}
	\left(\int_X T_{\omega}^+ \wedge \psi C\right)T_{\sigma^{\ell_k}(\omega)}^+ \wedge  T_{\sigma^{\ell_k}(\omega)}^- 
	- \left(\int_X T_{\sigma^{\ell_k}(\omega)}^+ \wedge T_{\sigma^{\ell_k}(\omega)}^-\right)M((f_\omega^{\ell_k})_* T_\omega^+) T_{\sigma^{\ell_k}(\omega)}^+ \wedge C_\psi^{\ell_k}
\end{align} does not converge to the zero measure. 
If $T_2=0$, then the same argument as above gives
\begin{align*}
	T_{\sigma^{\ell_k}(\omega)}^+ \wedge M((f_\omega^{\ell_k})_*T_\omega^+)C_\psi^{\ell_k}\to0.
\end{align*}
By \eqref{constant}, the scalar $\int_X T_\omega^+\wedge\psi C$ is zero, and hence the displayed sequence converges to the zero measure, a contradiction. Thus we may assume $T_2\neq0$.
But \eqref{integrate} and \eqref{grad} prove the above sequence does converge to the zero measure, a contradiction. 

	The preceding argument gives the upper-density statement. Assume now that $\mu$ satisfies the exponential moment condition \eqref{exponential}. We explain how to modify the construction to obtain lower density. The only point in the preceding convergence proof where the Gou\"ezel--Karlsson tightness condition was used was in Claim \ref{claim:shrink}, to show that
	\begin{align*}
		\frac{\norm{(f_\omega^{\tilde n_k-\tilde m_k})^*}}
		{\norm{(f_\omega^{\tilde n_k})^*}}
		\to0.
	\end{align*}
	Since we do not have tightness in the exponential moment case, we replace this by the following estimate from Cantat--Dujardin. By Proposition 5.14 of \cite{MR4635340}, applied with $D\equiv1$, there exists a finite $\mu^\Z$-measurable function $b(\omega)$ such that, for every $n\geq1$,
	\begin{align}\label{cd:ratio-sum}
		\sum_{m=1}^{n-1}
		\frac{\norm{(f_\omega^{n-m})^*}}
		{\norm{(f_\omega^n)^*}}
		\leq b(\omega).
	\end{align}
	The idea is that, given $n$, \eqref{cd:ratio-sum} forces $\norm{(f_\omega^{n-m})^*}/\norm{(f_\omega^n)^*}$ to be small for many values of $m$. We will then choose $m$ separately for each $n$, rather than using a single value of $m$ throughout a block of times.
	
	Choose the initial set $S$ from Proposition \ref{lem:MassBound2} with
	\begin{align*}
		\underline d(S)>1-\rho'.
	\end{align*}
		Fix $0<\epsilon<\lambda_\mu$. With $\mathcal L_m$, $\mathcal R_m$, and $\mathcal H_{m,i}$ as in the proof of Claim \ref{claim:2}, choose numbers $\gamma_i\to0$ and finite intervals $I_i\subset\N$ such that $m_i\coloneqq \min I_i\to\infty$ and
	\begin{align*}
		\frac{b(\omega)}{\gamma_i |I_i|}\to0.
	\end{align*}
	By \eqref{claim2:good-measures}, we may take the intervals $I_i$ far enough out so that, for every $m\geq m_i$,
	\begin{align*}
		\mu^\Z(\mathcal L_m)>1-\frac{1}{2i},\qquad
		\mu^\Z(\mathcal R_m)>1-\frac{1}{2i},\qquad
		\mu^\Z(\mathcal H_{m,i})>1-\frac{1}{2i}.
	\end{align*}
		Set $M_i\coloneqq \max I_i$. We choose $c_0=0$ and then choose $c_i$ recursively so that the interval analogs of conditions \eqref{claim2:separation}--\eqref{claim2:H-average} in Claim \ref{claim:2} hold. Namely, $c_i>M_i$, $c_i-M_i\to\infty$, $c_{i-1}/c_i\to0$, and the following estimates hold for every $N\geq c_i$. The analog of \eqref{claim2:growth} is
	\begin{align*}
		\norm{(f_\omega^N)^*}> i e^{(\lambda_\mu+\epsilon)M_i}.
	\end{align*}
		The analog of \eqref{claim2:closed-approx} is that, for every $n\in S$ with $n\geq c_i$, there exists a closed positive current $T$ such that
	\begin{align*}
		d_\Theta(M((f_\omega^n)_*T_\omega^+)C_\psi^n,T)
		<\frac1i e^{-(\lambda_\mu+\epsilon)M_i}.
	\end{align*}
		The analogs of \eqref{claim2:lyap-average}--\eqref{claim2:H-average} are that, for every $m\in I_i$ and $N\geq c_i$,
\begin{equation}\label{latere}
	\begin{aligned}
		&\frac1N\sum_{k=1}^N \mathds{1}_{\mathcal L_m}(\sigma^k(\omega))
		>1-\frac1i,\hspace{.7cm}
		\frac1N\sum_{k=1}^N \mathds{1}_{\mathcal R_m}(\sigma^k(\omega))
		>1-\frac1i,\\
		& \hspace{3 cm} \frac1N\sum_{k=1}^N \mathds{1}_{\mathcal H_{m,i}}(\sigma^k(\omega))
		>1-\frac1i.
	\end{aligned}
\end{equation}
	
	Finally, 
	\begin{align*}
		\underline d(S\cap(S+m))\geq 2\underline d(S)-1
	\end{align*}
	for every fixed $m\in\N$. Since $I_i$ is finite, after increasing $c_i$ we may arrange that, for every $m\in I_i$ and every $N\geq c_i$,
		\begin{align*}
			\#(S \cap (S + m)\cap[1,N]) \geq \left(2\underline d(S)-1-\frac1i\right)N.
		\end{align*} Hence
			\begin{align}\label{dense:new}
				\#\set{(n,m)\in [1,N]\times I_i\colon n\in S,\ n-m\in S}
				\geq
				\left(2\underline d(S)-1-\frac1i\right)N|I_i|.
			\end{align}

		For $m\in I_i$, write
		\begin{align*}
			Q_{i,m}\coloneqq \mathcal L_m \cap \mathcal R_m \cap \mathcal H_{m,i}.
		\end{align*}
		Then $\sigma^n(\omega)\in Q_{i,m}$ is precisely the condition that the three estimates \ref{good:lyap}--\ref{good:H} hold at $k=n$ with $m$ in place of $m_i$.
		
		For $n\in S$, say that $m\in I_i$ is admissible for $n$ at stage $i$ if
		\begin{align*}
			n-m\in S,\qquad
			\sigma^n(\omega)\in Q_{i,m},\qquad
			\frac{\norm{(f_\omega^{n-m})^*}}
			{\norm{(f_\omega^n)^*}}
			\leq \gamma_i.
		\end{align*}
		Let $E_i$ be the set of $n\in S$ for which there exists an admissible $m\in I_i$. For such an $n$, let $m(n)$ be the least admissible $m\in I_i$. By \eqref{cd:ratio-sum}, for each fixed $n$ there are at most $b(\omega)/\gamma_i$ values of $m$ for which the last displayed inequality fails. Combining this with \eqref{dense:new} and \eqref{latere}, we obtain, for every $N\geq c_i$,
		\begin{align*}
		\#(E_i\cap[1,N])\cdot 	|I_i|
			\geq
			\left(2\underline d(S)-1-\frac1i\right)N	|I_i|
			-\frac 3 i N|I_i|
			-\frac{b(\omega)}{\gamma_i}N.
		\end{align*}
		Indeed,	$\#(E_i\cap[1,N])\cdot 	|I_i|$ is at least the number of pairs $(n,m)\in [1,N]\times I_i$ such that $m$ is admissible for $n$. Then \eqref{dense:new} gives the first term. Then the three conditions from \eqref{latere} remove at most $3N|I_i|/i$ pairs, while \eqref{cd:ratio-sum} removes at most $b(\omega)N/\gamma_i$ pairs. 
		
		Dividing by $|I_i|$ yields
	\begin{align*}
		\#(E_i\cap[1,N]) \geq 
		\left(2\underline d(S)-1-\alpha_i\right)N,
	\end{align*} where
	\begin{align*}
		\alpha_i\coloneqq \frac4i+\frac{b(\omega)}{\gamma_i |I_i|}
	\end{align*}
	satisfies $\alpha_i\to0$.

	Now set
	\begin{align*}
		A\coloneqq \bigcup_{i\geq1}\left(E_i\cap(c_i,c_{i+1}]\right).
	\end{align*}
	If $N$ is large and $i=i(N)\geq2$ is determined by $c_i<N\leq c_{i+1}$, then
	\begin{align*}
		\#(A\cap[1,N])
		&\geq
		\#(E_i\cap[1,N])-c_i
		+\#(E_{i-1}\cap[1,c_i])-c_{i-1} \\
		&\geq
		(2\underline d(S)-1-\alpha_i)N
		+(2\underline d(S)-2-\alpha_{i-1})c_i
		-c_{i-1}.
	\end{align*}
	Dividing by $N$ and taking $\liminf$ gives
	\begin{align*}
		\underline d(A)\geq 4\underline d(S)-3,
	\end{align*}
	since $c_i/N\leq1$, $c_{i-1}/N\leq c_{i-1}/c_i\to0$, and $\alpha_i\to0$. Choosing $\rho'<\rho/4$, and then choosing the auxiliary sets $K$, $K'$, and $D_\tau$ with sufficiently small density losses, the final set
	\[
			\set{\tilde n_k}_{k\in\N}
			\coloneqq
			A\cap \set{n\in\N\colon \sigma^n(\omega)\in K\cap K'\cap D_\tau}
		\]
	has lower-density greater than $1-\rho$.

		For each $\tilde n_k\in A$, let $i_k$ be the unique index such that $c_{i_k}<\tilde n_k\leq c_{i_k+1}$ and set $\tilde m_k\coloneqq m(\tilde n_k)$. Then $\tilde m_k\to\infty$ and $\tilde n_k-\tilde m_k\to\infty$, because $\min I_i\to\infty$ and $c_i-M_i\to\infty$. Since $\tilde m_k\leq M_{i_k}$, the estimates corresponding to \eqref{claim2:growth} and \eqref{claim2:closed-approx} give
	\begin{align*}
		\norm{(f_\omega^{\tilde n_k})^*}
		> i_k e^{(\lambda_\mu+\epsilon)\tilde m_k} \hspace{.25 cm} \text{and} \hspace{.25 cm}
		d_\Theta(M((f_\omega^{\tilde n_k})_*T_\omega^+)C_\psi^{\tilde n_k},\tilde C_\psi^{\tilde n_k})
		<\frac1{i_k}e^{-(\lambda_\mu+\epsilon)\tilde m_k}
	\end{align*}
	for some closed approximants $\tilde C_\psi^{\tilde n_k}$. The rest of the convergence proof is unchanged aside from Claim \ref{claim:shrink}, where the term that previously used Gou\"ezel--Karlsson is now bounded by
	\begin{align*}
		\frac{1}{\norm{(f_\omega^{\tilde n_k})^*}}
		M(C_\psi^{\tilde n_k-\tilde m_k})
		&\lesssim
		\frac{\norm{(f_\omega^{\tilde n_k-\tilde m_k})^*}}
		{\norm{(f_\omega^{\tilde n_k})^*}}
		\leq \gamma_{i_k}\to0.
	\end{align*}
\end{proof}

	\section{Proof of Theorem \ref{thm:main}}
	
		Let $\nu \in S(\mu)$ have maximal fiber entropy. Notice that if $\nu$ were not ergodic, then by ergodicity of $\mu^\Z$ under the shift, almost every ergodic component would belong to $S(\mu)$. Moreover, almost every ergodic component would have maximal fiber entropy. Therefore it is enough to treat the case where $\nu$ is ergodic. 
		
		By Theorem \ref{thm:var}, $h_{\nu}^\mathcal{F}=\lambda_\mu>0$. Thus Lemma \ref{posent} implies that $\nu$ is hyperbolic and that the stable and unstable manifolds are Zariski dense for $\nu$-almost every $(\omega,x)$. Fix a $\nu$-measurable partition $\eta$ given by Proposition \ref{prop:partition}, subordinate to the unstable manifolds of $\nu$ and refining the partition into fibers of $p\colon \Omega\times X\to \Omega$. As in Definition \ref{def:subordinate}, write
		\begin{align*}
			\eta_\omega(x)\coloneqq \set{y\in X\colon (\omega,y)\in \eta(\omega,x)}.
		\end{align*}
		Recalling the outline in Section \ref{sec:MainOutline}, our proof is split into two steps.

		\medskip
		\noindent\textbf{Step 1.}
		\medskip
		
			Since $\eta_\omega(x)$ is almost everywhere bounded in $W^u_\omega(x)$ (see Remark \ref{bdd}), by Section \ref{sec:slices} we obtain a Borel measure 
		\begin{align*}
			\restr{T_\omega^+}{\eta_\omega(x)}  \coloneqq \mathds{1}_{\eta_\omega(x)}\restr{T_\omega^+}{W^u_\omega(x)}
		\end{align*} on $X$. Consider the function 
	\begin{align*}
			f(\omega, x) \coloneqq \int_X \restr{T_\omega^+}{\eta_\omega(x)} = \restr{T_\omega^+}{W^u_\omega(x)}(\eta_\omega(x)).
	\end{align*} 
Notice that $f(\omega, x)$ is defined almost-everywhere, and since  $\restr{T_\omega^+}{W^u_\omega(x)}$ is Radon and $\eta_\omega(x)$ is bounded in $W^u_\omega(x)$, $f(\omega, x)$ is almost-everywhere finite. 

\begin{claim}\label{claim:measurable}
	The assignment $(\omega, x)\mapsto f(\omega, x)$ is $\nu$-measurable.
	\begin{proof}
		Recall from Lemma \ref{lem:measurable-slices} that
		\begin{align*}
			(\omega,x)\mapsto \restr{T_\omega^+}{W^{u,\mathrm{loc}}_\omega(x)}
		\end{align*}
		is $\nu$-measurable, with values in the space of finite Borel measures on $X$ equipped with the weak topology; that is, it defines a measurable kernel $\Omega \times X \to X$.

		For each $n\in\N$,
		\begin{align*}
			f(\omega,x)
			&=
			\restr{T_\omega^+}{W^u_\omega(x)}(\eta_\omega(x)) \\
			&=
			\frac{1}{M((f_{\sigma^{-n}(\omega)}^n)_*T_{\sigma^{-n}(\omega)}^+)}
			(f_{\sigma^{-n}(\omega)}^n)_*
			\left(\restr{T_{\sigma^{-n}(\omega)}^+}{W^u_{\sigma^{-n}(\omega)}(f_\omega^{-n}(x))}\right)(\eta_\omega(x)) \\
			&=
			\frac{1}{M((f_{\sigma^{-n}(\omega)}^n)_*T_{\sigma^{-n}(\omega)}^+)}
			\restr{T_{\sigma^{-n}(\omega)}^+}{W^u_{\sigma^{-n}(\omega)}(f_\omega^{-n}(x))}
			(f_{\omega}^{-n}(\eta_\omega(x))).
		\end{align*}
		Almost surely, $f_{\omega}^{-n}(\eta_\omega(x))$ is contained in a local unstable manifold for all sufficiently large $n$. Hence
		\begin{align*}
			f(\omega,x)
			=
			\lim_{n\to \infty}
			\frac{1}{M((f_{\sigma^{-n}(\omega)}^n)_*T_{\sigma^{-n}(\omega)}^+)}
			\restr{T_{\sigma^{-n}(\omega)}^+}{W^{u,\mathrm{loc}}_{\sigma^{-n}(\omega)}(f_\omega^{-n}(x))}
			(f_{\omega}^{-n}(\eta_\omega(x))).
		\end{align*}
		It remains to show that the terms in this limit are $\nu$-measurable. More generally, we claim that if $\gamma$ is a $\nu$-measurable partition of $\Omega\times X$ and $(\omega, x) \mapsto \mu_{(\omega, x)}$ is a $\nu$-measurable kernel with values in finite positive Borel measures on $\Omega \times X$, endowed with the weak topology, then
			\begin{align*}
					(\omega,x)\mapsto
					\mu_{(\omega, x)}(\gamma(\omega ,x))
				\end{align*}
				is $\nu$-measurable. Applying this claim with $\gamma=F^{-n}\eta$ and then composing with $F^{-n}$, we obtain the measurability of the $n$-th term in the displayed limit. 
				
				To prove the claim, recall that since $\gamma$ is measurable, there exists a standard Borel space $Y$ and a $\nu$-measurable map $q:\Omega \times X \to Y$ such that $\gamma(\omega, x) = q^{-1}(q(\omega, x))$ almost surely. Then since $Y$ is standard Borel, the diagonal $\Delta \subset Y\times Y$ is measurable, so $(q\times q)^{-1}(\Delta) = \set{((\omega, x), (\omega', x')) \colon \gamma(\omega, x) = \gamma(\omega', x')}$ is measurable. Hence $((\omega, x), (\omega', x'))\mapsto \mathds{1}_{\gamma(\omega, x)}(\omega', x')$ is measurable and one applies Lemma 3.2 (i) of \cite{MR1876169} to conclude that 
				\begin{align*}
					(\omega,x)\mapsto
					\mu_{(\omega, x)}(\gamma(\omega ,x)) = \int_{\Omega \times X} \mathds{1}_{\gamma(\omega, x)}(\omega', x')\,d\mu_{(\omega, x)}(\omega', x')
				\end{align*} is $\nu$-measurable.
				
			\end{proof}
		\end{claim}

	\begin{claim}\label{claim:positive}
		$f(\omega, x)$ is almost-everywhere positive.
		\begin{proof}
		By the equivariance property \eqref{equivar} and since $F\eta \leq \eta$, the set \[\set{(\omega, x)\mid f(\omega, x) = 0}\] is $F$-invariant and thus has either full or null $\nu$-measure. 

		Assume the former. Then by \eqref{equivar} again we find that $\restr{T_\omega^+}{W^u_\omega(x)}((F^n \eta)_\omega(x)) = 0$ for all $n$. Moreover, $\nu$-almost everywhere, $\bigcup_{n\geq0}(F^n\eta)_\omega(x)=W^u_\omega(x)$;
		this follows from Remark \ref{bdd}, since $\eta_\omega(x)$ contains a relatively open subset of the unstable manifold, with radius bounded below by a positive measurable function.
	Arguing as in the proof of Theorem \ref{big} (see Theorem 8.2 and Lemma 8.8 of
		\cite{MR4635340}), if $T$ is a closed positive $(1,1)$-current with unit mass and continuous potentials, then $\restr{T}{W^u_\omega(x)}=0$ implies that $T$ is the unique Ahlfors--Nevanlinna current with unit mass associated to $W^u_\omega(x)$, namely $T=T_\omega^-$. We have shown that $\restr{T_\omega^+}{W^u_\omega(x)}=0$ generically, so this yields $T_\omega^+=T_\omega^-$ almost surely, a contradiction.
		\end{proof}
	\end{claim}
	
	By the same measurable-kernel argument used in Claim \ref{claim:measurable}, the assignment $(\omega,x)\mapsto \delta_\omega\otimes \restr{T_{\omega}^+}{\eta_\omega(x)}$ defines a $\nu$-measurable kernel. Since $f$ is positive almost everywhere, the same is true after dividing by $f$.
	
	\begin{claim}\label{lem:main} 
		Let $\nu^*$ be the measure on $\Omega \times X$ defined by 
		\begin{align*}
			\nu^* \coloneqq \int \frac{\delta_{\omega}\otimes \restr{T_{\omega}^+}{\eta_\omega(x)}}{f(\omega, x)} \,d\nu.
		\end{align*} Then $\nu = \nu^*.$
	\end{claim}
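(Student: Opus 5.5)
The plan is to run the classical Ledrappier--Young / Ledrappier--Strelcyn ``entropy equals the Lyapunov term forces SRB-type conditionals'' argument, with Lebesgue measure on unstable leaves replaced by the slices $\restr{T_\omega^+}{W^u_\omega(x)}$. By construction the conditional measures of $\nu^*$ on the atoms of $\eta$ are
\[
\nu^*_{\eta_\omega(x)}=\cN\big(\restr{T_\omega^+}{\eta_\omega(x)}\big).
\]
I would first record this, together with the fact that $\nu^*$ is a probability measure. Both facts use Claims \ref{claim:measurable} and \ref{claim:positive}, and the observation that $f$ and the normalized slices are constant on atoms of $\eta$, so integrating against $\nu$ reproduces the disintegration. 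It therefore suffices to show $\nu_{\eta_\omega(x)}=\nu^*_{\eta_\omega(x)}$ for $\nu$-almost every $(\omega,x)$.

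\textbf{Step 1: the $\nu^*$-conditional mass of the atoms of $F^{-1}\eta$.} Write $c(\omega)\coloneqq M(\omega_0^*T^+_{\sigma(\omega)})$. By \eqref{equivar} we have $(\omega_0)_*T_\omega^+=c(\omega)^{-1}T^+_{\sigma(\omega)}$, and slicing commutes with pushforward by the biholomorphism $\omega_0$, which maps $W^u_\omega(x)$ onto $W^u_{\sigma(\omega)}(\omega_0x)$. Since $F\eta\le\eta$, the atom $(F^{-1}\eta)(\omega,x)$ is contained in $\eta(\omega,x)$, and its fiber is $\omega_0^{-1}(\eta_{\sigma(\omega)}(\omega_0x))$. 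Hence
\[
g(\omega,x)\coloneqq \nu^*_{\eta(\omega,x)}\big((F^{-1}\eta)(\omega,x)\big)=\frac{f(F(\omega,x))}{c(\omega)\,f(\omega,x)}.
\]
Consequently
\[
-\int\ln g\,d\nu=\int\ln c\,d\mu^\Z+\int(\ln f-\ln f\circ F)\,d\nu .
\]
By Theorem \ref{thm:furst} (note $[T^+_{\sigma(\omega)}]=e^+(\sigma(\omega))$) the first integral equals $\lambda_\mu$. The second should vanish by invariance of $\nu$.

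\textbf{Step 2: the main obstacle, integrability in the coboundary term.} The vanishing of $\int(\ln f-\ln f\circ F)\,d\nu$ is where I expect the real work. The function $\ln f$ need not be integrable, so I would use the standard lemma (Ledrappier--Young, or Lemma 0.1 of Ledrappier--Strelcyn): if $u$ is measurable and finite a.e. and $u-u\circ F$ is bounded below by an integrable function (or is integrable), then $\int(u-u\circ F)\,d\nu=0$. Integrability comes from two facts. First, $g\le 1$, so $-\ln g\ge 0$. Second, $\ln c$ is integrable by \eqref{int:cohomology} and Remark \ref{cpt}. Therefore $\ln f\circ F-\ln f=\ln g+\ln c$ is bounded above by the integrable function $\ln c$, and the lemma applies, giving $-\int\ln g\,d\nu=\lambda_\mu$. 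The same computation applied to $F^n$ and $F^{-n}\eta$ yields
\[
-\int\ln\nu^*_{\eta}\big(F^{-n}\eta\big)\,d\nu=n\lambda_\mu .
\]

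\textbf{Step 3: Jensen and conclusion.} By Proposition \ref{Prop:full} and the maximality assumption $h_\nu^\mathcal{F}=\lambda_\mu$ (Theorem \ref{thm:var}),
\[
H(F^{-n}\eta\mid\eta)=n\lambda_\mu=-\int\ln\nu^*_{\eta}(F^{-n}\eta)\,d\nu .
\]
Subtracting the two expressions gives
\[
0=\int\ln\frac{\nu^*_{\eta(\omega,x)}\big((F^{-n}\eta)(\omega,x)\big)}{\nu_{\eta(\omega,x)}\big((F^{-n}\eta)(\omega,x)\big)}\,d\nu .
\]
Disintegrating over the atoms of $\eta$ and applying Jensen's inequality on each atom, the inner integral is at most $\ln\sum_{A}\nu^*_{\eta}(A)\le 0$, where the sum runs over the atoms $A$ of $F^{-n}\eta$ inside the given $\eta$-atom that have positive $\nu_\eta$-mass. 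Since the total integral is zero, equality must hold in Jensen on almost every atom. Hence $\nu_\eta$ and $\nu^*_\eta$ agree on the $\sigma$-algebra generated by $F^{-n}\eta$ restricted to $\eta$-atoms, for every $n$. Because $\bigvee_{n\ge0}F^{-n}\eta$ is the partition into points (Proposition \ref{prop:partition}), these $\sigma$-algebras generate the Borel sets on almost every atom. Therefore $\nu_{\eta(\omega,x)}=\nu^*_{\eta(\omega,x)}$ almost surely, and so $\nu=\nu^*$.
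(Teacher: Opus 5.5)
Your proposal is correct and follows essentially the same argument as the paper: the equivariance identity $\nu^*_{\eta}(F^{-1}\eta)=f\circ F/(c\,f)$, the mass version of Furstenberg's formula for $\int\ln c$, the coboundary lemma (Lemma I.3.1 of Liu--Qian) justified by the integrable upper bound $\ln(f\circ F/f)\le \ln c$, and Jensen's inequality combined with $H(F^{-1}\eta\mid\eta)=h_\nu^{\mathcal F}=\lambda_\mu$. The differences are only cosmetic: you handle all $F^{-n}\eta$ at once and apply Jensen atom by atom, which correctly accounts for the ratio integrating to at most $1$ rather than exactly $1$; the paper treats the case $n=1$ globally and then iterates.
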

	\noindent By the uniqueness of conditional measures, Claim \ref{lem:main} implies:
	\begin{cor} For $\nu$-almost every $(\omega, x)$, 
		\begin{align*}
			\nu_{\eta_\omega(x)} = \frac{\restr{T_{\omega}^+}{\eta_\omega(x)}}{f(\omega, x)}.
		\end{align*}
	\end{cor}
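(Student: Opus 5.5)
The plan is the classical Ledrappier--Young / Bedford--Lyubich--Smillie entropy-equality argument, comparing the conditional measures $\nu_{\eta_\omega(x)}$ with the normalized slices $\nu^*_{\eta_\omega(x)}\coloneqq f(\omega,x)^{-1}\restr{T_\omega^+}{\eta_\omega(x)}$. These are the conditionals of $\nu^*$ on $\eta$, since the kernel defining $\nu^*$ is constant on atoms of $\eta$. Define
\begin{align*}
g(\omega,x)\coloneqq \nu_{\eta_\omega(x)}\bigl((F^{-1}\eta)_\omega(x)\bigr),\qquad g^*(\omega,x)\coloneqq \nu^*_{\eta_\omega(x)}\bigl((F^{-1}\eta)_\omega(x)\bigr).
\end{align*}
By Proposition \ref{Prop:full} and Theorem \ref{thm:var}, we have $-\int \ln g\,d\nu = h^{\mathcal F}_\nu=\lambda_\mu$. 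The first goal is to show $-\int\ln g^*\,d\nu=\lambda_\mu$ as well.

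For this I use the equivariance in Remark \ref{equivariance}, namely $\omega_0^*T^+_{\sigma(\omega)}=c(\omega)T^+_\omega$ with $c(\omega)\coloneqq M(\omega_0^*T^+_{\sigma(\omega)})$. Since $(F^{-1}\eta)_\omega(x)=\omega_0^{-1}(\eta_{\sigma(\omega)}(\omega_0x))$ and slices are natural under biholomorphisms, the $T^+_\omega$-slice mass of this set is $f(F(\omega,x))/c(\omega)$. Hence
\begin{align*}
-\ln g^*=\ln c+\ln f-\ln f\circ F.
\end{align*}
By Theorem \ref{thm:furst}, together with Remark \ref{rmk:equiv} and the moment condition \eqref{int:cohomology} for integrability, we have $\int \ln c\,d\nu=\int\ln c\,d\mu^\Z=\lambda_\mu$; here I use that $c$ depends only on $\omega$ and $p_*\nu=\mu^\Z$. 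The main obstacle is the coboundary term $\ln f-\ln f\circ F$, since $\ln f$ need not be integrable. I would handle it with the standard lemma: if $\phi$ is measurable and finite a.e., and $\phi-\phi\circ F$ is bounded below by an integrable function, then $\int(\phi-\phi\circ F)\,d\nu=0$. Here $-\ln g^*\ge 0$ and $\ln c$ is integrable, so $\ln f-\ln f\circ F\ge -\ln c$. The lemma applies because $f$ is finite a.e. and, by Claim \ref{claim:positive}, positive a.e. This yields $-\int \ln g^*\,d\nu=\lambda_\mu$.

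Therefore $\int \ln(g^*/g)\,d\nu=0$. On the other hand, disintegrating along $\eta$ and summing over the countably many atoms $A$ of $F^{-1}\eta$ inside an atom of $\eta$, Jensen's inequality gives
\begin{align*}
\int \ln\frac{g^*}{g}\,d\nu \le \int \ln\Bigl(\sum_{A}\nu_{\eta}(A)\,\frac{\nu^*_\eta(A)}{\nu_\eta(A)}\Bigr)d\nu\le \int\ln\Bigl(\sum_A \nu^*_\eta(A)\Bigr)d\nu\le 0.
\end{align*}
Equality throughout forces $g=g^*$ $\nu$-a.e. That is, $\nu_{\eta_\omega(x)}$ and $\nu^*_{\eta_\omega(x)}$ agree on every atom of $F^{-1}\eta$, and $\nu^*_\eta$ gives no mass outside the $\nu_\eta$-charged atoms.

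Finally, I repeat the argument with $F^n$ in place of $F$. The partition $\eta$ still satisfies $F^n\eta\le\eta$, the fiber entropy is $n\lambda_\mu$, and the cocycle $c$ is replaced by its Birkhoff sum $\sum_{j<n}c\circ F^j$. This shows that $\nu_\eta$ and $\nu^*_\eta$ agree on every atom of $F^{-n}\eta$ for all $n$. Since $\bigvee_{n\ge0}F^{-n}\eta$ is the partition into points (Proposition \ref{prop:partition}(2)), the two measures agree on a generating algebra, so $\nu_{\eta_\omega(x)}=\nu^*_{\eta_\omega(x)}$ almost everywhere. Integrating against $\nu$ then gives $\nu=\nu^*$.
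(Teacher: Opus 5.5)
Your proposal is correct and follows the paper's own proof of Claim \ref{lem:main}: the same ratio $g^*/g$, the equivariance identity, the mass Furstenberg formula, the coboundary lemma (your ``bounded below by an integrable function'' form is equivalent to the paper's $\ln^+$ hypothesis), and equality in Jensen. The differences are cosmetic. You iterate with $F^n$ directly, using $H(F^{-n}\eta\mid\eta)=n\lambda_\mu$, where the paper passes successively to $B(F^{-i}\eta)$. You also correctly use only $\int g^*/g\,d\nu\le 1$, where the paper asserts equality. One small slip: the cocycle for $F^n$ should be $\sum_{j<n}\ln c\circ F^j$ (equivalently $\prod_{j<n}c\circ F^j$), not $\sum_{j<n}c\circ F^j$.
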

	\begin{proof}[Proof of Claim \ref{lem:main}]
				By construction, $\nu$ and $\nu^*$ agree on $B(\eta)$, the collection of measurable sets that are unions of $\eta$-atoms. We will show that they also agree on $B(F^{-1}\eta)$. By applying the same argument successively to $B(F^{-i}\eta)$ for each $i\geq 1$, it follows that $\nu = \nu^*$ since $\lim_{n\to \infty}F^{-n}\eta$ is the partition into points.
		
		We begin by noting that $x\in \supp \nu_{\eta_\omega(x)}$ for $\nu$-almost every $(\omega,x)$; this follows from the defining properties of conditional measures. 
		Therefore we may consider the ratio 
		\begin{align*}
			\frac{\restr{T_{\omega}^+}{W^u_\omega(x)}((F^{-1}\eta)_\omega(x))}{f(\omega, x)\nu_{\eta_\omega(x)}((F^{-1}\eta)_\omega(x))}
		\end{align*} which one can see is positive almost surely by replacing $\eta$ with $F^{-1}\eta$ in Claim \ref{claim:positive}. From the defining properties of conditional measures and the fact that $F^{-1}\eta$ refines $\eta$, it is straightforward to check that 
		\begin{align*}
			\int	\frac{\restr{T_{\omega}^+}{W^u_\omega(x)}((F^{-1}\eta)_\omega(x))}{f(\omega, x)\nu_{\eta_\omega(x)}((F^{-1}\eta)_\omega(x))}\,d\nu= 1.
		\end{align*} So by Jensen's inequality,
		\begin{align*}
			\int \ln \left(	\frac{\restr{T_{\omega}^+}{W^u_\omega(x)}((F^{-1}\eta)_\omega(x))}{f(\omega, x)\nu_{\eta_\omega(x)}((F^{-1}\eta)_\omega(x))}  \right) \,d\nu  \leq 0
		\end{align*} with equality if and only if 
		\begin{align*}
				\frac{\restr{T_{\omega}^+}{W^u_\omega(x)}((F^{-1}\eta)_\omega(x))}{f(\omega, x)\nu_{\eta_\omega(x)}((F^{-1}\eta)_\omega(x))} =  1
		\end{align*} $\nu$-almost everywhere. Notice
	\begin{equation}\label{exp1}
		\begin{aligned}
			&\int \ln \left(	\frac{\restr{T_{\omega}^+}{W^u_\omega(x)}((F^{-1}\eta)_\omega(x))}{f(\omega, x)\nu_{\eta_\omega(x)}((F^{-1}\eta)_\omega(x))}  \right) \,d\nu \\
			&\hspace{4 cm} = \int \ln(\frac{\restr{T_{\omega}^+}{W^u_\omega(x)}((F^{-1}\eta)_\omega(x))}{f(\omega, x)})\,d\nu \\
			&\hspace{7 cm} -\int \ln \nu_{\eta_\omega(x)}((F^{-1}\eta)_\omega(x)) \,d\nu 
		\end{aligned}
	\end{equation}
	\noindent and by Proposition \ref{Prop:full},
	\begin{equation}\label{exppp2}
		-\int \ln \nu_{\eta_\omega(x)}((F^{-1}\eta)_\omega(x)) \,d\nu = H(F^{-1}\eta \mid \eta) = h_{\nu}^\mathcal{F}.
	\end{equation}
Furthermore, we have the identity
	\begin{equation}\label{identity}
		\begin{aligned}
				\restr{T_{\omega}^+}{W^u_\omega(x)}((F^{-1}\eta)_\omega(x)) &= \restr{T_{\omega}^+}{W^u_\omega(x)}(\omega_0^{-1}(\eta_{\sigma(\omega)} (\omega_0 x)))\\
			&= \restr{\left((\omega_0)_*T_{\omega}^+\right)}{W^u_{\sigma(\omega)}(\omega_0 x)}(\eta_{\sigma(\omega)}(\omega_0 x)) \\
			&= M((\omega_0)_*T_{\omega}^+) \restr{T_{\sigma(\omega)}^+}{W^u_{\sigma(\omega)}(\omega_0 x)}(\eta_{\sigma(\omega)}(\omega_0 x)) \\
		&= M((\omega_0)_*T_{\omega}^+) f(F(\omega, x))
		\end{aligned}
	\end{equation}
 which we apply to find 
		\begin{align}\label{exp2}
				&\int \ln(\frac{\restr{T_{\omega}^+}{W^u_\omega(x)}((F^{-1}\eta)_\omega(x))}{f(\omega, x)}) \,d\nu \notag  \\
			& \hspace{4 cm}= \int \ln \left(\frac{M((\omega_0)_*T_{\omega}^+) f(F(\omega, x))}{ f(\omega, x)} \right)	\,d\nu \notag\\
			& \hspace{4 cm}= \int \ln M((\omega_0)_*T_{\omega}^+) \,d\nu + \int \ln \frac{f(F(\omega, x))}{f(\omega, x)} \,d\nu
			\notag \\
			& \hspace{4 cm}= -\int \ln M(\omega_0^*T_{\sigma(\omega)}^+)\,d\nu + \int \ln \frac{f(F(\omega, x))} {f(\omega, x)} \,d\nu.
		\end{align} Now 
		\begin{align}\label{expone}
			\int \ln M(\omega_0^*T_{\sigma(\omega)}^+)\,d\nu = \int \ln M(\omega_0^*T_{\sigma(\omega)}^+)\,d\mu^\Z = \lambda_\mu
		\end{align} where the first equality holds since $p_* \nu = \mu^\Z$ and the second by the mass version of Furstenberg's formula (see Theorem \ref{thm:furst}). By \eqref{identity}, we also have
		\begin{align}\label{oneline}
			\frac{f(F(\omega, x))}{f(\omega, x)} &=   \frac{	\restr{T_{\omega}^+}{W^u_\omega(x)}((F^{-1}\eta)_\omega(x)) }{ M((\omega_0)_*T_{\omega}^+)  f(\omega, x)} \leq \frac{1}{M((\omega_0)_*T_{\omega}^+)}  =M(\omega_0^*T_{\sigma(\omega)}^+)
				\end{align} where the inequality uses that $F^{-1}\eta$ is a refinement of $\eta$, and the final equality follows from \eqref{equivar}. By compactness, we may pick $C > 1$ such that $M(g^*\alpha)\leq C\norm{g^*}$ for all $g\in \Aut(X)$ and all closed positive $(1,1)$-currents $\alpha$ with unit mass. Then by \eqref{oneline},
		\begin{align*}
			\int \ln^+\frac{f(F(\omega, x))}{f(\omega, x)} \,d\nu &\leq \int \ln^+ M(\omega_0^*T_{\sigma(\omega)}^+)\,d\nu  \\ 
				&\leq \int \ln^+ (C \norm{g^*})\,d\mu(g)  \\
				&\leq \ln C + \int \ln^+(\norm{g^*})\,d\mu(g)
		\end{align*} which is $<\infty$ by \eqref{int:cohomology}.
		Now $f$ is a positive, finite measurable function satisfying $\ln^+\frac{f(F(\omega, x))}{f(\omega, x)}\in L^1(\Omega\times X)$. By Lemma I.3.1 of \cite{MR1369243},
		\begin{align}\label{exptwo}
			\int\ln\frac{f(F(\omega, x))}{f(\omega, x)} \,d\nu= 0.
		\end{align} Plugging \eqref{expone} and \eqref{exptwo} into \eqref{exp2} we have 
		\begin{align}\label{expp2}
				\int \ln(\frac{\restr{T_{\omega}^+}{W^u_\omega(x)}((F^{-1}\eta)_\omega(x))}{f(\omega, x)}) \,d\nu =  - \lambda_\mu.
		\end{align} 
		Now combining \eqref{expp2} and \eqref{exppp2} with \eqref{exp1}, we have proven that 
		\begin{align*}
			h_{\nu}^\mathcal{F} - \lambda_\mu  \leq 0
		\end{align*} with equality if and only if $\nu = \nu^*$ on $B(F^{-1}\eta)$, as desired.
	\end{proof}
	
		\medskip
	\noindent\textbf{Step 2.}
	 \medskip
	
	Since $p_*\nu = \mu^\Z$, if a set has full $\mu^\Z$-measure in $\Omega$ then its inverse image under $p$ has full $\nu$-measure. We deduce that there exists a set $\Lambda\subset \Omega\times X$ with full $\nu$-measure such that for all $(\omega, x)\in \Lambda$ we may define the sequences
	\begin{align*}
		m_N \coloneqq \frac 1 N \sum_{n=1}^N \delta_{\sigma^n(\omega)} \otimes m_{\sigma^n(\omega)}
	\end{align*}
	and 
	\begin{align*}
		\nu_N \coloneqq \frac 1 N \sum_{n=1}^N \delta_{\sigma^n(\omega)} \otimes (f_\omega^n)_* \cN( \restr{T_\omega^+}{\eta_\omega(x)}).
	\end{align*}
	For every $f\in C_b(\Omega \times X)$, 
	\begin{align*}
		\omega \mapsto \int f_{\omega} \,dm_\omega
	\end{align*}
	is measurable and in $L^1(\mu^\Z)$, so the ergodicity of $\mu^\Z$ with respect to $\sigma$ and the ergodic theorem yield
	\begin{align*}
			\frac 1 N \sum_{n=1}^N \int f_{\sigma^n(\omega)} \,dm_{\sigma^n(\omega)} \to \int \int f_\omega  \,dm_\omega \,d\mu^\Z(\omega) = \int f  \,dm
	\end{align*}
	almost surely. Hence $m_N\to m$ weakly as $N\to \infty$. 
	
		In Step 1, we proved that $\nu$-almost surely,
	\begin{align*}
		\nu_N= \frac 1 N \sum_{n=1}^N F_*^n \nu_{\eta(\omega, x)}.
	\end{align*}
	Let $f\in C_b(\Omega\times X)$. By the ergodic theorem and the defining property of conditional measures, $\nu$-almost surely we have that $\nu_{\eta_\omega(x)}$-almost every $y\in \eta_\omega(x)$ satisfies
	\begin{align*}
		\frac 1 N \sum_{n=1}^N f(F^n(\omega, y)) \to \int f \,d\nu.
	\end{align*}
	Moreover, since $f$ is uniformly bounded, dominated convergence gives
	\begin{align*}
			\int f \,d\nu_N  = \frac 1 N \sum_{n=1}^N \int f(F^n(\omega, y)) \,d\nu_{\eta_\omega(x)}\to \int f \,d\nu.
	\end{align*}
		After applying the preceding two arguments to a countable family of test functions determining weak convergence and shrinking $\Lambda$, we may assume that, for every $(\omega,x)\in\Lambda$, $m_N\to m$ and $\nu_N\to\nu$ weakly.
	
	Now fix $(\omega,x)\in\Lambda$. Recalling Remark \ref{bdd}, we can choose a holomorphic parametrization $\zeta\colon \C\to W^u_\omega(x)$ with $\zeta(0)=x$ and open disks $\Delta_0,\Delta_1,\Delta_2\subset \C$, with $0\in\Delta_0$, such that
	\begin{align*}
		\zeta(\overline{\Delta_0})\subset \eta_\omega(x) \subset \zeta(\Delta_1) \hspace{.5 cm}\text{and}\hspace{.5 cm} \overline{\Delta_1}\subset \Delta_2.
	\end{align*}
	Set $D\coloneqq \zeta(\overline{\Delta_2})$.
		Therefore we may find smooth $\psi_1, \psi_2\colon X\to [0,1]$ such that
	\begin{enumerate}
		\item $\psi_1(x) > 0$ and $(\supp{\psi_1}) \cap D\subset \zeta(\Delta_0)$,
		\item $\restr{\psi_2}{\zeta(\overline{\Delta_1})} \equiv 1$, and
		\item $(\supp{\psi_2}) \cap D \subset  \zeta(\Delta_2)$.
	\end{enumerate}
	
	Recall from Section \ref{sec:slices} that
	\begin{equation}\label{indic}
		\begin{aligned}
			\restr{T_\omega^+}{\eta_\omega(x)} &= \mathds{1}_{\eta_\omega(x)} \restr{T_\omega^+}{W^u_\omega(x)} = \mathds{1}_{\eta_\omega(x)} \restr{T_\omega^+}{D}.
		\end{aligned}
	\end{equation}
	Since, on $D$,
	\begin{align*}
			\psi_1 \leq \mathds{1}_{\eta_\omega(x)}  \leq \psi_2,
	\end{align*}
	we have
	\begin{align}\label{ineq2}
		\psi_1	\restr{T_\omega^+}{D} \leq \restr{T_\omega^+}{\eta_\omega(x)} \leq \psi_2	\restr{T_\omega^+}{D}
	\end{align}
	as measures. 
	
		Notice that $x\in \supp \restr{T_\omega^+}{\eta_\omega(x)}$ on a full-measure set. Indeed, $\restr{T_\omega^+}{\eta_\omega(x)}$ is proportional to $\nu_{\eta_\omega(x)}$, and we proved $x\in \supp \nu_{\eta_\omega(x)}$ almost surely in Step 1. Since $\psi_1(x) >0$, it follows that
	\begin{align*}
		\psi_1	\restr{T_\omega^+}{D} \neq 0.
	\end{align*}
	
	Applying Lemma \ref{lem:elementary}, 
	\begin{align*}
		\psi_i	\restr{T_\omega^+}{D}  = T_\omega^+ \wedge (\psi_i [D])
	\end{align*}
		for $i = 1,2$. Now we define 
	\begin{align*}
		\nu_N^i \coloneqq \frac 1 N \sum_{n=1}^N \delta_{\sigma^n(\omega)} \otimes (f_{\omega}^n)_*\cN(T_{\omega}^+\wedge (\psi_i[D])).
	\end{align*}
	By \eqref{indic} and \eqref{ineq2}, and accounting for the normalization in $\nu_N^i$, we obtain
	\begin{align*}
			&\left (\int_X T_\omega^+ \wedge (\psi_1 [D])\right )\nu_N^1 (\omega, x) \\
			&\hspace{3 cm} \leq \left(\int_X \restr{ T_\omega^+}{\eta_\omega(x)} \right)\nu_N  (\omega, x)  \\
			&\hspace{6 cm}\leq  \left (\int_X T_\omega^+ \wedge (\psi_2 [D])\right )\nu_N^2 (\omega, x).
	\end{align*}
	Hence there exists $K > 0$ satisfying 
	\begin{align}\label{shorter}
		K^{-1} \nu_N^1 (\omega, x) \leq \nu_N  (\omega, x) \leq K \nu_N^2 (\omega, x)
	\end{align}
	for all $N$. 
	
		By Remark \ref{rmk:prop31-independent}, the full-measure set of $(\omega,x)$ on which Proposition \ref{lem:last2} holds may be chosen independently of $C$ and $\psi$. Then by Proposition \ref{lem:last2}, for each $\rho > 0$ there exists $A_\rho\subset \N$ with upper-density greater than $1 - \rho$. Let
	\begin{align*}
		\nu_N^1(\rho)&\coloneqq  \frac 1 {N} \sum_{n=1}^{N} \mathds{1}_{A_\rho}(n)\left(\delta_{\sigma^n(\omega)} \otimes (f_{\omega}^n)_*\cN\bigl(T_{\omega}^+\wedge (\psi_1[D])\bigr)\right), \\
		&\hspace{1.5 cm} m_N(\rho)\coloneqq  \frac 1 {N} \sum_{n=1}^{N} \mathds{1}_{A_\rho}(n)\left(\delta_{\sigma^n(\omega)} \otimes (f_{\omega}^n)_*m_\omega\right),
		\end{align*}.
		We first show that Proposition \ref{lem:last2} implies 
	\begin{align}\label{thing1}
		m_{N}(\rho)- \nu_N^1(\rho) \to 0
	\end{align} weakly as $N\to \infty$. Let $h\in C_b(\Omega\times X)$. Choose compact sets $K_j\subset\Omega$ with $\mu^\Z(K_j)\to 1$, and shrink $\Lambda$ so that $\omega$ is Birkhoff-generic for each $K_j$. On each $K_j\times X$, the family $\set{h_\eta\colon \eta\in K_j}$
		is precompact in $C^0(X)$. By Proposition \ref{lem:last2} and Remark \ref{tbd2}, we have
	\begin{align*}
		\mathds{1}_{A_\rho}(n)\left(
		(f_\omega^n)_*m_\omega
		-
		(f_\omega^n)_*\cN\bigl(T_\omega^+\wedge(\psi_1[D])\bigr)
		\right)\to 0
	\end{align*}
	weakly. The signed measures in this sequence have total variation at most $2$. Therefore their convergence to zero may be tested uniformly on the precompact family $\set{h_\eta\colon \eta\in K_j}$. Then Birkhoff genericity gives
	\begin{align*}
		\limsup_{N\to\infty}\left|\int h \,d m_N(\rho)-\int h\,d\nu_N^1(\rho)\right|
		\leq 2\mu^\Z(K_j^c)\norm{h}_{C^0}.
	\end{align*}
	Letting $j\to\infty$ gives
	\begin{align*}
		\int h \,d m_N(\rho)-\int h\,d\nu_N^1(\rho) \to 0.
	\end{align*}
	Since this holds for every $h\in C_b(\Omega\times X)$, we have $m_{N}(\rho)- \nu_N^1(\rho) \to 0$ weakly.
	
	Now, since $A_\rho$ has upper-density greater than $1 - \rho$, there exist $N_k\to \infty$ such that for every $f\in C_b(\Omega \times X)$, 
	\begin{align}\label{thing2}
		\left |	\int f  \,d \nu_{N_k}^1 -\int f \,d \nu_{N_k}^1(\rho)\right|,  \left |	\int f  \,d m_{N_k} -\int f \,d m_{N_k}(\rho)\right|\leq \rho \norm{f}_{C^0}
	\end{align}
	for all $k\in \N$. Combining \eqref{thing1}, \eqref{thing2}, and the weak convergence $m_N\to m$, we obtain
	\begin{align*}
		\left|\int f \,d \nu_{N_k}^1  - \int f \,d m\right| \leq 3 \rho \norm{f}_{C^0}
	\end{align*} 
	for all $k$ large enough, where we emphasize that the sequence $N_k$ depends on $\rho$. Therefore, for every nonnegative $f\in C_b(\Omega\times X)$, by \eqref{shorter} we have 
	\begin{align*}
		K^{-1}\left(\int f \,d m -  3\rho \norm{f}_{C^0}\right) \leq \int f \,d\nu_{N_k}
	\end{align*}
	for all $k$ large enough. Since $\nu_{N_k}  \to \nu$ weakly, it follows that 
	\begin{align*}
			K^{-1}\left(\int f \,d m -  3\rho \norm{f}_{C^0}\right) \leq \int f \,d\nu.
	\end{align*}
	Taking $\rho\to 0$ yields $K^{-1}m  \leq  \nu.$
	
	An analogous argument with $\psi_2$ proves $\nu \leq K m$, hence
	\begin{align*}
			K^{-1}m  \leq  \nu \leq Km. 
	\end{align*}
	Since $m$ and $\nu$ are $F$-invariant probability measures and $\nu$ is ergodic, the Radon--Nikodym derivative of $m$ with respect to $\nu$ is $F$-invariant and therefore $\nu$-almost surely constant. Hence $m=\nu$.

\section{Exponential moment implies $m$ is mixing}
The main theorem of this section is the following.
\begin{thm}\label{thm:mixing}
	Under the hypotheses of Theorem \ref{thm:main}, suppose in addition that $\mu$ satisfies the exponential moment condition \eqref{exponential}. Then $m$ is mixing for $F$.
\end{thm}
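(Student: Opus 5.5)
The plan is to show $\int (g\circ F^n)\, h \,dm \to \int g\,dm\int h\,dm$ for $g,h$ in a class of test functions that is dense enough to determine mixing. I would take $g(\omega,y)=a(\omega)\,u(y)$ and $h(\omega,y)=b(\omega)\,v(y)$, with $a,b$ depending on finitely many coordinates $\omega_{-L},\dots,\omega_L$ and $u,v$ smooth on $X$. Linear combinations of such products are dense in $L^2(m)$, so this suffices.

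Since $\mu^\Z$ is Bernoulli, the base factor mixes. The work is in the fibers. Conditioning on $\omega$, the correlation reads
\[
\int a(\sigma^n\omega)\,b(\omega)\int_X (u\circ f_\omega^n)\, v\, dm_\omega\, d\mu^\Z(\omega),
\]
and the key step is an effective equidistribution statement. Write $v\,m_\omega = T_\omega^+\wedge (v\,T_\omega^-)/\beta(\omega)$, decompose $v$ into nonnegative parts, and express $v\,T_\omega^-$ in terms of smooth approximants. I would then invoke Proposition \ref{lem:last2} in its lower-density form, which is available under \eqref{exponential}, with $C$ replaced by closed currents built from $T^-_\omega$ (or from smooth forms converging to it) and with $\psi=v$. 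This gives, for $\mu^\Z$-a.e. $\omega$ and along a set of times of lower density $>1-\rho$,
\[
(f_\omega^n)_*(v\, m_\omega)-\Big(\int v\,dm_\omega\Big)\, m_{\sigma^n\omega}\to 0 .
\]
The intended structure is that the left factor equidistributes toward $T^+$ while the push-forward of the $T^-$ slice reproduces $T^-_{\sigma^n\omega}$, as in Remark \ref{tbd2}.

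Pairing with $u$ then gives
\[
\int_X (u\circ f_\omega^n)\, v\,dm_\omega \approx \int v\,dm_\omega \int u\,dm_{\sigma^n\omega}
\]
for all but a $\rho$-proportion of $n\le N$, uniformly in $N$ large. Lower density, as opposed to upper density, is exactly what upgrades Cesàro convergence to convergence outside a sparse set. To get honest convergence as $n\to\infty$ rather than along a density-one set, I would use that the bound holds for $\mu^\Z$-a.e. $\omega$ with a measurable threshold. Applying Fubini to the set $\{(\omega,n)\}$ where the approximation fails shows that, for each large $n$, its $\omega$-measure is small: by stationarity, the proportion of bad $\omega$ at time $n$ equals the proportion of bad times for a typical $\omega$, provided the good set is defined shift-equivariantly. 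Dominated convergence, using $|u|,|v|$ bounded, then gives the fiberwise approximation in $L^1(\mu^\Z)$.

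It remains to handle
\[
\int a(\sigma^n\omega)\,b(\omega)\int v\,dm_\omega\int u\,dm_{\sigma^n\omega}\,d\mu^\Z .
\]
Here $\omega\mapsto\int v\,dm_\omega$ and $\omega\mapsto\int u\,dm_{\sigma^n\omega}$ are measurable functions of the whole sequence, so mixing of the Bernoulli shift applied to these $L^2$ functions yields the limit $\int b\,v\,dm\cdot\int a\,u\,dm$.

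The main obstacle I expect is the fiberwise equidistribution. Proposition \ref{lem:last2} is stated for a fixed current $C$ and pushes forward $T^+$-wedges, but here I must push forward $v\,m_\omega$, which involves $T^-_\omega$, a closed current that is not smooth. I would first try taking $C$ to be smooth Kähler approximants $\kappa_j$ of $T^-_\omega$ (whose $T^+$-wedges converge by Lemma \ref{lem:currClosed}). Since the mass estimates in Proposition \ref{lem:last2} are uniform in $C$ with bounded cohomology class (Remark \ref{rmk:sequence-independent}), an $\epsilon/3$ argument should then let me pass from $\kappa_j$ to $T_\omega^-$. If that fails, the fallback is to make the equidistribution shift-equivariant so that the Fubini step works.
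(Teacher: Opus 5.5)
Your overall architecture matches the paper's proof. You disintegrate the correlation over $\Omega$ and use Proposition~\ref{lem:last2} in its lower-density form, via Remark~\ref{tbd2}, to get $(f_\omega^n)_*(v\,m_\omega)-(\int v\,dm_\omega)\,m_{\sigma^n\omega}\to 0$ along good times. You then finish with dominated convergence and mixing of the Bernoulli base.

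There is a genuine gap in the step that upgrades this to mixing. Write $Z_n\coloneqq\set{\omega\colon n\in G(\omega)}$ for the set of $\omega$ at which $n$ is a good time. You need a lower bound on $\mu^\Z(Z_n)$ for \emph{every} large $n$. Fubini plus lower density only gives $\limsup_{N}\frac1N\sum_{n\leq N}\mu^\Z(Z_n^c)\leq\rho$, by reverse Fatou. That controls the correlation error only for $n$ outside a set of small upper density. This is weak mixing, not mixing, and it is compatible with $\mu^\Z(Z_n)$ being small along a sparse sequence of $n$.

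Your identity ``proportion of bad $\omega$ at time $n$ = proportion of bad times for a typical $\omega$'' holds only when $Z_n=\sigma^{-n}E$ for a fixed set $E$. The good sets produced by Proposition~\ref{lem:last2} are not of this form. Whether $n$ is good depends on $n$-dependent quantities such as $h_n(\omega)$ and the ratios $\norm{(f_\omega^{n-m})^*}/\norm{(f_\omega^{n})^*}$. It also depends on the thresholds $c_i$ and the Birkhoff-genericity requirements, which that proof chooses separately for each $\omega$. Saying the good set should be ``defined shift-equivariantly'' names the missing property but does not supply it.

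Supplying it is the content of Lemma~\ref{lem:uniformity}, which again exploits \eqref{exponential}:
\begin{enumerate}
\item Lemma~\ref{lem:MassBound} then gives all of $\N$, so the initial good set is explicit: $S(\omega)=\set{n\colon h_n(\omega)\geq\epsilon_0}$.
\item \eqref{positive} bounds $\mu^\Z(h_n<\epsilon_0)$ at each fixed time, giving $\limsup_n\mu^\Z(h_n<\epsilon_0)\leq r(\epsilon_0)$. This uses non-degeneracy of $\mu_{\mathcal F}$ and of the adjoint Furstenberg measure.
\item The $c_i$ are chosen uniformly on sets $P_i$ with $\mu^\Z(P_i)>1-\alpha^i$.
\item On $\set{b\leq B_0}$, the Cantat--Dujardin bound \eqref{cd:ratio-sum} shows the ratio condition fails for at most $B_0/\gamma_i$ values of $m\in I_i$.
\item Averaging over $m\in I_i$ and using $\sigma$-invariance for the sets $\sigma^{-n}(Q_{i,m})$ gives $\liminf_n\mu^\Z(Z_n)\geq s(\rho)$ with $s(\rho)\to 1$.
\end{enumerate}

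Separately, your smooth-approximant detour is unnecessary, and it would not work as stated. Proposition~\ref{lem:last2} allows any positive current $C$ with $\supp\psi\cap\supp\partial C=\varnothing$. So the closed current $C=T_\omega^-$ can be used directly, since the generic set is independent of $C$ and $\psi$ (Remark~\ref{rmk:prop31-independent}). By contrast, passing from $\kappa_j$ to $T_\omega^-$ by an $\epsilon/3$ argument would need $T_\omega^+\wedge v\kappa_j\to T_\omega^+\wedge vT_\omega^-$ uniformly against the non-equicontinuous family $\set{u\circ f_\omega^n}_n$. That is essentially total-variation convergence, which the weak convergence from Lemma~\ref{lem:currClosed} does not give.
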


Before beginning the proof, we show that the exponential moment allows some uniformity in the sequence coming from Proposition \ref{lem:last2}. Precisely, suppose we are given an assignment
\[
	\omega\mapsto \psi_\omega C_\omega,
\]
where each $C_\omega$ is a positive $(1,1)$-current, each $\psi_\omega\colon X\to\R_{\geq0}$ is smooth and satisfies
\[
	\supp(\psi_\omega)\cap\supp(\partial C_\omega)=\varnothing,
\]
and the assignment $\omega\mapsto \psi_\omega C_\omega$ is $\mu^\Z$-measurable as a map into currents endowed with the weak topology.
\begin{lem}\label{lem:uniformity}
	Suppose $\mu$ satisfies the exponential moment condition \eqref{exponential}. Let $\rho>0$ be given. For every $\beta>0$, there exists a subset $\Omega_\beta\subset\Omega$ with $\mu^\Z(\Omega_\beta)>1-\beta$ and, for every $\omega\in\Omega_\beta$, a sequence
	\begin{align*}
		G(\omega)\coloneqq \set{\tilde n_k(\omega)}_{k\in\N}
	\end{align*}
	such that:
	\begin{enumerate}
		\item each $G(\omega)$ satisfies the conclusions of Proposition \ref{lem:last2} for $\rho$ and $\psi_\omega C_\omega$
		\item for every $n\in\N$, the set
		\begin{align*}
			Z_n^\beta\coloneqq \set{\omega\in\Omega_\beta\colon n\in G(\omega)}
		\end{align*}
		is $\mu^\Z$-measurable with \begin{align*}
			\liminf_{n\to\infty}\mu^\Z(Z_n^\beta)\geq s(\rho),
		\end{align*}
		for some constant $s(\rho)$ depending on $\rho$ where $s(\rho)\to1$ as $\rho\to 0$.
	\end{enumerate}
	
\end{lem}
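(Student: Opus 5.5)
The plan is to rerun the lower-density construction from the proof of Proposition \ref{lem:last2} under \eqref{exponential}, with every $\omega$-dependent constant frozen on a set of large measure. Concretely, I would fix $\rho$ and $\beta$. Using Lusin/Egorov-type arguments, I would then choose a set $\Omega_\beta$ with $\mu^\Z(\Omega_\beta)>1-\beta$ on which the following are uniform:
\begin{enumerate}[label=(\alph*)]
\item the constant $C$ in Corollary \ref{cor:C0Bound} (which, by Proposition \ref{prop:exponentialC0}, holds for all $n\in\N$);
\item the constant $\epsilon$ in Claim \ref{claim:mass};
\item the function $b(\omega)$ in \eqref{cd:ratio-sum};
\item the bound $R$ in \eqref{boundd}, which requires $\omega\mapsto\psi_\omega C_\omega$ to have bounded mass and bounded $\norm{dd^c\psi_\omega}$ after shrinking;
\item the rate at which the Birkhoff averages for the countably many sets $\mathcal L_m,\mathcal R_m,\mathcal H_{m,i},K,K',D_\tau$ converge, and the rates in \eqref{limitt} and \eqref{claim2:closed-distance}.
\end{enumerate}
With these choices, the thresholds $c_i$ and intervals $I_i$ can be taken independent of $\omega\in\Omega_\beta$, by choosing each $c_i$ as the maximum of the relevant $\omega$-dependent thresholds over the good set. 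Egorov's theorem lets us do this at the cost of $\beta 2^{-i}$ measure at stage $i$.

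Next I would define $G(\omega)$ by a measurable rule. Set $n\in G(\omega)$ iff $n\in A(\omega)$ and $\sigma^n(\omega)\in K\cap K'\cap D_\tau$, where $n\in A(\omega)$ with $c_i<n\le c_{i+1}$ iff $n\in S(\omega)$ and some $m\in I_i$ is admissible. Each condition involved is a measurable condition on $\omega$: membership in $S(\omega)$ is defined by measurable inequalities on $\norm{(f^n_\omega)^*}$, $\norm{\phi((f^n_\omega)^*\kappa_0)}_{C^0}$, and $M((f_\omega^n)_*T^+_\omega)$, and the remaining conditions are of the form $\sigma^n\omega\in Q_{i,m}$ together with ratio inequalities. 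Hence $Z^\beta_n$ is measurable. Conclusion (1) then follows verbatim from the proof of Proposition \ref{lem:last2}, since that proof only used these properties of the sequence along with the genericity of $\omega$.

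For (2), I would avoid appealing to density along orbits and instead integrate the counting estimates. The proof of Proposition \ref{lem:last2} gave, for each fixed $n$ in the $i$-th block, a pointwise count over pairs $(n,m)\in\{n\}\times I_i$. I would redo that count in measure: by $\sigma$-invariance,
\[
\mu^\Z\{\omega\colon n\in S(\omega),\ n-m\in S(\omega)\}\geq 1-2\sup_k\mu^\Z\{k\notin S\},
\]
and $\mu^\Z\{k\notin S\}$ is small uniformly in large $k$, because the defining events of $S$ are stationary in law. For example, the event $\norm{(f^n_\omega)^*e^+(\sigma^n\omega)}\ge\epsilon\norm{(f^n_\omega)^*}$ has probability $\int\mathds 1_{\{h_n\ge\epsilon\}}\,d\mu^\N$, which is controlled by \eqref{positive}. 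Likewise, $\mu^\Z(\sigma^{-n}Q_{i,m})=\mu^\Z(Q_{i,m})>1-3/(2i)$. The ratio condition fails for at most $b/\gamma_i$ values of $m$ per $n$ on $\Omega_\beta$. Averaging over $m\in I_i$ then gives
\[
\mu^\Z(Z^\beta_n)\ge 1-O(\rho')-O(\rho_0)-\beta-\alpha_i,
\]
which yields $\liminf_n\mu^\Z(Z_n^\beta)\ge s(\rho)$ with $s(\rho)\to1$.

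The main obstacle I expect is making the construction genuinely uniform. The proof of Proposition \ref{lem:last2} chose $c_i$ depending on $\omega$ through Birkhoff genericity and through \eqref{claim2:closed-distance}, which is a compactness-based statement with no rate. To handle this, I would replace \eqref{claim2:closed-distance} by a quantitative bound: the estimate in the proof of Proposition \ref{lem:closed} gives $d\!\left(\text{limit},\mathcal Z\right)\lesssim M((f^n_\omega)_*T^+_\omega)^{1/2}$, uniformly on $\Omega_\beta$. If that bound proves insufficient, I would fall back on Egorov's theorem applied to the measurable functions $\omega\mapsto\sup_{n\ge N,\,n\in S}d_\Theta(\cdot,\mathcal Z)$.
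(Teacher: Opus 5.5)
Your proposal is correct and follows essentially the paper's route. You uniformize the thresholds $c_i$, the intervals $I_i$, and the bound $b(\omega)\le B_0$ on a large set by Egorov-type arguments. Your measurable rule makes $Z_n^\beta$, as in the paper, equal to $\sigma^{-n}(K\cap K'\cap D_\tau)$ intersected with a finite union over $m\in I_i$ of sets $V_n\cap V_{n-m}\cap\sigma^{-n}(Q_{i,m})\cap U_{n,m}$. You then bound its measure using shift-invariance, \eqref{positive}, and averaging the at most $B_0/\gamma_i$ bad values of $m$ over $I_i$.

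A few minor points:
\begin{itemize}
\item Your bound contains a $-\beta$ term. To make $s(\rho)$ independent of $\beta$, cap the measure lost in $\Omega_\beta$ at $\min(\beta,\cdot)$ with a $\rho$-dependent second argument; the paper does this through its choice of $\alpha$.
\item The uniformizations in (a) and (d) are unnecessary, since they only enter pointwise-in-$\omega$ estimates.
\item The estimate in Proposition \ref{lem:closed} controls only the boundary of the rescaled currents. It does not by itself give a rate in \eqref{claim2:closed-distance}, so your Egorov fallback is the step that actually does the work.
\end{itemize}
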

\begin{proof}[Proof of Lemma \ref{lem:uniformity}]
For each fixed generic $\omega$, the proof of Proposition 3.2 begins by choosing a good-time set $S(\omega)\subset \mathbb N$ supplied by Proposition \ref{lem:MassBound2}, for an auxiliary density parameter $\rho'$ chosen in terms of $\rho$. Under the exponential moment assumption, we may make this set explicit. Indeed, the sequence produced by Lemma \ref{lem:MassBound} is $\N$. Hence, in the proof of Proposition \ref{lem:MassBound2}, the only restriction comes from Claim \ref{claim:mass}. Thus, for a small $\epsilon_0>0$ to be chosen below in terms of $\rho'$, we may take
\begin{align}\label{sdef}
	S(\omega)\coloneqq \set{n\in\N\colon h_n(\omega)\geq \epsilon_0},
\end{align}
recalling the definition of $h_n(\omega)$ from the proof of Claim \ref{claim:mass}.

For each $n\in\N$, define
\begin{align}\label{Vndefn}
	V_n \coloneqq \set{\omega\in\Omega\colon n\in S(\omega)} = \set{\omega\in\Omega\colon h_n(\omega)\geq \epsilon_0}.
\end{align}
	By the convergence \eqref{positive} established in the proof of Claim \ref{claim:mass},
	\begin{align*}
		\limsup_{n\to\infty}\mu^\Z(V_n^c)\leq r(\epsilon_0),
	\end{align*}
where
\begin{align*}
	r(t)
	&\coloneqq \int\int
		\mathds{1}_{\set{|\langle w,v\rangle|\leq t}}
	\,d\tilde\mu_{\mathcal F}(w)\,d\mu_{\mathcal F}(v).
\end{align*}
Here $\mu_{\mathcal F}$ is the Furstenberg measure associated to the forward action, and $\tilde\mu_{\mathcal F}$ is the corresponding Furstenberg measure for $\tilde \mu(f^*)\coloneqq \mu((f^*)^t)$. Since these measures give no mass to proper projective subspaces, $r(t)\to 0$ as $t\to 0$.

Choose auxiliary $\rho'>0$ small enough that the lower-density part of the proof of Proposition \ref{lem:last2} gives the conclusion with parameter $\rho$ whenever its initial set $S$ satisfies $\underline d(S)>1-\rho'$. Then choose $\epsilon_0=\epsilon_0(\rho)>0$, tending to $0$ as $\rho\to0$, such that $r(2\epsilon_0)<\rho'$. For almost every $\omega$, by the estimate \eqref{almost} in the proof of Claim \ref{claim:mass} we have
\begin{align*}
	\underline d(S(\omega))\geq 1-r(2\epsilon_0)>1-\rho'
\end{align*}
where $S(\omega)$ is defined by \eqref{sdef}. This choice of $\epsilon_0$ is independent of $\omega$. Set
\begin{align*}
	s'(\rho) = 1-2r(\epsilon_0).
\end{align*} 
As $\rho\to0$, we have $\epsilon_0(\rho)\to0$ and thus $s'(\rho)\to1$.
Choose $s(\rho)<s'(\rho)$ with $s(\rho)\to1$ as $\rho\to0$; this will eventually be our $s(\rho)$ in the conclusion of the lemma.

By Proposition 5.14 of \cite{MR4635340}, applied with $D\equiv1$, there is a finite $\mu^\Z$-measurable function $b(\omega)$ such that, for every $n\geq1$,
\begin{align}\label{estimate}
	\sum_{m=1}^{n-1}
	\frac{\norm{(f_\omega^{n-m})^*}}
	{\norm{(f_\omega^n)^*}}
	\leq b(\omega).
\end{align}

Let $\beta>0$ be given. Choose $0<\alpha<1$ so small that
\begin{align*}
	\frac{2\alpha}{1-\alpha}<\beta
	\quad\text{and}\quad
	\frac{2\alpha}{1-\alpha}<s'(\rho)-s(\rho).
\end{align*}
Choose $B_0<\infty$ so that
\begin{align*}
	\mu^\Z(\set{\omega\in\Omega\colon b(\omega)>B_0})
	<\frac{\alpha}{1-\alpha}.
\end{align*}
Fix $0<\epsilon<\lambda_\mu$. Recalling our notation from Claim \ref{claim:2}, choose numbers $\gamma_i\to0$ and finite intervals $I_i\subset\N$ such that $m_i\coloneqq\min I_i\to\infty$ and
\begin{align*}
	\frac{B_0}{\gamma_i|I_i|}\to0.
\end{align*}
Taking the intervals far enough out, we may assume that, for every $m\geq m_i$,
\begin{align}\label{partic}
	\mu^\Z(\mathcal L_m)>1-\frac{1}{2i},\qquad
	\mu^\Z(\mathcal R_m)>1-\frac{1}{2i},\qquad
	\mu^\Z(\mathcal H_{m,i})>1-\frac{1}{2i}.
\end{align}
Set $M_i\coloneqq \max I_i$. In the exponential moment case of Proposition \ref{lem:last2}, after setting $c_0=0$, the integers $c_i$ were chosen so that the interval analogs of  \eqref{claim2:separation}--\eqref{claim2:H-average} and the additional condition \eqref{dense:new} hold for fixed $\omega$ and the intervals $I_i$ chosen above. The same argument allows us to choose $(c_i)_{i\in \N_0}$ that works uniformly on a large set. Precisely, there exists $(c_i)_{i\in \N_0}$ and sets $P_i\subset\Omega$ with $\mu^\Z(P_i)>1-\alpha^i$ such that, for every $\omega\in P_i$,
the interval analogs of \eqref{claim2:separation}--\eqref{claim2:H-average}, as written in the lower-density section of the proof of Proposition \ref{lem:last2}, and the additional condition \eqref{dense:new} hold. This follows from the same limiting estimates
because the relevant quantities are $\mu^\Z$-measurable functions of $\omega$.

Set
\begin{align*}
	\Omega_\alpha\coloneqq  \bigg( \bigcap_{i\geq1}P_i\bigg)\cap \set{\omega\in\Omega\colon b(\omega)\leq B_0}.
\end{align*}
Then
\begin{align*}
	\mu^\Z(\Omega_\alpha)\geq 1 - \sum_{i\geq 1} \alpha^i - \frac{\alpha}{1-\alpha} =  1-\frac{2\alpha}{1-\alpha}>1-\beta.
\end{align*}
For $m\in I_i$, set
\begin{align*}
	Q_{i,m}\coloneqq \mathcal L_m\cap\mathcal R_m\cap\mathcal H_{m,i}.
\end{align*}
By the choice of the intervals $I_i$ above, in particular \eqref{partic}, $\mu^\Z(Q_{i,m}^c)<\frac{3}{2i}$ for every $m\in I_i$.
For $\omega\in\Omega_\alpha$ and each $i\in \N$, define
\begin{align*}
	E_i(\omega)
	\coloneqq
	\set{n\in S(\omega)\colon \text{there exists }m\in I_i\text{ admissible for }n},
\end{align*}
where $m\in I_i$ is admissible for $n$ if
\begin{align*}
	n-m\in S(\omega),\qquad
	\sigma^n(\omega)\in Q_{i,m},\qquad
	\frac{\norm{(f_\omega^{n-m})^*}}
	{\norm{(f_\omega^n)^*}}
	\leq \gamma_i.
\end{align*}
As in the proof of Proposition \ref{lem:last2}, let
\begin{align*}
	A(\omega)\coloneqq \bigcup_{i\geq1}\left(E_i(\omega)\cap(c_i,c_{i+1}]\right).
\end{align*}
The condition $c_i-M_i\to\infty$ will only be used below to ensure that $n-m\to\infty$ whenever $c_i<n\leq c_{i+1}$ and $m\in I_i$.

If $c_i<n\leq c_{i+1}$, set
\begin{align*}
	U_{n,m}\coloneqq
	\set{\omega\in\Omega\colon
	\frac{\norm{(f_\omega^{n-m})^*}}
	{\norm{(f_\omega^n)^*}}
	\leq \gamma_i}.
\end{align*}
Then, recalling the definition of $V_k$ in \eqref{Vndefn}, $\set{\omega\in\Omega_\alpha\colon n\in A(\omega)}$ is the finite union over $m\in I_i$ of the sets
\begin{align*}
	\Omega_\alpha
	\cap V_n
	\cap V_{n-m}
	\cap \sigma^{-n}(Q_{i,m})
	\cap U_{n,m}.
\end{align*}
Thus this set is $\mu^\Z$-measurable. Since the union contains each of the sets displayed above, its measure is at least the average of their measures over $m\in I_i$. Moreover, as $c_i>M_i$, every $m\in I_i$ satisfies $m<n$. For each fixed $\omega\in\Omega_\alpha$, the estimate \eqref{estimate} gives at most $B_0/\gamma_i$ values of $m\in I_i$ for which $\omega\notin U_{n,m}$. Therefore
\begin{align*}
	\frac1{|I_i|}\sum_{m\in I_i}\mu^\Z(\Omega_\alpha\cap U_{n,m}^c)
	\leq
	\frac{B_0}{\gamma_i|I_i|}.
\end{align*}
Hence, using shift-invariance of $\mu^\Z$,
\begin{align*}
	& \mu^\Z(\set{\omega\in\Omega_\alpha\colon n\in A(\omega)})
	\\
	& \hspace{1.5 cm}\geq
	1-\mu^\Z(\Omega_\alpha^c)
	-\mu^\Z(V_n^c)  -\frac1{|I_i|}\sum_{m\in I_i}\mu^\Z(V_{n-m}^c)
	-\frac1{|I_i|}\sum_{m\in I_i}\mu^\Z(Q_{i,m}^c)
	-\frac{B_0}{\gamma_i|I_i|} \\
		&\hspace{1.5 cm} \geq
	1-\frac{2\alpha}{1-\alpha}
	-\mu^\Z(V_n^c) -\frac1{|I_i|}\sum_{m\in I_i}\mu^\Z(V_{n-m}^c) 
	- \frac{3}{2i}
	-\frac{B_0}{\gamma_i|I_i|}.
\end{align*}
When $i=i(n)$ is determined by $c_i<n\leq c_{i+1}$, we have $i(n)\to\infty$ as $n\to\infty$. By the choice above, $n-m\to\infty$ uniformly for $m\in I_i$. Since $\limsup_{k\to\infty}\mu^\Z(V_k^c)\leq r(\epsilon_0)$, it follows that
\begin{align}\label{uniformity:A-lower}
	\liminf_{n\to\infty}\mu^\Z\set{\omega\in\Omega_\alpha\colon n\in A(\omega)}
	&\geq 1 - 2 r(\epsilon_0) -\frac{2\alpha}{1-\alpha} =  s'(\rho)-\frac{2\alpha}{1-\alpha}
\end{align}

Choose $\delta>0$ so that
\[
	\frac{2\alpha}{1-\alpha}+\delta<s'(\rho)-s(\rho).
\]
Then choose the sets $K$, $K'$, and $D_\tau$ appearing at the end of the proof of Proposition \ref{lem:last2} so that their total measure loss is less than $\delta$ and small enough to preserve the lower-density conclusion there.
Let $\Lambda$ be the full-measure set on which $S(\omega)$ has the lower-density required above and on which $\omega$ is Birkhoff-generic for $K$, $K'$, and $D_\tau$. Finally set
\begin{align*}
	\Omega_\beta\coloneqq \Omega_\alpha\cap\Lambda.
\end{align*}
Then $\mu^\Z(\Omega_\beta)>1-\beta$. For $\omega\in\Omega_\beta$, define
\begin{align*}
	G(\omega)
	\coloneqq
	A(\omega)\cap\set{n\in\N\colon \sigma^n(\omega)\in K\cap K'\cap D_\tau}.
\end{align*}
Then, by the proof of Proposition \ref{lem:last2}, $G(\omega)$ satisfies the conclusion of Proposition \ref{lem:last2}. Moreover, for each fixed $n$, the set
\begin{align*}
	Z_n^\beta\coloneqq \set{\omega\in\Omega_\beta\colon n\in G(\omega)}
\end{align*}
is $\mu^\Z$-measurable. Indeed, if $n\leq c_1$, then $Z_n^\beta=\varnothing$. If $c_i<n\leq c_{i+1}$, then
		\begin{align*}
			Z_n^\beta
			=
			\Omega_\beta\cap\sigma^{-n}(K\cap K'\cap D_\tau) \cap
			\bigcup_{m\in I_i}
			\left(
			V_n\cap V_{n-m}\cap\sigma^{-n}(Q_{i,m})\cap U_{n,m}
			\right),
		\end{align*}
which is $\mu^\Z$-measurable. With these choices, \eqref{uniformity:A-lower} gives
\begin{align*}
	\liminf_{n\to\infty}\mu^\Z(Z_n^\beta)
		&\geq s'(\rho)-\frac{2\alpha}{1-\alpha}-\delta > s(\rho).
\end{align*}
This proves the claimed lower bound.
	
\end{proof}

\begin{proof}[Proof of Theorem \ref{thm:mixing}]
	Since $m$ is a Borel probability measure on the metrizable space $\Omega\times X$, bounded continuous functions are dense in $L^2(m)$. Moreover, bounded continuous functions whose restrictions to the fibers $\set{\omega}\times X$ are smooth are dense in $L^2(m)$. This follows by approximating on compact subsets of $\Omega$ with arbitrarily large $\mu^\Z$-measure and smoothing in the $X$-variable. Thus it suffices to show that 
	\begin{align*}
		 \int \psi \cdot (\phi \circ F^n)  \,dm - \int \psi  \, dm  \int \phi  \, dm \to 0
	\end{align*}
	for bounded, continuous functions $\psi, \phi\colon \Omega \times X \to \R$ whose fiber restrictions $\psi_\omega,\phi_\omega$ are smooth. Finally, it is enough to treat the case $\phi_\omega,\psi_\omega \geq 0$ for every $\omega \in \Omega$.

	Write 
	\begin{align*}
		\beta(\omega)\coloneqq \int_X T_\omega^+ \wedge T_\omega^-,
	\end{align*}  
	which is positive for $\mu^\Z$-almost every $\omega$ by Remark \ref{tbd}.
	Using the definition of $m$ and the identity 
	\[
	(\phi \circ  F^n)_\omega = \phi_{\sigma^n(\omega)} \circ f_\omega^n
	\] we obtain
	\begin{align*}
		\int\psi \cdot (\phi \circ F^n)  \,dm &=    \int\psi_\omega \cdot (\phi \circ F^n)_\omega  \frac{1}{\beta(\omega)} d(T_\omega^+\wedge T_\omega^-) \,d\mu^\Z(\omega) \\
		& =     \int(\phi_{\sigma^n(\omega)} \circ f_\omega^n) \frac{1}{\beta(\omega)}  d ( T_\omega^+\wedge \psi_\omega  T_\omega^-) \,d\mu^\Z(\omega) \\
		& =    \int_\Omega \frac{1}{\beta(\omega)} \int_X \phi_{\sigma^n(\omega)} d ((f_\omega^n)_* ( T_\omega^+\wedge \psi_\omega  T_\omega^-)) \,d\mu^\Z (\omega ).
	\end{align*}

	Fix $\rho>0$ and choose $\gamma>0$. Since $T_\omega^-$ is closed and depends $\mu^\Z$-measurably on $\omega$, the assignment $\omega\mapsto \psi_\omega T_\omega^-$ satisfies the hypotheses of Lemma \ref{lem:uniformity}. Applying the lemma with $\gamma$ in place of $\beta$, we obtain a set $\Omega_\gamma$ and, for each $\omega\in\Omega_\gamma$, a set
	\[
		G(\omega)\coloneqq \set{\tilde n_k(\omega)}_{k\in\N}.
	\]
	Set $G(\omega)=\varnothing$ for $\omega\notin\Omega_\gamma$, and define
	\[
		Z_n\coloneqq \set{\omega\in\Omega\colon n\in G(\omega)}.
	\]
	Then $Z_n$ is the set $Z_n^\gamma$ from Lemma \ref{lem:uniformity}. Hence, for some $s(\rho)\to1$ as $\rho\to0$,
	\begin{align}\label{mixing:uniformity}
		\liminf_{n\to\infty}\mu^\Z(Z_n)\geq s(\rho).
	\end{align}
	
	Fix $\delta>0$. By regularity, choose a compact set $K_\phi\subset\Omega$ with $\mu^\Z(K_\phi)>1-\delta$. Since $\phi$ is continuous on $\Omega\times X$, the family
	\[
		\set{\phi_\eta\colon \eta\in K_\phi}
	\]
	is equicontinuous and uniformly bounded. Define
	\[
		G_\delta(\omega)\coloneqq G(\omega)\cap\set{n\in\N\colon \sigma^n(\omega)\in K_\phi}
	\]
	and
	\[
		Z_{n,\delta}\coloneqq \set{\omega\in\Omega\colon n\in G_\delta(\omega)}
		=Z_n\cap \sigma^{-n}(K_\phi).
	\]
	By shift-invariance of $\mu^\Z$ and \eqref{mixing:uniformity},
	\begin{align*}
		\liminf_{n\to\infty}\mu^\Z(Z_{n,\delta})
		\geq s(\rho)-\delta.
	\end{align*}
	
	By Remark \ref{tbd2}, for $\mu^\Z$-almost every $\omega$ the signed measures
	\begin{align}\label{diff}
		\mathds{1}_{G(\omega)}(n)
		\bigg(
		(f_\omega^n)_*(T_\omega^+\wedge \psi_\omega T_\omega^-)
		-
		\frac{\int_X T_\omega^+\wedge \psi_\omega T_\omega^-}{\beta(\sigma^n(\omega))}
		T_{\sigma^n(\omega)}^+\wedge T_{\sigma^n(\omega)}^-
		\bigg)
		\to 0
	\end{align}
	weakly as $n\to\infty$. Since $G_\delta(\omega)\subset G(\omega)$, the same convergence holds with $G_\delta(\omega)$ in place of $G(\omega)$. For fixed $\omega$, the signed measures in \eqref{diff} have uniformly bounded mass. Along $G_\delta(\omega)$ the test functions $\phi_{\sigma^n(\omega)}$ belong to the equicontinuous family above, so \eqref{diff} gives
	\begin{align}\label{close2}
		\mathds{1}_{G_\delta(\omega)}(n)|a_n(\omega)-b_n(\omega)|\to0
	\end{align}
	for $\mu^\Z$-almost every $\omega$, where
	\begin{align*}
		a_n(\omega)
		&\coloneqq
		\frac{1}{\beta(\omega)}
		\int_X \phi_{\sigma^n(\omega)}\, d((f_\omega^n)_*(T_\omega^+\wedge \psi_\omega T_\omega^-)), \\
		b_n(\omega)
		&\coloneqq
		\frac{1}{\beta(\omega)}
		\frac{\int_X T_\omega^+\wedge \psi_\omega T_\omega^-}{\beta(\sigma^n(\omega))}
		\int_X \phi_{\sigma^n(\omega)}\, d(T_{\sigma^n(\omega)}^+\wedge T_{\sigma^n(\omega)}^-).
	\end{align*}
	Moreover,
	\begin{align*}
		|a_n(\omega)|,\ |b_n(\omega)|
		\leq \norm{\phi}_{C^0}\norm{\psi}_{C^0}.
	\end{align*}
	Thus, by dominated convergence and \eqref{close2},
	\begin{align*}
		\int_{Z_{n,\delta}} |a_n(\omega)-b_n(\omega)|\,d\mu^\Z(\omega)
		\to0.
	\end{align*}
	It follows that
	\begin{align*}
		\limsup_{n\to\infty}
		\left|
		\int_\Omega a_n(\omega)\,d\mu^\Z(\omega)
		-
		\int_\Omega b_n(\omega)\,d\mu^\Z(\omega)
		\right|
		&\leq
		2\norm{\phi}_{C^0}\norm{\psi}_{C^0}
		\limsup_{n\to\infty}\mu^\Z(Z_{n,\delta}^c) \\
		&\leq
		2\norm{\phi}_{C^0}\norm{\psi}_{C^0}(1-s(\rho)+\delta).
	\end{align*}
	By the computation at the start of the proof,
	\begin{align*}
		\int_\Omega a_n(\omega)\,d\mu^\Z(\omega)
		=
		\int\psi\cdot(\phi\circ F^n)\,dm.
	\end{align*}
	
	We now identify the limit of the $b_n$ terms. Set
	\begin{align*}
		u(\omega)&\coloneqq \frac{1}{\beta(\omega)}\int_X T_\omega^+\wedge \psi_\omega T_\omega^-, \hspace{1cm} v(\omega)\coloneqq \frac{1}{\beta(\omega)}\int_X \phi_\omega\,d(T_\omega^+\wedge T_\omega^-).
	\end{align*}
	Then $u$ and $v$ belong to $L^2(\mu^\Z)$ and
	\begin{align*}
		\int_\Omega b_n(\omega)\,d\mu^\Z(\omega)
		=
		\int_\Omega u(\omega)v(\sigma^n(\omega))\,d\mu^\Z(\omega).
	\end{align*}
	Since $(\Omega,\mu^\Z,\sigma)$ is mixing,
	\begin{align*}
		\int_\Omega b_n(\omega)\,d\mu^\Z(\omega)
		\to
		\int_\Omega u\,d\mu^\Z \int_\Omega v\,d\mu^\Z.
	\end{align*}
	By the definition of $m$,
	\begin{align*}
		\int_\Omega u\,d\mu^\Z=\int \psi\,dm
		\hspace{.5 cm}\text{and}\hspace{.5 cm}
		\int_\Omega v\,d\mu^\Z=\int \phi\,dm.
	\end{align*}
	Therefore
	\begin{align*}
		\int_\Omega b_n(\omega)\,d\mu^\Z(\omega)
		\to
		\int \psi\,dm\int \phi\,dm.
	\end{align*}
	Combining the last two estimates gives
	\begin{align*}
		\limsup_{n\to\infty}
		\left|
		\int \psi\cdot(\phi\circ F^n)\,dm
		-
		\int \psi\,dm\int \phi\,dm
		\right|
		\leq
		2\norm{\phi}_{C^0}\norm{\psi}_{C^0}(1-s(\rho)+\delta).
	\end{align*}
	Letting $\delta\to0$ and then $\rho\to0$, and using $s(\rho)\to1$, proves the theorem.
		
\end{proof}

\begin{appendices}
	\section{Random topological entropy and the variational principle}\label{app:entropy}
	
	In this section, we define random topological entropy and state the random variational principle. Fix a K\"ahler surface $X$ and a K\"ahler form $\kappa$, and let $d$ denote the induced metric on $X$. 
	\begin{defn}[Separated set]
		Let $\epsilon > 0$ and $n\in \N$. We say that a subset $S\subset X$ is $(\omega, n,\epsilon)$-separated if, for all distinct $x,y\in S$, $d(f_\omega^i (x), f_\omega^i (y)) > \epsilon$ for some $0\leq i < n$. 
	\end{defn}
	By compactness, every $(\omega,n,\epsilon)$-separated set is finite, and for each fixed $(\omega,n,\epsilon)$ there is a uniform bound on the cardinalities of such sets. Let $r^\omega(n,\epsilon)$ denote the maximum size of an $(\omega, n, \epsilon)$-separated set. 
	\begin{defn}[Random topological entropy]
		We define
		\[
			h_{top}(\mu) \coloneqq \lim_{\epsilon \to 0} \limsup_{n\to\infty} \frac{1}{n}\int_\Omega \ln r^\omega(n,\epsilon)\,d\mu^\Z(\omega).
		\]
		We call $h_{top}(\mu)$ the random topological entropy of $\mu$.
	\end{defn}
		\begin{rmk}\label{rmk:appendix}
				The topological entropy $h_{top}(\mu)$ has several equivalent definitions, and we refer to \cite{MR884892} for details. We will use that for $\mu^\N$-generic $\omega$ \cite[Theorem~II.2.2]{MR1819189},
		\[
		h_{top}(\mu)=\lim_{\epsilon\to 0}\limsup_{n\to\infty}\frac{1}{n}\ln r^\omega(n,\epsilon).
		\]
	\end{rmk}

	Just like in the deterministic case, fiber entropy and random topological entropy satisfy a variational principle. In the case that $\mu$ has compact support, the variational principle is due to Ledrappier and Walters \cite{MR476995}; the general case is due to Bogensch\"utz \cite{MR1181382} and Kifer \cite{MR1819189}.
	
	\begin{thm}[Variational Principle]
		\begin{align*}
			\sup_{\nu\in S(\mu)}h_{\nu}^\mathcal{F} =  h_{top}(\mu)
		\end{align*}
	and the supremum is achieved. 
	\end{thm}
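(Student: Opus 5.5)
The plan is to follow Misiurewicz's proof of the classical variational principle, carried out fiberwise over $(\Omega,\mu^\Z,\sigma)$ as in Kifer \cite{MR1819189} and Bogensch\"utz \cite{MR1181382}. After that I deal separately with attainment of the supremum.

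First I would check integrability, which makes every quantity finite. By compactness of $X$, $r^\omega(n,\epsilon)$ is at most the $\epsilon/2$-covering number of $X$ for the Bowen metric $d^\omega_n(x,y)=\max_{0\le i<n} d(f^i_\omega x,f^i_\omega y)$. Since $f^i_\omega$ has Lipschitz constant at most $\prod_{j<i}\norm{\omega_j}_{C^1}$, this gives
\begin{align*}
\ln r^\omega(n,\epsilon)\le C(\epsilon)+4\sum_{j<n}\ln^+\norm{\omega_j}_{C^1},
\end{align*}
which is integrable by the moment condition \eqref{int}. Hence $\frac1n\int \ln r^\omega(n,\epsilon)\,d\mu^\Z$ is bounded and the definitions make sense. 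Remark \ref{rmk:appendix} allows me to work with a fixed generic $\omega$ when convenient.

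For the inequality $\sup_\nu h^\mathcal F_\nu\le h_{top}(\mu)$, I would take $\nu\in S(\mu)$, which I may assume ergodic by ergodic decomposition, and a finite partition $\xi=\{A_1,\dots,A_k\}$ of $X$. I use formula \eqref{align:2} for $h^\mathcal F_\nu(\xi)$.
\begin{itemize}
\item Approximate each $A_i$ from inside by a compact $B_i$ with $\int \nu_\omega(A_i\setminus B_i)\,d\mu^\Z$ small, and set $B_0=X\setminus\bigcup B_i$. The partition $\beta=\{B_0,\dots,B_k\}$ then has $h^\mathcal F_\nu(\xi)\le h^\mathcal F_\nu(\beta)+1$ after the usual conditional-entropy estimate.
\item Let $\epsilon$ be smaller than the minimal distance between the $B_i$, $i\ge1$. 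Then each $d^\omega_n$-ball of radius $\epsilon/2$ meets at most $2^n$ atoms of $\bigvee_{j<n}(f^j_\omega)^{-1}\beta$.
\item Consequently $H_{\nu_\omega}\bigl(\bigvee_{j<n}(f^j_\omega)^{-1}\beta\bigr)\le \ln r^\omega(n,\epsilon/2)+n\ln 2$.
\item Integrate in $\omega$, divide by $n$, and let $n\to\infty$.
\item Remove the additive constants $1$ and $\ln 2$ by the standard trick of replacing $\mu$ by its $N$-step convolution, i.e.\ the skew product $F^N$, and using $h^\mathcal F_\nu(F^N)=Nh^\mathcal F_\nu(F)$ together with the corresponding identity for $h_{top}$.
\end{itemize}

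For the reverse inequality, I fix $\epsilon$ and, for each $\omega$, choose a maximal $(\omega,n,\epsilon)$-separated set $E_n(\omega)$. Define
\begin{align*}
\sigma_n=\int\delta_\omega\otimes\frac{1}{\#E_n(\omega)}\sum_{x\in E_n(\omega)}\delta_x\,d\mu^\Z(\omega),\qquad \nu_n=\frac1n\sum_{k<n}F^k_*\sigma_n.
\end{align*}
Each $\nu_n$ projects to $\mu^\Z$. By the compactness in Remark \ref{compactness}, a subsequence converges to some $\nu$, and $\nu\in S(\mu)$ since $F_*\nu_n-\nu_n\to0$. I then choose a partition $\xi$ of $X$ with diameter $<\epsilon$ and $\int\nu_\omega(\partial\xi)\,d\mu^\Z=0$. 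Each atom of $\bigvee_{j<n}(f^j_\omega)^{-1}\xi$ contains at most one point of $E_n(\omega)$, so $H_{\sigma_{n,\omega}}=\ln\#E_n(\omega)$. Misiurewicz's splitting of $\{0,\dots,n-1\}$ into blocks of length $q$, together with concavity of entropy and passage to the limit using the null boundaries, then yields $h^\mathcal F_\nu(\xi)\ge\limsup\frac1n\int\ln r^\omega(n,\epsilon)\,d\mu^\Z$.

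I expect this second direction to be the main obstacle, for two reasons. One is measurability: choosing $E_n(\omega)$ measurably in $\omega$, which I would handle with a measurable selection theorem on the compact space of finite subsets of $X$. The other is that the limit step requires test functions in $L^1(\Omega,C(X))$ rather than merely continuous ones. To handle the latter, I would approximate on compact sets $K\subset\Omega$ of large measure, via Lusin, where $\omega\mapsto f^j_\omega$ is continuous.

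Finally, attainment of the supremum follows from compactness of $S(\mu)$ (Remark \ref{compactness}) together with upper semicontinuity of $\nu\mapsto h^\mathcal F_\nu$ \cite[Lemma~2.1]{MR1819189}. Upper semicontinuity is available here because the fiber maps are holomorphic, hence $C^\infty$. By the random Yomdin estimate \cite{MR970566}, the tail entropy at small scales vanishes, so a single partition of small diameter computes $h^\mathcal F_\nu$ uniformly in $\nu$. A maximizing sequence in $S(\mu)$ then has a limit point achieving the supremum.
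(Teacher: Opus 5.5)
Your proposal is correct and follows the paper's route. The paper cites Bogensch\"utz \cite{MR1181382} and Kifer \cite{MR1819189} for the equality, and your fiberwise Misiurewicz argument (including the Portmanteau-type passage to the limit for measures with fixed marginal $\mu^\Z$) is the standard proof behind those citations; attainment then comes, exactly as in your last paragraph, from compactness of $S(\mu)$ and upper semicontinuity of $\nu\mapsto h^{\mathcal F}_\nu$ via \cite[Lemma~2.1]{MR1819189}.

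Two small cautions. First, your extra justification of upper semicontinuity should not be credited to \cite{MR970566}: that work gives the random Yomdin lower bound of $h_{top}(\mu)$ by volume growth, not vanishing of random tail entropy. Second, the compactness you need for the non-invariant $\nu_n$ is compactness of all measures with marginal $\mu^\Z$, of which $S(\mu)$ is a closed subset, rather than compactness of $S(\mu)$ itself.
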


	\section{Randomized Gromov's theorem}\label{app:gromov}
	We turn our attention to proving that $h_{top}(\mu) = \lambda_\mu$. The direction $\lambda_\mu \leq h_{top}(\mu)$ is due to Yomdin and Kifer \cite{MR970566}; they prove the analog of Yomdin's theorem in our setting. In a sentence, they show that, in the $C^\infty$ category, exponential volume growth bounds topological entropy from below. Trivially, growth in cohomology bounds volume growth from below, so the inequality follows. 
	
	The direction $h_{top}(\mu) \leq \lambda_\mu$ is due to Gromov \cite{MR2026895}, once we restate his theorem using the random notation.
	\begin{thm}[Randomized Gromov]
		Let $X$ be a K\"ahler surface and $\mu$ a probability measure supported on $\Aut(X)$ satisfying the moment condition \eqref{int:cohomology}. Then $h_{top}(\mu) \leq \lambda_\mu$.
	\end{thm}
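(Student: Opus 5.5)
Fix $\epsilon>0$ and a generic $\omega$. We will bound the number of $(\omega,n,\epsilon)$-separated points by the volume of the graph of the orbit map, following Gromov. By Remark \ref{rmk:appendix} it suffices to show that
\[\limsup_{n}\tfrac1n\ln r^\omega(n,\epsilon)\le\lambda_\mu\]
for $\mu^\Z$-almost every $\omega$.

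Let $\Gamma_n(\omega)\subset X^n$ be the graph of $x\mapsto (x,f^1_\omega x,\dots,f^{n-1}_\omega x)$. This is a complex curve-free compact complex surface in $X^n$, biholomorphic to $X$. We give $X^n$ the product Kähler form $\kappa^{(n)}=\sum_i p_i^*\kappa$.

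The first step is Gromov's packing argument. If $S$ is $(\omega,n,\epsilon)$-separated, the points $(x,\dots,f^{n-1}_\omega x)$ for $x\in S$ are $\epsilon$-separated in the sup-metric of $X^n$. The balls of radius $\epsilon/2$ around them in $X^n$ are disjoint and centered on the complex submanifold $\Gamma_n$. By Lelong's monotonicity inequality (the constant is uniform in $n$ because we use the sup-metric and the balls are polydisc-like products), each such ball meets $\Gamma_n$ in $\kappa^{(n)}$-area at least $c\epsilon^4$. Hence
\[r^\omega(n,\epsilon)\le c^{-1}\epsilon^{-4}\operatorname{Vol}(\Gamma_n).\]
Here $c$ is independent of $n$ and $\omega$; this uniformity is exactly what Gromov proves, and nothing random enters it.

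The second step computes the volume cohomologically. We have
\[\operatorname{Vol}(\Gamma_n)=\tfrac12\int_X\Big(\sum_{i=0}^{n-1}(f^i_\omega)^*\kappa\Big)^2=\tfrac12\sum_{i,j}\big\langle (f^i_\omega)^*[\kappa],(f^j_\omega)^*[\kappa]\big\rangle\le \tfrac12 n^2\max_{i<n}\|(f^i_\omega)^*[\kappa]\|\,\max_j\|(f^j_\omega)^*[\kappa]\|.\]
A sharper pairing bound is $\langle A a,Bb\rangle\le \|B^{-1}A\|$-type estimates, but the crude bound is enough. The random input enters at this step. By Theorem \ref{lyap}, for a.e. $\omega$ and every $\delta>0$ we have $\|(f^i_\omega)^*[\kappa]\|\le C_\delta(\omega)e^{(\lambda_\mu+\delta)i}$.

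Done naively, this gives exponent $2\lambda_\mu$, since the terms with $i=j$ contribute $\|(f^i_\omega)^*[\kappa]\|^2$. The correct approach uses invariance of the intersection form. We write
\[\langle (f^i_\omega)^*a,(f^j_\omega)^*a\rangle=\langle a,\ ((f^i_\omega)^*)^{-1}(f^j_\omega)^*a\rangle,\]
which for $i\le j$ equals $\langle a,(f^{j-i}_{\sigma^i\omega})^*a\rangle$ since pullbacks preserve $\langle\cdot,\cdot\rangle$. For $i=j$ this is $\langle a,a\rangle$, which is bounded. Hence
\[\operatorname{Vol}(\Gamma_n)\lesssim\sum_{0\le i\le j<n}\|(f^{j-i}_{\sigma^i\omega})^*\|.\]

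The main obstacle is bounding these shifted products uniformly in $i$. Pointwise Lyapunov bounds at $\sigma^i\omega$ carry constants $C_\delta(\sigma^i\omega)$ that are not uniform. We will use subadditivity instead. Since $\|(f^j_\omega)^*\|\le \|(f^{j-i}_{\sigma^i\omega})^*\|\,\|(f^i_\omega)^*\|$ goes the wrong way, we use $(f^{j-i}_{\sigma^i\omega})^*=((f^i_\omega)^*)^{-1}(f^j_\omega)^*$ together with $\|A^{-1}\|=\|A\|$. This gives
\[\|(f^{j-i}_{\sigma^i\omega})^*\|\le \|(f^i_\omega)^*\|\,\|(f^j_\omega)^*\|,\]
which again yields exponent $2\lambda_\mu$.

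To get exponent $\lambda_\mu$ we therefore work with $\liminf$ over $\omega$ in measure. By Kingman applied to $\ln\|(f^n_\omega)^*\|$ with stationarity, $\int\ln\|(f^{m}_{\sigma^i\omega})^*\|\,d\mu^\Z=\int\ln\|(f^m_\omega)^*\|$. Combining Jensen on $\frac1n\int\ln r^\omega(n,\epsilon)\,d\mu^\Z$ with the original definition of $h_{top}(\mu)$, which integrates over $\omega$, we bound
\[\tfrac1n\int\ln\sum_{i\le j}\|(f^{j-i}_{\sigma^i\omega})^*\|\ \le\ \tfrac1n\int\ln\big(n^2\max_{i,m\le n}\|(f^m_{\sigma^i\omega})^*\|\big).\]
We then control the maximum by Egorov-type uniformity of the convergence $\frac1m\ln\|(f^m_\eta)^*\|\to\lambda_\mu$ on a set of large measure. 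This uses Birkhoff to ensure that most $\sigma^i\omega$ lie in that set. The exceptional $i$ are handled by the moment bound \eqref{int:cohomology} via $\ln\|(f^m_{\sigma^i\omega})^*\|\le\sum_{k}\ln\|\omega_k^*\|$, whose average is controlled. Making this exceptional-set estimate work is where I expect the real difficulty.
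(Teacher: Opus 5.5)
Your first two steps match the paper's: the packing argument via monotonicity in $(X^n,\widehat\kappa_n)$, with constants uniform in $n$, and then $\Vol=\int_X\bigl(\sum_i (f^i_\omega)^*\kappa\bigr)^2$ combined with invariance of the intersection form, which reduces everything to $\sum_{0\le i\le j<n}\norm{(f^{j-i}_{\sigma^i(\omega)})^*}$. The gap is exactly the step you flag. You never prove
\[
\limsup_{n\to\infty}\frac1n\ln\max_{0\le i\le j<n}\norm{(f^{j-i}_{\sigma^i(\omega)})^*}\le\lambda_\mu ,
\]
and the strategy you sketch for it does not work as stated. Passing to integrals over $\omega$ does not help. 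Shift-invariance controls $\int\ln\norm{(f^m_{\sigma^i(\omega)})^*}\,d\mu^\Z$ for each fixed $i$, but you need the integral of a maximum over $i$, and that is not the maximum of the integrals. Your treatment of exceptional $i$ also fails. If you bound $\ln\norm{(f^m_{\sigma^i(\omega)})^*}$ by the full sum $\sum_{k=i}^{i+m-1}\ln\norm{\omega_k^*}$, the growth rate is $\int\ln\norm{\eta_0^*}\,d\mu^\Z(\eta)$. This is only $\ge\lambda_\mu$ and is typically strictly larger. Since you take a maximum, a single exceptional $i$ near $0$ paired with $j$ near $n$ ruins the estimate, however rare exceptional indices are.

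The missing ingredient is a way to get uniformity over all pairs $(i,j)$. The paper obtains it with a block argument, which runs as follows.
\begin{enumerate}
\item Fix $L$ with $E_L=\frac1L\int\ln\norm{(f^L_\eta)^*}\,d\mu^\Z<\lambda_\mu+\epsilon$, and write $j-i=Lm+r$ and $i=Lq+s$.
\item By subadditivity, bound $\ln\norm{(f^{Lm}_{\sigma^i(\omega)})^*}$ by a sum of $m$ blocks of length $L$.
\item Since $\norm{A}\ge1$ for $A\in\Orth^+(H^{1,1})$ in the adapted norm, every term is nonnegative. So you may enlarge this to the full Birkhoff sum $\sum_{k=0}^{q+m-1}\ln\norm{(f^L_{\sigma^{s+Lk}(\omega)})^*}$ for $\sigma^L$, where $L(q+m)<n$.
\item Birkhoff's theorem for each of the $L$ residues $s$ then gives a bound $nE_L+o(n)$, uniformly in $(i,j)$.
\item The remainder of length $r<L$ is handled by Borel--Cantelli, using $\ln\norm{\omega_k^*}\le k\delta+C$.
\end{enumerate}

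Alternatively, your Egorov idea can be repaired, also in three steps.
\begin{enumerate}
\item Apply the crude moment bound only on the stretch from an exceptional index to the next good index. By Birkhoff, the total over all such stretches is at most $n\int_{G^c}\ln\norm{\eta_0^*}\,d\mu^\Z+o(n)$, which is small relative to $n$ when $\mu^\Z(G^c)$ is small.
\item From the good index onward, use the uniform Egorov rate.
\item Cover products shorter than the Egorov threshold with Borel--Cantelli.
\end{enumerate}
Either way, this pointwise uniformity in $(i,j)$ is the real random content of the theorem, and your proposal does not supply it.
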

	We recall the proof below, following closely the one presented in \cite{Filip2019_K3_Dynamics}.
	
	\begin{proof}[Proof of Randomized Gromov]
		Fix a K\"ahler form $\kappa$ and let $d$ denote the induced metric. Given $\omega\in \Omega_+$, define
		\begin{align*}
			\cF_\omega^n(x)\coloneqq (x, f_\omega^1(x), f_\omega^2(x), \dots, f_\omega^{n-1}(x)) \in X^n
		\end{align*}
	and let $\Delta_\omega^n \coloneqq \cF_\omega^n(X)$. The key idea in Gromov's theorem is to bound $r^\omega(n, \epsilon)$ above by the volume of $\Delta_\omega^n$ in $X^n$ times a constant independent of $n$. This will follow from the monotonicity formula for minimal surfaces and Federer's theorem. Then one shows that, since $\Delta_\omega^n$ is holomorphic, its volume grows exponentially at rate $\lambda_\mu$.

	Let $\pi_i\colon X^n \to X$ denote projection to the $i$th factor, for $0\leq i<n$, and set $\kappa_i\coloneqq \pi_i^* \kappa$. Then
	\[
		\widehat\kappa_n \coloneqq \sum_{i=0}^{n-1}\kappa_i
	\]
	is a K\"ahler form on $X^n$. Let $d_n$ be the distance induced by $\widehat\kappa_n$. All volumes and lengths in $X^n$ will be with respect to $d_n$.

	Let $S$ be an $(\omega, n, \epsilon)$-separated set. Then, for all distinct $x,y\in \cF_\omega^n(S)$, $d_n(x, y) > \epsilon$. Therefore,
		\begin{align}\label{vol}
		\Vol(\Delta_\omega^n) \geq \sum_{x\in \cF_\omega^n(S)}\Vol(\Delta_\omega^n\cap B_{\epsilon/2}(x)).
	\end{align}

		Given a compact complex manifold $Y$ with K\"ahler form $\alpha$ and a $q$-dimensional complex submanifold $W$, for $\epsilon > 0$ let 
	\begin{align*}
			\Dens_\epsilon(W, x) \coloneqq \frac{1}{\epsilon^{2q}}\int_{W\cap B_\epsilon(x)} \alpha^q = \frac{1}{\epsilon^{2q}} \Vol (W\cap B_\epsilon(x)),
	\end{align*}
	where by volume we mean the $2q$-dimensional volume induced by $\alpha$, and balls are with respect to the K\"ahler metric.
		\begin{thm}[Monotonicity Formula and Federer's Theorem]
			Suppose the sectional curvature of $Y$ has absolute value uniformly bounded above by $K$. There exists $\epsilon(K) > 0$ and $C>0$, depending only on $q$, $K$, and a lower injectivity-radius bound for $Y$, such that, for all $0<\epsilon<\epsilon(K)$, $\Dens_\epsilon (W, x) \geq C$ for all $x\in W$. 
		\end{thm}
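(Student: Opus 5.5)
This is the classical monotonicity formula for minimal submanifolds, together with the fact (Federer, or Wirtinger's inequality) that complex submanifolds of a Kähler manifold are calibrated, hence volume-minimizing and in particular minimal. The plan is to reduce to the Euclidean monotonicity inequality on small balls, where the metric is uniformly close to flat because of the curvature and injectivity-radius bounds.

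\emph{First step: $W$ is minimal.} By Wirtinger's inequality, $\alpha^q/q!$ restricts to the Riemannian volume form on any complex $q$-plane. It is also at most the volume form on any real $2q$-plane. Since $\alpha$ is closed, $W$ is calibrated, so its mean curvature vanishes. (This is also visible directly: the second fundamental form of a complex submanifold is complex-bilinear, so its trace vanishes.)

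\emph{Second step: monotonicity.} Fix $x\in W$ and work in geodesic normal coordinates on $B_{\epsilon_0}(x)$, where $\epsilon_0$ is smaller than the injectivity radius and $\pi/(2\sqrt K)$. Let $r$ be the distance to $x$. Hessian comparison gives
\[\nabla^2(r^2/2)\geq (1-c_Kr^2)g\]
on this ball. Taking the trace over $TW$ and using $H=0$, we get $\Delta_W(r^2/2)\geq 2q(1-c_Kr^2)$. Integrate this over $W\cap B_s(x)$ and apply the divergence theorem, using $|\nabla_W r|\le 1$ and the coarea formula on the boundary term. This yields
\[2q(1-c_Ks^2)V(s)\le sV'(s),\qquad V(s)=\Vol(W\cap B_s(x)).\]
Hence $\frac{d}{ds}\log\bigl(V(s)e^{c s^2}/s^{2q}\bigr)\ge 0$. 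So $V(s)e^{cs^2}/s^{2q}$ is nondecreasing, and for $\epsilon<\epsilon(K)$,
\[\Dens_\epsilon(W,x)\geq e^{-c\epsilon^2}\lim_{s\to0}V(s)/s^{2q}.\]

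\emph{Third step: the limit.} Because $W$ is a smooth submanifold through $x$, the limit equals the volume of the unit ball in $\R^{2q}$, namely $\omega_{2q}=\pi^q/q!$. (This is the $2q$-dimensional unit ball volume, up to the normalization of $\alpha^q$ against the volume form.) Therefore $C=\tfrac12\pi^q/q!$ works once $\epsilon(K)$ is small enough that $e^{-c\epsilon^2}\ge 1/2$. The constants depend only on $q$, $K$, and the injectivity radius.

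The main technical point is the Hessian comparison step: we need a uniform lower bound on $\nabla^2 r^2$ inside balls below the injectivity radius with only a sectional curvature bound. This is standard Rauch comparison; the first thing to try is to quote it directly. The rest is routine. For our application, $Y=X^n$ with the product metric has curvature and injectivity radius bounded independently of $n$, and $W=\Delta^n_\omega$ is a complex curve in the surface case ($q=2$). So the constant $C$ is uniform in $n$, which is exactly what is needed in \eqref{vol}.
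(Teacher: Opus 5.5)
Your proof is correct, but it takes a different route from the paper. The paper only sketches this statement in one sentence: it cites the Euclidean case (the monotonicity formula together with Federer's theorem) and asserts that the curvature bound ``limits distortion in charts.''

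You instead prove an intrinsic Riemannian monotonicity formula. Minimality of $W$, combined with the Hessian comparison $\nabla^2(r^2/2)\geq (1-c_Kr^2)g$, gives monotonicity of $e^{cs^2}V(s)/s^{2q}$ on geodesic balls; note that the comparison uses only the upper curvature bound. The limit at $s=0$ is the Euclidean ball constant for every $W$ and every $x$, and this is exactly what makes $C$ uniform.

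What your route buys:
\begin{itemize}
\item It never needs charts in which $W$ is simultaneously complex (or minimal) for the flat model and the metric is uniformly close to Euclidean. The paper's phrase hides this point: in Riemannian normal coordinates $W$ is no longer complex for the standard structure. So one needs either uniformly controlled holomorphic charts or an almost-monotonicity argument with error terms.
\item It yields an explicit, essentially sharp constant $C=\tfrac12\pi^q/q!$, up to the factor $q!$ between $\alpha^q$ and the volume form, which you flagged.
\end{itemize}
The paper's route is more economical in its actual application. There $Y=X^n$, and products of holomorphic charts on $X$ give holomorphic charts on $X^n$ with distortion independent of $n$, so the Euclidean statement transfers directly.

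Two small remarks. First, your divergence-theorem step uses that $W$ has no boundary inside $B_{\epsilon_0}(x)$. This holds because $W$ is a closed submanifold of the compact $Y$, as is $\Delta^n_\omega$, but it is worth saying. Second, $\Delta^n_\omega=\mathcal F^n_\omega(X)$ is a complex surface in $X^n$, not a complex curve; your value $q=2$ is nonetheless correct.
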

			\noindent In the Euclidean case this follows from the Monotonicity Formula and Federer's theorem, and the bound $K$ limits distortion in charts. 
		
			Now notice that if $X$ has sectional curvature bounded by $K$, then $X^n$ has sectional curvature bounded by $K$ and injectivity radius bounded below uniformly in $n$. Therefore, there exists $\epsilon_0 > 0$ such that for all $0 < \epsilon < \epsilon_0$ and $n\geq 1$, $\Dens_{\epsilon/2} (\Delta_\omega^n, x) > C$ for all $x\in \Delta_\omega^n$. Then, for any $\epsilon < \epsilon_0$ and $n$, applying \eqref{vol} we get
	\begin{align*}
			\Vol(\Delta_\omega^n) \geq \sum_{x\in \cF_\omega^n(S)}\Vol(\Delta_\omega^n\cap B_{\epsilon/2}(x)) \geq \left(\frac{\epsilon}{2}\right)^4 C |S|.
		\end{align*}
		Letting $S$ be a maximum-cardinality $(\omega, n, \epsilon)$-separated set, by Remark \ref{rmk:appendix} we find
	\begin{align*}
			h_{top}(\mu) \leq \limsup_{n\to\infty} \frac 1 n \ln \Vol(\Delta_\omega^n).
	\end{align*}
	Now,
	\begin{align*}
		\Vol(\Delta_\omega^n) = \int_{\Delta_\omega^n} \widehat\kappa_n^2 = \int_{X} ((\cF_\omega^n)^*\widehat\kappa_n)^2 = \int_{X} \left( \sum_{i=0}^{n-1}(f_\omega^i)^* \kappa \right)^2.
	\end{align*}

	Let $\omega$ be Furstenberg-generic. On one hand, 
	\begin{align*}
			 \limsup_{n\to\infty} \frac 1 n \ln\int_{X} \left( \sum_{i=0}^{n-1}(f_\omega^i)^* \kappa \right)^2  \geq  \limsup_{n\to\infty} \frac 1 n \ln \int_X \kappa \wedge (f_\omega^{n-1})^* \kappa = \lambda_\mu.
	\end{align*} On the other,
	\begin{align*}
		\limsup_{n\to\infty} \frac 1 n \ln\int_{X} \left( \sum_{i=0}^{n-1}(f_\omega^i)^* \kappa \right)^2 &= \limsup_{n\to\infty} \frac 1 n \ln \sum_{0\leq i,j < n}\int_X (f_{\omega}^i)^* \kappa \wedge (f_{\omega}^j)^* \kappa \\
		&\leq \limsup_{n\to\infty} \frac 1 n \ln \left(2C_\kappa\sum_{0\leq i \leq j < n} \norm{(f_{\sigma^i(\omega)}^{j-i})^*}\right) \\
			&\leq \limsup_{n\to\infty} \frac 1 n \ln \left(2C_\kappa n^2\max_{0\leq i \leq j < n} \norm{(f_{\sigma^i(\omega)}^{j-i})^*}\right) \\
			&= \limsup_{n\to\infty} \frac 1 n \ln \max_{0\leq i \leq j < n} \norm{(f_{\sigma^i(\omega)}^{j-i})^*},
		\end{align*}
where $C_\kappa>0$ is independent of $i$, $j$, and $n$. Let
\[
E_L \coloneqq \frac 1 L \int_\Omega \ln \norm{(f_\eta^L)^*}\,d\mu^\Z(\eta).
\]
By Furstenberg's theorem and Fekete's lemma,
	\begin{align*}
		\lim_{L\to \infty}E_L= \lambda_\mu;
	\end{align*} fix $\epsilon > 0$ and choose $L$ large enough that $E_L< \lambda_\mu + \epsilon$.
Now, for each $i,j$ with $0\leq i \leq j < n$, choose integers $m,q\geq 0$ and $0\leq r,s<L$ satisfying
\begin{align*}
	j-i=Lm+r  \hspace{.5 cm } \text{and} \hspace{.5 cm } i=Lq+s. 
\end{align*}
Subadditivity gives
\begin{align*}
	\ln \norm{(f_{\sigma^i(\omega)}^{j-i})^*}
	&\leq \ln \norm{(f_{\sigma^i(\omega)}^{Lm})^*}
	+ \ln \norm{(f_{\sigma^{i+Lm}(\omega)}^r)^*},
\end{align*}
and
\begin{align*}
	\ln \norm{(f_{\sigma^i(\omega)}^{Lm})^*}
	&\leq \sum_{k=0}^{m-1}\ln\norm{(f_{\sigma^{i+Lk}(\omega)}^L)^*} \leq\sum_{k=0}^{q+m-1}\ln\norm{(f_{\sigma^{s+Lk}(\omega)}^L)^*}.
\end{align*}
By the Birkhoff ergodic theorem applied to $\sigma^L$, for generic $\omega$ there exists a nondecreasing function $\alpha\colon\N_0 \to\R_{\geq0}$ satisfying $\lim \frac 1 N \alpha(N)=0$ and
\begin{align*}
	\sum_{k=0}^{N-1}\ln\norm{(f_{\sigma^{s+Lk}(\omega)}^L)^*}
	\leq NLE_L+\alpha(N)
\end{align*}
for every $N\in\N$ and $0\leq s<L$. Since
\begin{align*}
	L(q+m)=i-s+Lm=j-s-r<n,
\end{align*}
we obtain, uniformly in $0\leq i\leq j<n$,
\begin{align*}
	\frac1n\ln \norm{(f_{\sigma^i(\omega)}^{Lm})^*}
	\leq E_L+\frac{\alpha(n)}{n}.
\end{align*}

Choose $\delta>0$. By Borel--Cantelli and the integrability condition \eqref{int:cohomology}, there exists $C>0$ such that
\begin{align*}
	\ln\norm{(f_{\sigma^k(\omega)}^1)^*}\leq k\delta+C
\end{align*}
for every $k\geq 0$. Hence, since $r<L$ and $i+Lm+r=j<n$,
\begin{align*}
	\ln \norm{(f_{\sigma^{i+Lm}(\omega)}^r)^*}
	&\leq\sum_{\ell=0}^{r-1}\ln\norm{(f_{\sigma^{i+Lm+\ell}(\omega)}^1)^*} \\
	&\leq L(n\delta+C).
\end{align*}
It follows that
\begin{align*}
	\limsup_{n\to\infty}\frac1n\ln\max_{0\leq i\leq j<n}
	\norm{(f_{\sigma^i(\omega)}^{j-i})^*}
	\leq E_L+L\delta
	<\lambda_\mu+\epsilon+L\delta.
\end{align*}
Letting $\delta\to0$ and then $\epsilon\to0$ proves the upper bound.
	\end{proof}
	\section{Product structure of $m$ in Pesin boxes}\label{app:Product}
	In this section, we outline a proof that $m$ has product structure in Pesin boxes. For us, a Pesin box has the following definition.
	\begin{defn}[Fibered Pesin box]\label{defn:PesinBox}
		A \emph{fibered Pesin box} (see \cite{MR1207478}, \cite{MR2219241}) over $\omega\in \Omega$ for a hyperbolic measure $\nu\in S(\mu)$ consists of a compact subset $Q_\omega \subset X$ with positive $\nu_\omega$-measure, an open neighborhood $N(Q_\omega)$ of $Q_\omega$ that is biholomorphic to $\mathbb{D}\times\mathbb{D}$, and transverse compact laminations $\mathcal L_\omega^u$ and $\mathcal L_\omega^s$ of $N(Q_\omega)$ whose leaves are horizontal and vertical graphs, respectively, under the identification of $N(Q_\omega)$ with $\mathbb{D}\times\mathbb{D}$. The defining property of a Pesin box is that $\mathcal L_\omega^s, \mathcal L_\omega^u$ are laminations by local stable/unstable manifolds, and $Q_\omega$ has product structure with respect to these laminations. This will be made precise in a moment.
		
		Let $K^-_\omega$ denote the intersection of $\mathcal L^u_\omega$ with the vertical fiber $\set{0}\times \mathbb{D}$ in the coordinates above, and let $K^+_\omega$ denote the intersection of $\mathcal L^s_\omega$ with the horizontal fiber $\mathbb{D}\times \set{0}$. Let $L_\omega^u(x,y)$ (respectively $L_\omega^s(x,y)$) denote the leaf of $\mathcal L_\omega^u$ (respectively $\mathcal L_\omega^s$) through $(x,y)\in\mathbb{D}\times\mathbb{D}$. By definition, a Pesin box satisfies the following:
		\begin{enumerate}
			\item For every $x\in K^+_\omega$ and $y\in K^-_\omega$, the leaf $L_\omega^u(0,y)$ intersects the leaf $L_\omega^s(x,0)$ in a unique point $h_\omega(x,y)\in K_\omega$. In this way, one obtains a homeomorphism
			\[
			h_\omega\colon K^+_\omega\times K^-_\omega\to K_\omega,
			\]
			where $K_\omega\subset N(Q_\omega)$ denotes the intersection of the supports of $\mathcal L_\omega^u$ and $\mathcal L_\omega^s$. 
			\item For $\nu_\omega$-almost every $(x,y)\in Q_\omega$, the leaf $L_\omega^u(x,y)$ (resp. $L_\omega^s(x,y)$) is contained in $W_\omega^{u}(x,y)$ (resp. $W_\omega^{s}(x,y)$).
		\end{enumerate}
	\end{defn}

By the theory of non-uniform hyperbolicity in our setting, in particular Lyapunov charts and the local stable/unstable manifold theorem \cite[Proposition~3.3]{MR2032494} or \cite[Theorem~6.4]{MR3671937}, for $\mu^\Z$-almost every $\omega$ there exists a countable collection of Pesin boxes over $\omega$ whose union has full $\nu_\omega$-measure. This holds in particular for our measure of maximal entropy $m$. 

	For ease of notation, let $\omega$ be as above and let $T^\pm \coloneqq T_\omega^\pm$. Let $\set{(Q_i,\mathcal L^s_i, \mathcal L^u_i)}_{i\in \N}$ be a collection of Pesin boxes over $\omega$ such that $\cup_i Q_i$ has full $m_\omega$-measure. Furthermore, take the stable (resp. unstable) Pesin laminations in this collection to be pairwise disjoint. We now outline an argument that $m_\omega$ has local product structure in the $Q_i$, meaning that $m_\omega$ is a countable sum of product measures on the $Q_i$'s. The key inputs are the ideas of laminar currents and geometric intersections \cite{MR1207478,MR2132264,MR2219241}.

	In \cite{MR4635340}, Cantat and Dujardin use their Ahlfors-Nevanlinna characterizations of $T^\pm$ combined with Lemma 8.8 to prove that $T^\pm$ are so-called \emph{strongly approximable} currents. To us, this means we may choose sequences of uniformly laminar currents $T^+_r, T^-_r$ such that, as $r\to 0$, $T^+_r \nearrow T^+$, $T^-_r \nearrow T^-$ and $T^+_r \wedge T^-_r \nearrow T^+ \wedge T^-$.

	Let $i\in \N$. For $r$ small, $(T^+_r \wedge T^-_r)(Q_i) > 0$. Now $\mathcal L^{s/u}_i$ are closed laminations on $N(Q_i)$; the analytic continuation theorem of Dujardin \cite[Theorem~1.1]{MR2132264} gives that the restrictions of $T^{+}_r$ to $\mathcal L^{s}_i$ (resp. $T^{-}_r$ to $\mathcal L^{u}_i$) are uniformly laminar, although a priori they may be zero.

	By the argument in Appendix A of \cite{MR2219241}, the restriction $\restr{(T^+_r \wedge T^-_r)}{Q_i} = \restr{T^+_r}{\mathcal L^s_i}\wedge \restr{T^-_r}{\mathcal L^u_i}$. Here, Proposition \ref{lem:last2} may be used as input in place of the convergence result $f_*^n(\psi C)/\lambda^n \to T_f^{-}\int_X(T_f^+ \wedge \psi C)$. 

Taking $r\to 0$, we have
\begin{align*}
	\restr{T_r^+}{\mathcal L^s_i}\wedge \restr{T_r^-}{\mathcal L^u_i}
	\nearrow \restr{T^+}{\mathcal L^s_i}\wedge \restr{T^-}{\mathcal L^u_i} \hspace{.5 cm}\text{and}\hspace{.5 cm}	\restr{(T_r^+ \wedge T_r^-)}{Q_i} \nearrow \restr{(T^+ \wedge T^-)}{Q_i}.
\end{align*}
Thus
\begin{align*}
	\restr{(T^+ \wedge T^-)}{Q_i}
	=
	\restr{T^+}{\mathcal L^s_i}\wedge \restr{T^-}{\mathcal L^u_i}.
\end{align*}
Now each $\restr{T^+}{\mathcal L^s_i}\wedge \restr{T^-}{\mathcal L^u_i}$ is a geometric intersection of uniformly laminar currents on $N(Q_i)$ and hence is locally a product measure in the coordinates of the Pesin box. Hence $T^+ \wedge T^-$ is a countable sum of local product measures; it has local product structure.

\section{Questions of rigidity}
	A natural question raised by this work is when, if ever, the measure $m$ arises from a stationary measure $\nu$ on $X$ (in the sense of \cite{MR884892}, \cite{MR1369243}). One checks that this occurs if and only if
	\begin{align*}
		\omega_+ \mapsto \int_{\omega_-}m_{(\omega_-, \omega_+)}\,d\mu^\N(\omega_-)
	\end{align*}
	 is almost surely equal to $\nu$. In the case that $X$ is a K3 surface, assuming $\mu$ is finitely supported and there are no $\Gamma_\mu$-invariant algebraic curves, by work of Roda \cite{roda2024classifying} every hyperbolic stationary measure is invariant. Thus $\nu$ as above would be invariant and equal to $m_\omega$ almost surely. 
	 
	The measure $m$ arises from a stationary measure in at least one case, namely when $(X,\Gamma_\mu)$ is what we call a \emph{simultaneous Kummer example}. Up to birational modification, a Kummer example is a pair $(X,f)$ in which $X$ is a complex torus and $f$ lifts to an affine transformation of the universal cover \cite[Definition~1.3]{MR4071328}. Given a subgroup $\Gamma\subset \Aut(X)$, we call the pair $(X, \Gamma)$ a \emph{simultaneous Kummer example} if $(X, f)$ is a Kummer example for each $f\in \Gamma$ and $\pi, \epsilon$ as in Definition 1.3 of \cite{MR4071328} can be chosen independently of $f$. It is enough to check this condition on a generating set for $\Gamma$.  

	For every Kummer example, the image of the Haar measure from the torus provides a nonzero $\Aut(X)$-invariant measure on $X$ which is absolutely continuous with respect to volume. We call this normalized measure $Leb$. If $(X,\Gamma_\mu)$ is a simultaneous Kummer example, $m_\omega = Leb$ almost surely, and $m = \mu^\Z \otimes Leb$. Hence, $m$ is induced by $Leb$ when $Leb$ is viewed as a stationary measure on $X$. In fact, this is stronger since $Leb$ is actually invariant.
	\begin{ques}\label{ques1}
	If $m$ arises from a stationary measure on $X$, is $(X, \Gamma_\mu)$ a simultaneous Kummer example?
\end{ques}
In the spirit of Cantat--Dupont \cite{MR4071328} and Filip--Tosatti \cite{MR4629008}, we have the following weaker question, which already seems approachable using the techniques of Cantat--Dupont.
\begin{ques}\label{ques2}
	Suppose $m$ arises from a stationary measure $\nu$ on $X$ and $\nu$ is in the Lebesgue class. Is $(X, \Gamma_\mu)$ a simultaneous Kummer example?
\end{ques}
\end{appendices}

	\bibliographystyle{amsplain}
	\bibliography{bibliography}
	
\end{document}